\documentclass[a4paper,10pt]{amsart}
\usepackage[utf8]{inputenc}
\usepackage{lmodern}
\usepackage[T1]{fontenc}
\usepackage{microtype} 	
\usepackage[all]{xy}
\usepackage{amsmath,amssymb,amsfonts,amsthm,latexsym,mathrsfs}
\usepackage{mathtools}
\usepackage{ stmaryrd } 
\usepackage{tikz-cd}
\usepackage{leftindex}
\usepackage{upgreek}
\usepackage[shortlabels]{enumitem}
\usepackage{xcolor}
\usepackage{hyperref}
\usepackage{cleveref}
\hypersetup{colorlinks=true,linkcolor=gray,citecolor=gray}

\usepackage[colorinlistoftodos]{todonotes}

\usepackage[style=alphabetic,doi=false,isbn=false,url=false,maxnames=10,maxalphanames=10,sorting=nyt,backref=true]{biblatex}
\DeclareDelimFormat{multicitedelim}{\addcomma\space}

\renewbibmacro*{journal+issuetitle}{%
  \usebibmacro{journal}%
  \iffieldundef{volume}
    { 
      \iffieldundef{year}
        {}
        {\setunit{\addspace}\printtext[parens]{\printfield{year}}}%
    }
    { 
      \setunit{\addspace}\printfield{volume}%
      \iffieldundef{year}
        {}
        {\setunit{\addspace}\printtext[parens]{\printfield{year}}}%
    }%
  \setunit*{\addcomma\space}%
  \printfield{number}%
  \newunit}

\DeclareFieldFormat[article, thesis, incollection, inbook, unpublished, inproceedings]{title}{\mkbibemph{#1}}

\DeclareFieldFormat[article]{journaltitle}{#1}

\DeclareFieldFormat[article]{volume}{\textbf{#1}}

\DeclareFieldFormat[article]{number}{no.~#1}
 
\DeclareFieldFormat[incollection, inbook, inproceedings]{booktitle}{#1}

\renewbibmacro*{in:}{}

\DeclareFieldFormat{pages}{#1}

\newtheorem{lemma}{Lemma}[section]
\newtheorem{proposition}[lemma]{Proposition}
\newtheorem{theorem}[lemma]{Theorem}
\newtheorem{corollary}[lemma]{Corollary}

\newtheorem{maintheorem}{Theorem}

\theoremstyle{definition}

\newtheorem{conjecture}[lemma]{Conjecture}
\newtheorem*{conjecture*}{Conjecture}

\theoremstyle{remark}
\newtheorem{remark}[lemma]{Remark}
\newtheorem{example}[lemma]{Example}

\newtheorem*{remark*}{Remark}
\newtheorem*{problem*}{Problem}
\newtheorem*{lremark*}{Literature remark}

\DeclareMathOperator{\im}{im}
\newcommand\Ob{\mathrm{Ob}}

\newcommand\End{\mathrm{End}}

\newcommand\Hom{\mathrm{Hom}}
\newcommand\Ext{\mathrm{Ext}}
\DeclareMathOperator{\id}{id}

\DeclarePairedDelimiter\abs{\lvert}{\rvert}
\newcommand{\Modcat}{\operatorname{Mod}}
\newcommand\modcat{\operatorname{mod}} 

\newcommand\ind{\operatorname{ind}}

\newcommand{\gldim}{\operatorname{gldim}}

\newcommand{\ten}{\otimes}
\newcommand{\lten}{\ten^{\mathrm{\mathbf{L}}}}
\newcommand{\RHom}{\mathbf{R}\Hom}
\newcommand{\cone}{\mathrm{cone}}

\DeclareMathOperator{\add}{add}
\newcommand{\pvd}{\mathrm{pvd}}
\newcommand{\per}{\mathrm{per}}

\newcommand{\dd}{\mathfrak{d}}
\newcommand{\DD}{\mathscr{D}}

\newcommand\mc{\mathcal}
\newcommand\ov{\overline}
\newcommand\un{\underline}

\newcommand\mf{\mathfrak}
\newcommand\wt{\widetilde}

\DeclareFontFamily{U}{mathx}{}
\DeclareFontShape{U}{mathx}{m}{n}{<-> mathx10}{}
\DeclareSymbolFont{mathx}{U}{mathx}{m}{n}
\DeclareMathAccent{\widecheck}{0}{mathx}{"71}
\newcommand{\N}{\mathbb{N}}
\newcommand{\Z}{\mathbb{Z}}
\newcommand{\F}{\mathbb{F}}

\newcommand{\Cbb}{\mathbb{C}}
\newcommand{\Qbb}{\mathbb{Q}}

\newcommand{\bG}{\mathbf{\Gamma}}
\newcommand{\bGh}{\widehat{\mathbf{\Gamma}}}
\newcommand{\bP}{\mathbf{\Pi}}

 \newcommand\restr[2]{{
   \left.\kern-\nulldelimiterspace 
   #1 
   \right|_{#2} 
   }}

\makeatletter
\providecommand\@dotsep{5}
\renewcommand{\listoftodos}[1][\@todonotes@todolistname]{%
  \@starttoc{tdo}{#1}}
\makeatother

\let\oldtocsubsection=\tocsubsection

\renewcommand{\tocsubsection}[2]{\hspace{2em}\oldtocsubsection{#1}{#2}}

\usepackage[margin=1.4in]{geometry}

\allowdisplaybreaks

\begin{document}

\title{Additive categorification of the monoidal $\Lambda$-invariant}
\author{Ricardo Canesin}
\author{Peigen Cao}
\author{Geoffrey Janssens}

\address{(Ricardo Canesin) \newline Université Paris Cité and Sorbonne Université, CNRS, IMJ-PRG, F-75013 Paris, France \newline E-mail address: {\tt ricardo.canesin@imj-prg.fr}}

\address{(Peigen Cao) \newline School of Mathematical Sciences, University of Science and Technology of China, Hefei, 230026, People's Republic of China \newline E-mail address: {\tt peigencao@126.com}}

\address{(Geoffrey Janssens) \newline Departement Wiskunde, Vrije Universiteit Brussel,
Pleinlaan $2$, 1050 Elsene, Belgium \newline E-mail address: {\tt geofjans@vub.ac.be}}

\begin{abstract}
In this paper, we contribute to the broad aim of relating invariants of additive and monoidal categorifications of cluster algebras. Specifically, in the setting of representations of a quantum affine algebra $U_q'(\mathfrak{g})$, Kashiwara--Kim--Oh--Park proved that the Hernandez--Leclerc categories form a monoidal categorification of their Grothendieck rings. Furthermore, these rings are $\Uplambda$-cluster algebras, meaning they are equipped with a compatible Poisson structure, constructed via the $\Lambda$-invariant. Under certain natural conditions, where $U_q'(\mathfrak{g})$ is of untwisted simply-laced type, we provide an additive interpretation of the $\Lambda$-invariant within the framework of Higgs categories. More precisely, there is an ice quiver with potential associated with these cluster algebras, and a key ingredient of our work consists in proving that its relative Ginzburg algebra is proper. More generally, if the relative Ginzburg algebra associated with an arbitrary ice quiver with potential is proper, we prove that the corresponding cluster algebra admits the structure of a $\Uplambda$-cluster algebra defined in terms of negative extensions in the Higgs category. Moreover, we provide a homological formula to compute the corresponding tropical and $F$-invariants introduced by Cao.
\end{abstract}
\maketitle

\newcommand\blfootnote[1]{%
  \begingroup
  \renewcommand\thefootnote{}\footnote{#1}%
  \addtocounter{footnote}{-1}%
  \endgroup
}

\blfootnote{\textit{2020 Mathematics Subject Classification}. 13F60, 16G10, 17B10, 17B37, 17B67, 18G80.}
\blfootnote{\textit{Key words and phrases}. Cluster algebras, additive categorification, monoidal categorification, Ginzburg algebras, invariants of $R$-matrices, quantum affine algebras.} 

\blfootnote{The second author is partially supported by the National Key R\&D Program of China (2024YFA1013801). The third author is grateful to Fonds Wetenschappelijk Onderzoek vlaanderen - FWO (grant 88258), and le Fonds de la Recherche Scientifique - FNRS (grant 1.B.239.22) for financial support.}

\vspace{-0,7cm}
\tableofcontents

\section{Introduction}

\subsection{Background and outline}\addtocontents{toc}{\protect\setcounter{tocdepth}{1}}

Cluster algebras are a class of commutative algebras introduced by Fomin--Zelevinsky \cite{FominZelevinskyI} in the early 2000s. By definition, they possess a distinguished set of generators constructed via an iterative procedure called mutation. A fruitful approach to understanding their intricate combinatorics has been to interpret them using certain categories, whose rich structure provides more conceptual tools for their study. There are two types of categorification of cluster algebras: additive and monoidal. Although they employ very different techniques, recent work by many authors has revealed correspondences between them that go well beyond what was initially expected. These notably include Fujita \cite{Fujformula,FujE}, Fujita--Oh \cite{FujitaOh}, Fujita--Murakami \cite{FM}, Duan--Schiffler \cite{DS,DuanSchifflerClassificationPreprint}, Contu \cite{ContuMonoidalAdditive}, Baur--Fu--Li \cite{BCJ}, Cao \cite{CaoFinvariant}, and Casbi \cite{Casbi}. In this paper, we contribute to this connection by providing an additive reinterpretation of the $\Lambda$-invariant of Kashiwara--Kim--Oh--Park \cite{KKOPmonI}, an important tool in their study of monoidal categorification \cite{KKOPmonII,KKOPmonIII}.

The \emph{additive} categorification program was initiated soon after the introduction of cluster algebras, starting from the observation of similarities between the combinatorics of mutations and those of tilting theory for finite-dimensional hereditary algebras (cf. \cite{MRZ}). After the invention of cluster categories by Caldero--Chapoton--Schiffler \cite{CCS} and Buan--Marsh--Reineke--Reiten--Todorov \cite{BMRRT}, it was further developed and expanded in numerous articles, such as \cite{CC, GLSinv, CalKel1, CalKel2, BuanIyamaReitenScott, FuKeller, DerksenWeymanZelevinskyI, DerksenWeymanZelevinskyII, AmiotClusterCat, Palu, PlamondonCharacter, PlamondonCluster, PresslandInternallyCY}, culminating in the categorification of skew-symmetric cluster algebras with coefficients by Yilin Wu \cite{YilinWuJacobifinite} and Keller--Wu \cite{KellerWuJacobiinfinite}. In these two papers, to an ice quiver with potential $(Q,F,W)$, one associates a differential graded algebra $\bGh(Q,F,W)$ called the \emph{completed relative Ginzburg dg algebra}. From its derived category, the authors of \cite{YilinWuJacobifinite,KellerWuJacobiinfinite} construct the \emph{Higgs category} $\mc{H}(Q,F,W)$ together with a \emph{cluster character} which, under mild assumptions, categorify the cluster algebra $\mc{A}_{Q,F}$ (with non-invertible coefficients) associated with the ice quiver $(Q,F)$. For instance, the cluster character establishes a bijection between the isomorphism classes of reachable rigid indecomposable objects in $\mc{H}(Q,F,W)$ and the cluster variables in $\mc{A}_{Q,F}$.

In another direction, Hernandez--Leclerc \cite{HLduke} introduced monoidal categorifications of cluster algebras, motivated by character identities in the representation theory of quantum affine algebras. A monoidal abelian length category $\mc{C}$ is a \emph{monoidal categorification} of a cluster algebra $\mc{A}$ if it is endowed with an isomorphism of $\Z$-algebras from the Grothendieck ring $K_0(\mc{C})$ to $\mc{A}$ that identifies the set of cluster monomials of $\mc{A}$ with a subset of the set of isomorphism classes of simple objects in $\mc{C}$, whose elements are called the \emph{reachable} simple objects. For a complex finite-dimensional simple Lie algebra $\mf{g}_0$, Hernandez--Leclerc \cite{HLduke,HLcluster} defined monoidal subcategories $\mathscr{C}_{\ell}$ ($\ell \geq 1$) and $\mathscr{C}^-$ of the category of finite-dimensional modules over the untwisted quantum affine algebra $U_q'(\widehat{\mf{g}}_0)$, and showed that their Grothendieck rings admit natural cluster algebra structures. They conjectured that these categories provide monoidal categorifications of these cluster algebras. After pioneering work by Hernandez--Leclerc and Nakajima \cite{Nakcluster} (see also Kimura--Qin \cite{KimQin}), a major breakthrough came with the work of Kang--Kashiwara--Kim--Oh \cite{KKKOjams} and Qin \cite{Qinduke}. Building on the tools developed in \cite{KKKOjams}, Hernandez--Leclerc's conjectures were completely proved by Kashiwara--Kim--Oh--Park \cite{KKOPmonI,KKOPmonII,KKOPmonIII}, who generalized them to a much larger family of monoidal subcategories $\mathscr{C}_{\mf{g}}^{[a,b],\mc{D},\un{w}_0}$ of the category of representations of an arbitrary quantum affine algebra $U'_q(\mf{g})$.

For a pair $(V,W)$ of finite-dimensional integrable simple modules over $U'_q(\mf{g})$, Kashiwara--Kim--Oh--Park \cite{KKOPmonI} defined the \emph{$\Lambda$-invariant} $\Lambda(V,W) \in \Z$ and the \emph{$\dd$-invariant} $\dd(V,W) \in \Z_{\geq 0}$, inspired by similar invariants constructed in \cite{KKKOjams} for representations of quiver Hecke algebras. These invariants are extracted from the commutation morphism $V \ten W \to W \ten V$ arising from the universal $R$-matrix, and they encode important information about the structure of these tensor products. They play a fundamental role in the proofs of \cite{KKOPmonI,KKOPmonII,KKOPmonIII} and allow Kashiwara--Kim--Oh--Park to establish a more refined result: the category $\mathscr{C}_{\mf{g}}^{[a,b],\mc{D},\un{w}_0}$ is a \emph{$\Uplambda$-monoidal categorification}. This implies, in particular, that the commutative ring $K_0(\mathscr{C}_{\mf{g}}^{[a,b],\mc{D},\un{w}_0})$ has a compatible Poisson structure \cite{GekhtmanShapiroVainshtein} and is a \emph{$\Uplambda$-cluster algebra}. A $\Uplambda$-cluster algebra is a cluster algebra in which each seed with extended exchange matrix $\widetilde{B}$ is endowed with an additional skew-symmetric integer matrix $\Lambda$, called the \emph{Poisson coefficient matrix}, such that the pair $(\widetilde{B},\Lambda)$ is compatible in the sense of Berenstein--Zelevinsky \cite{BerensteinZelevinsky}. Moreover, the compatible pairs associated with two neighboring seeds are related by a mutation as defined in loc.\ cit. In the case of the category $\mathscr{C}_{\mf{g}}^{[a,b],\mc{D},\un{w}_0}$, if $\{V_i\}_{i \in K}$ is the family of simple modules corresponding to the cluster variables of a seed, then the associated Poisson coefficient matrix $\Lambda = (\lambda_{ij})_{i,j \in K}$ is given by $\lambda_{ij} = \Lambda(V_i,V_j)$ for $i,j \in K$.

Motivated in part by these results, the second-named author \cite{CaoFinvariant} (see also \cite{CaoLi}) introduced the \emph{tropical invariant} and the \emph{$F$-invariant} for pairs of cluster monomials (or, more generally, of \emph{good elements}) in an arbitrary (upper) $\Uplambda$-cluster algebra. These invariants can be computed using $F$-polynomials and extended $g$-vectors, and he showed that they recover the $\Lambda$-invariant and the $\dd$-invariant for pairs of reachable simple modules in the $\Uplambda$-cluster algebras of Kashiwara--Kim--Oh--Park. Moreover, he proved that the $F$-invariant agrees with the $E$-invariant of decorated representations of quivers with potential in the additive categorification framework of Derksen--Weyman--Zelevinsky \cite{DerksenWeymanZelevinskyI, DerksenWeymanZelevinskyII} (see also \cite{DeFe}), confirming a conjecture by Fujita \cite{FujE}.

Our goal in this paper is to complete this picture by providing an additive interpretation of the tropical invariant (and hence of the $\Lambda$-invariant) in the setting of Higgs categories. For a non-degenerate ice quiver with potential $(Q,F,W)$ such that $\bGh(Q,F,W)$ is a \emph{proper} dg algebra, we prove that the associated cluster algebra $\mc{A}_{Q,F}$ admits a structure of $\Uplambda$-cluster algebra defined in terms of \emph{negative extensions} in the Higgs category $\mc{H}(Q,F,W)$, and we give a homological formula to compute the corresponding tropical and $F$-invariants. Inspired by the work of Contu \cite{ContuMonoidalAdditive}, we upgrade the initial exchange matrix of Kashiwara--Kim--Oh--Park for $\mathscr{C}_{\mf{g}}^{[a,b],\mc{D},\un{w}_0}$ into an ice quiver with potential when the interval $[a,b]$ is finite, and consider the associated Higgs category. When $U_q'(\mf{g})$ is untwisted of simply-laced type and the PBW-pair $(\mc{D},\un{w}_0)$ is \emph{adapted}, we show that the corresponding relative Ginzburg dg algebra is proper and that our $\Uplambda$-cluster algebra structure coincides with the one defined by Kashiwara--Kim--Oh--Park, thereby providing an additive categorification of their $\Lambda$- and $\dd$-invariants.

We now explain our main results in more detail.

\subsection{A \texorpdfstring{$\Uplambda$}{}-cluster algebra structure through additive categorification}\addtocontents{toc}{\protect\setcounter{tocdepth}{1}}

Let $(Q,F)$ be a finite ice quiver. We allow arrows between frozen vertices, which may be either unfrozen or frozen. Assume that the sets of vertices are $Q_0 = \{1,2,\dots,m\}$ and $F_0 = \{n+1,\dots,m\}$ for $1 \leq n \leq m$. We consider the $m \times m$ matrix $\widehat{B} = (b_{ij})$ defined by
\[
b_{ii} = \begin{cases}
    0 &\textrm{if } 1 \leq i \leq n,\\
    1 &\textrm{otherwise,}
\end{cases}
\]
and, for $i \neq j$,
\[
b_{ij} = \#\{\textrm{unfrozen arrows } i\to j\textrm{ in } Q\} - \#\{\textrm{arrows } j\to i \textrm{ in } Q\}.
\]
The first $n$ columns form the extended exchange matrix $\widetilde{B}$ of the ice quiver $(Q,F)$. If $\widehat{B}$ is invertible over $\Qbb$, we define
\[
\Lambda = \abs{\det \widehat{B}} \cdot (\widehat{B}^{-T} - \widehat{B}^{-1}).
\]
In this case, we show that the pair $(\widetilde{B},\Lambda)$ is compatible (see Proposition \ref{compatible pair in inv case}) and call it the \emph{compatible pair associated with} $(Q,F)$. This construction is compatible with mutation of ice quivers (taking into account the arrows between frozen vertices!) as defined by Pressland \cite{PresslandMutation} (see Lemma \ref{proposition:compatible pair of an ice quiver respects mutation}).

If one equips $(Q,F)$ with a potential $W$, then the matrix $\widehat{B}$ is the matrix of the Euler form of the perfectly valued derived category $\pvd(\bGh)$ with respect to the basis of simple dg modules, where $\bGh = \bGh(Q,F,W)$ is the completed relative Ginzburg dg algebra. When $\bGh(Q,F,W)$ is proper, it follows that $\widehat{B}$ is invertible (see Lemma \ref{lemma:inverse of Euler matrix}), and hence the compatible pair associated with $(Q,F)$ is defined. We will interpret its Poisson coefficient matrix using the Higgs category $\mc{H} = \mc{H}(Q,F,W)$. To this end, we define
\begin{equation}\label{eq:bracket on the Higgs category (introduction)}
[M,N]_{\mc{H}} = \sum_{p \geq 0}(-1)^p(\dim\Ext^{-p}_{\mc{H}}(M,N) - \dim\Ext^{-p}_{\mc{H}}(N,M)),
\end{equation}
for $M,N \in \mc{H}$, where the negative extensions are computed in the relative cluster category (see Section \ref{section:relative cluster and Higgs categories}). Under the assumption that $\bGh(Q,F,W)$ is proper, this sum is finite (see Lemma \ref{lemma:bracket is defined}). 

To any vertex $t$ of the labeled $n$-regular tree $\mathbb{T}_n$, we associate an ice quiver with potential $(Q_t,F_t,W_t)$ obtained from $(Q,F,W)$ by iterated mutation along the path in $\mathbb{T}_n$ from the root to $t$. Our first main result is the following.

\begin{maintheorem}[Theorem \ref{thm:canonical quantum structure}]\label{thm:canonical quantum structure (introduction)}
    Let $(Q,F,W)$ be a non-degenerate ice quiver with potential such that $\bGh(Q,F,W)$ is proper. Let $\mc{H} = \mc{H}(Q,F,W)$ be the associated Higgs category. Then the compatible pair associated with $(Q_t,F_t)$ is defined for all $t \in \mathbb{T}_n$, and its Poisson coefficient matrix $\Lambda_t = (\lambda_{ij}^t)$ is given by
    \[
    \lambda_{ij}^t = [M_i^t,M_j^t]_{\mc{H}},
    \]
    where $M_i^t \in \mc{H}$ is the reachable rigid indecomposable object corresponding to the $i$-th cluster variable in the seed of $\mc{A}_{Q,F}$ associated with $t$. In particular, this endows $\mc{A}_{Q,F}$ with a $\Uplambda$-cluster algebra structure.
\end{maintheorem}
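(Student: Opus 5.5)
Our plan is to first treat the initial seed $t=t_0$ and then transport everything along mutations. At $t_0$ the cluster variables correspond to $M_i = e_i\bGh$, $1\le i\le m$; these are the images of the indecomposable projectives in the relative cluster category, with the frozen ones projective-injective in $\mc{H}$. Since $\bGh$ is proper, $\per\bGh\subseteq\pvd\bGh$. We then use the relative $3$-Calabi--Yau property of $\bGh$ relative to the frozen part to compute the negative extensions $\Ext^{-p}_{\mc{H}}(M_i,M_j)$ for $p\ge 0$. The relevant facts are that $\Hom$ in the relative cluster category in non-positive degrees agrees with $\Hom_{\per\bGh}$ in those degrees (the Verdier quotient by $\pvd_{e}$ does not affect nonpositive degrees between projectives), and that $\Hom_{\per}(e_i\bGh,e_j\bGh[-p]) = H^{-p}(e_j\bGh e_i)$.

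This identifies $\sum_{p\ge0}(-1)^p\dim\Ext^{-p}(M_i,M_j)$ with the truncated Euler characteristic of $e_j\bGh e_i$. The next step is to show that $\chi(e_j\bGh e_i)$ equals the $(i,j)$ entry of $\widehat B^{-1}$ (or of its transpose). This follows from Lemma~\ref{lemma:inverse of Euler matrix}: $\widehat B$ is the Euler form on simples, and the projectives are dual to the simples under $\chi$, so the Euler form on projectives is the inverse matrix. The positive-degree cohomology of $\bGh$ must be handled, because the full $\chi$ sums over all degrees while the bracket only sees $p\ge0$. Here we use the relative CY duality $H^{k}(e_j\bGh e_i)\cong D H^{3-k}(\dots)$ up to correction terms at frozen vertices. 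Under this duality the positive-degree parts contribute symmetrically and cancel in the antisymmetrisation. The remaining normalisation is the factor $\abs{\det\widehat B}$, together with the $b_{ii}=1$ convention at frozen vertices encoding the missing loop. With these in hand we get $[M_i,M_j]_{\mc H} = \abs{\det\widehat B}\,(\widehat B^{-T}-\widehat B^{-1})_{ij}$. Alternatively, if the bracket is normalised without the determinant, we verify that $\det\widehat B=\pm1$ in the proper case, which should follow from unimodularity of the Euler form on $K_0(\per)=K_0(\pvd)$.

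For arbitrary $t$, we use that mutation of $(Q,F,W)$ gives a derived equivalence $\bGh(Q,F,W)\simeq\bGh(Q_t,F_t,W_t)$ (Keller--Yang/Wu) inducing an equivalence of Higgs categories sending $e_i\bGh_t$ to $M_i^t$. Properness is preserved, so $\widehat B_t$ is invertible and the compatible pair for $(Q_t,F_t)$ is defined. The initial-seed computation, applied to $(Q_t,F_t,W_t)$, then gives $\lambda^t_{ij}=[M_i^t,M_j^t]_{\mc H}$. Lemma~\ref{proposition:compatible pair of an ice quiver respects mutation} ensures that these $\Lambda_t$ are related by Berenstein--Zelevinsky mutation, and non-degeneracy ensures the quivers $Q_t$ match the cluster seeds. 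Together these yield the $\Uplambda$-cluster structure.

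The main obstacle is the initial-seed identity. It requires matching the truncated alternating sum of negative Ext groups with the inverse Euler matrix, that is, showing that the positive-degree cohomology cancels in the antisymmetrisation via relative Calabi--Yau duality. In this step we would first try a long-exact-sequence argument in the relative cluster category using the defining triangle of the relative CY structure.
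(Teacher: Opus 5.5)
Your overall architecture matches the paper's: an Euler-form computation at the initial seed via Lemma \ref{lemma:inverse of Euler matrix} and Proposition \ref{proposition:negative extensions can be computed in per Gamma}, then transport along the equivalences $\Phi_t$ using Lemma \ref{lemma:mutation preserves properness} and Proposition \ref{proposition:compatible pair of an ice quiver respects mutation}. However, the step you single out as the main obstacle rests on a false premise, and the tool you propose for it is not available. The algebra $\bGh$ is generated by arrows of degrees $0$, $-1$ and $-2$, with a differential of degree $+1$. So it is concentrated in non-positive degrees, and $\Ext^p_{\bGh}(e_i\bGh,e_j\bGh)=H^p(e_j\bGh e_i)=0$ for all $p>0$. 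There is no positive-degree cohomology to cancel. The sum $\sum_{p\geq 0}(-1)^p\dim\Ext^{-p}_{\mc{H}}(e_i\bGh,e_j\bGh)$ is already the full Euler pairing $\langle[e_i\bGh],[e_j\bGh]\rangle$, and this is exactly how the paper obtains Lemma \ref{lemma:bracket is defined}.

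Moreover, the duality $H^k(e_j\bGh e_i)\cong DH^{3-k}(e_i\bGh e_j)$ you want to invoke is false. Relative $3$-Calabi--Yau duality only applies when one argument lies in the thick subcategory generated by the simples at unfrozen vertices. No $e_i\bGh$ lies there, since it survives in $\mc{C}(Q,F,W)$. And if such a duality held for all projectives, connectivity would force $H^*(\bGh)=0$, contradicting $H^0(\bGh)\cong J(Q,F,W)\neq 0$. So the long-exact-sequence argument you plan would be chasing a nonexistent correction term. ``Cancellation under antisymmetrisation'' is not what makes the identity true.

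Likewise, there is no factor $\abs{\det\widehat{B}}$ to absorb into the bracket. The content of Lemma \ref{lemma:inverse of Euler matrix} is that $\widehat{B}^T$ and the Gram matrix $(\langle[e_i\bGh],[e_j\bGh]\rangle)_{ij}$ are the transition matrices between the two $\Z$-bases $\{[S_i]\}$ and $\{[e_i\bGh]\}$ of $K_0(\per(\bGh))=K_0(\pvd(\bGh))$. Hence they are mutually inverse integer matrices, and $\abs{\det\widehat{B}}=1$; your fallback option is the right one. With this and connectivity, $[e_i\bGh,e_j\bGh]_{\mc{H}}=(\widehat{B}^{-T}-\widehat{B}^{-1})_{ij}=\lambda_{ij}$ follows in one line. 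The rest of your argument then goes through exactly as in the paper.
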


If $\bGh(Q,F,W)$ is moreover concentrated in degree zero, then the Higgs category is a Frobenius exact category by a result of Yilin Wu \cite[Theorem 6.2]{YilinWuJacobifinite}. As observed in Remark \ref{rem: Frobenis exact case}, it follows that
\[
[M,N]_{\mc{H}} = \dim\Hom_{\mc{H}}(M,N) - \dim\Hom_{\mc{H}}(N,M)
\]
for $M,N \in \mc{H}$. Consequently, the $\Uplambda$-cluster algebra structure on $\mc{A}_{Q,F}$ arising from Theorem \ref{thm:canonical quantum structure (introduction)} coincides with that of Gei\ss--Leclerc--Schröer \cite{GLSQuantumstructure} and Grabowski--Pressland \cite[Theorem 6.22]{GrabowskiPressland}.

The next result gives a homological interpretation of Cao's tropical and $F$-invariants for pairs of cluster monomials in $\mc{A}_{Q,F}$, computed using the $\Uplambda$-cluster algebra structure of Theorem \ref{thm:canonical quantum structure (introduction)}.

\begin{maintheorem}[Proposition \ref{prop:additive interpretation of the tropical and F invariants}]
    Let $(Q,F,W)$ be a non-degenerate ice quiver with potential such that $\bGh(Q,F,W)$ is proper. Let $\mc{H} = \mc{H}(Q,F,W)$ be the associated Higgs category. For reachable rigid objects $M,N \in \mc{H}$, we have
    \[
    \langle CC(M),CC(N)\rangle_{\rm{trop}} = \dim\Ext^1_{\mc{H}}(M,N) + [M,N]_{\mc{H}}
    \]
    and
    \[
    (CC(M) \mid\mid CC(N))_F = 2 \cdot \dim\Ext^1_{\mc{H}}(M,N),
    \]
    where $CC$ denotes the canonical cluster character on $\mc{H}$.
\end{maintheorem}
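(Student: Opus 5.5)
Both formulas can be reduced to the initial seed of a suitable cluster, where Cao's invariants are computed from extended $g$-vectors. First I recall Cao's definitions. For good elements $u,v$ with extended $g$-vectors $\tilde g_u,\tilde g_v$ with respect to a seed $t$ with compatible pair $(\widetilde B_t,\Lambda_t)$ and $F$-polynomial $F_v$, the tropical invariant is
\[
\langle u,v\rangle_{\rm trop}=\tilde g_u^{T}\Lambda_t\tilde g_v+(\text{tropical term } \tilde g_u^{T}\cdot\text{trop}(F_v)\text{ paired via }\widetilde B_t),
\]
and the $F$-invariant is $(u\mid\mid v)_F=\langle u,v\rangle_{\rm trop}+\langle v,u\rangle_{\rm trop}$. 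Cao shows this is independent of the seed $t$. Since the $[\,\cdot,\cdot\,]_{\mc H}$-part is skew-symmetric, the second formula follows from the first by symmetrizing and using $\dim\Ext^1_{\mc H}(M,N)=\dim\Ext^1_{\mc H}(N,M)$. This symmetry is the relative $2$-Calabi--Yau property of $\mc H$ established by Wu and Keller--Wu. So only the first formula needs proof.

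For the first formula, I would choose a seed $t$ adapted to $M$. Since $M$ is reachable rigid, $M\in\add T_t$ with $T_t=\bigoplus_i M_i^t$, and $CC(M)$ is a cluster monomial in seed $t$. In that seed $\tilde g_M$ is a nonnegative vector and the $F$-polynomial of $CC(M)$ is $1$. By seed independence, I would compute the right-hand side by expanding $N$ in seed $t$. Then I would use the Higgs category interpretation of $g$-vectors and $F$-polynomials. The index of $N$ with respect to $T_t$ comes from a triangle $T_1\to T_0\to N\to \Sigma T_1$ with $T_0,T_1\in\add T_t$. The tropicalization of $F_N$ evaluated against $\tilde g_M$ measures $\dim\Hom(M,\Sigma N)$ restricted suitably; more precisely, the $F$-polynomial exponents are dimension vectors of submodules of $\Ext^1_{\mc H}(T_t,N)$. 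The tropical term paired with a summand of $T_t$ then yields $\dim\Ext^1_{\mc H}(M,N)$, as in Cao's comparison with the $E$-invariant and in Plamondon/Palu-type arguments.

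Next I identify the Poisson term. By Theorem~\ref{thm:canonical quantum structure}, $\lambda^t_{ij}=[M_i^t,M_j^t]_{\mc H}$. It remains to show that $(M,N)\mapsto \tilde g_M^T\Lambda_t\tilde g_N$ agrees with $[M,N]_{\mc H}$ for arbitrary $N$, not just summands of $T_t$. For this I need $[-,-]_{\mc H}$ to be additive on the index triangle. This holds because $[M,-]$ is an alternating sum of dimensions of negative extensions, i.e.\ essentially an Euler-form-type pairing on the relative cluster category. Applying $\Hom(M,-)$ to $T_1\to T_0\to N\to\Sigma T_1$ gives a long exact sequence. The negative-degree parts are finite by Lemma~\ref{lemma:bracket is defined}. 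Hence $[M,N]=[M,T_0]-[M,T_1]$ up to the boundary contribution at degree $0/1$. The main obstacle is this boundary term. The long exact sequence does not stop at degree $0$: it connects $\Hom(M,N)$ to $\Ext^1(M,T_1)=0$ (rigidity of $T_t\ni M$), but the reverse bracket $[N,M]$ involves $\Hom(N,M)\to\Hom(T_0,M)\to\Hom(T_1,M)\to\Ext^1(N,M)\to 0$. This produces exactly the correction $\dim\Ext^1(N,M)=\dim\Ext^1(M,N)$. I would carefully track this truncation and check that it combines with the tropical $F$-term with the correct sign. If the naive bookkeeping double-counts, I would instead expand $M$ in a seed adapted to $N$ and compare the two expressions.
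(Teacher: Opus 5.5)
There is a genuine error in how you read Cao's tropical invariant, and it is the source of the ``double counting'' you anticipate but leave unresolved. In the definition used here, $\langle u,u'\rangle_{\rm{trop}} = ({\bf g}^t_u)^T\Lambda_t{\bf g}^t_{u'} + F^t_u[(S\mid{\bf 0}){\bf g}^t_{u'}]$. The $F$-polynomial that appears is the one of the \emph{first} argument, not of the second as in your formula. You correctly choose $t$ so that $CC(M)$ is a cluster monomial of the seed $t$. Then $F^t_{CC(M)}=1$, so the tropical term is identically $0$.

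Your step that uses the $F$-polynomial of $N$, quiver Grassmannians of $\Ext^1_{\mc{H}}(T_t,N)$ and Plamondon/Palu-type arguments to produce a summand $\dim\Ext^1_{\mc{H}}(M,N)$ is therefore wrong, not merely unnecessary. Your own computation of the bilinear term already gives $({\bf g}^t_{CC(M)})^T\Lambda_t{\bf g}^t_{CC(N)}=[M,T_0]_{\mc H}-[M,T_1]_{\mc H}=[M,N]_{\mc H}+\dim\Ext^1_{\mc H}(N,M)$. Adding a further $\Ext^1$-contribution overshoots.

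Your reading is also not seed-independent. In a seed adapted to $N$, the same long-exact-sequence argument gives bilinear term $[M,N]_{\mc H}-\dim\Ext^1_{\mc H}(M,N)$. Under your convention the $F$-term would then vanish because $F_{CC(N)}=1$, so you would get $[M,N]_{\mc H}-\dim\Ext^1_{\mc H}(M,N)$, a different answer. For the same reason, your fallback of expanding in a seed adapted to $N$ only works with the correct convention. Even then it requires computing $F_{CC(M)}[(S\mid{\bf 0}){\bf g}_{CC(N)}]$ homologically, which is extra work.

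Once the tropical term is dropped, the rest of your outline is exactly the paper's proof:
\begin{enumerate}[(1)]
\item Identify the $g$-vectors with indices (Theorem~\ref{thm:g-vectors as indices}).
\item Use $\lambda^t_{ij}=[M_i^t,M_j^t]_{\mc H}$ (Theorem~\ref{thm:canonical quantum structure}) and additivity to obtain $[M,T_0]_{\mc H}-[M,T_1]_{\mc H}$.
\item Apply $\Hom(M,-)$ and $\Hom(-,M)$ to $T_1\rightarrowtail T_0\twoheadrightarrow N$ and take Euler characteristics of the two long exact sequences. They are finite by Lemma~\ref{lemma:bracket is defined} and terminate because $\Ext^1_{\mc H}(M,T_1)=\Ext^1_{\mc H}(T_0,M)=0$. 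The only uncancelled term is $\dim\Ext^1_{\mc H}(N,M)$.
\item Finish with the $2$-Calabi--Yau property.
\end{enumerate}
The $F$-invariant formula then follows by symmetrizing, as you say.
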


As explained in Remark \ref{rem: recovering sym E invariant}, a result of Plamondon \cite{PlamondonCharacter} allows us to express $\dim\Ext^1_{\mc{H}}(M,N)$ in the theorem above as an $E$-invariant between the decorated representations corresponding to $M$ and $N$. Hence, we recover, in our setting, the link between the $E$-invariant and the $F$-invariant established in \cite[Theorem 6.11]{CaoFinvariant}.

\subsection{Additive categorification of the monoidal \texorpdfstring{$\Lambda$}{}-matrix}\addtocontents{toc}{\protect\setcounter{tocdepth}{1}}

Let us recall the main ingredients in the definition of the categories $\mathscr{C}_{\mf{g}}^{[a,b],\mc{D},\un{w}_0}$ and of their initial monoidal seeds, introduced by Kashiwara--Kim--Oh--Park. Let $\mf{g}$ be an affine Kac--Moody Lie algebra and consider the associated quantum affine algebra $U_q'(\mf{g})$. By \cite[Theorem 3.6]{KKOPSimplyLacedRootSystems}, one can attach to $U_q'(\mf{g})$ a simply-laced Dynkin diagram $\Delta$ of finite type whose root system controls the block decomposition of the category of finite-dimensional integrable $U_q'(\mf{g})$-modules. For example, if $\mf{g}$ is of type $\mathsf{A}^{(1)}$, $\mathsf{D}^{(1)}$, or $\mathsf{E}^{(1)}$, then $\Delta$ is the corresponding Dynkin diagram of finite type. In \cite{KKOP_PBWQuantumAffine}, the notion of a \emph{complete PBW-pair} is introduced. It consists of a pair $(\mc{D},\un{w}_0)$, where $\mc{D}$ is a \emph{complete duality datum} and $\un{w}_0$ is a reduced expression of the longest element of the Weyl group of $\Delta$. The category $\mathscr{C}_{\mf{g}}^{[a,b],\mc{D},\un{w}_0}$ depends on the choice of an integer interval $[a,b]$, where $a,b \in \Z \cup \{\pm\infty\}$, and on a complete PBW-pair $(\mc{D},\un{w}_0)$.

When $b \in \Z$, Kashiwara--Kim--Oh--Park describe the ice quiver of an initial seed for the cluster algebra structure on $K_0(\mathscr{C}_{\mf{g}}^{[a,b],\mc{D},\un{w}_0})$ by adapting earlier constructions from \cite{BerensteinFominZelevinsky,BuanIyamaReitenScott,GLSKacMoodyGroups}. We denote it by $(Q^{[a,b]}(\un{w}_0),F^{[a,b]}(\un{w}_0))$, as it does not depend on $\mc{D}$. When the interval $[a,b]$ is finite, we endow this ice quiver with a potential $W^{[a,b]}(\un{w}_0)$, building on \cite{BuanIyamaReitenSmith,HLcluster,ContuMonoidalAdditive}. The definition of the resulting ice quiver with potential is given in Section \ref{section:initial monoidal seed}. This allows us to study the Higgs category \cite{YilinWuJacobifinite} that categorifies the corresponding cluster algebra. We remark that Contu \cite{ContuMonoidalAdditive} has already considered an additive counterpart to the monoidal categorification of Kashiwara--Kim--Oh--Park. However, for his purposes, it was sufficient to categorify the cluster algebra \emph{without} coefficients, whereas retaining coefficients is crucial for our results.

Let $\bGh^{[a,b]}(\un{w}_0)$ be the completed relative Ginzburg dg algebra of the ice quiver with potential $(Q^{[a,b]}(\un{w}_0),F^{[a,b]}(\un{w}_0),W^{[a,b]}(\un{w}_0))$. We conjecture that it is always a proper dg algebra. 

\begin{conjecture*}[\Cref{conj: ginzburg proper}]
The relative Ginzburg dg algebra $\bGh^{[a,b]}(\underline{w}_0)$ is proper. 
\end{conjecture*}

The next result confirms this conjecture in two particular cases.

\begin{maintheorem}[Corollaries \ref{cor:adapted words give proper Ginzburgs} and \ref{cor:properness if length of w0 divides length of [a,b]}]\label{thm:proper Ginzburg algebra (introduction)}
    Let $[a,b]$ be a finite integer interval and $\un{w}_0$ a reduced expression of the longest element $w_0$ of the Weyl group of $\Delta$. The completed relative Ginzburg dg algebra $\bGh^{[a,b]}(\un{w}_0)$ is a proper dg algebra if one of the following conditions holds:
    \begin{enumerate}[(1)]
        \item $\un{w}_0$ is a source sequence for some orientation of $\Delta$;
        \item $b-a+1$ is a multiple of the length of $w_0$.
    \end{enumerate}
\end{maintheorem}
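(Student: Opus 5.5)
We prove properness by identifying $\bGh^{[a,b]}(\un{w}_0)$, up to quasi-isomorphism, with a relative Ginzburg algebra whose homology is known to be finite-dimensional. The mechanism throughout is a criterion we expect to prove (or invoke) early in the body of the paper: \emph{properness is preserved under mutation at unfrozen vertices}. Pressland's mutation of ice quivers with potential induces a derived equivalence between the completed relative Ginzburg algebras, sending $\bGh$ to a dg algebra quasi-isomorphic to an endomorphism algebra $\RHom(T,T)$ of a silting object. Since $\pvd$ is preserved, properness transfers along mutations. It therefore suffices to reach, by mutation and change of the reduced word, one convenient seed.

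\textbf{Case (1).} Suppose $\un{w}_0$ is a source sequence for an orientation of $\Delta$, with associated Dynkin quiver $Q_\Delta$. We would show that the ice quiver with potential $(Q^{[a,b]}(\un{w}_0),F^{[a,b]}(\un{w}_0),W^{[a,b]}(\un{w}_0))$ is the Hernandez--Leclerc / Buan--Iyama--Reiten--Smith type quiver: a finite piece of the repetition quiver $\Z Q_\Delta$ with frozen vertices on the last slice, and with potential given by the mesh relations. For such quivers, the zeroth homology $H^0\bGh$ is the relative Jacobian algebra. We would identify it with a stable Auslander algebra or a preprojective-type algebra of a finite piece of $\Z Q_\Delta$, which is finite-dimensional.

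Properness additionally requires $H^{-p}\bGh=0$ for $p\gg0$, or at least that each $H^{-p}$ is finite-dimensional and only finitely many are nonzero. Our plan is to show that $\bGh$ is concentrated in degree $0$, i.e.\ that the relative Jacobian algebra has relative $3$-Calabi--Yau-type global dimension. We would do this by exhibiting, in the spirit of Keller--Wu, an explicit finite projective resolution of the simples built from the mesh relations. The key input is that the mesh category of an interval in $\Z Q_\Delta$ is directed, so truncations are finite. Exactness of the Koszul-type complex at the frozen slice is where the coefficients matter. This step is the main obstacle: we must check that the relative Ginzburg differential resolves the frozen simples correctly, using the fact that frozen vertices carry no degree $-2$ loops $t_i$.

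\textbf{Case (2).} If $\ell(w_0)$ divides $b-a+1$, the interval consists of complete copies of $\un{w}_0$. We would use the known fact that changing $\un{w}_0$ by commutation or braid moves corresponds to a sequence of mutations of the initial ice quiver (KKOP's braid-move mutations). This reduces to the case where $\un{w}_0$ is adapted, where case (1) applies. The divisibility is needed so that the braid moves act on each full block without breaking the frozen boundary. Combined with mutation invariance of properness, this gives the corollary.
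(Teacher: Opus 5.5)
\textbf{Gap.} Your Case (1) rests on proving that $\bGh^{[a,b]}(\un{w}_0)$ is concentrated in degree $0$. That is false, so the step you single out as the main obstacle cannot be carried out. Take $\Delta=\mathsf{A}_1$, $\un{w}_0=(1)$ and $[a,b]=[1,2]$, which satisfies both (1) and (2). The ice quiver is $2\xrightarrow{\alpha}1$ with $1$ frozen and $W=0$. Since $\alpha$ is unfrozen, $\bGh$ contains $\alpha^*\colon 1\to 2$ in degree $-1$ and a loop $t_2$ in degree $-2$.

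Now $d\alpha^*=\partial_\alpha W=0$. On the other hand, the differential of any element of degree $-2$ is a (possibly infinite) combination of paths of length at least $2$: paths with two starred arrows are cocycles, and $dt_2$ is, up to sign, the $2$-cycle at $2$. Hence $0\neq[\alpha^*]\in H^{-1}(\bGh)$.

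You can also see this numerically. The Euler matrix is $\widehat{B}=\left(\begin{smallmatrix}1&-1\\1&0\end{smallmatrix}\right)$, so $(\widehat{B}^{-1})_{11}=0$. By Lemma~\ref{lemma:inverse of Euler matrix} this entry equals $\chi(e_1\bGh e_1)$, which would be $\dim e_1H^0(\bGh)e_1\geq 1$ if all cohomology sat in degree $0$.

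This is not a small-case accident. Nonzero negative cohomology of $\bGh$ is exactly what encodes $\dd(V,\DD^{-n}(W))$ for $n\geq 2$ in Proposition~\ref{prop:d-invariant with dual as a negative extension}. Jacobi-finiteness, which your identification with a mesh-type algebra does give, only controls $H^0$.

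\textbf{What is missing.} You need a mechanism that \emph{bounds} the negative cohomology instead of killing it. The paper proceeds as follows.
\begin{enumerate}[(i)]
\item Pass to the non-completed $\bG^{[a,b]}(\un{w}_0)$. All cycles of $W$ are $3$-cycles, so an Adams grading reduces properness of $\bGh$ to properness of $\bG$.
\item Realize $\bG$ as a relative derived preprojective algebra $\bP_3(A,B)=T_A(\Sigma^2\Omega_{A,B})$, with $A$ of global dimension at most $2$ and $B$ the path algebra of the frozen subquiver. Properness then follows from nilpotency of $\Omega_{A,B}$.
\item When $b-a+1$ is a multiple of $2l(w_0)$, all rows have the same length $\ell$ and $A=kQ_\xi\ten kQ^{\circ}_\ell$. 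Then $\Omega_{A,B}\cong\Omega_{kQ_\xi}\ten\Omega_{A^{\circ}_\ell,B^{\circ}_\ell}$, and $-\lten\Sigma\Omega_{A^{\circ}_\ell,B^{\circ}_\ell}$ sends $V_{[x,y]}$ to $V_{[x-1,y-1]}$, so it is nilpotent. The higher tensor powers, via $\tau^{-r}$ on $\mc{D}^b(kQ_\xi)$, are what produce the negative-degree cohomology.
\item For an arbitrary left endpoint, one needs Theorem~\ref{thm:coincidence of extensions}: $\bG^{[a,b]}$ and $\bG^{[a',b]}$ agree on extensions. This is proved by gluing, via Proposition~\ref{proposition:gluing of bimodules}, the regular tensor part with a truncated Auslander algebra.
\end{enumerate}
None of this appears in your proposal.

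\textbf{Case (2).} Your argument follows the paper's route: commutation moves give isomorphisms, braid moves give mutations, properness is mutation invariant, and divisibility makes the $[a,b]$-residue equal to $w_0$, so an adapted word is reachable by moves inside $[a,b]$. However, KKOP's braid-move mutations are statements about exchange matrices. Since $\bGh$ depends on the frozen arrows and on $W$, you must verify the braid-move statement for Pressland's mutation of ice quivers with potential, as in Lemma~\ref{lemma:effect of braid move}. In any case, (2) inherits the gap in (1).
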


To prove (1), we first show that $\bGh^{[a,b]}(\un{w}_0)$ can be replaced by the \emph{non-completed} relative Ginzburg dg algebra $\bG^{[a,b]}(\un{w}_0)$ (see Lemma \ref{lemma:complete/noncomplete Ginzburg are qiso when Jacobi-finite}). By a relative version of a result of Keller \cite{KellerCY} (see Theorem \ref{theorem:Ginzburg algebras as completion}), $\bG^{[a,b]}(\un{w}_0)$ is a relative Calabi--Yau completion in the sense of Yeung \cite{YeungRelativeCY}. This completion is proper if a certain relative inverse dualizing bimodule is nilpotent. We prove that this is indeed the case by decomposing it and reducing the argument to the case of $\Delta = \mathsf{A}_1$. Along the way, we establish a ``gluing'' result for relative inverse dualizing bimodules which may be of independent interest (see Proposition \ref{proposition:gluing of bimodules}). To prove (2), we study the effect of commutation and braid moves on $\un{w}_0$, following ideas of Contu \cite[Section 6.1]{ContuMonoidalAdditive}. See also Proposition \ref{prop:words that we can prove give a proper Ginzburg} for a slight strengthening of the theorem.

We now state the main result of this paper. We assume that $\mf{g}$ is untwisted of simply-laced type, that is, of type $\mathsf{A}^{(1)}$, $\mathsf{D}^{(1)}$, or $\mathsf{E}^{(1)}$. We further assume that the PBW-pair $(\mc{D},\un{w}_0)$ is \emph{adapted to an orientation of $\Delta$}, meaning that $\mc{D}$ is the complete duality datum associated with a height function on $\Delta$ (see Section \ref{section:initial monoidal seed}) and $\un{w}_0$ is a source sequence for the corresponding orientation. Let $[a,b]$ be a finite integer interval. By Theorem \ref{thm:proper Ginzburg algebra (introduction)}, $\bGh^{[a,b]}(\un{w}_0)$ is proper, so the bracket \eqref{eq:bracket on the Higgs category (introduction)} is defined for objects in the associated Higgs category $\mc{H}^{[a,b]}(\un{w}_0)$. Let $CC$ be the canonical cluster character from $\mc{H}^{[a,b]}(\un{w}_0)$ to the corresponding cluster algebra, denoted $\mc{A}^{[a,b]}(\un{w}_0)$, and let $\varphi: K_0(\mathscr{C}_{\mf{g}}^{[a,b],\mc{D},\un{w}_0}) \to \mc{A}^{[a,b]}(\un{w}_0)$ be the isomorphism defining the $\Uplambda$-monoidal categorification of Kashiwara--Kim--Oh--Park.

\begin{maintheorem}[Theorem \ref{thm:coincidence of Lambda matrices}]\label{thm:coincidence of Lambda matrices (introduction)}
    Suppose $\mf{g}$ is untwisted of simply-laced type. Let $(\mc{D},\un{w}_0)$ be a complete PBW-pair adapted to an orientation of $\Delta$, and let $a,b \in \Z$ with $a \leq b$. For reachable simple objects $V,W \in \mathscr{C}_{\mf{g}}^{[a,b],\mc{D},\un{w}_0}$, we have
    \[
    \Lambda(V,W) = \dim\Ext^1_{\mc{H}}(M,N) + [M,N]_{\mc{H}},
    \]
    where $\mc{H} = \mc{H}^{[a,b]}(\un{w}_0)$ and $M,N \in \mc{H}$ are the corresponding reachable rigid objects such that $\varphi(V) = CC(M)$ and $\varphi(W) = CC(N)$ in $\mc{A}^{[a,b]}(\un{w}_0)$.
\end{maintheorem}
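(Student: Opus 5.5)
The plan is to reduce the theorem to a comparison of initial $\Uplambda$-seeds, and then use the general machinery already set up: Theorem \ref{thm:canonical quantum structure (introduction)}, the Proposition computing the tropical invariant additively, and Cao's identification of $\Lambda$ with the tropical invariant.

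\textbf{Step 1: reduce to the tropical invariant.} By Theorem \ref{thm:proper Ginzburg algebra (introduction)}(1), $\bGh^{[a,b]}(\un{w}_0)$ is proper, since $\un{w}_0$ is a source sequence. Hence Theorem \ref{thm:canonical quantum structure (introduction)} equips $\mc{A}^{[a,b]}(\un{w}_0)$ with a $\Uplambda$-cluster structure whose matrices are $\lambda^t_{ij}=[M_i^t,M_j^t]_{\mc{H}}$. For reachable rigid $M,N$, the Proposition on additive interpretation of the tropical and $F$-invariants then gives
\[
\langle CC(M),CC(N)\rangle_{\rm trop}=\dim\Ext^1_{\mc{H}}(M,N)+[M,N]_{\mc{H}}
\]
for this additive structure. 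On the monoidal side, Cao \cite{CaoFinvariant} shows that $\Lambda(V,W)=\langle\varphi(V),\varphi(W)\rangle_{\rm trop}$, where the right-hand side is computed with the Kashiwara--Kim--Oh--Park $\Uplambda$-structure, i.e.\ with $\lambda_{ij}=\Lambda(V_i,V_j)$. The tropical invariant depends only on the $\Uplambda$-seed pattern. Moreover, compatible pairs are determined by mutation from any single seed, and both structures share the exchange matrices. It therefore suffices to show that the two Poisson matrices agree at the initial seed.

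\textbf{Step 2: compute the additive initial matrix.} By Theorem \ref{thm:canonical quantum structure (introduction)}, the additive initial matrix is $\Lambda=\abs{\det\widehat B}(\widehat B^{-T}-\widehat B^{-1})$. Here $\widehat B$ is built from $Q^{[a,b]}(\un{w}_0)$, including the arrows between frozen vertices and the $1$'s on the frozen diagonal. Because the ice quiver is a ``staircase'' quiver attached to the word $\un{w}_0$ repeated over $[a,b]$, $\widehat B$ is block unitriangular after ordering vertices by position in the word. I expect $\det\widehat B=\pm1$, and I will compute $\widehat B^{-1}$ explicitly. Entries of $\widehat B^{-1}$ should be expressible via the inverse quantum Cartan matrix and the coefficients $\widetilde c_{ij}(u)$ attached to the height function.

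\textbf{Step 3: compare with the monoidal initial matrix.} The initial cluster variables are the KR-type modules $M[i,j]$ (affine determinantial modules). For adapted $(\mc D,\un w_0)$ and untwisted simply-laced $\mf g$, their $\Lambda$-invariants are computable from the formulas of Kashiwara--Kim--Oh--Park and Fujita--Oh, which express $\Lambda$ between fundamental modules as sums of $\widetilde c_{ij}(u)$. Since the initial modules are ordered products of fundamentals and $\Lambda$ is additive on commuting (unmixed) factors, $\Lambda(V_i,V_j)$ becomes a sum over pairs of letters. The goal is to check that this sum equals the $(i,j)$ entry of $\widehat B^{-T}-\widehat B^{-1}$.

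\textbf{Main obstacle.} I expect this matching of Step 2 with Step 3 to be the hard part. My first attempt would avoid computing full inverses, using compatibility instead. The matrix $\Lambda^{\rm KKOP}$ is compatible with $\widetilde B$, i.e.\ $\widetilde B^T\Lambda^{\rm KKOP}=(D\ 0)$ with $D$ diagonal. By Proposition \ref{compatible pair in inv case}, $\widetilde B^T\Lambda=(\abs{\det\widehat B}I\ 0)$, so both matrices are determined by their compatibility data up to a correction. That correction lives in the frozen-by-frozen block, because the square matrix $\widehat B$ is invertible and determined by its unfrozen columns together with the frozen columns. Hence it would suffice to check two things. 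First, $D=I$ (equivalently $\dd$ of the exchange pairs equals $1$, which KKOP prove). Second, the frozen–frozen entries agree, i.e.\ $\Lambda$ between frozen KR modules matches $\widehat B^{-T}-\widehat B^{-1}$ on frozen rows. This last check uses that the frozen columns of $\widehat B$ encode exactly the additional frozen arrows of the potential quiver.
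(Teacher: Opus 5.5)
Your Step 1 is the paper's own reduction: if the two Poisson matrices agree at one seed, then Theorem \ref{thm:canonical quantum structure}, Proposition \ref{prop:additive interpretation of the tropical and F invariants} and Theorem \ref{thm:Lambda invariant as a tropical invariant} finish the proof. The gap is that you never prove this equality, and it is the entire content of the theorem.

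Steps 2--3 are an unexecuted computation, and some of its ingredients are wrong:
\begin{itemize}
\item $\widehat B$ is not unitriangular in any ordering, because its unfrozen diagonal entries are $0$. Invertibility over $\Z$ comes instead from Lemma \ref{lemma:inverse of Euler matrix}.
\item The initial KR modules are heads of tensor products of pairwise non-commuting fundamental modules. So additivity of $\Lambda$ on commuting factors does not give a letter-by-letter expansion of $\Lambda(V_i,V_j)$.
\item The additive type is $2I_n$ (Proposition \ref{compatible pair in inv case}), not $I_n$.
\end{itemize}

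Your compatibility shortcut also fails. Suppose $\Lambda,\Lambda'$ are skew-symmetric with $\wt B^T\Lambda=\wt B^T\Lambda'$. Since $\wt B^T\widehat B^{-1}=(-I_n\mid 0)$, you get $\Lambda-\Lambda'=KXK^T$, where $K$ consists of the last $m-n$ columns of $\widehat B^{-1}$ and $X$ is skew-symmetric. The frozen--frozen block of this difference is $K_fXK_f^T$ with $K_f=(\widehat B^{-1})_{\rm fr,fr}$. Jacobi's complementary minor formula gives $\det K_f=\det B/\det\widehat B$, where $B$ is the principal part. So the frozen--frozen entries detect the correction only when $B$ is nonsingular.

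This already fails for $\Delta=\mathsf{A}_2$, $\un w_0=(1,2,1)$, $[a,b]=[1,6]$. The frozen vertices are $1,2$, and the unfrozen arrows are $3\to4\to5\to6$, $5\to3$, $6\to4$. Rows $3$ and $6$ of $B$ coincide, so $\det B=0$. For a $2\times 2$ skew-symmetric $X$ one has $K_fXK_f^T=\det(K_f)X=0$. Hence a nonzero compatible correction exists that is invisible on the frozen block. Even when $B$ is invertible, you give no way to compute the frozen--frozen entries on either side.

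The missing idea is to avoid explicit matrices altogether and compare both sides homologically. For commuting initial $V,W$, Proposition \ref{prop: lambda as sum of delta} writes $\Lambda(V,W)$ as the alternating sum of $\dd(V,\DD^{-n}W)-\dd(W,\DD^{-n}V)$, which has the same shape as $[M,N]_{\mc H}$. The paper then proves $\dd(V,\DD^{-n}W)=\dim\Ext^{1-n}_{\mc H}(M,N)$ (Proposition \ref{prop:d-invariant with dual as a negative extension}) in four steps:
\begin{itemize}
\item Theorem \ref{thm:coincidence of extensions} lets you enlarge $[a,b]$ until the quiver is a triangle product with long rows.
\item $\DD^{-n}$ on the initial seed is realized by $n$ concatenated maximal green sequences (Proposition \ref{proposition:mutation sequence the gives left dual}, Theorem \ref{thm:maximal green sequence for triangle product}).
\item Keller's Theorem \ref{thm:maximal green sequence gives inverse shift} identifies the result with $\Sigma^{-n}N$ in $\mc C(\ov Q,\ov W)$. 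This lifts to $\mc H$ once one checks that $\Sigma^{-n}N$ has no extensions with the frozen projectives.
\item $\dd$ is converted into $\dim\Ext^1$ through the $F$-invariant, which does not depend on the Poisson matrices.
\end{itemize}
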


We prove this theorem by showing that the $\Uplambda$-cluster algebra structures on $\mc{A}^{[a,b]}(\un{w}_0)$ arising from Theorem \ref{thm:canonical quantum structure (introduction)} and from the $\Uplambda$-monoidal categorification coincide. To this end, we use a formula of Kashiwara--Kim--Oh--Park expressing the $\Lambda$-invariant in terms of the $\dd$-invariant and the left duality functor $\DD^{-1}$ (see Proposition \ref{prop: lambda as sum of delta}), namely
\[
\Lambda(V,W) = \dd (V,W) +  \sum_{n \geq 1} (-1)^{n-1} \left[\dd(V, \DD^{-n}(W))  - \dd(W, \DD^{-n}(V)) \right].
\]
This expression closely resembles the definition of the bracket $[M,N]_{\mc{H}}$ given in \eqref{eq:bracket on the Higgs category (introduction)}. A comparison of the two formulas shows that it is enough to prove that
\[
\dd(V,\DD^{-n}(W)) = \dim\Ext^{1-n}_{\mc{H}}(M,N)
\]
for $n \geq 0$. This is established in Proposition \ref{prop:d-invariant with dual as a negative extension} when $V$ and $W$ belong to the initial monoidal seed, which was previously described by Hernandez--Leclerc in \cite{HLcluster}. By exploiting an infinite sequence of mutations considered in loc.\ cit., we show in Proposition \ref{proposition:mutation sequence the gives left dual} that $\DD^{-1}(W)$ can be computed via a maximal green sequence. Up to some technical details, the desired equality then follows from results of Keller \cite{KellerQuantumDilog,KellerClusterDerivedSurvey}, together with the additive and monoidal categorifications of the $F$-invariant. We remark that the connection between the duality functor on the monoidal side and the suspension functor on the additive side has been observed before; see, for instance, \cite[Section 3.6]{KashiwaraKimLaurent} and \cite[Table 3]{QinClusterAlgebrasBases}.

Keeping in mind that $\bGh^{[a,b]}(\un{w}_0)$ is expected to be proper for arbitrary finite intervals $[a,b]$ and reduced expressions $\un{w}_0$, we expect that the hypotheses on $\mf{g}$ and $(\mc{D},\un{w}_0)$ in Theorem \ref{thm:coincidence of Lambda matrices (introduction)} are not necessary and can be removed.

\subsection{Organization}\addtocontents{toc}{\protect\setcounter{tocdepth}{1}}

This paper is organized as follows. In Section \ref{section:review on cluster algebras}, we briefly review the theory of $\Uplambda$-cluster algebras and introduce the tropical and $F$-invariants from \cite{CaoFinvariant}. In Section \ref{section:additive categorification of cluster algebras}, we recall the additive categorification of cluster algebras with coefficients following \cite{YilinWuJacobifinite,KellerWuJacobiinfinite}, focusing on the Jacobi-finite case. In Section \ref{section additive lambda}, we establish the $\Uplambda$-cluster algebra structure arising from proper relative Ginzburg dg algebras and provide an additive interpretation of the tropical invariant. Section \ref{section:background on monoidal categorification} contains the necessary background on quantum affine algebras and the work of Kashiwara--Kim--Oh--Park. In Section \ref{section:maximal green sequences}, we present results on maximal green sequences and explain how they can be used to realize the left duality functor $\DD^{-1}$ on certain $U_q'(\mf{g})$-modules. In Section \ref{section:inverse dualizing bimodules}, we develop the required results on inverse dualizing bimodules and relative derived preprojective algebras, including the proof of the gluing theorem (Proposition \ref{proposition:gluing of bimodules}). The proof of Theorem \ref{thm:proper Ginzburg algebra (introduction)} is given in Section \ref{section:properness of the Ginzburg algebra}. A substantial part of this section is devoted to Theorem \ref{thm:coincidence of extensions}, which provides a key technical result on the coincidence of extensions for relative Ginzburg dg algebras in the adapted case. Finally, in Section \ref{section:additive categorification of the Lambda-matrix}, we prove Theorem \ref{thm:coincidence of Lambda matrices (introduction)} using the results developed in the previous sections.

\vspace{0,2cm}
\noindent \textbf{Acknowledgments.} The authors are grateful to Bernhard Keller for numerous interesting discussions and his invaluable insight into Ginzburg algebras. The first author also thanks him for his guidance as PhD advisor. We thank Fan Qin for helpful email exchanges concerning the relationship between the duality and suspension functors.

\section{Review on cluster algebras and Cao's invariants} \label{section:review on cluster algebras}

We give a brief reminder on the theory of $\Uplambda$-cluster algebras (also known as cluster algebras with compatible Poisson structures), following mainly \cite{CaoFinvariant}. We then recall the definition and main properties of the tropical invariant and the $F$-invariant introduced in loc.\ cit.

Let $K = K^{\rm{un}} \sqcup K^{\rm{fr}}$ be a countable index set which decomposes into a non-empty subset $K^{\rm{un}}$ of \emph{unfrozen} indices and a subset $K^{\rm{fr}}$ of \emph{frozen} indices. Following \cite{BerensteinZelevinsky} (and \cite[Section 5]{KKOPmonI}), we say that a pair $(\wt{B},\Lambda)$ is a \emph{compatible pair} if
\begin{enumerate}[(1)]
    \item $\wt{B} = (b_{ij})$ is a $K \times K^{\rm{un}}$ integer matrix such that, for any $j \in K^{\rm{un}}$, there are finitely many $i \in K$ such that $b_{ij} \neq 0$;
    \item $\Lambda = (\lambda_{ij})$ is a $K \times K$ skew-symmetric integer matrix;
    \item we have $\wt{B}^T\Lambda = (S \mid {\bf 0})$, where $S$ is a $K^{\rm{un}} \times K^{\rm{un}}$ diagonal matrix whose diagonal entries are strictly positive integers and ${\bf 0}$ is the $K^{\rm{un}} \times K^{\rm{fr}}$ zero matrix.
\end{enumerate}
We call $\wt{B}$ the \emph{extended exchange matrix}, $\Lambda$ the \emph{Poisson coefficient matrix}, and $S$ the \emph{type} of the compatible pair. If $B$ denotes the submatrix of $\wt{B}$ indexed by $K^{\rm{un}} \times K^{\rm{un}}$, we call $B$ the \emph{principal part} of $\wt{B}$. By \cite[Proposition 3.3]{BerensteinZelevinsky}, $B$ is a skew-symmetrizable matrix with skew-symmetrizer $S$ and, if $K$ is finite, $\wt{B}$ has full rank.

\begin{remark}\label{rmk:matrices and quivers}
    In most of this paper, we will work with compatible pairs $(\wt{B},\Lambda)$ such that the principal part of $\wt{B}$ is skew-symmetric. In this case, the data of $\wt{B}$ is essentially the same as that of a pair $(Q,F)$ where $Q = (Q_0,Q_1)$ is a quiver without loops or $2$-cycles and $F \subseteq Q_0$ is a subset of \emph{frozen vertices}. We also require that any unfrozen vertex is incident to finitely many arrows. Given such a pair and relabeling the vertices so that $Q_0 = K$ and $F = K^{\rm{fr}}$, the corresponding matrix $\wt{B} = (b_{ij})$ is defined by
    \[
    b_{ij} = \#\{i \to j \textrm{ in } Q_1\} - \#\{j \to i \textrm{ in } Q_1\}
    \]
    for $i \in K$ and $j \in K^{\rm{un}}$. Notice that the number of arrows between frozen vertices is irrelevant. However, when $K$ is finite, we will need to upgrade $F$ to a \emph{subquiver} of $Q$  for the additive categorification of cluster algebras with coefficients. The \emph{frozen arrows} will play an important role (see Section \ref{section:additive categorification of cluster algebras}). As we will see in Section \ref{section:compatible pair of ice quiver}, this extra data also encodes a matrix $\Lambda$ and yields a compatible pair $(\widetilde{B},\Lambda)$.
\end{remark}

Let $\F$ be the field of rational functions on $|K|$ variables over $\Qbb$. We call a triple $({\bf x},\wt{B},\Lambda)$ a \emph{$\Uplambda$-seed} in $\F$ if
\begin{enumerate}[(1)]
    \item ${\bf x} = \{x_i\}_{i \in K}$ is a \emph{free generating set} of $\F$, that is, a transcendence basis of $\F$ that generates it;
    \item $(\widetilde{B},\Lambda)$ is a compatible pair.
\end{enumerate}
We call ${\bf x}$ the \emph{cluster} of the $\Uplambda$-seed, and its elements the \emph{cluster variables}. We say that the cluster variables $x_i$ for $i \in K^{\rm{un}}$ are \emph{unfrozen}, while the remaining ones are \emph{frozen}. A monomial on the cluster variables of a $\Uplambda$-seed is called a \emph{cluster monomial}.

Let $({\bf x},\wt{B},\Lambda)$ be a $\Uplambda$-seed in $\F$ and write ${\bf x} = \{x_i\}_{i \in K}$, $\wt{B} = (b_{ij})$ and $\Lambda = (\lambda_{ij})$. For $v \in K^{\rm{un}}$, we define the \emph{mutation} $\mu_v({\bf x},\wt{B},\Lambda)$ of $({\bf x},\wt{B},\Lambda)$ \emph{in the direction of $v$} as a $\Uplambda$-seed $({\bf x'},\mu_v(\wt{B}),\mu_v(\Lambda))$ where ${\bf x'} = \{x_i'\}_{i \in K}$, $\mu_v(\wt{B}) = (b_{ij}')$, and $\mu_v(\Lambda) = (\lambda_{ij}')$ are defined by
\begin{align*}
x_i' &= \begin{cases}
    x_i &\textrm{if } i\neq v,\\
    x_v^{-1}\cdot(\prod_{j\in K}x_j^{[b_{jv}]_+} + \prod_{j\in K}x_j^{[-b_{jv}]_+}) &\textrm{otherwise},
\end{cases}\\[1em]
b_{ij}' &= \begin{cases}
    -b_{ij} &\textrm{if } i = v \textrm{ or } j=v,\\
    b_{ij} + [b_{iv}]_+[b_{vj}]_+ - [-b_{iv}]_+[-b_{vj}]_+ &\textrm{otherwise,}
\end{cases}\\[1em]
\lambda_{ij}' &= \begin{cases}
    -\lambda_{iv} + \sum_{l\in K}[ b_{lv}]_+\lambda_{il} &\textrm{if } i \neq v \textrm{ and } j = v,\\
    -\lambda_{vj} + \sum_{l\in K}[ b_{lv}]_+\lambda_{lj} &\textrm{if } i = v \textrm{ and } j \neq v,\\
    \lambda_{ij} &\textrm{otherwise}.
\end{cases}
\end{align*}
Here $[a]_+$ denotes $\max\{a,0\}$ for $a \in \Z$. Note that the sums and products above are all finite by the definition of the extended exchange matrix. We remark that $(\mu_v(\wt{B}),\mu_v(\Lambda))$ is indeed a compatible pair and has the same type as $(\wt{B},\Lambda)$ by \cite[Proposition 3.4]{BerensteinZelevinsky}. Moreover, mutation of $\Uplambda$-seeds in a fixed direction is an involution.

Let $\mathbb{T}_{|K^{\rm{un}}|}$ denote the $|K^{\rm{un}}|$-regular tree. Label its edges by $K^{\rm{un}}$ so that edges incident to the same vertex have different labels. A \emph{$\Uplambda$-seed pattern}
\[
\mc{S} = \{({\bf x}_t,\wt{B}_t,\Lambda_t) \mid t \in \mathbb{T}_{|K^{\rm{un}}|}\}
\]
is an assignment of $\Uplambda$-seeds to the vertices of $\mathbb{T}_{|K^{\rm{un}}|}$ such that $({\bf x}_{t'},\wt{B}_{t'},\Lambda_{t'}) = \mu_v({\bf x}_t,\wt{B}_t,\Lambda_t)$ for any vertices $t$ and $t'$ connected by an edge labeled by $v$. If we fix a root vertex $t_0 \in \mathbb{T}_{|K^{\rm{un}}|}$, then $\mc{S}$ is determined by the $\Uplambda$-seed $({\bf x},\wt{B},\Lambda)$ associated with $t_0$, which we call the \emph{initial} $\Uplambda$-seed. The \emph{$\Uplambda$-cluster algebra (with non-invertible coefficients)} $\mc{A} = \mc{A}_{\wt{B}}$ associated with $\mc{S}$ is the $\Z$-subalgebra of $\F$ generated by all the cluster variables of all the $\Uplambda$-seeds in $\mc{S}$. If $\wt{B}$ comes from a pair $(Q,F)$ as in Remark \ref{rmk:matrices and quivers}, we will denote $\mc{A}$ by $\mc{A}_{Q,F}$. If $K$ is finite (resp. infinite), we say that $\mc{A}$ has \emph{finite} (resp. \emph{infinite}) \emph{rank}.

\begin{remark}
    As the notation suggests, the $\Uplambda$-cluster algebra does not depend a priori on the chosen initial matrix $\Lambda$. In fact, as originally done by Fomin--Zelevinsky \cite{FominZelevinskyI}, by disregarding the Poisson coefficient matrices in all the definitions above, we can define a \emph{cluster algebra} for any matrix $\wt{B}$ satisfying the finiteness condition above and whose principal part is skew-symmetrizable. However, we remark that this extra data can be used to define a compatible Poisson structure \cite{GekhtmanShapiroVainshtein} on the $\Uplambda$-cluster algebra, and the corresponding quantization is called a \emph{quantum cluster algebra} \cite{BerensteinZelevinsky}. Although we do not work with quantum cluster algebras, we still want to keep track of the Poisson coefficient matrices of the $\Uplambda$-seeds of $\mc{A}$. This is the reason for the definition of $\Uplambda$-cluster algebras given above.
\end{remark}

For the rest of the section, we assume that $\mc{A}$ has finite rank. In this case, we will always take $K = \{1,\dots,m\}$ and $K^{\rm{un}} = \{1,\dots,n\}$ for fixed integers $m \geq n \geq 1$.

Each $\Uplambda$-seed $({\bf x}_t,\wt{B}_t,\Lambda_t)$ of $\mc{A}$ determines a partial order $\preceq_t$ on $\Z^m$ defined as follows: for ${\bf g},{\bf g'} \in \Z^m$, we have ${\bf g} \preceq_t {\bf g'}$ if there is ${\bf v} \in \N^n$ such that ${\bf g'} = {\bf g} + \wt{B}_t{\bf v}$. This is indeed a partial order because $\wt{B}_t$ has full rank. We write ${\bf g}\prec {\bf g'}$ if we additionally have ${\bf g} \neq {\bf g'}$. For ${\bf g} = (g_1,\dots,g_m) \in \Z^m$, we denote ${\bf x}^{\bf g}_t = x_{1;t}^{g_1}\dotsb x_{m;t}^{g_m}$, where ${\bf x}_t = (x_{1;t},\dots,x_{m;t})$. Notice that ${\bf g'} = {\bf g} + \wt{B}_t{\bf v}$ is equivalent to ${\bf x}^{\bf g'}_t = {\bf x}^{\bf g}_t \cdot \widehat{{\bf y}}_t^{\bf v}$, where $\widehat{{\bf y}}_t^{\bf v} = {\bf x}^{\wt{B}_t{\bf v}}_t$ is a monomial in the variables $(\widehat{y}_{1;t},\dots,\widehat{y}_{n;t})$ defined by $\widehat{y}_{t;j} = \prod_{i = 1}^mx_{i;t}^{b_{ij}}$.

\begin{theorem}[{\cite{FominZelevinskyIV,GrossHackingKeelKontsevich}}]\label{thm:g-vectors and F-polynomials}
For $t \in \mathbb{T}_n$, any cluster monomial $u \in \mc{A}$ can be written (uniquely) as
\[
u = {\bf x}_t^{{\bf g}^t_u} + \sum_{{\bf h \prec {\bf g}^t_u}}b_{\bf h}{\bf x}_t^{\bf h} = {\bf x}_t^{{\bf g}^t_u} \cdot F^t_u(\widehat{y}_{1;t},\dots,\widehat{y}_{n;t}),
\]
where $b_{\bf h} \in \Z_{\geq 0}$ and $F^t_u \in \Z[y_1,\dots,y_n]$ is a polynomial with constant term $1$.    
\end{theorem}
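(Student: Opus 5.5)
This theorem is cited from Fomin--Zelevinsky and Gross--Hacking--Keel--Kontsevich; the plan below reconstructs the argument from those sources rather than from anything established earlier in the paper.

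\textbf{Reduction to principal coefficients.} First pass to the cluster algebra with principal coefficients at $t$, $\mc{A}^{\bullet}$, with extended matrix $\binom{B_t}{I_n}$. By \cite{FominZelevinskyIV}, every cluster variable of $\mc{A}^\bullet$ is a Laurent polynomial in ${\bf x}_t$ and the frozen $y$'s. Moreover, it is $\Z^n$-homogeneous for the grading $\deg x_{i}={\bf e}_i$, $\deg y_j=-{\bf b}_j$. This homogeneity defines the $g$-vector and the $F$-polynomial $F_u^t(y)=u|_{x_i=1}$.

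The \emph{separation formula} \cite[Corollary 6.3]{FominZelevinskyIV} then transports this to the geometric-type algebra $\mc{A}$. Specializing the principal coefficients $y_j\mapsto \widehat y_{j;t}$ (this is where the full extended matrix $\wt B_t$ enters) gives
\[
u={\bf x}_t^{{\bf g}^t_u}\,\frac{F_u^t(\widehat y_{1;t},\dots,\widehat y_{n;t})}{F_u^t|_{\mathbb P}(y)},
\]
where $F_u^t|_{\mathbb{P}}$ is the tropical evaluation in the tropical semifield of the frozen variables. With non-invertible coefficients written as actual monomials in ${\bf x}_t$, the extended $g$-vector ${\bf g}^t_u\in\Z^m$ absorbs the frozen part. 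One must check that the denominator is $1$, i.e.\ that the evaluation is exactly ${\bf x}_t^{{\bf g}}F(\widehat{\bf y})$ with ${\bf g}$ the extended $g$-vector. This follows once $F$ has constant term $1$, because then the tropical evaluation of $F$ is trivial as a monomial. For cluster monomials, multiply the expressions: $g$-vectors add and $F$-polynomials multiply.

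\textbf{Key properties.} Two facts remain, and these are the deep inputs.
\begin{enumerate}[(a)]
\item $F_u^t$ has constant term $1$. This is sign-coherence of $c$-vectors, conjecture 5.4 of \cite{FominZelevinskyIV}, proved in general skew-symmetrizable type in \cite{GrossHackingKeelKontsevich} via scattering diagrams and theta functions.
\item $F_u^t$ has nonnegative coefficients. This is positivity of the Laurent phenomenon, also from \cite{GrossHackingKeelKontsevich}: cluster monomials are theta functions, and theta functions are positive sums of broken-line monomials.
\end{enumerate}

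\textbf{Expansion and uniqueness.} Expanding $F_u^t(\widehat{\bf y}_t)=1+\sum_{{\bf v}\neq0}c_{\bf v}\widehat{\bf y}_t^{\bf v}$ with $c_{\bf v}\ge0$ gives terms ${\bf x}_t^{{\bf g}+\wt B_t{\bf v}}$. This is the stated form with ${\bf h}$ ordered against ${\bf g}^t_u$ by $\preceq_t$, and $b_{\bf h}\ge 0$. If the order convention comes out reversed, fix the sign convention of $\preceq_t$ to match $\widehat y$; the definitions are consistent since ${\bf g}'={\bf g}+\wt B_t{\bf v}$. Distinct ${\bf v}$ give distinct ${\bf h}$ because $\wt B_t$ has full rank, which holds by the compatibility of $(\wt B_t,\Lambda_t)$ \cite[Proposition 3.3]{BerensteinZelevinsky}.

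Uniqueness follows from the same injectivity. Laurent monomials in the free generating set ${\bf x}_t$ are linearly independent, so both the leading exponent ${\bf g}^t_u$ (the unique $\preceq_t$-maximal exponent with coefficient $1$) and the polynomial $F_u^t$ are determined by $u$.

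\textbf{Main obstacle.} The main obstacle is clearly (a) together with positivity in (b); the plan would be to invoke \cite{GrossHackingKeelKontsevich} for both rather than reprove them.
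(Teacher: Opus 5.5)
Since the paper gives no proof and only cites Fomin--Zelevinsky and Gross--Hacking--Keel--Kontsevich, the comparison is with the standard argument behind that citation. Your plan is that argument: principal coefficients at $t$, the separation formula, constant term $1$ from sign-coherence, nonnegativity from positivity, and full rank of $\wt B_t$ for uniqueness. One step, however, rests on a false claim.

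You assert that the denominator $F^t_u|_{\mathbb P}(y)$ equals $1$ because $F^t_u$ has constant term $1$. In geometric type this evaluation is taken in $\mathrm{Trop}(x_{n+1},\dots,x_m)$ at $y_j=\prod_{i>n}x_i^{b_{ij}}$, where the $b_{ij}$ can have either sign. It equals $\prod_{i>n}x_i^{\mu_i}$ with $\mu_i=\min_{\bf v}\sum_j b_{ij}v_j$, the minimum running over the exponent vectors ${\bf v}$ of $F^t_u$. Constant term $1$ only gives $\mu_i\le 0$, not $\mu_i=0$.

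For example, take $n=1$, $m=2$, $b_{21}=-1$. Then $x_1'=x_1^{-1}(1+x_2)$ and $F=1+y_1$. The denominator is $1\oplus x_2^{-1}=x_2^{\min(0,-1)}=x_2^{-1}\neq 1$, and the extended $g$-vector is $(-1,1)$, not $(-1,0)$.

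Triviality of the denominator is a principal-coefficient phenomenon: there the frozen rows form $I_n$, so the relevant exponents are $v_i\ge 0$ and the minimum is attained at ${\bf v}=0$. The correct step is the one you only half-state. The denominator is a Laurent monomial in the frozen variables, and absorbing its inverse into the last $m-n$ coordinates is exactly what produces the frozen part of ${\bf g}^t_u$. Constant term $1$ is needed only so that ${\bf x}_t^{{\bf g}^t_u}$ occurs with coefficient $1$ as the extremal term. Drop the claim that the denominator is $1$.

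Second, replace your hedge about the order with the concrete observation. With $\preceq_t$ as defined in the paper (${\bf g}\preceq_t{\bf g}'$ iff ${\bf g}'={\bf g}+\wt B_t{\bf v}$ with ${\bf v}\in\N^n$), every exponent of ${\bf x}_t^{{\bf g}}F^t_u(\widehat{\bf y}_t)$ has the form ${\bf g}+\wt B_t{\bf v}\succeq_t{\bf g}$. So ${\bf g}^t_u$ is the $\preceq_t$-\emph{least} exponent, not the maximal one. The sum over ${\bf h}\prec{\bf g}^t_u$ in the statement only parses with the reversed (dominance-order) convention, ${\bf g}'\preceq_t{\bf g}$ iff ${\bf g}'={\bf g}+\wt B_t{\bf v}$.

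Your uniqueness argument is unaffected by which convention is used. Full rank of $\wt B_t$ makes $\preceq_t$ antisymmetric and rules out cancellation among the terms ${\bf x}_t^{{\bf g}+\wt B_t{\bf v}}$. Hence the extremal exponent and $F^t_u$ are both determined by the Laurent expansion of $u$.
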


The vector ${\bf g}^t_u \in \Z^m$ is called the \emph{(extended) $g$-vector} of $u$ with respect to $t$, and $F^t_u$ is the \emph{$F$-polynomial} of $u$ with respect to $t$. We remark that ${\bf g}^t_{uu'} = {\bf g}^t_u + {\bf g}^t_{u'}$ for two cluster monomials $u,u' \in \mc{A}$ associated to the same $\Uplambda$-seed by \cite[Lemma 3.2.6]{QinJEMS24}. In particular, $g$-vectors are determined by the $g$-vectors of the cluster variables. We give a categorical interpretation for them in Section \ref{section:categorification after Yilin}.

For a nonzero polynomial
\[
F = \sum_{{\bf v} \in \N^n}c_{\bf v}{\bf y}^{\bf v} \in \Z[y_1,\dots,y_n],
\]
its \emph{tropical evaluation} on ${\bf r} \in \Z^n$ is defined as
\[
F[{\bf r}] = \max\{{\bf v}^T{\bf r} \mid c_{\bf v} \neq 0\} \in \Z.
\]
If $F$ has a nonzero constant term, the integer above is non-negative. We can finally define the invariants of \cite{CaoFinvariant}. For two cluster monomials $u,u' \in \mc{A}$, their \emph{tropical invariant} is defined as
\begin{equation}\label{def tropical invariant}
\langle u,u'\rangle_{\rm{trop}} = ({\bf g}^t_u)^T\Lambda_t{\bf g}^t_{u'} + F^t_u[(S \mid {\bf 0}){\bf g}^t_{u'}] \in \Z
\end{equation}
for any $t \in \mathbb{T}_n$, where $\wt{B}_t^T\Lambda_t = (S \mid {\bf 0})$. By \cite[Theorem 1.3]{CaoFinvariant}, it does not depend on the choice of $t$. The \emph{$F$-invariant} is given by
\begin{equation}\label{def F-invariant}
(u \mid\mid u')_F = \langle u,u'\rangle_{\rm{trop}} + \langle u',u\rangle_{\rm{trop}} = F^t_u[(S \mid {\bf 0}){\bf g}^t_{u'}] + F^t_{u'}[(S \mid {\bf 0}){\bf g}^t_{u}] \in \Z_{\geq 0}
\end{equation}
for any $t \in \mathbb{T}_n$. The last equality follows from the definition of the tropical invariant and the fact that $\Lambda_t$ is skew-symmetric. Consequently, observe that the $F$-invariant does not depend on the Poisson coefficient matrices. In fact, it can also be defined for finite rank cluster algebras with trivial coefficients without a compatible Poisson structure (see \cite[Section 4.4]{CaoFinvariant}).

\begin{theorem}[{\cite[Theorem 1.5]{CaoFinvariant}}]
    Let $u,u' \in \mc{A}$ be two cluster monomials. They are cluster monomials associated with the same ($\Uplambda$-)seed if and only if $(u \mid\mid u')_F = 0$.
\end{theorem}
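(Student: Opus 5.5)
The forward implication follows directly from the definition \eqref{def F-invariant} together with the independence of the vertex $t$ (\cite[Theorem 1.3]{CaoFinvariant}). The converse I would reduce to pairs of cluster variables and then to a compatibility statement from the literature, which I do not prove here.

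\emph{The easy direction.} Suppose $u$ and $u'$ are cluster monomials in the cluster ${\bf x}_t$ of a common seed. Evaluate \eqref{def F-invariant} at this $t$. Then $u={\bf x}_t^{{\bf g}^t_u}$, so by the uniqueness in Theorem \ref{thm:g-vectors and F-polynomials} we get $F^t_u=1$, and likewise $F^t_{u'}=1$. The tropical evaluation of the constant polynomial $1$ at any ${\bf r}$ is $\max\{{\bf 0}^T{\bf r}\}=0$. Both summands vanish, so $(u\mid\mid u')_F=0$.

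\emph{Reduction to cluster variables.} Conversely, suppose $(u\mid\mid u')_F=0$. I would first show that the $F$-invariant is additive in each argument on monomials of a fixed seed. Write $u=u_1u_2$ with $u_1,u_2$ cluster monomials of one seed. Then ${\bf g}^t_{u}={\bf g}^t_{u_1}+{\bf g}^t_{u_2}$ by \cite[Lemma 3.2.6]{QinJEMS24}, and $F^t_u=F^t_{u_1}F^t_{u_2}$ by the uniqueness in Theorem \ref{thm:g-vectors and F-polynomials}. All these polynomials have nonnegative coefficients: by the expansion in Theorem \ref{thm:g-vectors and F-polynomials} the coefficients $b_{\bf h}$ are nonnegative, and since $\wt{B}_t$ has full rank the map ${\bf v}\mapsto{\bf g}^t_u+\wt{B}_t{\bf v}$ is injective, so these are exactly the coefficients of $F^t_u$. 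Hence no cancellation occurs, the Newton polytope of a product is the Minkowski sum, and $F[\cdot]$ is additive on products. Both tropical terms are therefore additive. Since every summand is $\ge 0$, vanishing for $u,u'$ forces vanishing for every pair $(x,x')$ of cluster-variable factors of $u$ and $u'$. Frozen variables lie in every cluster, so it suffices to show the following:
\begin{enumerate}[(a)]
\item unfrozen cluster variables $x,x'$ with $(x\mid\mid x')_F=0$ lie in a common cluster;
\item pairwise compatible cluster variables lie in a single cluster.
\end{enumerate}

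\emph{The pairwise step (a).} Choose $t$ with $x=x_{k;t}$. Then ${\bf g}^t_x=e_k$ and $F^t_x=1$, so
\[
(x\mid\mid x')_F=F^t_{x'}[s_k e_k]=s_k\cdot\deg_{y_k}F^t_{x'}.
\]
Since $s_k>0$, the vanishing means that $F^t_{x'}$ does not involve $y_k$, i.e.\ the $k$-th component of the $f$-vector of $x'$ with respect to $t$ is zero. I would then invoke the known result that the $f$-compatibility degree vanishes exactly for compatible pairs. This rests on sign-coherence and the theory of \cite{GrossHackingKeelKontsevich}, and appears in the work of Fu--Gyoda and Cao--Li. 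It gives a seed containing both $x$ and $x'$.

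\emph{Main obstacle.} The hard part is entirely (a) and (b); both are imported rather than proved here. For (b) I would try to cite the known result that clusters are exactly the maximal sets of pairwise compatible cluster variables. If only the pairwise statement (a) were available, I would instead prove (b) by induction along mutations: starting from a seed containing some factors, mutate away variables whose $y$-degree is nonzero, using the recursion for $F$-polynomials to control the degrees.
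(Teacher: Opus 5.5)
Your easy direction is correct, and so is the product rule $F^t_{u_1u_2}=F^t_{u_1}F^t_{u_2}$. The resulting additivity of $F[\mathbf r]$ in the \emph{polynomial} is also correct.

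The gap is the sentence ``both tropical terms are therefore additive.'' In the second term $F^t_{u'}[(S\mid\mathbf 0)\mathbf g^t_u]$, the monomial $u$ enters only through the evaluation point. The map $\mathbf r\mapsto F[\mathbf r]$ is merely subadditive: for $F=1+y_1$ one has $F[1]+F[-1]=1\neq 0=F[0]$. At an arbitrary $t$ the vectors $(S\mid\mathbf 0)\mathbf g^t_{u_1}$ and $(S\mid\mathbf 0)\mathbf g^t_{u_2}$ may have entries of both signs, so the Newton-polytope argument gives nothing there. Subadditivity also points the wrong way for you, since you need vanishing of the total to force vanishing of each piece.

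The repair is short. By $t$-independence, compute at a seed $t$ with $u=\prod_i x_{i;t}^{a_i}$. Then $F^t_u=1$ kills the first term and $\mathbf g^t_u=\mathbf a\geq 0$. For $u'=\prod_j (x'_j)^{b_j}$, your product rule gives $(u\mid\mid u')_F=\sum_j b_j\,F^t_{x'_j}[(S\mid\mathbf 0)\mathbf a]$. The exponents of $F^t_{x'_j}$ lie in $\mathbb{N}^n$, so tropical evaluation is monotone and positively homogeneous on $\mathbb{Z}_{\geq 0}^n$. Hence $F^t_{x'_j}[(S\mid\mathbf 0)\mathbf a]\geq a_i\,F^t_{x'_j}[s_ie_i]=a_i\,(x_{i;t}\mid\mid x'_j)_F\geq 0$ for every unfrozen $i$. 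So $(u\mid\mid u')_F=0$ forces $(x_{i;t}\mid\mid x'_j)_F=0$ for all pairs of factors.

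Full bilinearity does hold, and with it your additivity claim at every $t$. But it needs an extra input: each $F$-polynomial has a unique maximal monomial divisible by all the others, which makes $F[\cdot]$ linear on $\mathbb{Z}_{\geq 0}^n$.

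With this repair your outline is sound, and the computation $(x_{k;t}\mid\mid x')_F=s_k\deg_{y_k}F^t_{x'}$ is right. The whole content of the converse then lies in two places. In (a), only ``$f_k=0$ implies compatible'' is non-trivial; the other implication is your easy direction plus $t$-independence. In (b), what is needed is the clique property of the cluster complex. The paper gives no argument of its own here; it simply quotes \cite[Theorem 1.5]{CaoFinvariant}. So importing these results is in line with how the statement is used.

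Do not count on your fallback for (b), though. It would have to show that a cluster variable whose $F$-polynomial with respect to $t$ avoids $y_{k_1},\dots,y_{k_r}$ can be reached from $t$ without mutating at $k_1,\dots,k_r$. That is a genuinely non-trivial statement, and the $F$-polynomial recursion does not visibly control it.
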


\begin{remark}
The original definitions of the tropical and $F$-invariants are not the ones given above, but they coincide by \cite[Theorem 1.3]{CaoFinvariant}. Moreover, they can be defined for a larger class of elements in the \emph{upper} $\Uplambda$-cluster algebra called \emph{good elements} in loc.\ cit.\ (building on the notion of \emph{compatibly pointed elements} of \cite{QinJEMS24}). These are certain elements of the upper $\Uplambda$-cluster algebra that have a decomposition similar to that of Theorem \ref{thm:g-vectors and F-polynomials}.
\end{remark}

\section{Additive categorification of cluster algebras with coefficients}\label{section:additive categorification of cluster algebras}

We present the necessary background on the additive categorification of cluster algebras with coefficients using relative Ginzburg dg algebras and their Higgs categories, following mainly \cite{YilinWuJacobifinite} and \cite{KellerWuJacobiinfinite}. We will focus on the Jacobi-finite case, even though the theory can be developed more generally.

From now on, $k$ will denote an algebraically closed field. We assume $k = \Cbb$ in Section \ref{section:categorification after Yilin} and in the results that depend on the cluster character that is introduced there.

\subsection{Ice quivers with potential}\addtocontents{toc}{\protect\setcounter{tocdepth}{2}} 
In this section, we give the necessary background on ice quivers with potential and their mutations. More details can be found in Sections 2, 3, and 4 of \cite{PresslandMutation}, which builds on the foundational work of \cite{DerksenWeymanZelevinskyI}.

An \emph{ice quiver} is a pair $(Q,F)$ consisting of a quiver $Q$ and a subquiver $F$. In this case, we refer to $F$ as the \emph{frozen} subquiver of $Q$, and its vertices and arrows are called \emph{frozen}.

Let $Q$ be a finite quiver and denote by $\widehat{kQ}$ the completion of its path algebra with respect to the ideal generated by all arrows. Let $HH_0(\widehat{kQ})$ be the \emph{zeroth continuous Hochschild cohomology} of $\widehat{kQ}$, that is, the quotient of $\widehat{kQ}$ by the closure of its subspace generated by all commutators. It has a topological basis given by all cycles in $Q$ up to cyclic permutation. A \emph{potential} $W$ on a quiver $Q$ is an element of $HH_0(\widehat{kQ})$ that is a (possibly infinite) linear combination of cycles of length at least two such that any term involving a loop has degree at least three. An \emph{ice quiver with potential} is a triple $(Q,F,W)$ where $(Q,F)$ is a \emph{finite} ice quiver and $W$ is a potential in $Q$. If $F$ is empty, we call $(Q,W)$ a \emph{quiver with potential}. We say that $W$ is \emph{irredundant} if each term of $W$ contains at least one unfrozen arrow. We define $(Q,F,W)$ to be \emph{reduced} if $W$ is irredundant and none of its terms is a $2$-cycle.

For a cyclic path $p = \alpha_1\dotsb\alpha_n$ in $Q$ (with $\alpha_i \in Q_1$) and an arrow $\alpha \in Q_1$, the \emph{cyclic derivative} of $p$ with respect to $\alpha$ is defined by
\[
\partial_{\alpha}(p) = \sum_{\alpha_i = \alpha}\alpha_{i+1}\dotsb\alpha_n\alpha_1\dotsb\alpha_{i-1}.
\]
We can extend $\partial_{\alpha}$ by linearity and continuity to a well-defined linear map $\partial_{\alpha}: HH_0(\widehat{kQ}) \to \widehat{kQ}$. For an ice quiver with potential $(Q,F,W)$, we define the \emph{relative Jacobian algebra} as the quotient of $\widehat{kQ}$ by the closure of the ideal generated by the elements $\partial_{\alpha}(W)$ for all \emph{unfrozen} arrows $\alpha$. We call $(Q,F,W)$ \emph{Jacobi-finite} if $J(Q,F,W)$ is finite-dimensional.

By \cite[Theorem 3.6]{PresslandMutation}, for any ice quiver with potential $(Q,F,W)$, there exists a \emph{reduced} ice quiver with potential $(Q_{\rm{red}},F_{\rm{red}},W_{\rm{red}})$ such that
\[
J(Q,F,W) \cong J(Q_{\rm{red}},F_{\rm{red}},W_{\rm{red}}).
\]
If $W$ is irredundant, then $(Q_{\rm{red}},F_{\rm{red}},W_{\rm{red}})$ is unique up to right equivalence (see Definition 3.7 and Proposition 3.15 in \cite{PresslandMutation}). In this case, $(Q_{\rm{red}},F_{\rm{red}},W_{\rm{red}})$ is called the \emph{reduction} of $(Q,F,W)$. It is an important ingredient for the mutation of ice quivers with potential, which we now define. From now on, suppose $W$ is irredundant. Let $v \in Q_0 \setminus F_0$ be an unfrozen vertex that is not incident to any loop or $2$-cycle. Define an ice quiver with potential $\widetilde{\mu}_v(Q,F,W)$ by the following procedure:
\begin{enumerate}[(1)]
    \item For each pair of arrows $\alpha: u \to v$ and $\beta: v \to w$, add an unfrozen arrow $[\beta\alpha]: u \to w$.
    \item Replace each arrow $\alpha$ incident with $v$ by an arrow $\alpha^*$ going in the opposite direction. This gives a new ice quiver $(\widetilde{\mu}_v(Q),F)$.
    \item Choose a representative for $W$ in $\widehat{kQ}$ such that no term begins at $v$. For each pair of arrows as in (1), replace each occurrence of $\beta\alpha$ by the newly introduced arrow $[\beta\alpha]$, and add the term $[\beta\alpha]\alpha^*\beta^*$. This gives a new potential $\widetilde{\mu}_v(W)$.
    \item Set $\widetilde{\mu}_v(Q,F,W) = (\widetilde{\mu}_v(Q),F,\widetilde{\mu}_v(W))$.
\end{enumerate}
The \emph{mutation} $\mu_v(Q,F,W) = (\mu_v(Q),\mu_v(F),\mu_v(W))$ of $(Q,F,W)$ is defined to be the reduction of $\widetilde{\mu}_v(Q,F,W)$. We say that $(Q,F,W)$ is \emph{non-degenerate} if any iterated sequence of mutations of $(Q,F,W)$ does not produce $2$-cycles containing unfrozen arrows in the resulting ice quiver. By \cite[Proposition 4.10]{PresslandMutation}, this holds for example if $(Q,F,W)$ is reduced and \emph{rigid}, that is, any cycle in $Q$ containing an unfrozen arrow is zero in $J(Q,F,W)$ up to cyclic equivalence (see also Definition 4.8 in loc.\ cit.).

\begin{remark}
     If $(\ov{Q},\ov{W})$ denotes the quiver with potential obtained from $(Q,F,W)$ by deleting the frozen vertices, then $(\ov{Q},\ov{W})$ is non-degenerate and $\mu_v(\ov{Q},\ov{W}) = (\ov{\mu_v(Q)},\ov{\mu_v(W)})$ for any unfrozen vertex $v$.
\end{remark}

\begin{remark}
By \cite[Proposition 4.6]{PresslandMutation}, if $(\mu_v(Q),\mu_v(F))$ does not have $2$-cycles containing unfrozen arrows, then it coincides with the \emph{extended Fomin--Zelevinsky mutation} $\mu_v^{\rm{FZ}}(Q,F)$ of the ice quiver $(Q,F)$, which is constructed by the following procedure:
\begin{enumerate}[(1)]
    \item  For each pair of arrows $\alpha: u \to v$ and $\beta: v \to w$, add an unfrozen arrow $[\beta\alpha]: u \to w$.
    \item Replace each arrow $\alpha$ incident with $v$ by an arrow $\alpha^*$ going in the opposite direction.
    \item Remove a maximal collection of $2$-cycles involving only unfrozen arrows.
    \item Choose a maximal collection of half-frozen $2$-cycles, that is, involving exactly one frozen arrow. Replace each $2$-cycle in this collection by a frozen arrow pointing in the same direction as the unfrozen arrow of the $2$-cycle.
\end{enumerate}
The resulting quiver is defined up to isomorphism. If we disregard the frozen arrows, then the procedure above gives the classical mutation rule (as defined in Section \ref{section:review on cluster algebras} in terms of extended exchange matrices).
\end{remark}

\subsection{Relative Ginzburg dg algebras} Let $(Q, F, W)$ be a finite ice quiver with potential. We define $\widetilde{Q}$ to be the graded quiver whose vertices are the same as those of $Q$ and whose arrows are
\begin{itemize}
\item a copy of each arrow of $Q$, with degree $0$;
\item an arrow $\alpha^*: j \rightarrow i$ of degree $-1$ for every \emph{unfrozen} arrow $\alpha: i \to j$ of $Q$;
\item a loop $t_i: i \rightarrow i$ of degree $-2$ for every \emph{unfrozen} vertex $i$ of $Q$.
\end{itemize}
We define the \emph{completed relative Ginzburg dg algebra} $\bGh(Q, F, W)$ as follows. Its underlying graded algebra is the completion of the graded path algebra $k\widetilde{Q}$ with respect to the ideal generated by the arrows. The completion is taken in the category of graded algebras, so that the $n$-th component of $\bGh(Q, F, W)$ has as a topological basis the set of all paths in $\widetilde{Q}$ of degree $n$. The differential is the unique continuous linear endomorphism of degree $1$ which satisfies $d(uv)=du\cdot v+(-1)^p u \cdot dv$ for all homogeneous elements $u$ of degree $p$ and all $v$, and takes the following values on the arrows of $\widetilde{Q}$:
\begin{itemize}
\item $d\alpha = 0$ for every $\alpha \in Q_1$;
\item $d\alpha^* = \partial_{\alpha}(W)$ for every $\alpha \in Q_1 \setminus F_1$;
\item $dt_i = e_i\left(\sum_{\alpha \in Q_1}\left[\alpha, \alpha^*\right]\right)e_i$ for every $i \in Q_0 \setminus F_0$, where $e_i$ denotes the lazy path of length zero at vertex $i$.
\end{itemize}
Note that $J(Q,F,W)$ is isomorphic to $H^0(\bGh(Q,F,W))$.

When $W$ involves finitely many cycles, we also define the \emph{non-completed relative Ginzburg dg algebra} $\bG(Q,F,W)$ as the dg subalgebra of $\bGh(Q,F,W)$ whose underlying algebra is the non-completed path algebra $k\widetilde{Q}$. We have the following result linking the two of them.

\begin{lemma}\label{lemma:complete/noncomplete Ginzburg are qiso when Jacobi-finite}
    Let $(Q,F,W)$ be an ice quiver with potential that is Jacobi-finite and such that $W$ involves finitely many cycles. Suppose that we can give a non-negative Adams grading to the arrows of $\widetilde{Q}$ such that the differential $d$ defined above is homogeneous of Adams degree zero and no cycle in $\widetilde{Q}$ has Adams degree zero. Then the canonical morphism
    \[
    \bG(Q,F,W) \longrightarrow \bGh(Q,F,W)
    \]
    is a quasi-isomorphism.
\end{lemma}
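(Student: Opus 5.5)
Set $\bG = \bG(Q,F,W)$ and $\bGh = \bGh(Q,F,W)$. The plan is to use the Adams grading to split both dg algebras into homogeneous pieces, and then compare them piece by piece, since $\bGh$ should be a completion of $\bG$ with respect to this grading.

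\textbf{Step 1: Adams decomposition.} Since $d$ has Adams degree zero, $\bG$ decomposes as a direct sum of subcomplexes $\bG = \bigoplus_{p\geq 0}\bG_p$, where $\bG_p$ is spanned by the paths of Adams degree $p$. Fix a cohomological degree $n$ and a pair of vertices $i,j$. We claim that only finitely many paths from $i$ to $j$ have cohomological degree $n$ and Adams degree $p$. Arrows of Adams degree zero form a subquiver with no cycles, because no cycle in $\widetilde{Q}$ has Adams degree zero. Hence any path of bounded Adams degree contains boundedly many arrows of positive Adams degree, separated by paths in a finite acyclic subquiver, which have bounded length. So there are finitely many such paths, and each $e_j\bG_p e_i$ is degreewise finite-dimensional.

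\textbf{Step 2: identify the completion.} By the same argument, the number of positive-Adams-degree arrows in a path grows with its length. So the arrow-adic filtration and the Adams filtration define the same topology on each component. Consequently, in each cohomological degree, $\bGh = \prod_{p\geq 0}\bG_p$. Since $d$ is continuous and preserves Adams degree, this is an isomorphism of complexes. The canonical morphism is therefore the inclusion $\bigoplus_p \bG_p \to \prod_p \bG_p$.

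\textbf{Step 3: reduce to vanishing of cohomology.} Cohomology commutes with both direct sums and products of complexes. So the canonical morphism is a quasi-isomorphism if and only if, for each $n$, we have $H^n(\bG_p)=0$ for all but finitely many $p$. This is the main obstacle.

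\textbf{Step 4: bound the cohomology.} The planned bound is
\[
\dim \bigoplus_p H^n(\bG_p) = \dim H^n(\bG) < \infty \quad \text{for all } n.
\]
Jacobi-finiteness gives this for $n=0$, because $H^0(\bG)$ is the relative Jacobian algebra $J(Q,F,W)$, which is graded by Adams degree.

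\begin{itemize}
\item \emph{Positive degrees.} For $n>0$ there is nothing to prove, since $\bG$ is concentrated in non-positive degrees.
\item \emph{Negative degrees.} Here we use that $H^*(\bG)$ is a graded module over $H^0(\bG)$. The approach is to show that each $H^n(\bG)$ is finitely generated over $H^0$, for instance via the standard semi-free resolution or cofibrant description of the Ginzburg algebra. This would give finite-dimensionality; if it fails, at least it would show that only finitely many Adams degrees occur.
\end{itemize}

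If this finiteness cannot be established directly, the fallback is a degree-by-degree argument. Since all generators have finite Adams degree, the finite-dimensional algebra $J$ is nilpotent in large Adams degree. The plan is to bootstrap from this, by induction on $-n$, using the arrow filtration spectral sequence to show that $H^n(\bG_p)=0$ for $p\gg 0$.
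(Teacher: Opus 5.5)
\paragraph{Verdict.} There is a genuine gap, and it sits in Step~4.

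\paragraph{What is correct.} Your Steps~1--3 are right and coincide with the paper's argument. The Adams grading splits $\bG$ into pieces that are finite-dimensional in each cohomological degree, and $\bGh$ is degreewise the product of these pieces. So the canonical map is a quasi-isomorphism as soon as every $H^n(\bG)$ is finite-dimensional.

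\paragraph{The gap.} All the content of the lemma lies in Step~4 for negative degrees, and you do not prove it. Saying that $H^n(\bG)$ should be finitely generated over $H^0(\bG)=J$ ``via the semi-free description'' just restates the claim. Over the finite-dimensional algebra $J$, finitely generated and finite-dimensional are the same thing, so your hedge about ``finitely many Adams degrees'' gains nothing. Your fallback is not yet an argument either. Knowing that $J$ vanishes in large Adams degree gives, by itself, no control over cycles modulo boundaries in degrees $-1,-2,\dots$; you name no mechanism that would propagate that vanishing.

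\paragraph{The missing ingredient.} What is needed is the homological smoothness of $\bG(Q,F,W)$. The paper takes this from \cite[Corollary 5.4(4)]{YeungRelativeCY} and combines it with \cite[Proposition 2.5]{KalckYangI} to conclude that every $H^n(\bG)$ is finite-dimensional. Your remark about the semi-free structure could be made to work here, since a semi-free dg algebra on finitely many generators with triangular differential is smooth. But you still have to get from smoothness to finite-dimensionality, as follows.
\begin{itemize}
\item Smoothness gives $\pvd(\bG)\subseteq\per(\bG)$.
\item Argue by induction. If $H^i(\bG)$ is finite-dimensional for $-n<i\le 0$, then $\tau_{\geq -n+1}\bG\in\pvd(\bG)\subseteq\per(\bG)$. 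Hence $\tau_{\leq -n}\bG\in\per(\bG)$.
\item A compact object $X$ with cohomology in degrees $\le -n$ satisfies $\Hom(X,\Sigma^{n}M)\cong\Hom_J(H^{-n}(X),M)$ for every $J$-module $M$. So $\Hom_J(H^{-n}(X),-)$ commutes with direct sums.
\item Therefore $H^{-n}(\bG)=H^{-n}(\tau_{\le -n}\bG)$ is a finitely presented $J$-module, hence finite-dimensional.
\end{itemize}
With this inserted into Step~4, your proof is complete and agrees with the paper's.
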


\begin{proof}
We start with the following observation. Since $(Q,F,W)$ is Jacobi-finite and $\bG(Q,F,W)$ is concentrated in non-negative degrees and smooth (by \cite[Corollary 5.4(4)]{YeungRelativeCY}), we can apply \cite[Proposition 2.5]{KalckYangI} to conclude that all the cohomologies of $\bG(Q,F,W)$ are finite-dimensional. Now, both $\bGh(Q,F,W)$ and $\bG(Q,F,W)$ can be equipped with an Adams grading on top of the cohomological one using the extra grading on $\widetilde{Q}$. By our hypothesis, there are only finitely many paths on $\widetilde{Q}$ of a given Adams degree, so the component $(k\widetilde{Q})^{r,p}$ of $k\widetilde{Q}$ of cohomological degree $r$ and Adams degree $p$ has finite dimension. This allows us to write the $r$-th cohomological components of $\bGh(Q,F,W)$ and $\bG(Q,F,W)$ as
\[
\bGh(Q,F,W)^r = \prod_{p \geq 0}(k\widetilde{Q})^{r,p} \quad \textrm{and} \quad \bG(Q,F,W)^r = \bigoplus_{p \geq 0}(k\widetilde{Q})^{r,p}.
\]
Since $d$ is homogeneous of Adams degree zero, we may restrict it to a map $d^{r,p}$ from $(k\widetilde{Q})^{r,p}$ to $(k\widetilde{Q})^{r+1,p}$. If we denote the restriction of $d$ to $\bG(Q,F,W)$ as $d_{\bG}$, we deduce that
\[
\ker d^r = \prod_{p \geq 0}\ker d^{r,p}  \quad \textrm{and} \quad \ker d^r_{\bG}  = \bigoplus_{p \geq 0}\ker d^{r,p}. 
\]
But $H^r(\bG(Q,F,W))$ is finite-dimensional, so there is $p_r \geq 0$ such that the image of $d_{\bG}$ contains $\ker d^{r,p}$ for all $p \geq p_r$. By the continuity of $d$, we also deduce that the image of $d$ contains $\prod_{p \geq p_r} \ker d^{r,p}$. We conclude that both $H^r(\bG(Q,F,W))$ and $H^r(\bGh(Q,F,W))$ are given by the finite direct sum
\[
\bigoplus_{0 \leq p < p_r} \frac{\ker d^{r,p}}{\im d^{r,p}}.
\]
This gives the desired quasi-isomorphism.
\end{proof}

\begin{remark}\label{rmk:defining an Adams grading}
Suppose we can give a strictly positive grading to the arrows of $Q$ such that the potential $W$ is homogeneous of degree $n$ and all arrows have degree strictly less than $n$. Then we can extend it to an Adams grading on the arrows of $\widetilde{Q}$ as follows. For $\alpha \in Q_1$, we set the Adams degree of $\alpha^*$ as $n - |\alpha|$, and we set each $t_i$ to be of degree $n$. Each arrow in $\widetilde{Q}$ then has positive Adams degree, and the conditions in the lemma above are satisfied. For example, if all cycles in $W$ have the same length, we can set each arrow of $Q$ to have degree $1$ and then the resulting Adams grading satisfies the required conditions.
\end{remark}

\subsection{Relative cluster categories and Higgs categories}\label{section:relative cluster and Higgs categories} In this section, we fix a Jacobi-finite ice quiver with potential $(Q,F,W)$ and recall some of the constructions and results of \cite{YilinWuJacobifinite}. We remark that they can be generalized to the Jacobi-infinite setting \cite{KellerWuJacobiinfinite}, but we will not need this level of generality.

\begin{remark}
    Our hypothesis that $(Q,F,W)$ is Jacobi-finite ensures that the technical condition \cite[Assumption 1]{KellerWuJacobiinfinite} holds. This guarantees that the next results and the results in Section \ref{section:categorification after Yilin} are true without extra assumptions.
\end{remark}

Let $\bGh = \bGh(Q,F,W)$. Denote by $\mc{D}(\bGh)$ its derived category and by $\Sigma$ the suspension functor on $\mc{D}(\bGh)$. We denote by $\per(\bGh)$ the \emph{perfect derived category} of $\bGh$, that is, the thick subcategory of $\mc{D}(\bGh)$ generated by $\bGh$. We let $\pvd(\bGh)$ be the \emph{perfectly valued derived category} of $\bGh$, that is, the full subcategory of $\mc{D}(\bGh)$ whose objects are the dg $\bGh$-modules with finite-dimensional total cohomology. By the proof of \cite[Theorem 2.19(a)]{KellerYang}, $\pvd(\bGh)$ is a thick subcategory of $\per(\bGh)$ and is generated by all simple $H^0(\bGh)$-modules. Note that there is one simple $H^0(\bGh)$-module for each vertex of $Q$.

The \emph{relative cluster category} $\mc{C}(Q,F,W)$ is defined as the Verdier quotient of $\per(\bGh)$ by the thick subcategory generated by all simple $H^0(\bGh)$-modules associated with \emph{unfrozen} vertices. It is a $\Hom$-finite, idempotent complete, and Krull--Schmidt triangulated category by \cite[Lemma 2.17]{KellerYang} and \cite[Section 4.4]{YilinWuJacobifinite}. The \emph{Higgs category} $\mc{H}(Q,F,W)$ is its full subcategory whose objects are the $X \in \mc{C}(Q,F,W)$ such that
\[
\Ext^n_{\mc{C}(Q,F,W)}(X,e_i\bGh) = \Ext^n_{\mc{C}(Q,F,W)}(e_i\bGh,X) = 0
\]
for all $n \geq 0$ and $i \in F_0$. We remark that $\bGh \in \mc{H}(Q,F,W)$. The Higgs category is closed under extensions in $\mc{C}(Q,F,W)$ and hence canonically inherits from it the structure of an extriangulated category \cite{NakaokaPalu} equipped with a bivariant $\delta$-functor (as in \cite[Example 4.9(2)]{GorskyNakaokaPalu}). In particular, we have positive and negative (!) extension groups between objects in $\mc{H}(Q,F,W)$, which are computed as extension groups in $\mc{C}(Q,F,W)$. Moreover, $\mc{H}(Q,F,W)$ is a stably $2$-Calabi--Yau Frobenius extriangulated category whose subcategory $\mc{P}$ of projective-injective objects is the additive subcategory generated by the objects of the form $e_i\bGh$ for a frozen vertex $i \in F_0$. This implies, in particular, that there is a natural isomorphism
\[
\Ext^1_{\mc{H}(Q,F,W)}(M,N) \cong D\Ext^1_{\mc{H}(Q,F,W)}(N,M)
\]
for $M,N \in \mc{H}(Q,F,W)$, where $D$ denotes duality over $k$.

\begin{proposition}[{\cite[Lemma 3.30]{XiaofaChenExactdgcatII}}]\label{proposition:negative extensions can be computed in per Gamma}
    For $i,j \in Q_0$ and $n \leq 1$, the canonical map
    \[
    \Ext^n_{\bGh}(e_i\bGh,e_j\bGh) \longrightarrow \Ext^n_{\mc{C}(Q,F,W)}(e_i\bGh,e_j\bGh) = \Ext^n_{\mc{H}(Q,F,W)}(e_i\bGh,e_j\bGh)
    \]
    induced by the quotient functor $\per(\bGh) \to \mc{C}(Q,F,W)$ is an isomorphism.
\end{proposition}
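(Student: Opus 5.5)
The plan is to describe morphisms in the Verdier quotient $\mc{C} = \per(\bGh)/\mc{N}$ by roofs (calculus of fractions), where $\mc{N} = \mathrm{thick}(S_i \mid i \notin F_0)$ is generated by the simple modules at unfrozen vertices. The key input is the vanishing
\[
\Hom_{\per(\bGh)}(N, \Sigma^n e_j\bGh) = 0 \quad \textrm{for all } N \in \mc{N} \textrm{ and } n \leq 1.
\]
Once this holds, the canonical map $\Hom_{\per}(P,\Sigma^n e_j\bGh) \to \Hom_{\mc{C}}(P,\Sigma^n e_j\bGh)$ is bijective for any $P$. Indeed, a roof $P \xleftarrow{s} P' \to \Sigma^n e_j\bGh$ with $\cone(s) \in \mc{N}$ can be pulled back to $P$, because the long exact sequence for $\Hom(-,\Sigma^n e_j\bGh)$ applied to the triangle $P' \to P \to \cone(s)$ involves the groups $\Hom(\cone(s),\Sigma^n e_j\bGh)$ and $\Hom(\Sigma^{-1}\cone(s),\Sigma^n e_j\bGh) = \Hom(\cone(s),\Sigma^{n+1}e_j\bGh)$. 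These vanish for $n \leq 0$, and for $n+1 \leq 1$ in general. At the boundary case $n=1$ one needs only that $\Hom(\cone(s),\Sigma^{1} e_j\bGh)=0$ (surjectivity) and $\Hom(\Sigma^{-1}\cone(s),\Sigma e_j \bGh)=\Hom(\cone(s),e_j\bGh)=0$ (injectivity). So the required vanishing is in degrees $\le 1$, with $n$ and $n+1$ ranging suitably; I will arrange the bookkeeping so that only degrees $\leq 1$ appear.

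To get the vanishing, reduce by dévissage to $N = \Sigma^{m} S_i$ with $i$ unfrozen, so I need $\Hom_{\per}(S_i,\Sigma^{r}\bGh)=0$ for $r\le 1$. I would use the relative $3$-Calabi--Yau property of $\bGh$ (Yeung / Yilin Wu): for $M\in\pvd(\bGh)$ whose restriction to the frozen part vanishes, in particular for the unfrozen simples, there is a duality
\[
\Hom(S_i, \Sigma^r P) \cong D\Hom(P,\Sigma^{3-r}S_i)
\]
for $P\in\per(\bGh)$. Taking $P=e_j\bGh$, the right-hand side is $D H^{3-r}(S_i e_j)$, which vanishes unless $3-r=0$, that is unless $r=3$. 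Hence the Hom vanishes for $r\le 1$ (in fact for $r\le 2$). Because $\mc{N}$ is generated by shifts in both directions, the dévissage has to be done carefully. The correct formulation is that the vanishing holds for every $S_i$ and every $r\neq 3$, so it persists for all objects of $\mc{N}$ only in the form of a filtration argument.

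I expect this to be the main obstacle. A general object of $\mc{N}$ is an iterated extension of shifted simples with arbitrary shifts, so $\Hom(N,\Sigma^r e_j\bGh)$ need not vanish. The remedy is to use the $t$-structure or co-$t$-structure instead of the bare calculus of fractions. Since $\bGh$ is connective, the quotient can be computed via Iyama--Yang silting reduction (as Xiaofa Chen does), and the fundamental domain $\mc{F} = \per^{\le 0}\cap {}^{\perp}(\Sigma^{>0}\mc{N})$ maps fully faithfully onto $\mc{C}$ in a range of degrees. Concretely, I would show that $e_i\bGh$ lies in the relevant aisle, so that $\Hom_{\mc{C}}(e_i\bGh,\Sigma^n e_j\bGh)$ is computed in $\per(\bGh)$ for $n\le 1$. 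The remaining step is the equality with $\Ext_{\mc{H}}$, which holds by definition since $\mc{H}$ is a full subcategory of $\mc{C}$ containing $\bGh$.
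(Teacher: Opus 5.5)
There is a genuine gap: the argument that would prove the statement is never supplied. Your first paragraph rests on a vanishing that is false, as you notice later. Since $\mc{N}$ is closed under $\Sigma^{\pm1}$, having $\Hom(N,\Sigma^n e_j\bGh)=0$ for all $N\in\mc{N}$ and $n\le1$ would put $e_j\bGh$ in $\mc{N}^{\perp}$. But for unfrozen $j$ the duality you quote gives $\Hom(S_j,\Sigma^3e_j\bGh)\cong D\Hom(e_j\bGh,S_j)\neq0$.

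In particular, the map is not bijective ``for any $P$''. Take $P=\Sigma^{-3}S_j$ and $n=0$: the left side is nonzero, while $P\cong 0$ in $\mc{C}$. So any proof must use a property of the source $e_i\bGh$.

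Your degree count is also shifted. In the sequence for $P'\to P\to \cone(s)$, surjectivity needs $\Hom(\cone(s),\Sigma^{n+1}e_j\bGh)=0$ and injectivity needs $\Hom(\cone(s),\Sigma^{n}e_j\bGh)=0$. So degree $2$ unavoidably appears for $n=1$, and degree $3$ would be needed for $n=2$, which is why the statement stops at $n=1$.

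The last paragraph does not repair this. As written, $\Sigma^{>0}\mc{N}=\mc{N}$, so your domain is $\per^{\le0}\cap{}^{\perp}\mc{N}$. It does not contain $e_i\bGh$ for unfrozen $i$, since $\Hom(e_i\bGh,S_i)\neq0$. Moreover, full faithfulness on a fundamental domain only controls morphisms between objects of that domain, not $\Hom(e_i\bGh,\Sigma^n e_j\bGh)$ for $n\neq 0$. ``Show that $e_i\bGh$ lies in the relevant aisle'' is exactly where the proof should be.

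The missing idea is to control the cones of the roofs using only that the source is connective and representable. (The paper gives no proof, only the citation to Chen.)

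First, $\Hom(e_i\bGh,M)=H^0(Me_i)$ vanishes for $M\in\mc{D}^{\geq 1}(\bGh)$. Second, $\mc{N}$ consists of the objects of $\pvd(\bGh)$ whose cohomology vanishes at frozen vertices, so it is stable under the standard truncations. Take a roof $e_i\bGh\xleftarrow{s}X'\to Y$ with $N=\cone(s)$. The map $e_i\bGh\to N$ factors through $\tau_{\le0}N$, and the cocone $s''$ of $e_i\bGh\to\tau_{\le0}N$ factors through $s$. Hence roofs with cone in $\mc{N}\cap\mc{D}^{\le0}(\bGh)$ are cofinal.

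For such $N$, relative $3$-Calabi--Yau duality gives
\[
\Hom(N,\Sigma^r e_j\bGh)\cong D\Hom(e_j\bGh,\Sigma^{3-r}N)=D\bigl(H^{3-r}(N)e_j\bigr)=0\quad\textrm{for } r\le 2 .
\]
Applying this with $r=n$ and $r=n+1$ shows that $s^*\colon\Hom(e_i\bGh,\Sigma^n e_j\bGh)\to\Hom(X',\Sigma^n e_j\bGh)$ is bijective for every cofinal $s$ when $n\le1$. Passing to the colimit identifies it with $\Hom_{\mc{C}}(e_i\bGh,\Sigma^n e_j\bGh)$.

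Your ingredients (unfrozen simples, relative duality) are the right ones. Without this truncation step, however, they do not yield the statement.
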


Let $(\ov{Q},\ov{W})$ be the quiver with potential obtained from $(Q,F,W)$ by deleting the frozen vertices. We have a dg quotient morphism
\[
p: \bGh(Q,F,W) \longrightarrow \bGh(\ov{Q},\ov{W})
\]
and, in particular, $(\ov{Q},\ov{W})$ is again Jacobi-finite by \cite[Proposition 4.5]{YilinWuJacobifinite}. The \emph{(absolute) cluster category} $\mc{C}(\ov{Q},\ov{W})$ is defined as above if we regard $(\ov{Q},\ov{W})$ as an ice quiver with potential with empty frozen part, and it agrees with Amiot's cluster category \cite{AmiotClusterCat}. The morphism $p$ above induces a triangulated quotient functor
\[
p^*: \mc{C}(Q,F,W) \longrightarrow \mc{C}(\ov{Q},\ov{W}),
\]
which in turn yields an equivalence of triangulated categories
\begin{equation}\label{eq:stable Higgs is absolute cluster}
\mc{H}(Q,F,W)/[\mc{P}] \xrightarrow{\sim} \mc{C}(\ov{Q},\ov{W})
\end{equation}
by \cite[Proposition 4.17]{KellerWuJacobiinfinite}. As a consequence, we have the following result.

\begin{proposition}[{\cite[Proposition 5.36]{YilinWuJacobifinite}}]\label{prop:Ext1 is the same in Higgs and absolute cluster categories}
    For $X,Y \in \mc{H}(Q,F,W)$ and $n \geq 1$, the canonical map
    \[
    \Ext^n_{\mc{H}(Q,F,W)}(X,Y) \longrightarrow \Ext^n_{\mc{C}(\ov{Q},\ov{W})}(p^*(X),p^*(Y))
    \]
    is an isomorphism.
\end{proposition}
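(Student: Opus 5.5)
The statement to prove is Proposition \ref{prop:Ext1 is the same in Higgs and absolute cluster categories}: for $n \geq 1$, the map on $\Ext^n$ between objects of the Higgs category and their images in $\mc{C}(\ov{Q},\ov{W})$ is an isomorphism. The plan is to factor $p^*$ through the stable category $\mc{H}/[\mc{P}]$ and apply the equivalence \eqref{eq:stable Higgs is absolute cluster}. This reduces the problem to one claim: for $X,Y \in \mc{H}$ and $n \geq 1$, the natural map
\[
\Hom_{\mc{C}}(X,\Sigma^n Y) \longrightarrow \Hom_{\mc{H}/[\mc{P}]}(X,\Sigma^n_{\mc{H}}Y)
\]
is bijective. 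Here $\mc{C}=\mc{C}(Q,F,W)$, and $\Sigma_{\mc{H}}$ is the suspension of the triangulated category $\mc{H}/[\mc{P}]$, so that $\Sigma_{\mc{H}}Y$ is the cone of a left $\mc{P}$-approximation $Y \to P$.

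\textbf{Case $n=1$.} Take a conflation $Y \to P \to \Sigma_{\mc{H}}Y$ in $\mc{H}$ with $P \in \mc{P}$. It is a triangle $Y \to P \to \Sigma_{\mc{H}}Y \to \Sigma Y$ in $\mc{C}$. Apply $\Hom_{\mc{C}}(X,-)$ to get the exact sequence
\[
\Hom(X,P) \to \Hom(X,\Sigma_{\mc{H}}Y) \to \Hom(X,\Sigma Y) \to \Hom(X,\Sigma P).
\]
The last term vanishes by the definition of $\mc{H}$, since $\Ext^1_{\mc{C}}(X,e_i\bGh)=0$ for frozen $i$. Hence
\[
\Ext^1_{\mc{C}}(X,Y) \cong \Hom_{\mc{C}}(X,\Sigma_{\mc{H}}Y)/(\text{maps factoring through } P).
\]
Every morphism $X \to \Sigma_{\mc{H}}Y$ that factors through an object of $\mc{P}$ factors through $P \to \Sigma_{\mc{H}}Y$, because $\Ext^1_{\mc{C}}(P',Y)=0$ for $P' \in \mc{P}$. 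So the right-hand side is exactly $\Hom_{\mc{H}/[\mc{P}]}(X,\Sigma_{\mc{H}}Y)$. Finally, I check that this identification agrees with the map induced by $p^*$. This holds because $p^*$ kills $\mc{P}$ ($p$ kills the frozen idempotents) and sends the triangle above to a triangle with middle term zero.

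\textbf{Case $n\geq 2$.} I induct on $n$ by iterating the approximation, replacing $Y$ by $\Sigma_{\mc{H}}Y \in \mc{H}$. From the triangle, $\Hom_{\mc{C}}(X,\Sigma^{n}Y) \cong \Hom_{\mc{C}}(X,\Sigma^{n-1}\Sigma_{\mc{H}}Y)$ whenever $\Hom_{\mc{C}}(X,\Sigma^{n-1}P)$ and $\Hom_{\mc{C}}(X,\Sigma^n P)$ both vanish. Both vanish by the defining condition of $\mc{H}$, which requires $\Ext^m_{\mc{C}}(X,e_i\bGh)=0$ for all $m\geq 0$. The induction then reduces everything to the case $n=1$ applied to $\Sigma_{\mc{H}}^{n-1}Y$.

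\textbf{Main obstacle.} The delicate point is compatibility: the composite of these isomorphisms must equal the canonical map induced by $p^*$, and not merely be some abstract isomorphism. I would handle this by noting that $p^*$ sends each approximation triangle to an isomorphism $\Sigma p^*Y \cong p^*\Sigma_{\mc{H}}Y$, which is precisely how the equivalence \eqref{eq:stable Higgs is absolute cluster} is made triangulated in \cite{KellerWuJacobiinfinite}.
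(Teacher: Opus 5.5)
Your proposal is correct and takes the same route as the paper. The paper gives no separate proof: it cites \cite[Proposition 5.36]{YilinWuJacobifinite} and presents the statement as a direct consequence of the equivalence \eqref{eq:stable Higgs is absolute cluster}, and your argument (the conflation $Y \rightarrowtail P \twoheadrightarrow \Sigma_{\mc{H}}Y$ for $n=1$, dimension shifting for $n \geq 2$) is the standard way to unpack that consequence. The compatibility you single out as the main obstacle is in fact automatic: $p^*$ sends the connecting morphism $\Sigma_{\mc{H}}Y \to \Sigma Y$ to an isomorphism because $p^*P=0$, so you never need to know how the equivalence is made triangulated (and only the vanishing of positive-degree extensions against $\mc{P}$ is used).
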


We conclude this subsection with the effect of the mutation of ice quivers with potential in the associated categories. Suppose $(Q,F,W)$ is reduced and let $v \in Q_0 \setminus F_0$ be an unfrozen vertex that is not incident to any loop or $2$-cycle. We denote $\bGh' = \bGh(Q',F',W')$, where $\mu_v(Q,F,W) = (Q',F',W')$. We remark that $(Q',F',W')$ is again Jacobi-finite. By \cite[Theorem 1.1(b)]{YilinWuMutation}, there are two triangle equivalences $\Phi_{v,\pm}: \mc{D}(\bGh') \to \mc{D}(\bGh)$ that restrict to equivalences $\per(\bGh') \to \per(\bGh)$ and $\pvd(\bGh') \to \pvd(\bGh)$. By \cite[Proposition 3.29]{KellerWuJacobiinfinite}, they both descend to equivalences between the relative cluster categories and, on this level, they become naturally isomorphic functors. We denote this common equivalence by $\Phi_v: \mc{C}(Q,F,W) \to \mc{C}(Q',F',W')$. In turn, $\Phi_v$ restricts to an equivalence between the corresponding Higgs categories and preserves the classes of projective-injective objects. Consequently, we also have an induced triangle equivalence $\mc{C}(\ov{Q},\ov{W}) \to \mc{C}(\ov{Q'},\ov{W'})$, which coincides with the one defined in \cite{KellerYang}.

For the next result, recall that a dg algebra is \emph{proper} if its total cohomology is finite-dimensional.

\begin{lemma}\label{lemma:mutation preserves properness}
    The completed relative Ginzburg dg algebra $\bGh$ is proper if and only if its mutation $\bGh'$ is proper.
\end{lemma}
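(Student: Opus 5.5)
The plan is to characterize properness of $\bGh$ purely in terms of the derived categories, and then use that the equivalences $\Phi_{v,\pm}$ of \cite[Theorem 1.1(b)]{YilinWuMutation} respect these categories. A dg algebra $A$ whose cohomology is locally finite-dimensional is proper if and only if $A$ itself lies in $\pvd(A)$. Here that means $\per(\bGh) \subseteq \pvd(\bGh)$, since $\pvd$ is thick. We already know $\pvd(\bGh) \subseteq \per(\bGh)$ by the proof of \cite[Theorem 2.19(a)]{KellerYang}. So properness of $\bGh$ is equivalent to the equality $\pvd(\bGh) = \per(\bGh)$ as subcategories of $\mc{D}(\bGh)$. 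The same holds for $\bGh'$.

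First, I record the equivalence above. If $\bGh$ is proper, then $\bGh \in \pvd(\bGh)$, and thickness gives $\per(\bGh) \subseteq \pvd(\bGh)$. Conversely, if $\per(\bGh) = \pvd(\bGh)$, then $\bGh \in \pvd(\bGh)$. This means $\bGh$ has finite-dimensional total cohomology, which is exactly properness.

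Second, I take $\Phi = \Phi_{v,+}: \mc{D}(\bGh') \to \mc{D}(\bGh)$, which restricts to equivalences $\per(\bGh') \xrightarrow{\sim} \per(\bGh)$ and $\pvd(\bGh') \xrightarrow{\sim} \pvd(\bGh)$. Suppose $\pvd(\bGh) = \per(\bGh)$. For $X \in \per(\bGh')$, we have $\Phi X \in \per(\bGh) = \pvd(\bGh)$. Since $\Phi$ is an equivalence on $\mc{D}$ that restricts to an equivalence on $\pvd$, it follows that $X \in \pvd(\bGh')$. Here I use that the essential image of $\pvd(\bGh')$ is exactly $\pvd(\bGh)$, together with the fact that $\Phi$ reflects isomorphism classes. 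Hence $\per(\bGh') = \pvd(\bGh')$. The converse is symmetric, using a quasi-inverse of $\Phi$, or the inverse mutation, since $\mu_v$ is an involution up to right equivalence. Right equivalence preserves the Ginzburg algebra up to isomorphism.

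The only point requiring care is to confirm that the cited restrictions are genuinely onto $\per$ and $\pvd$, i.e.\ essentially surjective and not just inclusions. This is part of the statement of \cite[Theorem 1.1(b)]{YilinWuMutation} as quoted. Alternatively, I can avoid this entirely with a more hands-on argument. Note that $\Phi^{-1}(\bGh)$ lies in $\per(\bGh')$, and the indecomposable summands $e_i\bGh$ are sent to objects built from the $e_j\bGh'$ via a single triangle; for a mutation, this is the standard exchange triangle at $v$. The direct route is then: $\bGh' \in \pvd(\bGh')$ if and only if each $e_j\bGh'$ has finite-dimensional total cohomology. The exchange triangle expresses $e_v\bGh'$ through the $e_j\bGh$ and their images, so finite-dimensionality of cohomology transfers by the long exact sequence. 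I expect no real obstacle beyond this bookkeeping.
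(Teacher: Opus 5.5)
Your proposal is correct and follows the paper's proof: properness of $\bGh$ is equivalent to $\per(\bGh)=\pvd(\bGh)$, and the equivalences $\Phi_{v,\pm}$ restrict to equivalences on both $\per$ and $\pvd$, so this equality transfers between $\bGh$ and $\bGh'$. The exchange-triangle alternative is unnecessary: any triangle equivalence of derived categories automatically preserves $\per$ (the compact objects) and $\pvd$ (characterized intrinsically by finite total dimension of morphisms from compact objects), so the essential-surjectivity concern you raise does not arise.
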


\begin{proof}
    This follows from the discussion above since a completed relative Ginzburg dg algebra is proper if and only if its perfect and perfectly valued derived categories coincide.
\end{proof}

\subsection{The categorification}\label{section:categorification after Yilin} For this section, the base field is $k = \Cbb$. Let $(Q,F,W)$ be an ice quiver with potential. We assume that it is Jacobi-finite and non-degenerate. We also require that $Q$ has no loops and no $2$-cycles containing unfrozen arrows, and we relabel the vertices so that $Q_0 = \{1,\dots,m\}$ and $F_0 = \{n+1,\dots,m\}$ for $1 \leq n \leq m$. Let $\mathbb{F}$ be the field of rational functions in $m$ variables over $\Qbb$ and let $\mc{A}_{Q,F} \subseteq \mathbb{F}$ be cluster algebra associated with $(Q,F)$ (see Section \ref{section:review on cluster algebras}).

In \cite{KellerWuJacobiinfinite}, building on previous works such as \cite{Palu}, \cite{FuKeller}, and \cite{PlamondonCharacter}, the authors construct a canonical \emph{cluster character} $CC: \Ob(\mc{H}) \longrightarrow \mathbb{F}$ on the Higgs category $\mc{H} = \mc{H}(Q,F,W)$. This means that
\begin{enumerate}[(1)]
    \item $CC$ is constant on isomorphism classes;
    \item $CC(L \oplus M) = CC(L) \cdot CC(M)$ for $L,M \in \mc{H}$;
    \item if $L,M \in \mc{H}$ are such that $\Ext^1_{\mc{H}}(L,M)$ is one-dimensional (hence $\Ext^1_{\mc{H}}(M,L)$ is also one-dimensional), and
    \[
    L \rightarrowtail E \twoheadrightarrow M \dashrightarrow \quad \textrm{and} \quad M \rightarrowtail E' \twoheadrightarrow L \dashrightarrow
    \]
    are non-split conflations in $\mc{H}$, then $CC(L) \cdot CC(M) = CC(E) + CC(E')$.
\end{enumerate}
We say that $M \in \mc{H}$ is \emph{rigid} if $\Ext^1_{\mc{H}}(M,M) = 0$. It is \emph{reachable} if it is a direct summand of a direct sum of copies of the image of $\bGh(Q',F',W')$ under the equivalence
\[
\Phi_{v_1}\Phi_{v_2}\dotsb\Phi_{v_r}: \mc{H}(Q',F',W') \longrightarrow \mc{H}
\]
induced by a sequence of mutations at unfrozen vertices $v_1,\dots,v_r$. We call $T \in \mc{H}$ a \emph{cluster-tilting object} if it is rigid and, if $L \in \mc{H}$ satisfies $\Ext^1_{\mc{H}}(L,T) = 0$, then $L \in \add T$, where $\add T \subseteq \mc{H}$ denotes the smallest full subcategory containing $T$ and closed under direct sums and summands. If $T$ is also \emph{basic}, that is, its indecomposable direct summands are pairwise non-isomorphic, then it is isomorphic to the image of $\bGh(Q',F',W')$ under an equivalence as above.

\begin{theorem}[{\cite[Theorem 5.9]{KellerWuJacobiinfinite}}]
    The canonical cluster character $CC: \Ob(\mc{H}) \to \mathbb{F}$ induces a bijection between the sets of isomorphism classes of reachable rigid indecomposable objects in $\mc{H}$ and cluster variables in $\mc{A}_{Q,F}$. Under this bijection, basic reachable cluster-tilting objects correspond to clusters.
\end{theorem}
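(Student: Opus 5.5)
The plan is to compare the seed pattern of $\mc{A}_{Q,F}$ with the ``tilting pattern'' of reachable basic cluster-tilting objects in $\mc{H}$, which is indexed by the same tree $\mathbb{T}_n$. For $t \in \mathbb{T}_n$ with path $v_1,\dots,v_r$ from the root, set $T_t = \Phi_{v_1}\dotsb\Phi_{v_r}(\bGh(Q_t,F_t,W_t))$ with indecomposable summands $T_{i;t} = \Phi_{v_1}\dotsb\Phi_{v_r}(e_i\bGh(Q_t,F_t,W_t))$. I would prove by induction on the distance of $t$ from the root that $CC(T_{i;t}) = x_{i;t}$ for all $i$. Both directions of the bijection then come from this identification.

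\emph{Base case and mutation step.} At the root I would compute directly from the construction of $CC$ in \cite{KellerWuJacobiinfinite} (a Palu/Plamondon-type formula, index term times a sum over quiver Grassmannians of the module $\Ext^1_{\mc{H}}(\bGh,-)$) that $CC(e_i\bGh) = x_i$. The module $\Ext^1_{\mc{H}}(\bGh,e_i\bGh)$ vanishes by rigidity, so only the index term survives. For the inductive step along an edge $t \to t'$ labelled $v$, the objects $T_t$ and $T_{t'}$ share the summands $T_{i;t}$ with $i \neq v$. Moreover $\Ext^1_{\mc{H}}(T_{v;t},T_{v;t'})$ is one-dimensional: this follows from the $2$-Calabi--Yau property transported via \eqref{eq:stable Higgs is absolute cluster} and Proposition \ref{prop:Ext1 is the same in Higgs and absolute cluster categories}, together with the absolute result of Keller--Yang. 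The two non-split conflations have middle terms given by the minimal left and right $\add(T_t/T_{v;t})$-approximations of $T_{v;t}$. Since $\Phi_v$ identifies $\End(T_t)$ with the relative Jacobian algebra, including frozen arrows and $\mc{P}$, these middle terms are $\bigoplus_j T_{j;t}^{[b_{jv}]_+}$ and $\bigoplus_j T_{j;t}^{[-b_{jv}]_+}$. Axioms (2) and (3) of a cluster character then give exactly the exchange relation, so $CC(T_{v;t'}) = x_{v;t'}$. Non-degeneracy is used so that $\mu_v(Q_t,F_t)$ agrees with the Fomin--Zelevinsky mutation of the extended exchange matrix (Pressland).

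\emph{Surjectivity and well-definedness.} Surjectivity onto cluster variables is immediate from the induction. Conversely, by definition a reachable rigid indecomposable $M$ is a summand of some $T_t$, so $CC(M)$ is a cluster variable. For the cluster-tilting statement, a basic reachable cluster-tilting object is some $T_t$ as noted before the theorem, so it maps to the cluster ${\bf x}_t$.

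\emph{Injectivity, the main obstacle.} If two reachable rigid indecomposables $M,N$, possibly reached along different paths, satisfy $CC(M)=CC(N)$, I would pass to $g$-vectors. The index of $M$ with respect to $\bGh$ in $K_0(\add\bGh)\cong\Z^m$ is read off as the leading exponent ${\bf g}^{t_0}$ in Theorem \ref{thm:g-vectors and F-polynomials}, because the cluster character formula has the form ${\bf x}^{\mathrm{ind}(M)}F(\widehat{\bf y})$. Hence $M$ and $N$ have equal index. It then remains to show that rigid objects of $\mc{H}$ are determined up to isomorphism by their index. I would deduce this from the Dehy--Keller theorem in $\mc{C}(\ov{Q},\ov{W})$ via \eqref{eq:stable Higgs is absolute cluster}, using the frozen coordinates of the index to recover the projective-injective summands in $\mc{P}$. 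The delicate part is lifting the index statement through the quotient by $[\mc{P}]$; I expect this to be the hardest step.
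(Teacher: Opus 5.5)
The paper gives no proof of this statement: it is quoted from \cite[Theorem 5.9]{KellerWuJacobiinfinite}, so you are reconstructing an external argument. Your blueprint follows the standard Palu/Fu--Keller/Plamondon pattern: compatibility of $CC$ with mutation by induction on $\mathbb{T}_n$, giving a well-defined surjection, then injectivity via the index and Dehy--Keller. The induction and surjectivity parts are fine.

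The injectivity step, however, does not work as written. You read $\mathrm{ind}(M)$ off the element $CC(M)\in\mathbb{F}$ as ``the leading exponent'' via Theorem~\ref{thm:g-vectors and F-polynomials}. That uniqueness rests on $\preceq$ being a partial order, i.e.\ on $\wt{B}$ having full column rank. The paper only states that theorem for $\Uplambda$-seeds, where full rank is forced by compatibility. The statement you are proving has no such hypothesis.

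For instance, take $F=\varnothing$, $Q=(1\to 2\to 3)$ and $W=0$, which is Jacobi-finite and non-degenerate. Here $\widehat{y}_1=x_2^{-1}$ and $\widehat{y}_3=x_2$. So the cluster variable $x_1^{-1}(1+x_2)$ equals both ${\bf x}^{(-1,1,0)}(1+\widehat{y}_1)$ and ${\bf x}^{(-1,0,0)}(1+\widehat{y}_3)$. A decomposition ${\bf x}^{g}F(\widehat{\bf y})$ with $F(0)=1$ is therefore not determined by the element, and you cannot conclude $\mathrm{ind}(M)=\mathrm{ind}(N)$ from $CC(M)=CC(N)$.

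Closing the gap needs an extra input. One option is to show, by your same induction, that $\mathrm{ind}(T_{i;t})$ obeys the tropical recursion of the $g$-vectors defined via principal coefficients. Then use that for skew-symmetric cluster algebras these $g$-vectors (equivalently, the exchange graph) are independent of the coefficients (Cerulli Irelli--Keller--Labardini-Fragoso--Plamondon), so $x_{i;t}=x_{j;t'}$ forces equal $g$-vectors. Alternatively, invoke linear independence of cluster monomials. Wherever the paper actually applies the theorem, $\bGh$ is proper, so $\widehat{B}$ is invertible by Lemma~\ref{lemma:inverse of Euler matrix} and $\wt{B}$ has full column rank. In that setting your argument goes through.

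Conversely, the step you expect to be hardest is routine. Applying $p^*$ to $T_1\rightarrowtail T_0\twoheadrightarrow M$ gives a triangle in $\mc{C}(\ov{Q},\ov{W})$ in which the frozen summands vanish. Hence the unfrozen coordinates of $\mathrm{ind}(M)$ form the index of $p^*(M)$ with respect to $p^*(\bGh)$. By Proposition~\ref{prop:Ext1 is the same in Higgs and absolute cluster categories}, $p^*(M)$ and $p^*(N)$ are rigid, so Dehy--Keller gives $p^*(M)\cong p^*(N)$. By \eqref{eq:stable Higgs is absolute cluster} and Krull--Schmidt, $M\oplus P\cong N\oplus P'$ with $P,P'\in\mc{P}$. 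Since $\mathrm{ind}(P)=[P]$ and $\mathrm{ind}(P')=[P']$, equality of indices forces $P\cong P'$, and cancellation gives $M\cong N$.
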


Let $T \in \mc{H}$ be a reachable cluster-tilting object. Its corresponding cluster is associated with a vertex $t \in \mathbb{T}_n$ in the $n$-regular tree. By considering the sequence of mutations from the initial root to $t$, we can also attach to it an ice quiver with potential $(Q_t,F_t,W_t)$. The split Grothendieck group $K_0(\add T)$ is then isomorphic to the Grothendieck group of $\per(\bGh(Q_t,F_t,W_t))$, which we identify with $\Z^m$ by sending $e_i\bGh(Q_t,F_t,W_t)$ to the $i$-th vector of the canonical basis.

Now take $M \in \mc{H}$. By the construction of the Higgs category in \cite{KellerWuJacobiinfinite}, there is a conflation
\[\begin{tikzcd}
	{T_1} & {T_0} & M & {}
	\arrow[tail, from=1-1, to=1-2]
	\arrow[two heads, from=1-2, to=1-3]
	\arrow[dashed, from=1-3, to=1-4]
\end{tikzcd}\]
where $T_0,T_1 \in \add T$. By \cite[Proposition 3.2]{GrabowskiPressland} (see also the references mentioned therein), the element $[T_0]-[T_1] \in K_0(\add T)$ does not depend on the chosen conflation. Via the identification above, we can then attach a vector $\ind_T(M) \in \Z^m$ that depends only on $T$ and $M$, called the \emph{index} of $M$ with respect to $T$. It is a categorical version of the $g$-vector by the following result.

\begin{theorem}[{\cite[Theorem 4.39]{GrabowskiPressland}}]\label{thm:g-vectors as indices}
    Let $T \in \mc{H}$ be a reachable cluster-tilting object corresponding to $t \in \mathbb{T}_n$. If $M \in \mc{H}$ is a reachable rigid object, then $\ind_T(M)$ is the $g$-vector ${\bf g}^t_u$ of the cluster monomial $u = CC(M) \in \mc{A}_{Q,F}$ with respect to $t$.
\end{theorem}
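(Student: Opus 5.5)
The natural route is to show that, written in the cluster ${\bf x}_t$, the cluster character of $M$ has exactly the shape of Theorem \ref{thm:g-vectors and F-polynomials} with leading exponent $\ind_T(M)$; uniqueness of that decomposition then forces $\ind_T(M) = {\bf g}^t_u$.

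\emph{Reduction to the initial seed.} Let $t$ be reached from the root by mutations at $v_1,\dots,v_r$, and let $(Q_t,F_t,W_t)$ be the corresponding ice quiver with potential. The equivalence $\Phi = \Phi_{v_1}\dotsb\Phi_{v_r}\colon \mc{H}(Q_t,F_t,W_t) \to \mc{H}$ sends $\bGh(Q_t,F_t,W_t)$ to $T$. It is induced from triangle equivalences of the relative cluster categories and preserves $\mc{P}$, so it also preserves conflations. Hence it carries the index with respect to $\bGh(Q_t,F_t,W_t)$ to the index with respect to $T$. I would then use the fact, built into the construction of \cite{KellerWuJacobiinfinite}, that the canonical cluster character is compatible with these mutation equivalences. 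Concretely, $CC_{\mc{H}}\circ\Phi$ is the canonical cluster character of $\mc{H}(Q_t,F_t,W_t)$, expressed through the identification of its initial cluster with ${\bf x}_t$. This reduces the problem to the case $T = \bGh$ and $t = t_0$.

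\emph{The Caldero--Chapoton type formula.} With $T = \bGh$, I would write out the explicit formula for $CC(M)$, following \cite{Palu,PlamondonCharacter,KellerWuJacobiinfinite}. It reads
\[
CC(M) = {\bf x}^{\ind_T(M)} \sum_{{\bf e}} \chi\bigl(\mathrm{Gr}_{\bf e}(\Ext^1_{\mc{H}}(T,M))\bigr)\, \widehat{\bf y}^{\bf e},
\]
where $\Ext^1_{\mc{H}}(T,M)$ is a finite-dimensional module over the Jacobian algebra. Since $\Ext^1$ vanishes on projective-injectives, it is supported on unfrozen vertices, so ${\bf e} \in \N^n$. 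Checking that the monomial attached to ${\bf e}$ is exactly $\widehat{\bf y}^{\bf e} = {\bf x}^{\wt B{\bf e}}$ is an Euler-form computation: one compares $-\langle S_i,-\rangle$ in $\per(\bGh)$ with the columns of $\wt B$. The ${\bf e}=0$ term is the single point $\mathrm{Gr}_0$, which has Euler characteristic $1$. So $F := \sum_{\bf e}\chi(\mathrm{Gr}_{\bf e})\,{\bf y}^{\bf e}$ has constant term $1$.

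\emph{Conclusion via uniqueness.} Now let $M$ be reachable rigid, so $u = CC(M)$ is a cluster monomial. Theorem \ref{thm:g-vectors and F-polynomials} writes $u$ uniquely as ${\bf x}_t^{{\bf g}}F_u^t(\widehat{\bf y}_t)$ with $F_u^t$ having constant term $1$. Because $\wt B_t$ has full rank, the expansion of $u$ in monomials ${\bf x}_t^{\bf h}$ determines the pair consisting of the leading exponent and the $F$-polynomial. Its unique $\preceq_t$-maximal exponent is ${\bf g}^t_u$, and in our formula it is $\ind_T(M)$, the ${\bf e}=0$ term. Hence ${\bf g}^t_u = \ind_T(M)$.

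The main obstacle is the middle step: pinning down the Euler-form identity in the relative setting with frozen arrows, and the compatibility of $CC$ with $\Phi_v$. If that proves awkward, I would fall back on an induction on $r$. One shows that indices with respect to $T$ and $\mu_v T$ are related by the tropical (sign-coherent) $g$-vector mutation rule, using exchange conflations in $\mc{H}$ as in Dehy--Keller. One then matches this with the mutation rule for extended $g$-vectors.
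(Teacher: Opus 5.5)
The paper gives no proof of this statement; it quotes it from Grabowski--Pressland, so there is nothing internal to compare with, and your outline has to stand on its own. It does, with two repairs. Both routes are standard and viable:
\begin{itemize}
\item \emph{Main route.} Transport to $\bGh(Q_t,F_t,W_t)$, write the canonical character there as ${\bf x}^{\ind(M)}F(\widehat{\bf y})$ with $F(0)=1$, and conclude by uniqueness of the $g$-vector decomposition.
\item \emph{Fallback.} Induct along the mutation path, matching the Dehy--Keller rule for indices of rigid objects with the tropical rule for extended $g$-vectors. Start from a seed in which $CC(M)$ is a cluster monomial, where both vectors equal the multiplicity vector of $M$.
\end{itemize}

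\textbf{First repair: compatibility of $CC$ with $\Phi$.} Let $\sigma$ be the substitution $x_i\mapsto x_{i;t}$. The identity $CC_{\mc{H}}\circ\Phi=\sigma\circ CC_{\mc{H}_t}$ is not ``built into'' Keller--Wu's construction, which uses only the fixed object $\bGh$; it must be proved. On reachable rigid objects, which is all you need, the proof is short.
\begin{itemize}
\item $\chi=\sigma\circ CC_{\mc{H}_t}\circ\Phi^{-1}$ is again a cluster character on $\mc{H}$. This holds because $\Phi$ comes from a triangle equivalence restricting to the Higgs categories, so it preserves conflations and $\Ext^1$. That, not the preservation of $\mc{P}$, is the correct reason.
\item $\chi$ agrees with $CC_{\mc{H}}$ on $\add T$, since both send $T_i$ to $x_{i;t}$.
\item Every reachable rigid indecomposable is a summand of a cluster-tilting object reached from $T$ by mutations. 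At each mutation the exchange conflations have one-dimensional $\Ext^1$ and middle terms in the unchanged part. Axiom (3) then forces the two characters to agree on the new summand.
\end{itemize}

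\textbf{Second repair: the full-rank hypothesis.} Your uniqueness step uses that $\widetilde{B}_t$ has full rank. This holds in the framework of Section 2. It also holds whenever $\bGh$ is proper: then $\widehat{B}$ is invertible, so $\widetilde{B}$ has rank $n$, and mutation preserves rank. That is the only case in which the paper uses the theorem.

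For a general Jacobi-finite non-degenerate $(Q,F,W)$ it fails. Take $1\to2\to3$ with no frozen vertices. Then $\widehat{y}_1\widehat{y}_3=1$, so ${\bf x}^{\bf g}(1+\widehat{y}_3)={\bf x}^{{\bf g}+B{\bf e}_3}(1+\widehat{y}_1)$, and the element does not determine ${\bf g}$. In that generality $g$-vectors have to be defined via principal coefficients, and it is your fallback that proves the statement.

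A minor point on order: with $\preceq_t$ as defined, the other exponents ${\bf g}+\widetilde{B}_t{\bf e}$ lie above ${\bf g}$. So ${\bf x}^{\bf g}$ is the unique minimal term, not the maximal one; only its extremality matters.
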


\section{An additive \texorpdfstring{$\Lambda$}{}-invariant}\label{section additive lambda}

We start this section by attaching a compatible pair to any finite ice quiver whenever the so-called Euler matrix is invertible. For an ice quiver with potential $(Q,F,W)$, we prove the Euler matrix is invertible whenever $\bGh(Q,F,W)$ is proper, and give a homological interpretation for the corresponding Poisson coefficient matrix using the Higgs category. Our main result is Theorem \ref{thm:canonical quantum structure}, which gives a $\Uplambda$-cluster algebra structure for $\mc{A}_{Q,F}$ and generalizes previous results by \cite{GLSQuantumstructure} and \cite{GrabowskiPressland}. We then interpret the associated tropical invariant categorically. Our formula should be thought of as an ``additive'' $\Lambda$-invariant, since it will be used in Theorem \ref{thm:coincidence of Lambda matrices} to recover the monoidal $\Lambda$-invariant in certain cases.

\subsection{The compatible pair associated with an ice quiver}\label{section:compatible pair of ice quiver} Let $(Q,F)$ be a finite ice quiver. We label its vertices so that $Q_0 = \{1,\dots,m\}$ and $F_0 = \{n+1,\dots,m\}$ for $1 \leq n \leq m$. We define the \emph{Euler matrix} of $(Q,F)$ to be the $m \times m$ integer matrix $\widehat{B} = (b_{ij})$ given by
\[
b_{ii} = \begin{cases}
    0 &\textrm{if } 1 \leq i \leq n,\\
    1 &\textrm{otherwise,}
\end{cases}
\]
and, for $i \neq j$,
\[
b_{ij} =
    \#\{i\to j\textrm{ in } Q_1 \setminus F_1\}-\#\{j\to i \textrm{ in } Q_1\}.
\]
In the next section, we give an alternative description of this matrix using the Euler form of the perfectly valued derived category of a relative Ginzburg dg algebra. 

Note that $b_{ij} = -b_{ji}$ if $i$ or $j$ is unfrozen, but $\widehat{B}$ is not skew-symmetric if $n < m$. Moreover, the submatrix of $\widehat{B}$ defined as
\begin{equation}\label{definition B tilde submatrix}
    \widetilde{B} \coloneqq \text{ first $n$ columns of }\widehat{B}
\end{equation}
is the extended exchange matrix associated with $(Q,F_0)$ as in Remark \ref{rmk:matrices and quivers}. If $\det \widehat{B} \neq 0$, we define
\begin{equation}\label{Lambda if det invertible}
\Lambda \coloneqq \abs{\det \widehat{B}} \cdot (\widehat{B}^{-T} - \widehat{B}^{-1}).
\end{equation}
This is an $m \times m$ skew-symmetric integer matrix.

\begin{proposition}\label{compatible pair in inv case}
    Assume $\det \widehat{B} \neq 0$. The pair $(\widetilde{B},\Lambda)$ defined in \eqref{definition B tilde submatrix} -- \eqref{Lambda if det invertible} is compatible. Its type is $2\abs{\det \widehat{B}} \cdot I_n$, where $I_n$ is the $n \times n$ identity matrix.
\end{proposition}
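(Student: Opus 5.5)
We have to check three things: $\Lambda$ is a skew-symmetric integer matrix, and $\widetilde{B}^T\Lambda = (S\mid \mathbf{0})$ with $S = 2\abs{\det\widehat{B}}\, I_n$. The finiteness condition on $\widetilde{B}$ is automatic since $K$ is finite.

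Skew-symmetry is immediate, because $(\widehat{B}^{-T}-\widehat{B}^{-1})^T = \widehat{B}^{-1}-\widehat{B}^{-T}$. Integrality follows from the adjugate formula $\widehat{B}^{-1} = \operatorname{adj}(\widehat{B})/\det\widehat{B}$. Indeed, $\abs{\det\widehat{B}}\,\widehat{B}^{-1} = \pm\operatorname{adj}(\widehat{B})$, and likewise for the transpose. So $\Lambda = \pm(\operatorname{adj}(\widehat{B})^T - \operatorname{adj}(\widehat{B}))$ has integer entries.

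For the compatibility identity, the key observation is the one already noted before the statement: $b_{ij} = -b_{ji}$ whenever $i$ or $j$ is unfrozen, and $b_{jj}=0$ for $j$ unfrozen. Hence for $j \le n$, the $j$-th column of $\widehat{B}^T$ equals minus the $j$-th column of $\widehat{B}$. Equivalently, the first $n$ rows satisfy $(\widehat{B}^T)_{j,\bullet} = -\widehat{B}_{j,\bullet}$, that is, $\widetilde{B}^T = -E\widehat{B}$, where $E = (I_n \mid \mathbf{0})$ is the $n\times m$ projection onto the first $n$ coordinates. At the same time, $\widetilde{B}^T = (\widehat{B}E^T)^T = E\widehat{B}^T$. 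Using the two expressions for $\widetilde{B}^T$ on the two terms of $\Lambda$, I would compute
\[
\widetilde{B}^T\Lambda = \abs{\det\widehat{B}}\bigl(\widetilde{B}^T\widehat{B}^{-T} - \widetilde{B}^T\widehat{B}^{-1}\bigr) = \abs{\det\widehat{B}}\bigl(E\widehat{B}^T\widehat{B}^{-T} + E\widehat{B}\widehat{B}^{-1}\bigr) = 2\abs{\det\widehat{B}}\,E.
\]
Since $E = (I_n\mid\mathbf{0})$, this is exactly $(S\mid\mathbf{0})$ with $S = 2\abs{\det\widehat{B}}\, I_n$. Its diagonal entries are strictly positive because $\det\widehat{B}\ne 0$.

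The only point requiring care, and the one I would check most carefully, is the entrywise verification that $b_{ji} = -b_{ij}$ for $j\le n$ and every $i$. For $i\neq j$, any arrow between $i$ and $j$ is unfrozen because $j$ is unfrozen, so the convention of counting only unfrozen arrows in the first term agrees in both orders. For $i=j$, both sides vanish.
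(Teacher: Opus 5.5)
Your proof is correct and matches the paper's argument: both obtain $\widetilde{B}^T\widehat{B}^{-T}=(I_n\mid\mathbf{0})$ because $\widetilde{B}$ is the first $n$ columns of $\widehat{B}$, and $\widetilde{B}^T\widehat{B}^{-1}=(-I_n\mid\mathbf{0})$ because the first $n$ rows of $\widehat{B}$ equal $-\widetilde{B}^T$. Your adjugate argument for integrality and skew-symmetry of $\Lambda$ supplies a check that the paper only asserts just before the statement.
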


\begin{proof}
   We have $\widetilde{B}^T\widehat{B}^{-T} = (\widehat{B}^{-1}\widetilde{B})^T = (I_n \mid {\bf 0})$ because $\widetilde{B}$ consists of the first $n$ columns of $\widehat{B}$. Moreover, since $b_{ij} = -b_{ji}$ if $i$ or $j$ is not frozen, the submatrix of $\widehat{B}$ consisting of the first $n$ rows is $-\widetilde{B}^T$, so $\widetilde{B}^T\widehat{B}^{-1} = (-I_n \mid {\bf 0})$. We conclude that $\widetilde{B}^T\Lambda = 2\abs{\det \widehat{B}} \cdot (I_n \mid {\bf 0})$, as desired.
\end{proof}

We refer to $(\widetilde{B},\Lambda)$ as \emph{the compatible pair associated with the ice quiver} $(Q,F)$, which is defined whenever the Euler matrix is invertible. Let us study its behavior under mutations. 

Let $1 \leq v \leq n$ be an unfrozen vertex not incident to loops or $2$-cycles. Let $E_v = (e_{ij})$ be the $m \times m$ integer matrix given by
\[
e_{ij} \coloneqq \begin{cases}
    \delta_{ij} &\textrm{if } j \neq v,\\
    [b_{iv}]_+ &\textrm{if } i \neq v \textrm{ and } j = v,\\
    -1 &\textrm{if } i = j = v.
\end{cases}
\]
We remark that $E_v$ squares to the identity and the mutation rule for Poisson coefficient matrices can be written as $\mu_v(\Lambda) = E_v^T\Lambda E_v$. Indeed, $E_v$ is the matrix denoted by $E_{-}$ in \cite{BerensteinZelevinsky}.

\begin{lemma}\label{lemma:mutation of Euler matrix}
    Let $\mu_v^{\rm{FZ}}(Q,F)$ be the extended Fomin--Zelevinsky mutation of $(Q,F)$ and denote by $\mu_v(\widehat{B})$ its Euler matrix. Then $\mu_v(\widehat{B}) = E_v\widehat{B}E_v^T$.
\end{lemma}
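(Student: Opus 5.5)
Since $E_v$ is the identity except in its $v$-th column, the plan is to compute $E_v\widehat{B}E_v^T$ entry by entry and compare it with the Euler matrix of $\mu_v^{\rm{FZ}}(Q,F)$, one type of entry at a time. Writing $c_i = [b_{iv}]_+$ for $i\neq v$, the map $X \mapsto E_v X$ replaces row $v$ by $\sum_{l\neq v} c_l\,(\text{row } l) - (\text{row } v)$ and leaves the other rows unchanged. Right multiplication by $E_v^T$ acts the same way on columns. Hence
\[
(E_v\widehat{B}E_v^T)_{ij} = b_{ij} \quad (i,j\neq v),
\]
\[
(E_v\widehat{B}E_v^T)_{iv} = \sum_{l\neq v} c_l b_{il} - b_{iv}, \qquad (E_v\widehat{B}E_v^T)_{vj} = \sum_{l\neq v} c_l b_{lj} - b_{vj},
\]
and the $(v,v)$ entry is $\sum_{l,l'} c_lc_{l'}b_{ll'} - \sum_l c_l(b_{lv}+b_{vl}) + b_{vv}$.

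At first sight this seems wrong, since the entries with $i,j\neq v$ must change under mutation. The resolution is to use a different decomposition of the same product. Split $E_v = I + N$, where $N$ is supported in column $v$ with entries $c_i$ for $i\neq v$ and $-2$ at $(v,v)$. Then
\[
E_v\widehat{B}E_v^T = \widehat{B} + N\widehat{B} + \widehat{B}N^T + N\widehat{B}N^T,
\]
and this is not the same as the row/column description above. Indeed, $N\widehat{B}$ has its nonzero part in column $v$, namely $(N\widehat{B})_{ij} = N_{iv}b_{vj}$. So the row/column description was incorrect: $E_v X$ with $E_v$ nontrivial in column $v$ modifies every row by adding $c_i$ times row $v$, and row $v$ gets multiplied by $-1$. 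Correctly, then:
\[
(E_v\widehat{B}E_v^T)_{ij} = b_{ij} + c_i b_{vj} + b_{iv}c_j + c_ic_jb_{vv} \quad (i,j\neq v).
\]
Since $b_{vv}=0$, $b_{vj}=-b_{jv}$ and $b_{iv}=-b_{vi}$ (because $v$ is unfrozen), this equals $b_{ij} - [b_{iv}]_+b_{jv} + b_{iv}[b_{jv}]_+$. The remaining entries are $-b_{iv}$, $-b_{vj}$, and $b_{vv}=0$.

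The rest is a case analysis matching these formulas with the quiver. The row and column $v$ entries are just the reversal of the arrows at $v$, which gives $-b_{iv}$ and $-b_{vj}$; the diagonal entry stays $0$. For $i,j\neq v$ the increment $-[b_{iv}]_+b_{jv} + b_{iv}[b_{jv}]_+$ is checked by sign cases. It equals $b_{iv}b_{vj}$ when $i\to v\to j$, since then $b_{iv}>0$ and $b_{jv}<0$. It equals $-b_{jv}b_{vi}$ when $j\to v\to i$, and it is $0$ otherwise.

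On the quiver side, step (1) of the extended Fomin--Zelevinsky mutation adds $b_{iv}b_{vj}$ unfrozen arrows $i\to j$. Because these are unfrozen, they count in $b_{ij}$ with sign $+$, and in $b_{ji}$ with sign $-$. Step (3) cancels unfrozen $2$-cycles, which preserves both counts. Step (4) replaces a half-frozen $2$-cycle $i\rightleftarrows j$, where the frozen arrow $j\to i$ and the new unfrozen arrow $i\to j$, by a frozen arrow $i\to j$. This is the main obstacle, since here the Euler matrix is not skew-symmetric: before, $b_{ij}=1-1=0$ and $b_{ji}=-1$; after, $b_{ij}=0$ and $b_{ji}=-1$. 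So the counts are preserved, including the diagonal entry $1$ for frozen vertices. Combining the three steps, the Euler matrix changes exactly by the increments computed above, and this proves $\mu_v(\widehat{B}) = E_v\widehat{B}E_v^T$.
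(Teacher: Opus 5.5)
Your final argument is correct and is essentially the paper's proof: compute $E_v\widehat{B}E_v^T$ entrywise (obtaining $-b_{iv}$ and $-b_{vj}$ in row and column $v$, and $b_{ij}+[b_{iv}]_+b_{vj}+b_{iv}[b_{jv}]_+$ elsewhere), match the increment with the arrows created in step (1), and check that steps (3) and (4) leave the Euler matrix unchanged, a check you spell out more explicitly than the paper does. Delete the opening row/column computation, since it actually describes $E_v^T\widehat{B}E_v$ (the shape of the $\Lambda$-mutation) rather than $E_v\widehat{B}E_v^T$.
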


\begin{proof}
Since $b_{vv} = 0$, the entries of the product above are given by
\[
b_{ij}' = \begin{cases}
    -b_{ij} &\textrm{if } i =v \textrm{ or } j = v,\\
    b_{ij} + b_{iv}[b_{jv}]_+ +[b_{iv}]_+b_{vj} &\textrm{otherwise}.
\end{cases}
\]
Using that $b_{jv} = -b_{vj}$ and dividing in cases depending on the signs of $b_{iv}$ and $b_{vj}$, one shows that the second line above is equal to
\[
b_{ij} + [b_{iv}]_+[b_{vj}]_+ -[-b_{iv}]_+[-b_{vj}]_+.
\]
If $i = v$ or $j = v$, then it is clear that $b_{ij}'$ is also the entry $(i,j)$ of $\mu_v(\widehat{B})$, since the step (2) in the extended Fomin--Zelevinsky mutation reverses arrows incident to $v$ (which are all unfrozen) and the other steps do not influence this entry. If $i$ and $j$ are different from $v$, at most one of the terms $[b_{iv}]_+[b_{vj}]_+$ and $[-b_{iv}]_+[-b_{vj}]_+$ in the expression above is nonzero. Since $v$ is not incident to $2$-cycles, they count the number of arrows added between $i$ and $j$ at step (2) of the mutation. Therefore, it is clear that $E_v\widehat{B}E_v^T$ is the Euler matrix of the ice quiver $(\widetilde{\mu}_v(Q),F)$. One can easily check that the steps (3) and (4) of the extended Fomin--Zelevinsky mutation do not change the Euler matrix, so we conclude the proof.
\end{proof}

\begin{proposition}\label{proposition:compatible pair of an ice quiver respects mutation}
Assume $\det \widehat{B} \neq 0$ and let $(\widetilde{B},\Lambda)$ be the compatible pair associated with the finite ice quiver $(Q,F)$. If $v \in Q_0 \setminus F_0$ is an unfrozen vertex not incident to loops or $2$-cycles, then $(\mu_v(\widetilde{B}),\mu_v(\Lambda))$ is the compatible pair associated with $\mu_v^{\rm{FZ}}(Q,F)$.    
\end{proposition}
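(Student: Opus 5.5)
The plan is to combine Lemma \ref{lemma:mutation of Euler matrix} with the matrix form of $\Lambda$-mutation, $\mu_v(\Lambda) = E_v^T\Lambda E_v$. By Lemma \ref{lemma:mutation of Euler matrix}, the Euler matrix of $\mu_v^{\rm{FZ}}(Q,F)$ is $\widehat{B}' := E_v\widehat{B}E_v^T$.

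\textbf{Determinant and $\widetilde{B}$.} Since $E_v^2 = I$, we have $\det E_v = \pm 1$. Hence $\det\widehat{B}' = (\det E_v)^2\det\widehat{B} = \det\widehat{B} \neq 0$, so the associated compatible pair of $\mu_v^{\rm{FZ}}(Q,F)$ is defined and $\abs{\det\widehat{B}'} = \abs{\det\widehat{B}}$. Its extended exchange matrix is the first $n$ columns of $\widehat{B}'$. The computation in the proof of Lemma \ref{lemma:mutation of Euler matrix} shows that the entries of $\widehat{B}'$ are $-b_{ij}$ if $i=v$ or $j=v$, and $b_{ij}+[b_{iv}]_+[b_{vj}]_+-[-b_{iv}]_+[-b_{vj}]_+$ otherwise. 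Restricted to $j \leq n$, this is exactly the Fomin--Zelevinsky mutation rule, so the first $n$ columns equal $\mu_v(\widetilde{B})$. (Alternatively, this is the standard fact that extended FZ mutation restricts to matrix mutation when frozen arrows are disregarded.)

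\textbf{Mutated $\Lambda$.} We compute the inverses. Using $E_v^{-1} = E_v$, we get $\widehat{B}'^{-1} = E_v^{-T}\widehat{B}^{-1}E_v^{-1} = E_v^T\widehat{B}^{-1}E_v$. Similarly, $\widehat{B}'^{-T} = (E_v\widehat{B}^TE_v^T)^{-1} = E_v^T\widehat{B}^{-T}E_v$. Therefore
\[
\abs{\det\widehat{B}'}\,(\widehat{B}'^{-T}-\widehat{B}'^{-1}) = E_v^T\bigl(\abs{\det\widehat{B}}(\widehat{B}^{-T}-\widehat{B}^{-1})\bigr)E_v = E_v^T\Lambda E_v = \mu_v(\Lambda).
\]
This concludes the argument.

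The only point needing care is the identification $\mu_v(\Lambda) = E_v^T\Lambda E_v$ with the explicit mutation formula of Section \ref{section:review on cluster algebras}. The paper states this (it is the matrix $E_-$ of Berenstein--Zelevinsky), but it can be checked directly. For $i,j \neq v$, the $(i,j)$ entry is unchanged. For $j = v$, the entry $(E_v^T\Lambda E_v)_{iv} = \sum_l \lambda_{il}e_{lv} = -\lambda_{iv} + \sum_{l\neq v}[b_{lv}]_+\lambda_{il}$, which matches since $b_{vv}=0$. The $(v,v)$ entry is $0$ by skew-symmetry.
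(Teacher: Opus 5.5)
Your proposal is correct and follows the paper's proof: Lemma \ref{lemma:mutation of Euler matrix}, invariance of $\abs{\det\widehat{B}}$, and conjugating both inverses by $E_v$ using $E_v^{-1}=E_v$. The extra checks you add are details the paper leaves implicit: that the first $n$ columns of $E_v\widehat{B}E_v^T$ give $\mu_v(\widetilde{B})$, and that $E_v^T\Lambda E_v$ agrees entrywise with the $\Lambda$-mutation rule.
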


\begin{proof}
    By Lemma \ref{lemma:mutation of Euler matrix}, we have that $\mu_v(\widehat{B}) = E_v\widehat{B}E_v^T$ is the Euler matrix of $\mu_v^{\rm{FZ}}(Q,F)$. In particular, $\mu_v(\widehat{B})$ is invertible if $\widehat{B}$ is invertible, and they have the same determinant. We have
    \begin{align*}
    \mu_v(\Lambda) = E_v^T\Lambda E_v &= \abs{\det\widehat{B}} \cdot (E_v^T\widehat{B}^{-T}E_v - E_v^T\widehat{B}^{-1}E_v)\\
    &= \abs{\det\widehat{B}} \cdot ((E_v\widehat{B}E_v^T)^{-T} - (E_v\widehat{B}E_v^T)^{-1})\\
    &= \abs{\det\mu_v(\widehat{B})} \cdot (\mu_v(\widehat{B})^{-T} - \mu_v(\widehat{B})^{-1}),
    \end{align*}
    where we used that $E_v^{-1} = E_v$ in the third equality. This concludes the proof.
\end{proof}

\subsection{Homological interpretation via the Euler form} Let $(Q,F,W)$ be an ice quiver with potential. As before, we label the vertices as $Q_0 = \{1,\dots,m\}$ and $F_0 = \{n+1,\dots,m\}$ for $1 \leq n \leq m$. Denote $\bGh = \bGh(Q,F,W)$. For $P \in \per(\bGh)$ and $M \in \pvd(\bGh)$, the complex $\RHom_{\bGh}(P,M)$ has finite-dimensional total cohomology. Therefore, if $ K_0(\per(\bGh))$ and $K_0(\pvd(\bGh))$ denote the Grothendieck groups of the corresponding triangulated categories, we have a pairing
\[
\langle-,-\rangle: K_0(\per(\bGh)) \times K_0(\pvd(\bGh)) \longrightarrow \Z
\]
defined by
\[
\langle[P],[M]\rangle \coloneqq \sum_{p \in \Z}(-1)^p\dim H^p(\RHom_{\bGh}(P,M)) = \sum_{p \in \Z}(-1)^p\dim\Ext^p_{\bGh}(P,M).
\]
It is called the \emph{Euler form} of $\bGh$. For $i \in Q_0$, let $S_i \in \pvd(\bGh) \subseteq \per(\bGh)$ denote the simple $H^0(\bGh)$-module associated with $i$. By \cite[Lemma 2.15]{KellerYang}, the dimension of $\Ext^p_{\bGh}(S_i,S_j)$ is the number $a_{ij}^p$ of arrows from $j$ to $i$ of degree $-p+1$ in the quiver $\widetilde{Q}$ in the definition of $\bGh$ (or $a_{ij}^p + 1$ if $p = 0$ and $i = j$). Consequently, if $\widehat{B} = (b_{ij})$ is the Euler matrix of $(Q,F)$, then one can easily check that $b_{ij} = \langle [S_i],[S_j]\rangle$. 

Suppose $\bGh$ is proper. In this case, the inclusion $\pvd(\bGh) \subseteq \per(\bGh)$ becomes an equality, and we can compute the Euler form between a pair of perfect dg $\bGh$-modules. This allows us to compute the inverse of the Euler matrix.

\begin{lemma}\label{lemma:inverse of Euler matrix}
    If $\bGh$ is proper, then the associated Euler matrix $\widehat{B}$ is invertible over $\Z$. In this case, the entry $(i,j)$ of $\widehat{B}^{-1}$ is given by $\langle[e_j\bGh],[e_i\bGh]\rangle$ for any $1 \leq i,j \leq m$.
\end{lemma}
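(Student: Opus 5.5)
When $\bGh$ is proper we have $\per(\bGh) = \pvd(\bGh)$. I will use this to express both the simples $[S_i]$ and the projectives $[e_i\bGh]$ as bases of one Grothendieck group. The Euler form is then a bilinear form on a single lattice, and $\widehat{B}$ and the proposed inverse are its Gram matrices in two dual bases.

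\textbf{Step 1: two bases of $K_0(\per(\bGh))$.} The classes $[e_i\bGh]$, $1 \le i \le m$, form a $\Z$-basis of $K_0(\per(\bGh))$. This is standard for a non-negatively graded (pseudocompact) dg algebra with $H^0$ basic and $m$ primitive idempotents, as in Keller--Yang. Likewise, $K_0(\pvd(\bGh))$ is free with basis $[S_1],\dots,[S_m]$: every object of $\pvd$ has a finite filtration by the truncations of its cohomology with finite-dimensional subquotients, and linear independence follows from the pairing computed in Step 2. Properness gives $\pvd(\bGh) = \per(\bGh)$ as subcategories of $\mc{D}(\bGh)$, so $K_0(\pvd) = K_0(\per)$ and the Euler form becomes a bilinear form on one free abelian group of rank $m$.

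\textbf{Step 2: the pairing between the bases is the identity.} We have $\RHom_{\bGh}(e_i\bGh, S_j) = S_j e_i$, whose cohomology is $k$ in degree $0$ when $i=j$ and $0$ otherwise. Hence $\langle [e_i\bGh],[S_j]\rangle = \delta_{ij}$. This also gives the linear independence of the $[S_j]$ needed in Step 1.

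\textbf{Step 3: change of basis.} Write $[S_j] = \sum_k c_{kj}[e_k\bGh]$ and $[e_j\bGh] = \sum_k d_{kj}[S_k]$ with $C=(c_{kj})$ and $D=(d_{kj})$ integer matrices satisfying $CD = DC = I$. In particular $\det C = \pm 1$, so everything below is invertible over $\Z$. Pairing $[e_j\bGh]$ against $[e_i\bGh]$ in the second slot and using Step 2 gives
\[
\langle [e_j\bGh],[e_i\bGh]\rangle = \sum_k d_{ki}\,\delta_{jk} = d_{ji}.
\]
Similarly, using $b_{ij} = \langle [S_i],[S_j]\rangle$ and expanding the first argument in the projective basis,
\[
b_{ij} = \sum_k c_{ki}\langle [e_k\bGh],[S_j]\rangle = c_{ji},
\]
so $\widehat{B} = C^T$. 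Let $M$ be the matrix with $(i,j)$ entry $\langle [e_j\bGh],[e_i\bGh]\rangle = d_{ji}$; thus $M = D^T$. Then $\widehat{B} M = C^T D^T = (DC)^T = I$. Hence $\widehat{B}$ is invertible over $\Z$ and $\widehat{B}^{-1} = M$, which has entries $\langle[e_j\bGh],[e_i\bGh]\rangle$ as claimed.

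The only genuinely nontrivial input is Step 1, namely that $\per(\bGh)$ and $\pvd(\bGh)$ have the expected free Grothendieck groups with these bases in the completed setting. I would cite Keller--Yang's description of $\per$ and $\pvd$ for pseudocompact non-negative dg algebras rather than reprove it. After that the argument is pure bookkeeping, and the main care needed is tracking the index order in the transposes.
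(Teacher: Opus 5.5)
Your proposal is correct and is essentially the paper's proof: under properness $K_0(\pvd(\bGh))=K_0(\per(\bGh))$ carries the two bases $\{[S_i]\}$ and $\{[e_i\bGh]\}$, which are dual under the Euler form because $\langle[e_i\bGh],[S_j]\rangle=\delta_{ij}$, and the two transition matrices are $\widehat{B}^T$ and the matrix of pairings of projectives. The only difference is that the paper cites Plamondon's Lemma 2.14 where you cite Keller--Yang for the projective basis, and your bookkeeping $\widehat{B}=C^T$, $\widehat{B}^{-1}=D^T$ is right. As a small aside, that projective-basis input is not even needed: expanding $[e_i\bGh]=\sum_k d_{ki}[S_k]$ and pairing with $[S_j]$ already gives $D^T\widehat{B}=I$ over $\Z$.
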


\begin{proof}
    Since the objects in $\pvd(\bGh)$ have bounded cohomology, one can show that $K_0(\pvd(\bGh))$ is isomorphic to the Grothendieck group of the category of finite-dimensional $H^0(\bGh)$-modules. Since we are in the completed setting, this abelian group is free of rank $m$ and has as a basis the classes $[S_i]$ of the simple $H^0(\bGh)$-modules. On the other hand, one can use \cite[Lemma 2.14]{PlamondonCharacter} to show that $K_0(\per(\bGh))$ is also free of rank $m$ and has as a basis the classes $[e_i\bGh]$ of the indecomposable projective dg $\bGh$-modules. Under the hypothesis that $\bGh$ is proper, these two Grothendieck groups coincide. Using that $\langle e_i\bGh,S_j\rangle = \delta_{ij}$, we see that the transition matrix from the first basis to the second one is $(\langle [e_i\bGh],[e_j\bGh]\rangle)_{ij}$, while the transition matrix in the other direction is $\widehat{B}^T$. The lemma then follows.
\end{proof}

\Cref{lemma:inverse of Euler matrix} combined with \Cref{compatible pair in inv case} implies that the compatible pair $(\wt{B},\Lambda)$ of $(Q,F)$, constructed in \eqref{definition B tilde submatrix} -- \eqref{Lambda if det invertible}, exists when $\bGh$ is proper. Moreover, if $\Lambda = (\lambda_{ij})$, then
\[
\lambda_{ij} = \langle[e_i\bGh],[e_j\bGh]\rangle - \langle[e_j\bGh],[e_i\bGh]\rangle
\]
for all $1 \leq i,j \leq m$. We will now interpret this formula using the Higgs category $\mc{H} = \mc{H}(Q,F,W)$. Note that $(Q,F,W)$ is Jacobi-finite if we assume $\bGh$ proper. We define
\begin{equation}\label{def H-bracket}
[M,N]_{\mc{H}} \coloneqq \sum_{p \geq 0}(-1)^p(\dim\Ext^{-p}_{\mc{H}}(M,N) - \dim\Ext^{-p}_{\mc{H}}(N,M)).
\end{equation}
for $M,N \in \mc{H}$. The extension groups above are finite-dimensional since the relative cluster category is $\Hom$-finite.

\begin{lemma}\label{lemma:bracket is defined}
    If $\bGh$ is proper, then the sum $[M,N]_{\mc{H}}$ is finite for all $M,N \in \mc{H}$. Moreover, we have
    \[
    [e_i\bGh,e_j\bGh]_{\mc{H}} = \langle[e_i\bGh],[e_j\bGh]\rangle - \langle[e_j\bGh],[e_i\bGh]\rangle
    \]
    for $1 \leq i,j \leq m$.
\end{lemma}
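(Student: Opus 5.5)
We first show that the negative extension groups in $\mc{H}$ vanish in sufficiently negative degrees, and then compare the two sides of the formula using Proposition \ref{proposition:negative extensions can be computed in per Gamma}.

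\emph{Finiteness.} Since $\bGh$ is proper, $\Ext^{-p}_{\bGh}(e_i\bGh,e_j\bGh) = H^{-p}(e_j\bGh e_i)$ vanishes for $p \gg 0$. By Proposition \ref{proposition:negative extensions can be computed in per Gamma}, the same holds for $\Ext^{-p}_{\mc{H}}(e_i\bGh,e_j\bGh)$, and hence for $\Ext^{-p}_{\mc{H}}(T,T')$ with $T,T' \in \add\bGh$.

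Now take arbitrary $M,N \in \mc{H}$. Choose conflations $T_1 \rightarrowtail T_0 \twoheadrightarrow M \dashrightarrow$ and $T_1' \rightarrowtail T_0' \twoheadrightarrow N \dashrightarrow$ with $T_i,T_i' \in \add\bGh$, as in Section \ref{section:categorification after Yilin}. These come from triangles in the relative cluster category $\mc{C}$. Applying $\Hom_{\mc{C}}(-,N)$ and then $\Hom_{\mc{C}}(T,-)$ for $T \in \add\bGh$ to the resulting long exact sequences gives the following: $\Ext^{-p}_{\mc{C}}(M,N)$ is sandwiched between groups of the form $\Ext^{-p+\epsilon}_{\mc{C}}(T,T')$ with $\epsilon \in \{-1,0,1\}$. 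Hence $\Ext^{-p}_{\mc{H}}(M,N)$ also vanishes for $p \gg 0$. Since $\mc{C}$ is $\Hom$-finite, each term of \eqref{def H-bracket} is finite-dimensional, so the sum is finite.

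\emph{The formula.} Proposition \ref{proposition:negative extensions can be computed in per Gamma} gives, for all $p \geq 0$,
\[
\dim\Ext^{-p}_{\mc{H}}(e_i\bGh,e_j\bGh) = \dim\Ext^{-p}_{\bGh}(e_i\bGh,e_j\bGh).
\]
Therefore
\[
[e_i\bGh,e_j\bGh]_{\mc{H}} = \sum_{p\geq 0}(-1)^p\bigl(\dim H^{-p}(e_j\bGh e_i) - \dim H^{-p}(e_i\bGh e_j)\bigr).
\]
On the other hand, the Euler form is $\langle[e_i\bGh],[e_j\bGh]\rangle = \sum_{p\in\Z}(-1)^p\dim H^p(e_j\bGh e_i)$. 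Because $\bGh$ is concentrated in non-positive cohomological degrees (all arrows of $\widetilde{Q}$ have degree $0$, $-1$ or $-2$), only $p \leq 0$ contribute. Reindexing $p \mapsto -p$ and using $(-1)^{-p} = (-1)^p$, the Euler form equals exactly the first sum above. Doing the same for the pair $(j,i)$ and subtracting gives the claimed identity.

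\emph{Main obstacle.} The delicate step is the finiteness argument for general $M,N$. There one must check that the long exact sequences in $\mc{C}$ really bound $\Ext^{-p}(M,N)$ by groups between objects of $\add\bGh$ in nearby degrees. The key point is that the identification with $\per(\bGh)$ in Proposition \ref{proposition:negative extensions can be computed in per Gamma} holds in all degrees $\leq 1$, which covers every degree needed once $p \geq 2$. The finitely many remaining small $p$ are automatically finite-dimensional by $\Hom$-finiteness of $\mc{C}$.
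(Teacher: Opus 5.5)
Your proposal is correct and follows essentially the paper's argument: two-term $\add\bGh$-resolutions plus the long exact sequences in $\mc{C}(Q,F,W)$ reduce finiteness to the vanishing of $H^{-p}(\bGh)$ for $p \gg 0$, using Proposition \ref{proposition:negative extensions can be computed in per Gamma}. The formula likewise comes from that proposition together with $\bGh$ having cohomology only in non-positive degrees. The only difference is organizational: the paper first treats $M \in \add\bGh$ and then reduces the general case to it, whereas you resolve both $M$ and $N$ at once.
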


\begin{proof}
The second statement is an immediate consequence of Proposition \ref{proposition:negative extensions can be computed in per Gamma} and the fact that $\bGh$ is connective, that is, concentrated in non-negative degrees. Let us prove the first statement. Assume first that $M \in \add\bGh$. By taking a conflation
\begin{equation}\label{eq:resolution for N in proof of finite sum}
    \begin{tikzcd}
	{T_1} & {T_0} & N & {}
	\arrow[tail, from=1-1, to=1-2]
	\arrow[two heads, from=1-2, to=1-3]
	\arrow[dashed, from=1-3, to=1-4]
    \end{tikzcd}
\end{equation}
with $T_0,T_1 \in \add\bGh$ and applying the cohomological functor $\Hom_{\mc{C}(Q,F,W)}(M,-)$, we get a long exact sequence
\[\begin{tikzcd}[row sep=tiny]
	\dotsb & \Ext^{-1}_{\mc{H}}(M,T_1) & \Ext^{-1}_{\mc{H}}(M,T_0) & \Ext^{-1}_{\mc{H}}(M,N) \\
	{\phantom{\dotsb}} & \Hom_{\mc{H}}(M,T_1) & \Hom_{\mc{H}}(M,T_0) & \Hom_{\mc{H}}(M,N).
	\arrow[from=1-1, to=1-2]
	\arrow[from=1-2, to=1-3]
	\arrow[from=1-3, to=1-4]
	\arrow[from=2-1, to=2-2]
	\arrow[from=2-2, to=2-3]
	\arrow[from=2-3, to=2-4]
\end{tikzcd}\]
By Proposition \ref{proposition:negative extensions can be computed in per Gamma} and because $M \in \add\bGh$, the negative extensions between $M$ and the $T_i$ can be computed in $\per(\bGh)$. Since $\Ext^{-p}_{\bGh}(\bGh,\bGh) \cong H^{-p}(\bGh)$ and $\bGh$ is proper, the terms $\Ext^{-p}_{\mc{H}}(M,T_i)$ in the long exact sequence vanish for $p$ large enough. We conclude that the same holds for $\Ext^{-p}_{\mc{H}}(M,N)$. A similar argument shows that $\Ext^{-p}_{\mc{H}}(N,M)$ also vanishes for $p$ large enough, so we deduce that $[M,N]_{\mc{H}}$ is a finite sum. Finally, the general case can be similarly reduced to the case where $M \in \add\bGh$ by taking a two-term resolution of $M$ as in \eqref{eq:resolution for N in proof of finite sum}.
\end{proof}

Let $(Q,F,W)$ be a non-degenerate ice quiver with potential. We attach an ice quiver with potential $(Q_t,F_t,W_t)$ to any vertex $t \in \mathbb{T}_n$ of the $n$-regular tree by considering the sequence of mutations from the initial root to $t$. By Proposition \ref{proposition:compatible pair of an ice quiver respects mutation}, if the compatible pair associated with $(Q,F)$ is defined, then so is the compatible pair associated with $(Q_t,F_t)$, and the latter can be computed from the former by applying the corresponding sequence of mutations of compatible pairs.

\begin{theorem}\label{thm:canonical quantum structure}
    Let $(Q,F,W)$ be a non-degenerate ice quiver with potential such that $\bGh(Q,F,W)$ is proper. Let $\mc{H} = \mc{H}(Q,F,W)$ be the associated Higgs category. Then the compatible pair associated with $(Q_t,F_t)$ is defined for all $t \in \mathbb{T}_n$ and its Poisson coefficient matrix $\Lambda_t = (\lambda_{ij}^t)$ is given by
    \[
    \lambda_{ij}^t = [M_i^t,M_j^t]_{\mc{H}},
    \]
    where $M_i^t \in \mc{H}$ is the reachable rigid indecomposable object such that $CC(M^t_i)$ is the $i$-th cluster variable $x_{i;t}$ in the seed of $\mc{A}_{Q,F}$ associated with $t$. In particular, this endows $\mc{A}_{Q,F}$ with a $\Uplambda$-cluster algebra structure.
\end{theorem}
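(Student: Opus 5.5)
The plan is to reduce everything to the initial seed and transport the result along mutations using the equivalences $\Phi_v$.

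First, the existence statement. Since $\bGh(Q,F,W)$ is proper, Lemma \ref{lemma:mutation preserves properness} and induction along the path in $\mathbb{T}_n$ from the root to $t$ show that $\bGh(Q_t,F_t,W_t)$ is proper for every $t$. Non-degeneracy guarantees that every vertex along this path is free of loops and of $2$-cycles through unfrozen arrows. Hence each mutation of ice quivers with potential agrees with the extended Fomin--Zelevinsky mutation on the underlying ice quiver, and Lemma \ref{lemma:inverse of Euler matrix} applies at every $t$. Two routes then give invertibility of the Euler matrix of $(Q_t,F_t)$ and the compatible pair $(\wt B_t,\Lambda_t)$:
\begin{itemize}
\item applying Lemma \ref{lemma:inverse of Euler matrix} directly at $t$;
\item applying Proposition \ref{proposition:compatible pair of an ice quiver respects mutation} inductively.
\end{itemize}
The second route also shows that these pairs are related by mutation of compatible pairs. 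They therefore form a $\Uplambda$-seed pattern together with the clusters of $\mc{A}_{Q,F}$, which gives the final assertion once the formula for $\lambda^t_{ij}$ is established.

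Second, the formula at a fixed $t$. Let $\Psi_t=\Phi_{v_1}\dotsb\Phi_{v_r}\colon \mc{C}(Q_t,F_t,W_t)\to\mc{C}(Q,F,W)$ be the composite equivalence of relative cluster categories. It restricts to an equivalence of Higgs categories $\mc{H}_t\to\mc{H}$. By the definition of reachability and Keller--Wu's bijection, $M_i^t\cong\Psi_t(e_i\bGh_t)$, where $\bGh_t=\bGh(Q_t,F_t,W_t)$; one must check that the labelling of summands matches the labelling of cluster variables in the seed at $t$. Because $\Psi_t$ is a triangle equivalence of the ambient relative cluster categories, it preserves all $\Ext^{-p}$ groups, which are computed there. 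Hence
\[
[M_i^t,M_j^t]_{\mc{H}}=[e_i\bGh_t,e_j\bGh_t]_{\mc{H}_t}.
\]
Now apply Lemma \ref{lemma:bracket is defined} to the proper dg algebra $\bGh_t$. This identifies the right-hand side with $\langle[e_i\bGh_t],[e_j\bGh_t]\rangle-\langle[e_j\bGh_t],[e_i\bGh_t]\rangle$, computed in $\per(\bGh_t)$. By Lemma \ref{lemma:inverse of Euler matrix}, that expression is the $(i,j)$ entry of $\widehat B_t^{-T}-\widehat B_t^{-1}$.

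Third, normalisation. The definition \eqref{Lambda if det invertible} carries the factor $\abs{\det\widehat B_t}$. Lemma \ref{lemma:inverse of Euler matrix} says $\widehat B_t$ is invertible over $\Z$, so $\abs{\det\widehat B_t}=1$. The factor therefore disappears and $\lambda^t_{ij}=[M_i^t,M_j^t]_{\mc{H}}$.

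The main obstacle is the bookkeeping in the second step. One must confirm that the two routes to the Poisson matrix at $t$ agree:
\begin{itemize}
\item mutating $\Lambda_{t_0}$ by the Berenstein--Zelevinsky rule;
\item computing $\Lambda$ directly from the Euler matrix of $(Q_t,F_t)$.
\end{itemize}
They agree provided the Euler matrix of $\mu_v(Q,F,W)$, after reduction, equals that of $\mu_v^{\rm FZ}(Q,F)$. This holds by Pressland's result under non-degeneracy, combined with Proposition \ref{proposition:compatible pair of an ice quiver respects mutation}. One must also confirm that the equivalence $\Psi_t$ sends the indecomposable projectives $e_i\bGh_t$ to the objects with $CC(M_i^t)=x_{i;t}$, with matching indices. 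I would handle this by induction on the distance from the root, using at each step that $\Phi_v$ fixes $e_j\bGh$ for $j\neq v$ and replaces the $v$-th summand by its exchange partner.
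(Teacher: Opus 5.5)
Your proposal is correct and follows essentially the same route as the paper. Existence comes from Proposition \ref{proposition:compatible pair of an ice quiver respects mutation} and Lemma \ref{lemma:inverse of Euler matrix}; the formula at $t$ comes from Lemma \ref{lemma:bracket is defined} applied to $\bGh_t$; and the result is transported along the composite mutation equivalence, which preserves negative extensions because it comes from a triangle equivalence of relative cluster categories. You also make explicit two points the paper's proof leaves implicit but genuinely uses: $\bGh_t$ remains proper by Lemma \ref{lemma:mutation preserves properness}, and $\abs{\det\widehat{B}_t}=1$ because $\widehat{B}_t$ is invertible over $\Z$.
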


\begin{proof}
    Fix $t \in \mathbb{T}_n$. By Proposition \ref{proposition:compatible pair of an ice quiver respects mutation} and Lemma \ref{lemma:inverse of Euler matrix}, the compatible pair associated with $(Q_t,F_t)$ is defined. Moreover, combined with Lemma \ref{lemma:bracket is defined}, we have $\lambda_{ij}^t = [e_i\bGh_t,e_j\bGh_t]_{\mc{H}_t}$, where $\bGh_t = \bGh(Q_t,F_t,W_t)$ and $\mc{H}_t$ is the associated Higgs category. If $\Phi_t: \mc{H}_t \to \mc{H}$ is the  equivalence given by the sequence of mutations from the root of $\mathbb{T}_n$ to $t$, then it sends $e_i\bGh_t$ to $M^t_i$. Since it comes from an equivalence between the corresponding relative cluster categories, $\Phi_t$ preserves negative extensions, so we conclude that $\lambda^t_{ij}$ can be computed as above.
\end{proof}

\begin{remark}\label{rem: Frobenis exact case}
    If $\bGh$ is proper and concentrated in degree zero, then $\mc{H}$ is a Frobenius \emph{exact} category by \cite[Theorem 6.2]{YilinWuJacobifinite}. In particular, the first connecting morphism in the long exact sequence in the proof of Lemma \ref{lemma:bracket is defined} is the zero map. By this observation together with the fact that $\bGh$ has no cohomology in strictly negative degrees, we deduce that
    \[
    [M,N]_{\mc{H}} = \dim\Hom_{\mc{H}}(M,N) - \dim\Hom_{\mc{H}}(N,M)
    \]
    for $M,N \in \mc{H}$. As a corollary, we conclude that the $\Uplambda$-cluster algebra structure on $\mc{A}_{Q,F}$ arising from Theorem \ref{thm:canonical quantum structure} coincides with the one given by \cite[Theorem 6.22]{GrabowskiPressland} (see also \cite{GLSQuantumstructure}). In fact, by including the terms with negative extensions in the long exact sequences in the proof of \cite[Theorem 6.22]{GrabowskiPressland}, their argument can be easily adapted to the case where $\bGh$ is not concentrated in degree zero. This gives an alternative proof that the matrices in Theorem \ref{thm:canonical quantum structure} yield compatible pairs and that they respect the mutation rule for Poisson coefficient matrices.
\end{remark}

As recalled in \Cref{section:review on cluster algebras}, attached to the $\Uplambda$-cluster algebra structure of $\mc{A}_{Q,F}$ yielded by \Cref{thm:canonical quantum structure}, there are Cao's tropical and $F$-invariants, see \eqref{def tropical invariant} and \eqref{def F-invariant}. We conclude this section with a homological interpretation of these invariants. 

\begin{proposition}\label{prop:additive interpretation of the tropical and F invariants}
Let $(Q,F,W)$ be a non-degenerate ice quiver with potential such that $\bGh(Q,F,W)$ is proper. Let $\mc{H} = \mc{H}(Q,F,W)$ be the associated Higgs category. For reachable rigid objects $M,N \in \mc{H}$, we have
\[
\langle CC(M),CC(N)\rangle_{\rm{trop}} = \dim\Ext^1_{\mc{H}}(M,N) + [M,N]_{\mc{H}}
\]
and
\[
(CC(M) \mid\mid CC(N))_F = 2 \cdot\dim\Ext^1_{\mc{H}}(M,N).
\]
\end{proposition}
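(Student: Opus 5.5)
We compute the tropical invariant in the seed $t$ attached to $M$, i.e.\ choose a reachable cluster-tilting object $T$ with $M \in \add T$, corresponding to a vertex $t \in \mathbb{T}_n$. This is possible because $M$ is reachable rigid, so $CC(M)$ is a cluster monomial of some seed. Since $\langle-,-\rangle_{\rm trop}$ is independent of the seed by \cite[Theorem 1.3]{CaoFinvariant}, we may use this $t$. Then the $F$-polynomial $F^t_{CC(M)}$ equals $1$, so its tropical evaluation is $0$. Hence
\[
\langle CC(M),CC(N)\rangle_{\rm trop} = ({\bf g}^t_{CC(M)})^T\Lambda_t\,{\bf g}^t_{CC(N)}.
\]
By Theorem \ref{thm:g-vectors as indices}, the vector ${\bf g}^t_{CC(M)} = \ind_T(M)$ is just the class of $M$ in $K_0(\add T)$. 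Likewise ${\bf g}^t_{CC(N)} = [T_0]-[T_1]$ for a conflation $T_1 \rightarrowtail T_0 \twoheadrightarrow N \dashrightarrow$ with $T_0,T_1 \in \add T$.

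By Theorem \ref{thm:canonical quantum structure}, $\lambda^t_{ij} = [M^t_i,M^t_j]_{\mc{H}}$, and the bracket is bilinear with respect to direct sums. Therefore the right-hand side equals $[M,T_0]_{\mc{H}} - [M,T_1]_{\mc{H}}$. It remains to compare this with $[M,N]_{\mc{H}} + \dim\Ext^1_{\mc{H}}(M,N)$.

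For this we use the long exact sequences obtained by applying $\Hom_{\mc{C}}(M,-)$ and $\Hom_{\mc{C}}(-,M)$ to the triangle $T_1\to T_0\to N\to \Sigma T_1$ in the relative cluster category. In the first sequence, the terms in degrees $\le 0$ are followed by
\[
\Hom(M,N)\to \Ext^1(M,T_1)=0 .
\]
This vanishes because $M\oplus T_1 \in \add T$ is rigid. The sequence is bounded on the left, since negative extensions vanish in low degree by the argument of Lemma \ref{lemma:bracket is defined}. Taking the alternating sum of dimensions gives
\[
\textstyle\sum_{p\ge0}(-1)^p\dim\Ext^{-p}(M,N) = \sum_{p\ge0}(-1)^p\bigl(\dim\Ext^{-p}(M,T_0)-\dim\Ext^{-p}(M,T_1)\bigr).
\]

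In the second sequence, the degree $\le 0$ part continues as
\[
\Hom(T_1,M)\to\Ext^1(N,M)\to\Ext^1(T_0,M)=0 .
\]
Truncating there, the alternating sum yields
\[
\textstyle\sum_{p\ge0}(-1)^p\dim\Ext^{-p}(N,M) = \sum_{p\ge0}(-1)^p\bigl(\dim\Ext^{-p}(T_0,M)-\dim\Ext^{-p}(T_1,M)\bigr) - \dim\Ext^1(N,M).
\]
Subtracting the two identities gives
\[
[M,T_0]_{\mc{H}}-[M,T_1]_{\mc{H}} = [M,N]_{\mc{H}} + \dim\Ext^1(N,M).
\]
By the stable $2$-Calabi--Yau property, $\dim\Ext^1(N,M) = \dim\Ext^1_{\mc{H}}(M,N)$, which proves the first formula.

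The second formula follows by adding the formula for $(M,N)$ and for $(N,M)$. The brackets cancel by skew-symmetry, and the $\Ext^1$ terms agree by $2$-Calabi--Yau duality, giving $2\dim\Ext^1_{\mc{H}}(M,N)$.

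The main point needing care is the bookkeeping of the truncated long exact sequences. We must verify that the identification of $\Ext^1$ in $\mc{H}$ with $\Ext^1$ in $\mc{C}(Q,F,W)$ is valid for these triangles, since $\mc{H}$ is extension-closed. We must also check that the sum remains finite, which follows from properness as in Lemma \ref{lemma:bracket is defined}.
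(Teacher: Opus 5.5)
Your argument follows the paper's proof. You compute in a seed containing $M$, so that $F^t_{CC(M)}=1$, identify the $g$-vectors with indices, and use bilinearity of $[-,-]_{\mc{H}}$ to reduce to $[M,T_0]_{\mc{H}}-[M,T_1]_{\mc{H}}$. You then compare via the two long exact sequences and $2$-Calabi--Yau duality.

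There is one slip that needs fixing: your second displayed identity has the wrong sign on the $\Ext^1$ term. The exact sequence
\[
\dotsb \to \Ext^{-1}(T_1,M)\to\Hom(N,M)\to\Hom(T_0,M)\to\Hom(T_1,M)\to\Ext^1(N,M)\to 0
\]
has vanishing Euler characteristic, which gives
\[
\textstyle\sum_{p\ge0}(-1)^p\dim\Ext^{-p}(N,M)=\sum_{p\ge0}(-1)^p\bigl(\dim\Ext^{-p}(T_0,M)-\dim\Ext^{-p}(T_1,M)\bigr)+\dim\Ext^1(N,M).
\]
You can already see this when all negative extensions vanish: then $\dim\Hom(N,M)=\dim\Hom(T_0,M)-\dim\Hom(T_1,M)+\dim\Ext^1(N,M)$.

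With the sign you wrote, subtracting your two identities gives $[M,T_0]_{\mc{H}}-[M,T_1]_{\mc{H}}=[M,N]_{\mc{H}}-\dim\Ext^1(N,M)$. That would yield the wrong tropical formula. The identity you go on to state, with $+\dim\Ext^1(N,M)$, is correct, but it only follows once this sign is corrected. With that fix, the rest of your proof, including the deduction of the $F$-invariant formula, is complete.
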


\begin{proof}
Suppose $CC(M)$ is a cluster monomial for the seed attached to $t \in \mathbb{T}_n$. In particular, the $F$-polynomial $F^t_{CC(M)}$ is constant and equal to $1$. Thus, by the definition of the tropical invariant, we have
\[
\langle CC(M),CC(N)\rangle_{\rm{trop}} = ({\bf g}_{CC(M)}^t)^T\Lambda_t{\bf g}_{CC(N)}^t.
\]
We can compute the $g$-vectors above using Theorem \ref{thm:g-vectors as indices}. By the description of $\Lambda_t$ in Theorem \ref{thm:canonical quantum structure}, we deduce that
\[
\langle CC(M),CC(N)\rangle_{\rm{trop}} = [M,T_0]_{\mc{H}} - [M,T_1]_{\mc{H}},
\]
where $T_0$ and $T_1$ are such that there exists a conflation
\[\begin{tikzcd}
	{T_1} & {T_0} & N & {}
	\arrow[tail, from=1-1, to=1-2]
	\arrow[two heads, from=1-2, to=1-3]
	\arrow[dashed, from=1-3, to=1-4]
\end{tikzcd}\]
in $\mc{H}$ and $T_0,T_1$ belong to the cluster-tilting subcategory $\add T$ associated with $t$. Applying $\Hom_{\mc{C}(Q,F,W)}(M,-)$ and $\Hom_{\mc{C}(Q,F,W)}(-,M)$ to this conflation, we get long exact sequences
\[
\dotsb \rightarrow \Ext^{-1}_{\mc{H}}(M,N)  \rightarrow \Hom_{\mc{H}}(M,T_1)  \rightarrow \Hom_{\mc{H}}(M,T_0)  \rightarrow \Hom_{\mc{H}}(M,N)  \rightarrow 0
\]
and
\begin{align*}
\dotsb \rightarrow \Ext^{-1}_{\mc{H}}(N,M)  \rightarrow \Hom_{\mc{H}}(N,M)  \rightarrow \Hom_{\mc{H}}(T_0,M)  &\rightarrow \Hom_{\mc{H}}(T_1,M)\\
&\rightarrow \Ext^1_{\mc{H}}(N,M)  \rightarrow 0,
\end{align*}
where we used that $\Ext^1_{\mc{H}}(M,T_1) = \Ext^1_{\mc{H}}(T_0,M) = 0$ since $M,T_0,T_1 \in \add T$. Taking the difference of the Euler characteristics of these exact sequences, we deduce that
\[
[M,T_0]_{\mc{H}} - [M,T_1]_{\mc{H}} = \dim\Ext^1_{\mc{H}}(N,M) + [M,N]_{\mc{H}}.
\]
By the $2$-Calabi--Yau property, $\dim\Ext^1_{\mc{H}}(N,M) = \dim\Ext^1_{\mc{H}}(M,N)$, whence the formula for the tropical invariant. Finally, we have
\begin{align*}
    (CC(M) \mid\mid CC(N))_F &= \langle CC(M),CC(N)\rangle_{\rm{trop}} + \langle CC(N),CC(M)\rangle_{\rm{trop}}\\
    &= (\dim\Ext^1_{\mc{H}}(M,N) + [M,N]_{\mc{H}}) + (\dim\Ext^1_{\mc{H}}(N,M) + [N,M]_{\mc{H}})\\
    &= 2 \cdot \dim\Ext^1_{\mc{H}}(M,N)
\end{align*}
again by the $2$-Calabi--Yau property and by the skew-symmetry of $[-,-]_{\mc{H}}$.
\end{proof}

\begin{remark}\label{rem: recovering sym E invariant}
Keep the notation and hypotheses of the proposition above. By Proposition \ref{prop:Ext1 is the same in Higgs and absolute cluster categories}, we have
\[
\Ext^1_{\mc{H}}(M,N) \cong \Ext^1_{\overline{\mc{C}}}(p^*(M),p^*(N)),
\]
where $\ov{\mc{C}} = \mc{C}(\ov{Q},\ov{W})$ and $(\ov{Q},\ov{W})$ is the quiver with potential obtained from $(Q,F,W)$ by deleting the frozen vertices. Moreover, by \cite[Proposition 4.15]{PlamondonCluster} and since $p^*(M)$ and $p^*(N)$ are rigid, the dimension of the extension group above coincides with the $E$-invariant $E^{\rm{sym}}(\mc{M},\mc{N})$ of the corresponding pair of decorated representations $\mc{M}$ and $\mc{N}$ of $(\ov{Q},\ov{W})$ (in the sense of \cite{DerksenWeymanZelevinskyII}). Hence, by Proposition \ref{prop:additive interpretation of the tropical and F invariants}, we have
\[
(CC(M)\mid\mid CC(N))_F = 2 \cdot E^{\rm{sym}}(\mc{M},\mc{N}).
\]
This recovers \cite[Theorem 6.11]{CaoFinvariant} in our setting. We remark that the $F$-invariant above can be computed on the level of the cluster algebra with trivial coefficients $\mc{A}_{\ov{Q}}$ by \cite[Section 4.4]{CaoFinvariant}, which is the setting in the cited theorem. Furthermore, the term $2$ appears because our compatible pairs are of type $2I_n$.
\end{remark}

\section{Background on monoidal categorification}\label{section:background on monoidal categorification}

We provide the necessary background on the monoidal categorification of cluster algebras via quantum affine algebras, following the work of Kashiwara--Kim--Oh--Park \cite{KKOPmonI,KKOPmonII,KKOPmonIII}. After recalling some relevant properties of their $\Lambda$- and $\dd$-invariants, we state their result on $\Uplambda$-monoidal categorification using the category $\mathscr{C}_{\mf{g}}^{[a,b],\mc{D},\un{w}_0}$. We then describe the ice quiver of an initial monoidal seed in $\mathscr{C}_{\mf{g}}^{[a,b],\mc{D},\un{w}_0}$ and upgrade it to an ice quiver with potential that we will use in the subsequent sections to construct the corresponding additive categorification. For a particular case already considered by Hernandez--Leclerc in \cite{HLcluster}, we also present the modules that constitute this initial monoidal seed.

\subsection{Quantum affine algebras and the \texorpdfstring{$\Lambda$}{}-invariant} We follow the conventions of \cite{KKOPmonI,KKOPmonII}, where the reader can find more details and references.

We denote by ${\bf k}$ the algebraic closure of the subfield $\Cbb(q)$ in the algebraically closed field $\bigcup_{m > 0}\Cbb(\!(q^{1/m})\!)$, where $q$ is an indeterminate. Let $\mf{g}$ be an affine Kac--Moody Lie algebra. We denote by $U'_q(\mf{g})$ the associated quantum affine algebra, which is a Hopf algebra over ${\bf k}$. We work with the category $\mathscr{C}_{\mf{g}}$ of finite-dimensional integrable $U'_q(\mf{g})$-modules. It is a ${\bf k}$-linear non-semisimple abelian length category, and the coproduct of $U'_q(\mf{g})$ endows $\mathscr{C}_{\mf{g}}$ with the structure of a monoidal category, which is rigid in the sense that any $V \in \mathscr{C}_{\mf{g}}$ has a left dual $\DD^{-1}(V)$ and a right dual $\DD(V)$. We extend this notation and define $\DD^n(V)$ for any $n \in \Z$ in the natural way.

Although $\mathscr{C}_{\mf{g}}$ is not braided as a monoidal category, any two simple objects $V,W \in \mathscr{C}_{\mf{g}}$ determine a \emph{universal $R$-matrix} (introduced in \cite{DrinfeldICM}), which yields a canonical map $V \ten W \to W \ten V$ that is generically an isomorphism. Using the renormalizing coefficient of $V$ and $W$ coming from this universal $R$-matrix, Kashiwara--Kim--Oh--Park define in \cite{KKOPmonI} the \emph{$\Lambda$-invariant} $\Lambda(V,W) \in \Z$ and the \emph{$\dd$-invariant}
\[
\dd(V,W) = \frac{1}{2}(\Lambda(V,W) + \Lambda(W,V)).
\]
We remark that $\dd(V,W) \in \Z_{\geq 0}$ by Proposition 3.16 and Corollary 3.19 in \cite{KKOPmonI}.

The following result describes the $\Lambda$-invariant in terms of the $\dd$-invariant and the functors $\mathscr{D}^{\pm 1}$. Notice the similarity with the formula for the tropical invariant in Proposition \ref{prop:additive interpretation of the tropical and F invariants}.
\begin{proposition}\label{prop: lambda as sum of delta}
Let $V$ and $W$ be simple modules in $\mathscr{C}_{\mf{g}}$. Then 
\begin{align*}
\Lambda(V,W) & = \dd (V,W) +  \sum_{n \geq 1} (-1)^{n-1} \left[\dd(V, \DD^{-n}(W)) - \dd(V, \DD^n(W)) \right] \\
& = \dd (V,W) +  \sum_{n \geq 1} (-1)^{n-1} \left[\dd(V, \DD^{-n}(W))  - \dd(W, \DD^{-n}(V)) \right]
\end{align*}
\end{proposition}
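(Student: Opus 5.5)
The plan is to use the known properties of the $\Lambda$-invariant from \cite{KKOPmonI}, in particular its behavior under the duality functors. The key facts are the following, for simple $V,W$:
\[
\Lambda(V,W) = \Lambda(W,\DD(V)) = \Lambda(\DD^{-1}(W),V),
\]
which holds because the $R$-matrix for $(V,W)$ corresponds, through the adjunction between $V$ and its duals, to the $R$-matrix for $(W,\DD V)$. Together with the definition $\dd(V,W)=\tfrac12(\Lambda(V,W)+\Lambda(W,V))$, this gives $\Lambda(V,W) + \Lambda(W,V) = 2\dd(V,W)$. Combining the two, we obtain a telescoping recursion. Write $\Lambda(W,V)=\Lambda(\DD^{-1}V,W)$, or rather $\Lambda(W,V)=\Lambda(V,\DD^{-1}W)$ after applying the invariance twice, suitably arranged. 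Then
\[
\Lambda(V,W) = 2\dd(V,W) - \Lambda(V,\DD^{-1}(W)).
\]
We also use the symmetric identity $\Lambda(V,W) = -2\dd(V,\DD W)+\ldots$, which we derive in the same way.

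The first key step is to iterate this recursion in both directions, starting from the averaged form
\[
\Lambda(V,W)=\dd(V,W)+\tfrac12\bigl(\Lambda(V,W)-\Lambda(W,V)\bigr).
\]
Using $\Lambda(W,V)=\Lambda(V,\DD W)$, we expand $\Lambda(V,W)$ towards $\DD^{-n}$ and $\Lambda(V,\DD W)$ towards $\DD^{n}$. This produces
\[
\Lambda(V,W)=\dd(V,W)+\sum_{n=1}^{N}(-1)^{n-1}\bigl[\dd(V,\DD^{-n}W)-\dd(V,\DD^nW)\bigr]+R_N,
\]
where the remainder is $R_N=\pm\tfrac12\bigl(\Lambda(V,\DD^{-N}W)-\Lambda(V,\DD^{N+1}W)\bigr)$, up to bookkeeping of indices. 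Both sides of each inductive step are to be checked by direct substitution of the recursion.

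The second step, which I expect to be the main obstacle, is showing that the remainder vanishes for large $N$ and that the sum is finite. For this I will use the result of \cite{KKOPmonI} that $\dd(V,\DD^{k}W)=0$ for all but finitely many $k$. This holds because the denominator of the normalized $R$-matrix has finitely many roots, and the spectral shifts of $\DD^kW$ escape to infinity. I will then argue that $\Lambda(V,\DD^{k}W)$ is eventually constant in $k$ along each parity, and in fact eventually zero by \cite[Cor.~3.19]{KKOPmonI}, since $\Lambda$ equals minus the order of the zero of the denominator when the pair is generic. The same statement can also be read off from the recursion once $\dd$ vanishes together with the growth bound on $\Lambda$. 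If direct vanishing is delicate, I will instead cite \cite[Proposition 3.18]{KKOPmonI}, which writes $\Lambda(V,W)=\sum_k(-1)^{k+\delta(k<0)}\dd(V,\DD^kW)$ directly. In that case the first line of the statement is just a rearrangement of that formula.

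Finally, the second line follows from the first by rewriting
\[
\dd(V,\DD^nW)=\dd(\DD^{-n}V,W)=\dd(W,\DD^{-n}V),
\]
using the invariance of $\dd$ under simultaneous application of $\DD^{-n}$ (since $\DD$ is monoidal up to order reversal, and $\Lambda$ is invariant under applying $\DD$ to both arguments) together with the symmetry of $\dd$.
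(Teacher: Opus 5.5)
Your telescoping route, which you present as the main argument, has a genuine gap.

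First, a minor slip. $\Lambda(M,N)=\Lambda(N,\DD M)$ gives $\Lambda(W,V)=\Lambda(V,\DD W)$, not $\Lambda(V,\DD^{-1}W)$. With $a_k=\Lambda(V,\DD^kW)$ and $d_k=\dd(V,\DD^kW)$, the recursion is therefore $a_k+a_{k+1}=2d_k$, and your displayed $\Lambda(V,W)=2\dd(V,W)-\Lambda(V,\DD^{-1}(W))$ is off by one.

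The real issue is that this recursion determines $a_0$ only up to adding the homogeneous solution $c(-1)^k$, which is bounded. Hence neither the finiteness of $\{k : d_k\neq 0\}$ nor any growth bound can pin down $\Lambda(V,W)$. Your two-sided remainder is $\tfrac{(-1)^N}{2}(a_N+a_{-N})$, and its vanishing for $N\gg 0$ is equivalent to the proposition itself.

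The reason you give for that vanishing, namely that $a_k$ is eventually zero, is false. Take $\mf{g}$ of type $\mathsf{A}_1^{(1)}$ and $V=W=L(Y_{1,a})$. Then $\DD^kW\cong L(Y_{1,aq^{2k}})$, so $d_{\pm 1}=1$ and $d_k=0$ otherwise. Also $a_0=\Lambda(V,V)=\dd(V,V)=0$ since $V$ is real. The recursion then forces $a_k=2(-1)^k$ for $k\geq 2$ and $a_{-k}=2(-1)^{k+1}$ for $k\geq 1$.

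The two tails do cancel in $a_N+a_{-N}$. But this matching between $k\to+\infty$ and $k\to-\infty$ does not follow from $\Lambda(M,N)=\Lambda(N,\DD M)$ and the definition of $\dd$. In \cite{KKOPmonI} it comes from the analysis of the renormalizing coefficient $c_{V,W}(z)$ in terms of denominators of normalized $R$-matrices, i.e.\ from the proof of the very formula you fall back on.

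That fallback is exactly the paper's proof. The first line is a rearrangement of $\Lambda(V,W)=\sum_{k\in\Z}(-1)^{k+\delta(k<0)}\dd(V,\DD^kW)$, which the paper cites as \cite[Proposition 3.22]{KKOPmonI} (not 3.18). Your derivation of the second line via $\dd(V,\DD^nW)=\dd(\DD^{-n}V,W)=\dd(W,\DD^{-n}V)$ is the paper's argument, using \cite[Lemma 3.7]{KKOPmonI} and the symmetry of $\dd$. So the proposal is correct only through that citation, and the telescoping part should be dropped rather than offered as an independent derivation.
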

\begin{proof}
    The first equality is \cite[Proposition 3.22]{KKOPmonI}. For the second we use the symmetry of the $\dd$-invariant and \cite[Lemma 3.7]{KKOPmonI}, which implies that $\dd(V',W') = \dd(\DD^{-1}(V'),\DD^{-1}(W'))$ for any simple objects $V',W' \in \mathscr{C}_{\mf{g}}$. We obtain that $\dd(V, \DD^n(W)) =\dd(W, \DD^{-n}(V))$ and hence the desired equality.
\end{proof}

There is an important class of simple $U_q'(\mf{g})$-modules in $\mathscr{C}_{\mf{g}}$ called the \emph{Kirillov-Reshetikhin} (KR) \emph{modules}. We recall their definition in the case where $\mf{g}$ is \emph{untwisted of simply-laced type}, that is, where it is of type $\mathsf{A}^{(1)}\mathsf{D}^{(1)}\mathsf{E}^{(1)}$ in Kac's classification \cite[Chapter 4]{KacInfDimLieAlgebras}. In this case, let $\Delta$ denote the corresponding Dynkin diagram of \emph{finite} type $\mathsf{ADE}$ obtained by removing an extending vertex from the Dynkin diagram of $\mf{g}$. Consider the infinite set of variables $\mathscr{Y} = \{Y_{i,a} \mid i \in \Delta_0, a \in {\bf k}^{\times}\}$. By the work of Chari--Pressley (see \cite[Theorem 12.2.6]{ChariPressleyGuideQuantumGroups}) and Frenkel--Reshetikhin \cite{FreRes99}, the simple objects in $\mathscr{C}_{\mf{g}}$ can be parametrized by the highest dominant monomial of their $q$-characters, which is a monomial with non-negative exponents in the variables of $\mathscr{Y}$. For such a monomial $m$, denote by $L(m)$ the corresponding simple module. For $i \in \Delta_0$, $a \in {\bf k}^{\times}$ and $r \geq 1$, the associated KR-module is defined as
\[
W^{(i)}_{r,a} = L(Y_{i,a}Y_{i,aq^2}\dotsb Y_{i,aq^{2(r-1)}}).
\]
We have an explicit description of the duality functors on such modules. Denote by $\mathbb{D}: \mathscr{Y} \to \mathscr{Y}$ the bijection sending $Y_{i,a}$ to $Y_{i^*,aq^h}$, where $i \mapsto i^*$ is the involution on $\Delta_0$ induced by the longest element of the Weyl group of $\Delta$ and $h$ is the Coxeter number of $\Delta$. We extend $\mathbb{D}$ to a bijection on the set of monomials in the variables of $\mathscr{Y}$. By \cite[Proposition 5.5]{FujitaOh} (and the references therein), we have $\DD^{n}(L(m)) \cong L(\mathbb{D}^n(m))$ for any dominant monomial $m$ in the variables of $\mathscr{Y}$ and $n \in \Z$. In particular, observe that $\DD^{-1}(W^{(i)}_{r,a}) \cong W^{(i^*)}_{r,aq^{-h}}$.

\subsection{\texorpdfstring{$\Uplambda$}{}-monoidal categorification}\label{Section: background quiver and cat}

Let $\mc{C}$ be a full subcategory of $\mathscr{C}_{\mf{g}}$ that contains the trivial module and is stable under tensor products, subquotients, and extensions. Denote by $K_0(\mc{C})$ the Grothendieck group of $\mc{C}$. It becomes a ring under the product induced by the monoidal structure. We say $\mc{C}$ is a \emph{monoidal categorification} of a cluster algebra $\mc{A}$ if it is endowed with a $\Z$-algebra isomorphism
\[
\varphi: K_0(\mc{C}) \longrightarrow \mc{A}
\]
such that any cluster monomial in $\mc{A}$ is the image of the class of a simple module. In this case, a simple object $V \in \mc{C}$ is \emph{reachable} if $\varphi([V])$ is a cluster monomial. We stress that $\varphi$ is part of the data of the monoidal categorification. For simplicity, we will denote $\varphi(V) \coloneqq \varphi([V])$ for $V \in \mathscr{C}_{\mf{g}}$. We allow the cluster algebra to have infinite rank and denote its set of indices by $K = K^{\rm{un}} \sqcup K^{\rm{fr}}$.

For a seed $\mc{S} = ({\bf x}_t, \widetilde{B}_t)$ in $\mc{A}$, write ${\bf x}_t = \{x_{i;t}\}_{i \in K}$. It determines (up to isomorphism) a set $\mc{V}_t = \{V_{i;t}\}_{i \in K}$ of simple modules in $\mc{C}$ defined by $\varphi(V_{i;t}) = x_{i;t}$. Note that $V_{i;t}$ and $V_{j;t}$ \emph{strongly commute} for any $i,j \in K$ in the sense that their tensor product is simple (since $\varphi(V_{i;t} \ten V_{j;t})$ is a cluster monomial). In particular, each $V_{i;t}$ is a \emph{real} simple module, that is, its tensor square is simple. We call the pair $\mathscr{S} = (\mc{V}_t,\widetilde{B}_t)$ a \emph{(reachable) monoidal seed} in $\mc{C}$. For $v \in K^{\rm{un}}$, we denote by $\mu_v(\mathscr{S})$ the monoidal seed that corresponds to the mutation of $\mc{S}$ in the direction of $v$. More generally, we define a monoidal seed in $\mc{C}$ as a pair $(\mc{V},\wt{B})$ where $\wt{B}$ is a $K \times K^{\rm{un}}$ extended exchange matrix and $\mc{V}$ is a strongly commuting family of real simple modules in $\mc{C}$ indexed by $K$.

Any monoidal seed $\mathscr{S} = (\mc{V}_t,\wt{B}_t)$ in $\mc{C}$ determines a square matrix $\Lambda_t = (\lambda^t_{ij})_{i,j \in K}$ given by the $\Lambda$-invariant
\[
\lambda_{ij}^t = \Lambda(V_{i;t},V_{j;t}).
\]
Since $\mc{V}_t$ is a strongly commuting family of real simple modules, we have $\dd(V_{i;t},V_{j;t}) = 0$ by \cite[Corollary 3.17]{KKOPmonI} and so $\Lambda_t$ is skew-symmetric. We say that the monoidal categorification is a \emph{$\Uplambda$-monoidal categorification} if the pair $(\widetilde{B}_t,\Lambda_t)$ is compatible for some reachable monoidal seed. In this case, by \cite[Proposition 6.4]{KKOPmonI} (see also \cite[Proposition 5.13]{CaoFinvariant}), this compatibility condition holds for \emph{any} reachable monoidal seed and, if we denote $\mu_v(\mathscr{S}) = (\mc{V}_{t'},\wt{B}_{t'})$ and the corresponding skew-symmetric matrix by $\Lambda_{t'}$, then $(\wt{B}_{t},\Lambda_t)$ and $(\wt{B}_{t'},\Lambda_{t'})$ are related by mutation of compatible pairs in the direction of $v \in K^{\rm{un}}$. Consequently, $\mc{A}$ is a $\Uplambda$-cluster algebra.

\begin{remark}
    The notion of monoidal categorification was first introduced in \cite{HLduke}, while that of $\Uplambda$-monoidal categorification in \cite{KKOPmonI} (building on \cite{KKKOjams}). Observe that the definition in \cite{KKOPmonI} is slightly more restrictive than the one given above, which is based on \cite[Section 5]{CaoFinvariant}.
\end{remark}

The following is one of the main results in \cite{CaoFinvariant} and a major motivation for the definition of the tropical and $F$-invariants.

\begin{theorem}[{\cite[Corollary 5.17]{CaoFinvariant}}]\label{thm:Lambda invariant as a tropical invariant}
    Let $\mc{C}$ be a $\Uplambda$-monoidal categorification of a $\Uplambda$-cluster algebra $\mc{A}$. Let $\varphi: K_0(\mc{C}) \to \mc{A}$ be the associated isomorphism. For reachable simple objects $V,W \in \mc{C}$, we have
    \[
    \langle\varphi(V),\varphi(W)\rangle_{\rm{trop}} = \Lambda(V,W)
    \]
    and
    \[
    (\varphi(V)\mid\mid\varphi(W))_F = 2 \cdot \dd(V,W).
    \]
\end{theorem}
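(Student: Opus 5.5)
Since $\mc{C}$ is a $\Uplambda$-monoidal categorification, $\mc{A}$ carries the $\Uplambda$-seeds $({\bf x}_t,\wt{B}_t,\Lambda_t)$ with $\lambda^t_{ij}=\Lambda(V_{i;t},V_{j;t})$. By \cite[Theorem 1.3]{CaoFinvariant}, $\langle-,-\rangle_{\rm trop}$ does not depend on the seed used to compute it. The plan is to pick a seed adapted to $V$. Choose $t\in\mathbb{T}$ such that $\varphi(V)$ is a cluster monomial in ${\bf x}_t$. Then $F^t_{\varphi(V)}=1$, and $V\cong\bigotimes_i V_{i;t}^{\ten a_i}$ with ${\bf a}={\bf g}^t_{\varphi(V)}\in\N^{K}$. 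Since $F[{\bf r}]=0$ for the constant polynomial $1$, definition \eqref{def tropical invariant} gives
\[
\langle\varphi(V),\varphi(W)\rangle_{\rm trop}=\sum_{i,j}a_i\,\Lambda(V_{i;t},V_{j;t})\,g_j ,\qquad {\bf g}={\bf g}^t_{\varphi(W)} .
\]
It therefore suffices to prove two additivity statements: $\Lambda(V,W)=\sum_i a_i\Lambda(V_{i;t},W)$, and $\Lambda(V_{i;t},W)=\sum_j \Lambda(V_{i;t},V_{j;t})g_j$ for each $i$.

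The first statement is additivity of $\Lambda$ in the first variable over strongly commuting real factors. I would cite the corresponding result of \cite{KKOPmonI}: if $X,Y$ are real and $X\ten Y$ is simple, then $\Lambda(X\ten Y,Z)=\Lambda(X,Z)+\Lambda(Y,Z)$ for every simple $Z$. This follows from the renormalized $R$-matrix of $X\ten Y$ being the composition of those of $X$ and $Y$. Applying this lemma inductively to the factors of $V$ gives the first statement.

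For the second statement, write ${\bf g}={\bf g}^+-{\bf g}^-$ with ${\bf g}^{\pm}\in\N^K$ and set $X=\bigotimes_j V_{j;t}^{\ten g^-_j}$ and $Y=\bigotimes_j V_{j;t}^{\ten g^+_j}$. The key input I want is that $Y$ is the head of $W\ten X$, i.e. $Y\cong W\nabla X$. This is the monoidal shadow of the leading term ${\bf x}_t^{\bf g}$ in Theorem~\ref{thm:g-vectors and F-polynomials}: multiplying $\varphi(W)$ by $\varphi(X)$ gives a polynomial in ${\bf x}_t$ whose $\preceq_t$-minimal monomial is ${\bf x}_t^{{\bf g}^+}$, with coefficient $1$. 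I would derive it from the KKOP description of mutations through heads of tensor products, inducting along the mutation path from an initial seed. I would then use the KKOP identity $\Lambda(M,X\nabla Y')=\Lambda(M,X)+\Lambda(M,Y')$, valid when $M$ is real and strongly commutes with $X$, taking $M=V_{i;t}$. Since $V_{i;t}$ strongly commutes with every $V_{j;t}$, this gives
\[
\Lambda(V_{i;t},W)+\Lambda(V_{i;t},X)=\Lambda(V_{i;t},Y),
\]
and expanding $\Lambda(V_{i;t},X)$ and $\Lambda(V_{i;t},Y)$ by the first additivity lemma yields the claimed formula. I expect this step to be the main obstacle: I must ensure that the head identification holds for every reachable $W$ (not only for cluster variables) and that the hypotheses of the KKOP additivity lemma are met. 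If the head identification is troublesome, I would instead argue by induction on the distance in $\mathbb{T}$, using that both sides transform under mutation of $\Lambda_t$ by the same rule $E_v^T\Lambda_tE_v$ (\cite[Proposition 6.4]{KKOPmonI}) together with the tropical mutation rule for $g$-vectors.

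Finally, the $F$-invariant formula follows formally. By \eqref{def F-invariant} and the first part,
\[
(\varphi(V)\mid\mid\varphi(W))_F=\Lambda(V,W)+\Lambda(W,V)=2\,\dd(V,W),
\]
by the definition of the $\dd$-invariant.
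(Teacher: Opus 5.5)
Your reduction is sound. Choosing $t$ so that $\varphi(V)$ is a cluster monomial kills the $F$-term; this is the same trick the paper uses in Proposition~\ref{prop:additive interpretation of the tropical and F invariants}, and the paper itself only quotes \cite[Corollary 5.17]{CaoFinvariant}. Additivity of $\Lambda(-,W)$ over the strongly commuting factors of $V$ is fine, and the $F$-invariant formula is then formal. So everything rests on your second statement, $\Lambda(V_{i;t},W)=\sum_j\lambda^t_{ij}g_j$, which is the real content of the theorem. There the proposal has a genuine gap.

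First, the tensor order is wrong. The identity you quote requires the real module to commute with the \emph{first} factor of the head. For your head $W\nabla X$ that factor is $W$, with which $V_{i;t}$ does not commute, so the identity gives $\Lambda(V_{i;t},-)$ on $X\nabla W$, not on $W\nabla X$. For $W\nabla X$ the applicable identity is the companion one, $\Lambda(N\nabla M,L)=\Lambda(N,L)+\Lambda(M,L)$ for $L$ real commuting with $M$. If $Y\cong W\nabla X$ held, this would yield $\Lambda(W,V_{i;t})=\sum_j g_j\lambda^t_{ji}$. Adding the (true) formula you are after, skew-symmetry of $\Lambda_t$ forces $\dd(V_{i;t},W)=0$ for every $i$, i.e.\ $W$ commutes with the whole seed. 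That fails in general, for instance for $W=V_{k;t'}$ one mutation away, since $\dd(V_{k;t},V_{k;t'})>0$. So the key input as you state it is false; what you need is $Y\cong X\nabla W$.

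Second, even in the correct order, this head identification is the whole difficulty, and your sketch does not establish it. The identity $\varphi(X)\varphi(W)={\bf x}_t^{{\bf g}^+}F^t_{\varphi(W)}(\widehat{\bf y}_t)$ only describes the class of $X\ten W$. To conclude that the head of $X\ten W$ is $\bigotimes_j V_{j;t}^{\ten g^+_j}$ you need two further facts:
\begin{itemize}
\item every simple object of $\mc{C}$, including non-reachable composition factors of $X\ten W$, has a class pointed with respect to $t$;
\item there is a reason why the head, rather than some other composition factor, carries the extremal $g$-vector.
\end{itemize}
That is a monoidal $g$-vector theorem of Kashiwara--Kim--Oh--Park type (cf.\ \cite{KashiwaraKimLaurent}). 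It has to be quoted or proved, not obtained by an unspecified induction along mutation paths.

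Your fallback does not work either. The rule $\Lambda_{t'}=E_v^T\Lambda_tE_v$ only concerns $\Lambda$ between modules of the seeds. It says nothing about $\Lambda(V_{k;t'},W)$ for a general $W$ at the mutated index $k$. On the tropical side, that entry picks up an extra term $s_k[-g_k]_+$, where $S=\mathrm{diag}(s_1,\dots,s_n)$; this comes from the piecewise-linear mutation of $g$-vectors. Matching it would require a max-type identity for $\Lambda(V_{k;t'},W)$. You cannot get that from the additivity lemma applied to $V_{k;t}\nabla V_{k;t'}$, since $W$ commutes with neither factor.
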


Let us now briefly recall the $\Uplambda$-monoidal categorification obtained in \cite{KKOPmonII,KKOPmonIII}. By \cite[Theorem 3.6]{KKOPSimplyLacedRootSystems}, one can associate a simply-laced finite type Dynkin diagram $\Delta$ to $U'_q(\mf{g})$. If $\mf{g}$ is untwisted of simply-laced type, then $\Delta$ is the corresponding Dynkin diagram of finite type. In \cite{KKOP_PBWQuantumAffine}, the notion of \emph{complete duality datum} for $U'_q(\mf{g})$ is introduced, which depends on $\Delta$ and is intimately connected to the quantum affine Schur--Weyl duality of \cite{KKKinv}. A \emph{complete PBW-pair} is a pair $(\mc{D},\un{w}_0)$ where $\mc{D}$ is a complete duality datum for $U'_q(\mf{g})$ and $\un{w}_0 = (i_1,\dots,i_{l(w_0)}) \in \Delta^{l(w_0)}$ is a reduced expression of the longest element $w_0$ of the Weyl group of type $\Delta$. By \cite{KKOP_PBWQuantumAffine}, any such pair determines a family $\{\mathsf{S}_s\}_{s \in \Z}$ of simple modules in $\mathscr{C}_{\mf{g}}$ called the \emph{affine cuspidal modules}.

For $a,b \in \Z \cup \{\pm\infty\}$ such that $a \leq b$, we denote
\[
[a,b] = \{s \in \Z \mid a \leq s \leq b\}.
\]
Given such an interval and a complete PBW-pair $(\mc{D},\un{w}_0)$, Kashiwara--Kim--Oh--Park \cite{KKOPmonII} define $\mathscr{C}_{\mf{g}}^{[a,b],\mc{D},\un{w}_0}$ as the smallest full subcategory of $\mathscr{C}_{\mf{g}}$ that is closed under taking tensor products, subquotients and extensions, and contains the trivial module and the affine cuspidal modules $\mathsf{S}_s$ associated with $(\mc{D},\un{w}_0)$ for $s \in [a,b]$.

\begin{theorem}[{\cite{KKOPmonII,KKOPmonIII}}]\label{thm:KKOP monoidal categorification}
    The Grothendieck ring of $\mathscr{C}_{\mf{g}}^{[a,b],\mc{D},\un{w}_0}$ has a $\Uplambda$-cluster algebra structure and $\mathscr{C}_{\mf{g}}^{[a,b],\mc{D},\un{w}_0}$ is a $\Uplambda$-monoidal categorification of this $\Uplambda$-cluster algebra.
\end{theorem}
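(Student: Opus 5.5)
This theorem is quoted from \cite{KKOPmonII,KKOPmonIII} and is not reproved in the paper; what follows is only a sketch of how the cited argument is organised, taking the tools developed in those works as given.

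\textbf{Step 1: an initial monoidal seed.} For finite $b$, one exhibits an explicit family of real simple modules $\mc{V}=\{V_i\}_{i\in K}$ in $\mathscr{C}_{\mf{g}}^{[a,b],\mc{D},\un{w}_0}$. These are products of affine cuspidal modules $\mathsf{S}_s$ over subintervals determined by $\un{w}_0$, in the spirit of the determinantial modules of \cite{BerensteinFominZelevinsky,GLSKacMoodyGroups}. One pairs $\mc{V}$ with the exchange matrix $\wt{B}$ of the ice quiver $(Q^{[a,b]}(\un{w}_0),F^{[a,b]}(\un{w}_0))$. Then one checks two things:
\begin{enumerate}[(1)]
\item the $V_i$ pairwise strongly commute;
\item the matrix $\Lambda=(\Lambda(V_i,V_j))$ satisfies $\wt{B}^T\Lambda=(S\mid{\bf 0})$.
\end{enumerate}
Both are computed from $\Lambda$- and $\dd$-invariants between cuspidal modules, using the PBW theory of \cite{KKOP_PBWQuantumAffine} and T-systems. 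The T-system relations also give short exact sequences realising the first mutation at each unfrozen vertex.

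\textbf{Step 2: the general admissibility criterion.} Next one invokes the criterion of \cite{KKOPmonI}. If the initial monoidal seed is $\Uplambda$-admissible, then iterated mutation always stays within real simples and cluster monomials map to classes of simple modules. Admissibility means that for each unfrozen $v$ there is a simple $V_v'$ with $\dd(V_v,V_v')=1$ fitting into the exchange sequence. The criterion then gives a $\Uplambda$-monoidal categorification of the cluster algebra generated inside $K_0$.

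\textbf{Step 3: identifying the whole Grothendieck ring.} One must show that the image of $K_0$ equals the cluster algebra and not merely contains it. The plan is to show that the classes of cuspidal modules, which generate $K_0$, are cluster variables. This is done by exhibiting explicit mutation sequences, for instance by passing between reduced words $\un{w}_0$ via commutation and braid moves, each of which is realised by mutations.

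\textbf{Infinite intervals and main obstacle.} For infinite intervals, one takes the limit over finite $[a,b]$, using compatibility of seeds under interval extension. I expect the main obstacle to be Step 2's admissibility for arbitrary $(\mc{D},\un{w}_0)$ and non-simply-laced $\mf{g}$. There, one would argue via the generalized Schur--Weyl duality functor from quiver Hecke algebras \cite{KKKinv}, transporting the known categorification of quantum unipotent coordinate rings \cite{KKKOjams}.
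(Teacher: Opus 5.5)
Like the paper, you import this result from \cite{KKOPmonII,KKOPmonIII} rather than proving it. Your outline is a fair summary of how those works proceed: an initial seed of affine determinantial modules, T-systems, the $\Uplambda$-admissibility criterion of \cite{KKOPmonI}, and a separate identification of $K_0$ with the cluster algebra.

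Two small corrections. First, the seed modules are simple heads of ordered tensor products of the $\mathsf{S}_s$ along $i$-boxes, not products, so their classes are not monomials in the $[\mathsf{S}_s]$. Second, as far as I recall, for general $[a,b]$ admissibility in \cite{KKOPmonII} comes from T-systems among affine determinantial modules and box moves on chains of $i$-boxes. It does not come from transporting \cite{KKKOjams} through the Schur--Weyl functor, which by itself only reaches intervals of length at most $l(w_0)$.
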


We describe an initial monoidal seed for the $\Uplambda$-monoidal categorification above in the next section.

\subsection{Descriptions for the initial monoidal seed}\label{section:initial monoidal seed} From now on, $\Delta$ denotes the simply-laced finite type Dynkin diagram associated to $U_q'(\mf{g})$. For two vertices $i,j \in \Delta_0$, we write $i \sim j$ to denote that $i$ and $j$ are adjacent in $\Delta$. Let $w_0$ the longest element of the Weyl group of type $\Delta$ and denote by $i \mapsto i^*$ the involution on $\Delta_0$ defined by $w_0(\alpha_i) = -\alpha_{i^*}$, where $\alpha_i$ is the simple root associated with $i \in \Delta_0$. For a reduced expression $\un{w}_0 = (i_1,\dots,i_{l(w_0)})$ for $w_0$, we define an infinite sequence $\widehat{\un{w}}_0 = (i_s)_{s \in \Z}$ extending $\un{w}_0$ and such that $i_{s+l(w_0)} = i_s^*$ for all $s \in \Z$. We define
\[
s^- = \max\{t \in \Z \mid t<s, i_t=i_s\} \quad \textrm{and} \quad s^+ = \min\{t \in \Z \mid t>s, i_t=i_s\}
\]
for $s \in \Z$. We now use the sequence $\widehat{\un{w}}_0$ to attach an ice quiver (with potential) to any integer interval bounded on the right.

Fix $a \in \Z \cup \{-\infty\}$ and $b \in \Z$ with $a \leq b$. We define a quiver $Q^{[a,b]}(\underline{w}_0)$ as follows. Its vertex set is the interval $[a,b]$ and there is an arrow $s \to t$ if and only if one of the conditions below holds:
\begin{enumerate}[(1)]
    \item $t = s^-$;
    \item $i_s \sim i_t$ and $s^- < t^- < s < t$;
    \item $i_s \sim i_t$, $s < t$ and $s^-,t^- < a$.
\end{enumerate}
We declare a vertex $s$ to be frozen if $s^- < a$, and we denote by $F^{[a,b]}(\underline{w}_0)$ the full subquiver on frozen vertices. This yields an ice quiver $(Q^{[a,b]}(\un{w}_0),F^{[a,b]}(\un{w}_0))$. Note that $F^{[a,b]}(\underline{w}_0)$ is not empty precisely when $a \in \Z$, and an arrow $s \to t$ is frozen if and only if condition (3) holds. In this case, we also define a potential $W^{[a,b]}(\underline{w}_0)$ on $Q^{[a,b]}(\underline{w}_0)$ as the sum of all simple cordless cycles. More precisely, for any arrow $s \to t$ with $s < t$ and $t$ unfrozen, there is a unique cycle in $Q^{[a,b]}(\underline{w}_0)$ of the form
\[\begin{tikzcd}
	& s & \\
	{t_l} & \cdots & t_1
	\arrow[from=1-2, to=2-3]
	\arrow[from=2-1, to=1-2]
	\arrow[from=2-2, to=2-1]
	\arrow[from=2-3, to=2-2]
\end{tikzcd}\]
where $t_1 = t$ and $t_{i+1} = t_i^-$ for $1 \leq i < l$.  Then $W^{[a,b]}(\underline{w}_0)$ is the sum of all such cycles.

\begin{remark}
When drawing the ice quiver $(Q^{[a,b]}(\underline{w}_0), F^{[a,b]}(\underline{w}_0))$, we use the following conventions. We draw the vertices on horizontal lines indexed by $\Delta_0$. We place a vertex $s$ on the line $i_s$ and, if $s < t$, then $s$ is to the left of $t$. Thus, the only horizontal arrows in the picture are those of the form $s \to s^-$, which point to the left. We depict a frozen vertex inside a blue box, and frozen arrows are also blue.
\end{remark}

\begin{example}
    For $\Delta = \mathsf{A}_3$ and $\underline{w}_0 = (1,2,3,2,1,2)$, we have
    \[
    \widehat{\underline{w}}_0 = (\dots,3,2,1,\underbrace{2,3,2,1,2,3,2,1,2}_{[-2,6]},3,2,1,2,3,2,\dots).
    \]
    The ice quiver $(Q^{[-2,6]}(\underline{w}_0),F^{[-2,6]}(\underline{w}_0))$ is given by
    \[\begin{tikzcd}[sep=small]
    &&& \color{blue}\boxed{1} &&&& 5\\
	\color{blue}\boxed{-2} && 0 && 2 && 4 && 6 \\
	& \color{blue}\boxed{-1} &&&& 3 &&&
	\arrow[from=3-2, to=2-3]
	\arrow[from=3-6, to=3-2]
	\arrow[from=3-6, to=2-7]
	\arrow[color=blue, from=2-1, to=3-2]
	\arrow[color=blue, from=2-1, to=1-4]
	\arrow[from=2-3, to=3-6]
	\arrow[from=2-3, to=2-1]
	\arrow[from=2-5, to=2-3]
	\arrow[from=2-5, to=1-8]
	\arrow[from=2-7, to=2-5]
	\arrow[from=2-9, to=2-7]
	\arrow[from=1-4, to=2-5]
	\arrow[from=1-8, to=2-9]
	\arrow[from=1-8, to=1-4]
    \end{tikzcd}\]
    The potential $W^{[-2,6]}(\underline{w}_0)$ is the sum of the six simple cordless cycles.
\end{example}

By the next result, the ice quiver above describes the $\Uplambda$-cluster algebra structure from Theorem \ref{thm:KKOP monoidal categorification} for right-bounded intervals.

\begin{proposition}[{\cite{KKOPmonII,KKOPmonIII}}]\label{prop:quiver of initial monoidal seed of KKOP}
    Let $(\mc{D},\un{w}_0)$ be a complete PBW-pair. Let $a \in \Z \cup \{-\infty\}$ and $b \in \Z$ be such that $a \leq b$. Then $K_0(\mathscr{C}_{\mf{g}}^{[a,b],\mc{D},\un{w}_0})$ has a $\Uplambda$-seed whose extended exchange matrix is the one associated to the ice quiver $(Q^{[a,b]}(\un{w}_0),F^{[a,b]}(\un{w}_0))$.
\end{proposition}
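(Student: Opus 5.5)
This proposition is cited from \cite{KKOPmonII,KKOPmonIII}, so the plan is to extract it from their construction rather than reprove the monoidal categorification. Kashiwara--Kim--Oh--Park build, for a right-bounded interval $[a,b]$ with $b \in \Z$, an initial monoidal seed $\mathscr{S}^{[a,b]} = (\mc{V},\wt{B})$. Its modules are the $i$-boxes $M[s^{-}_{[a]},s]$ for $s \in [a,b]$, that is, the T-system-type modules obtained from the affine cuspidal modules $\mathsf{S}_t$ with $t \leq s$ and $i_t = i_s$, starting from the first such $t \geq a$. Its exchange matrix is defined combinatorially from $\widehat{\un{w}}_0$. The first step is to recall this seed and show that, after identifying the index $s$ with the vertex $s$ of $Q^{[a,b]}(\un{w}_0)$, the frozen indices are exactly those $s$ with $s^- < a$. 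The identification comes from checking that the box $[s^-_{[a]},s]$ has left endpoint as small as possible.

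The second step is to compare the matrix entries arrow by arrow. I would take the KKOP matrix, which is built from the GLS/BFZ-type rule for the word $\widehat{\un{w}}_0$ restricted to $[a,b]$, and check that for $i \neq j$ the entry $b_{ij}$ equals the number of arrows $i \to j$ minus the number of arrows $j \to i$. The horizontal arrows $s \to s^-$ come from type (1). The ``crossing'' arrows with $i_s \sim i_t$ and $s^- < t^- < s < t$ come from type (2). Frozen–frozen arrows of type (3) do not contribute to $\wt{B}$, which only has unfrozen columns (Remark~\ref{rmk:matrices and quivers}). So it is enough to match condition (2), together with (1), against the KKOP quiver, for pairs in which at least one endpoint is unfrozen.

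The main obstacle is reconciling conventions: the sign conventions for $b_{ij}$, the direction of the word (left versus right endpoints of $i$-boxes), and the boundary behaviour near $a$. The delicate case is $t^- < a \leq s^-$ and its variants, where the truncation at $a$ could create or destroy arrows. I would handle this by first treating $a = -\infty$, where it is just the known quiver of \cite{KKOPmonII}, and then arguing that truncating at $a$ is the same as freezing the vertices with $s^- < a$ and deleting those below $a$. KKOP show this compatibility via their restriction to the subcategory $\mathscr{C}_{\mf{g}}^{[a,b],\mc{D},\un{w}_0}$. Finally, the $\Uplambda$-structure is already supplied by Theorem~\ref{thm:KKOP monoidal categorification}, so the statement follows once the two matrices agree.
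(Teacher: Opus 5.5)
Your plan matches the paper, which simply cites \cite{KKOPmonII,KKOPmonIII} and notes that the only difference from their quiver is the type (3) arrows; these all join two frozen vertices and so do not affect $\widetilde{B}$. One correction to how you recall the seed: the module at $s$ is the $i$-box running from $s$ to the last occurrence of $i_s$ in $[a,b]$, not from the first occurrence to $s$. This is exactly why the frozen vertices are those with $s^-<a$, why $a=-\infty$ is allowed, and why your restrict-and-freeze step for finite $a$ works (compare Remark~\ref{rmk:initial monoidal seed of finite interval}).
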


\begin{remark}
    Our quivers are not exactly the ones appearing in \cite{KKOPmonII,KKOPmonIII}. Indeed, they do not impose the condition (3) in our definition of $Q^{[a,b]}(\underline{w}_0)$. Therefore, our quiver may have additional arrows, but they are all between frozen vertices and do not influence the extended exchange matrix in the proposition above. We remark that ice quivers with potential similar to ours have appeared in \cite{BuanIyamaReitenScott} and \cite{BuanIyamaReitenSmith}. On the other hand, they consider a signed version of the potential we consider, and so does \cite{ContuMonoidalAdditive}. We adopt the same signs as in \cite{HLcluster}.
\end{remark}

The simple modules in the monoidal seed that corresponds to the $\Uplambda$-seed in Proposition \ref{prop:quiver of initial monoidal seed of KKOP} are examples of \emph{affine determinantial modules}. We give a more explicit description of this monoidal seed in the case where $\mf{g}$ is untwisted of simply-laced type and $(\mc{D},\un{w}_0)$ is an adapted PBW-pair. In this case, these simple modules will be KR-modules. The general setting is treated in \cite{KKOPmonII}.

We first need to recall some definitions. Let $Q$ be a quiver. We say a sequence ${\bf v} = (v_1,\dots,v_r)$ of vertices of $Q$ is a \emph{source} (resp. \emph{sink}) sequence if $v_i$ is a source (resp. sink) in $\mu_{v_{i-1}}\dotsb\mu_{v_2}\mu_{v_1}(Q)$ for all $1 \leq i \leq r$. Alternatively, we say that ${\bf v}$ is \emph{adapted} to $Q$ if it is a source sequence.

Fix $\epsilon: \Delta_0 \to \{0,1\}$ such that $\epsilon_i \neq \epsilon_j$ if $i \sim j$, where $\epsilon_i \coloneqq \epsilon(i)$. A \emph{height function} is a function $\xi: \Delta_0 \to \Z$ such that $\xi_i \coloneqq \xi(i)$ has the same parity as $\epsilon_i$ and $|\xi_i - \xi_j| = 1$ for $i \sim j$. It determines an orientation $Q_{\xi}$ of $\Delta$ by defining an arrow $i \to j$ if $\xi_j = \xi_i + 1$. We say $i \in \Delta_0$ is a \emph{source} (resp. \emph{sink}) of $\xi$ if and only if $i$ is a source (resp. sink) of $Q_{\xi}$. If $i$ is a source of $\xi$, we define the \emph{reflection} $s_i\xi: \Delta_0 \to \Z$ of $\xi$ at $i$ to be the function such that $(s_i\xi)_j = \xi_j$ for $j \neq i$ and $(s_i\xi)_i = \xi_i + 2$. Note that $s_i\xi$ is indeed a height function and $Q_{s_i\xi}$ is the reflection $\mu_i(Q_{\xi})$ of $Q_{\xi}$ at the source $i$. Similarly, if $i$ is a sink of $\xi$, we define the \emph{reflection} of $\xi$ at $i$ as a height function $s_i^{-1}\xi: \Delta_0 \to \Z$ given by $(s_i^{-1}\xi)_j = \xi_j$ for $j \neq i$ and $(s_i^{-1}\xi)_i = \xi_i - 2$.

\begin{remark}
    We follow a convention opposite to that of \cite{FujitaOh} and \cite{KKOPmonII} to pass between height functions and quivers. In particular, a source (resp. sink) of a height function in our sense is the same as a sink (resp. source) in their sense.
\end{remark}

Let $Q_{\rm{HL}}$ be the quiver whose vertex set is the subset of $\Delta_0 \times \Z$ of pairs $(i,p)$ such that $p$ has the same parity as $\epsilon_i$. We define an arrow from $(i,p)$ to $(j,r)$ if
\begin{enumerate}[(1)]
    \item $i \sim j$ and $r = p+1$;
    \item $i = j$ and $r = p-2$.
\end{enumerate}
For a height function $\xi$, we let $Q_{\rm{HL}}^{<\xi}$ be the full subquiver of $Q_{\rm{HL}}$ on the vertices $(i,p)$ such that $p < \xi_i$.

\begin{lemma}\label{lemma:identification of HL with GLS quivers}
    Let $\xi: \Delta_0 \to \Z$ be a height function. Let $\un{w}_0 = (i_1,\dots,i_{l(w_0)})$ be a reduced expression of the longest element that is adapted to $Q_{\xi}$. Then there is a unique isomorphism $Q_{\rm{HL}}^{<\xi} \to Q^{[-\infty,0]}(\un{w}_0)$ sending a vertex of the form $(i,p)$ to a vertex in the $i$-th row of $Q^{[-\infty,0]}(\un{w}_0)$.
\end{lemma}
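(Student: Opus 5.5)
I will build the map explicitly from the combinatorics of the adapted word and then check that arrows match row by row. Since $\un{w}_0$ is adapted to $Q_{\xi}$, the standard theory of adapted words (the Auslander--Reiten / Hernandez--Leclerc description of $\widehat{\un{w}}_0$) says the following. Going leftwards from $0$ in $\widehat{\un{w}}_0$, the sequence $(i_0, i_{-1}, i_{-2},\dots)$ is a sink sequence obtained by repeatedly reflecting $\xi$ at sinks. Here $i_0$ is a sink of $\xi$, $i_{-1}$ is a sink of $s_{i_0}^{-1}\xi$, and so on. I would first verify this, using $i_{s+l(w_0)} = i_s^*$ and the fact that $(i_1,\dots,i_{l(w_0)})$ adapted to $Q_\xi$ forces $(i_{1-l(w_0)},\dots,i_0) = (i_1^*,\dots,i_{l(w_0)}^*)$ to be adapted to a quiver obtained from $Q_\xi$ by a full Coxeter sequence of reflections. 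Reading it backwards gives a sink sequence ending at $\xi$. Concretely, I would set $\xi^{(0)} = \xi$ and $\xi^{(s-1)} = s_{i_s}^{-1}\xi^{(s)}$ for $s \leq 0$, and define the map by
\[
s \longmapsto (i_s, \xi^{(s)}_{i_s} - 2) = (i_s, \xi^{(s-1)}_{i_s}).
\]

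Next I would show this map is a bijection onto the vertices $(i,p)$ with $p<\xi_i$, sending row $i$ to row $i$. Each reflection at $i$ lowers $\xi_i$ by $2$, and each vertex of $\Delta$ is reflected infinitely often, since every $i$ occurs infinitely often in $\widehat{\un{w}}_0$. Hence the occurrences of $i$ in $(-\infty,0]$, listed from right to left, go to $(i,\xi_i-2), (i,\xi_i-4),\dots$ bijectively. Uniqueness is then immediate: an isomorphism preserving rows must preserve the horizontal arrows $s \to s^-$ and $(i,p)\to(i,p-2)$. On each row these form an infinite path with a unique starting vertex (the rightmost one), so the map on each row is forced.

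Then I would match arrows. Vertical arrows of type (1), $s \to s^-$, map to $(i,p)\to(i,p-2)$ by construction. For $i\sim j$, an arrow $(i,p)\to(j,p+1)$ in $Q^{<\xi}_{\rm HL}$ must correspond to an arrow of type (2): $s\to t$ with $s^-<t^-<s<t$, where $i_s=i$ and $i_t=j$. Since $a=-\infty$, condition (3) never applies. The key observation is that along the sink sequence, for adjacent $i\sim j$, the heights always satisfy $|\xi^{(u)}_i - \xi^{(u)}_j| = 1$, so occurrences of $i$ and $j$ interleave. Between two consecutive reflections at $i$, exactly one reflection at $j$ occurs. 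I would then check the following correspondence. Given $s$ with image $(i,p)$, take $t$ to be the unique occurrence of $j$ with $t^-<s<t$. Interleaving gives $s^-<t^-$, and the height at $t$ comes out as $p+1$. Conversely, every type-(2) arrow arises this way. This is a short height computation: at the moment $s$ is reflected, $i$ is a sink, so $\xi_j = \xi_i - 1$ at that stage.

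The main obstacle is the first step: justifying cleanly that $\widehat{\un{w}}_0$ restricted to $(-\infty,0]$ is a sink sequence compatible with $\xi$. This rests on the twisted periodicity $i_{s+l(w_0)}=i_s^*$ and on the identity $\xi$ reflected along a full adapted reduced word of $w_0$ equals $\xi^*$ shifted by $h$. I would cite the standard fact that for $\un{w}_0$ adapted to $Q_\xi$, reflecting $\xi$ at all letters of $\un{w}_0$ gives $i\mapsto \xi_{i^*}+h$. Everything after that is bookkeeping.
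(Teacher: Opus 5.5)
Your argument is correct, but it takes a different route from the paper. The paper proves the lemma by citation alone, deducing the isomorphism from \cite[Proposition 7.27]{KKOPmonII} together with KKOP's description of admissible sequences \cite[Proposition 6.11]{KKOPmonII}.

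You instead give a self-contained combinatorial proof. It rests on two inputs: the standard fact (Bédard) that reflecting $\xi$ along an adapted reduced word of $w_0$ produces $i\mapsto \xi_{i^*}+h$, and the twisted periodicity $i_{s+l(w_0)}=i_s^*$. From these you get that $(i_0,i_{-1},i_{-2},\dots)$ is an infinite sink sequence starting at $\xi$. The rest of the argument then runs as follows:
\begin{itemize}
\item the map $s\mapsto (i_s,\xi^{(s)}_{i_s}-2)$ is a row-preserving bijection;
\item horizontal arrows match by construction;
\item strict alternation of reflections at adjacent vertices gives $s^-<t^-<s<t$ when $t$ is the first occurrence of $j$ after $s$;
\item the sink condition $\xi^{(s)}_j=\xi^{(s)}_i-1$ gives the height $p+1$.
\end{itemize}
I checked these steps and they go through, including the uniqueness argument via the rays formed by horizontal arrows.

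Two small points. First, make explicit the boundary case where that first occurrence $t$ of $j$ is positive. The same computation then gives $\xi_j=p+1$, so $(j,p+1)\notin Q^{<\xi}_{\rm{HL}}$ and no arrow is missed. Second, the reversed word $(i_0,\dots,i_{1-l(w_0)})$ is a sink sequence \emph{starting} at $\xi$, not ending there; your construction of $\xi^{(s)}$ already uses it this way.

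Your route gives an explicit description of the isomorphism: the $k$-th vertex from the right in row $i$ goes to $(i,\xi_i-2k)$. This is exactly the labelling the paper uses implicitly later, e.g.\ in Proposition~\ref{proposition:mutation sequence the gives left dual}. It also makes uniqueness transparent. The paper's citation instead places the quiver inside KKOP's framework of admissible sequences and duality data, which is the setting needed anyway to match it with their monoidal seed.
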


\begin{proof}
    It follows from \cite[Proposition 7.27]{KKOPmonII} and the description of admissible sequences in \cite[Proposition 6.11]{KKOPmonII}.
\end{proof}

\begin{remark}\label{rmk:reflecting height function gives all cases}
    Note that any quiver $Q^{[-\infty,b]}(\un{w}_0)$ is isomorphic to $Q^{[-\infty,0]}(\un{w}_0')$ for some reduced expression $\un{w}_0'$. To obtain $\un{w}_0'$, one can shift the sequence $\widehat{\un{w}}_0$ by $b$ units and take the corresponding reduced expression. This shift operation corresponds to applying reflections to $\xi$ in the lemma above.
\end{remark}

For the rest of this section, we assume $\mf{g}$ is untwisted of simply-laced type. By \cite[Theorem 6.12]{KKOPmonII}, a height function $\xi$ determines a complete duality datum $\mc{D}_{\xi}$ for $U_q'(\mf{g})$. We will say that a complete PBW-pair $(\mc{D},\un{w}_0)$ is \emph{adapted to an orientation of $\Delta$} if there exists a height function $\xi: \Delta_0 \to \Z$ such that $\mc{D} = \mc{D}_{\xi}$ and $\un{w}_0$ is adapted to $Q_{\xi}$. In this case, we describe a monoidal seed of $\mathscr{C}_{\mf{g}}^{[-\infty,0],\mc{D},\un{w}_0}$.

For a vertex $(i,p)$ in $Q_{\rm{HL}}^{<\xi}$, we attach the KR-module $\mc{M}^{<\xi}_{(i,p)} = W^{(i)}_{r,q^p}$, where $r \geq 1$ is the largest integer such that $p + 2(r-1) < \xi_i$. This gives a triple $\mathscr{S}^{<\xi} = (\{\mc{M}^{<\xi}_{(i,p)}\},Q_{\rm{HL}}^{<\xi},\varnothing)$, where the set $\{\mc{M}^{<\xi}_{(i,p)}\}$ is indexed by the vertices of $Q_{\rm{HL}}^{<\xi}$. The empty set in the triple represents the set of frozen vertices of $Q_{\rm{HL}}^{<\xi}$. In the next result, we will view $\mathscr{S}^{<\xi}$ as a monoidal seed whose extended exchange matrix comes from the ice quiver $(Q_{\rm{HL}}^{<\xi},\varnothing)$.

\begin{theorem}[{\cite[Section 8]{KKOPmonII}}]\label{thm:initial monoidal seed for HL category}
    Suppose $\mf{g}$ is untwisted of simply-laced type. Let $\xi: \Delta_0 \to \Z$ be a height function. Let $\mc{D}_{\xi}$ be the corresponding complete duality datum and choose a reduced expression $\un{w}_0$ for the longest element adapted to $Q_{\xi}$. Then $\mathscr{S}^{<\xi}$ is a reachable monoidal seed in $\mathscr{C}_{\mf{g}}^{[-\infty,0],\mc{D}_{\xi},\un{w}_0}$ for the $\Uplambda$-monoidal categorification in Theorem \ref{thm:KKOP monoidal categorification}.
\end{theorem}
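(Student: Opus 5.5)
The plan is to deduce the statement from the general description of the initial monoidal seed in \cite{KKOPmonII}. By Proposition \ref{prop:quiver of initial monoidal seed of KKOP}, $\mathscr{C}_{\mf{g}}^{[-\infty,0],\mc{D}_{\xi},\un{w}_0}$ has a reachable monoidal seed whose exchange matrix comes from $(Q^{[-\infty,0]}(\un{w}_0),\varnothing)$. Its modules are the affine determinantial modules $\mathsf{M}(s)$, $s \leq 0$. Here $\mathsf{M}(s)$ is the simple head of the ordered tensor product of the affine cuspidal modules $\mathsf{S}_t$, taken over $s \leq t \leq 0$ with $i_t = i_s$. So it suffices to do two things. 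First, transport this seed along the isomorphism $Q_{\rm{HL}}^{<\xi} \cong Q^{[-\infty,0]}(\un{w}_0)$ of Lemma \ref{lemma:identification of HL with GLS quivers}. Second, show that under this isomorphism $\mathsf{M}(s) \cong \mc{M}^{<\xi}_{(i,p)}$ whenever the vertex $(i,p)$ corresponds to $s$. Note that the frozen set is empty because $a = -\infty$, which matches the triple $\mathscr{S}^{<\xi}$.

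The first concrete step is to identify the affine cuspidal modules. For the duality datum $\mc{D}_{\xi}$ and a word $\un{w}_0$ adapted to $Q_{\xi}$, I would use \cite[Theorem 6.12]{KKOPmonII} together with the description of admissible sequences in \cite[Proposition 6.11]{KKOPmonII}. These should show that $\mathsf{S}_s$ is the fundamental module $L(Y_{i_s,q^{p_s}})$, where $(i_s,p_s)$ is the vertex of $Q_{\rm{HL}}^{<\xi}$ matched with $s$ by Lemma \ref{lemma:identification of HL with GLS quivers}. The key consistency check uses $i_{s+l(w_0)} = i_s^*$. Under $\DD$, which acts by $\mathbb{D}$, this says $\mathsf{S}_{s+l(w_0)} \cong \DD(\mathsf{S}_s)$. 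This agrees with $Y_{i,a} \mapsto Y_{i^*,aq^h}$ and with the fact that a full period of $\widehat{\un{w}}_0$ raises heights by $h$. On a single row $i$, consecutive occurrences $s < s^+$ then have parameters $p_{s^+} = p_s + 2$. The largest such parameter below $0$ is the one just below $\xi_i$.

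Second, for the vertex $s$ with coordinates $(i,p)$, the cuspidals entering $\mathsf{M}(s)$ are $L(Y_{i,q^{p}}), L(Y_{i,q^{p+2}}), \dots, L(Y_{i,q^{p+2(r-1)}})$. Here $r$ is exactly the largest integer with $p + 2(r-1) < \xi_i$. The simple head of the ordered tensor product of such a $q^2$-string of fundamental modules is the KR-module $W^{(i)}_{r,q^p}$. This is standard: it follows from the dominant monomial of the head being $Y_{i,q^p}\dotsb Y_{i,q^{p+2(r-1)}}$. Hence $\mathsf{M}(s) \cong \mc{M}^{<\xi}_{(i,p)}$. The exchange matrices agree by the quiver isomorphism, so $\mathscr{S}^{<\xi}$ is the KKOP initial seed and is therefore reachable.

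The main obstacle I expect is the first step: pinning down the precise spectral parameters of the $\mathsf{S}_s$ for $\mc{D}_{\xi}$. The convention mismatch between height functions here and those of \cite{FujitaOh,KKOPmonII} (sources versus sinks) makes off-by-one shifts easy. I would first verify it in type $\mathsf{A}_1$ and $\mathsf{A}_2$ before trusting the general matching. If needed, I would use Remark \ref{rmk:reflecting height function gives all cases} to reduce to a convenient normalization of $\xi$.
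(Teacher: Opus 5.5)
Your proposal is correct and is essentially the argument behind the citation. The paper gives no proof of its own and simply invokes \cite[Section 8]{KKOPmonII}, where, for $\mc{D}_\xi$ and $\un{w}_0$ adapted to $Q_\xi$, the affine cuspidal modules are identified with fundamental modules, so the affine determinantial modules of the initial seed for $[-\infty,0]$ become exactly the KR-modules $\mc{M}^{<\xi}_{(i,p)}$.

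One point needs tightening. The head of a tensor product of a $q^2$-string of fundamental modules is $W^{(i)}_{r,q^p}$ only in the order where the tensor product is generated by the tensor product of highest $\ell$-weight vectors. In the opposite order it can be a different simple; for example, for $r=2$ in type $\mathsf{A}_1$ the head is the trivial module. So you should check that the order Kashiwara--Kim--Oh--Park use for affine determinantial modules is the cyclic one, rather than arguing from the dominant monomial alone.
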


\begin{remark}
    Let $\xi: \Delta_0 \to \Z$ be the unique height function that only assumes the values $1$ and $2$. As explained in \cite[p. 894]{KKOPmonII}, the category $\mathscr{C}_{\mf{g}}^{[-\infty,0],\mc{D}_{\xi},\un{w}_0}$ above coincides with the category $\mathscr{C}^-$ in \cite{HLcluster}. Moreover, the assignment of KR-modules in the monoidal seed $\mathscr{S}^{<\xi}$ is the one given previously by Hernandez--Leclerc.
\end{remark}

\begin{remark}\label{rmk:initial monoidal seed of finite interval}
    For a height function $\xi: \Delta_0 \to \Z$, let $\mc{D}_{\xi}$ and $\un{w}_0$ be as in Theorem \ref{thm:initial monoidal seed for HL category}. For an integer $a \leq 0$, a reachable monoidal seed for $\mathscr{C}_{\mf{g}}^{[a,0],\mc{D}_{\xi},\un{w}_0}$ can be obtained by restricting $\mathscr{S}^{<\xi}$ to the subquiver that corresponds to $Q^{[a,0]}(\un{w}_0)$ under the isomorphism of Lemma \ref{lemma:identification of HL with GLS quivers} and freezing the vertices that correspond to $F^{[a,0]}(\un{w}_0)$. This follows from \cite[Lemma 7.15]{KKOPmonII} (see also the proof of their Proposition 8.11). Moreover, by applying reflections to $\xi$ as in Remark \ref{rmk:reflecting height function gives all cases}, we can replace the right boundary of the interval in the theorem and in this remark by any $b \in \Z$.
\end{remark}

\section{Maximal green sequences}\label{section:maximal green sequences}

When $\mf{g}$ is untwisted of simply-laced type, we will explain how to realize the left duality functor $\DD^{-1}$ on KR-modules over $U'_q(\mf{g})$ using an infinite version of a maximal green sequence. After recalling the definition and some properties of maximal green sequences, we show how to construct them for triangle products of Dynkin quivers following \cite{GenzKoshevoy}. We then exploit an idea of \cite{HLcluster} to obtain the desired realization of $\DD^{-1}$.

\subsection{Definition and effect on additive categorification} We recall the definition of (maximal) green sequences, which was first given in \cite{KellerQuantumDilog}. Let $Q$ be a finite quiver without loops or $2$-cycles. Its \emph{framed quiver} $\widehat{Q}$ (resp. \emph{coframed quiver} $\widecheck{Q}$) is obtained from $Q$ by adding a vertex $v'$ for any $v \in Q_0$ and an arrow $v \to v'$ (resp. $v' \to v$). The new vertices are declared to be frozen. If $R$ is a quiver obtained from $\widehat{Q}$ by a sequence of mutations (at unfrozen vertices), then we say that an unfrozen vertex $v$ is \emph{green} in $R$ if there is no arrow from frozen vertices to $v$, and \emph{red} otherwise. A sequence ${\bf v} = (v_1,\dots,v_r)$ of vertices in $Q$ is called a \emph{green sequence} if $v_i$ is green in $\mu_{v_{i-1}}\dotsb\mu_{v_2}\mu_{v_1}(\widehat{Q})$ for all $1 \leq i \leq r$. Such a sequence is \emph{maximal} if all unfrozen vertices of $\mu_{\bf v}(\widehat{Q}) \coloneqq \mu_{v_r} \dotsb\mu_{v_2}\mu_{v_1}(\widehat{Q})$ are red.

\begin{proposition}[{\cite[Proposition 2.10]{BrustleDupontPerotin}}]
    Suppose $Q$ admits a maximal green sequence ${\bf v}$. Then there is a unique isomorphism from $\mu_{\bf v}(\widehat{Q})$ to $\widecheck{Q}$ that fixes the frozen vertices. It sends $i \in Q_0$ to $\sigma_{\bf v}(i) \in Q_0$ for a unique permutation $\sigma_{\bf v}$ of $Q_0$.
\end{proposition}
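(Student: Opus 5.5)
The plan is to identify $\mu_{\bf v}(\widehat{Q})$ with $\widecheck{Q}$ through $c$-vectors, which give a sign-coherent, linear-algebraic description of the arrows between frozen and unfrozen vertices.

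\textbf{Setup.} For a quiver $R$ obtained from $\widehat{Q}$ by mutations at unfrozen vertices, the arrows between frozen vertex $u'$ and unfrozen vertex $v$ are recorded by the $c$-vector $c_v \in \Z^{Q_0}$. Its $u$-component is the number of arrows $v \to u'$ minus the number of arrows $u' \to v$. By sign-coherence of $c$-vectors (Derksen--Weyman--Zelevinsky), each $c_v$ is either non-negative or non-positive. Thus $v$ is green exactly when $c_v \geq 0$, and red exactly when $c_v \leq 0$.

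\textbf{Step 1.} Let $R = \mu_{\bf v}(\widehat{Q})$ and let $C$ be its $c$-matrix, whose columns are the $c_v$. Since mutation of extended exchange matrices preserves full rank, $C$ is invertible over $\Z$. Maximality says that every column of $C$ is non-positive, so $-C$ is a non-negative integer matrix.

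\textbf{Step 2.} Show that $-C$ is a permutation matrix. Since $(-C)^{-1}$ is also an integer matrix, one needs $(-C)^{-1}$ to be non-negative as well. For this I would use tropical duality between $c$-vectors and $g$-vectors (Nakanishi--Zelevinsky): $G^T C = I$. The $g$-vectors of a seed whose $c$-vectors are all negative lie in the negative orthant, so $G$ is also non-positive. A non-negative integer matrix with a non-negative integer inverse is a permutation matrix. This gives a permutation $\sigma = \sigma_{\bf v}$ with $c_v = -e_{\sigma(v)}$. In other words, in $R$ the unique arrow between $v$ and the frozen vertices is $\sigma(v)' \to v$, and there are no arrows between frozen vertices.

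\textbf{Step 3.} Determine the unfrozen part of $R$. Write $\widetilde{B}_R = \begin{pmatrix} B_R \\ C \end{pmatrix}$. Applying the compatibility relation $B_R = C^{-1} \cdots$, or more directly the identity $C^T B_R\, C$-duality $B_R = C^{T} B\, C$ (valid for skew-symmetric $B$ via the $c$-vector transformation rule), one gets $B_R = P^T B P$, where $P$ is the permutation matrix. Concretely, the number of arrows $u \to v$ in $R$ equals the number of arrows $\sigma(u) \to \sigma(v)$ in $Q$. Hence the map sending $u \mapsto \sigma(u)$ on unfrozen vertices and fixing frozen vertices is an isomorphism $R \to \widecheck{Q}$. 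Note that it sends the arrow $\sigma(v)' \to v$ to $\sigma(v)' \to \sigma(v)$, as required.

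\textbf{Uniqueness.} Any isomorphism fixing the frozen vertices must send $v$ to the unique unfrozen vertex of $\widecheck{Q}$ joined to $\sigma(v)'$. Since $\widecheck{Q}$ has no multiple arrows between frozen and unfrozen vertices, this determines the map on vertices, and so the isomorphism is unique.

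\textbf{Main obstacle.} The main difficulty is Step 2, namely proving that $-C$ is exactly a permutation matrix. This relies on deep inputs, sign-coherence and tropical duality, and I would cite these rather than reprove them.
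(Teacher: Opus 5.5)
The paper gives no proof of this statement; it imports it from Br\"ustle--Dupont--P\'erotin, so your $c$-vector argument has to stand on its own. Its overall shape works, but there is a gap at the decisive point of Step~2.

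\textbf{The gap.} The sentence ``the $g$-vectors of a seed whose $c$-vectors are all negative lie in the negative orthant'' is asserted without proof, and it does not follow from the inputs you name. Column sign-coherence together with $G^TC=I$ only gives you two facts: $-C\geq 0$, and $C^{-1}=G^T$ is integral. These are not enough. Take $C=\begin{pmatrix}-1&-1\\0&-1\end{pmatrix}$. Its columns are nonpositive and its determinant is $1$. Yet $G=(C^{-1})^T=\begin{pmatrix}-1&0\\1&-1\end{pmatrix}$ is not nonpositive, and $-C$ is not a permutation matrix. Excluding such matrices is the whole content of the statement, so Step~2 as written does not establish it.

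\textbf{The fix.} The missing ingredient is \emph{row} sign-coherence of $G$-matrices: for each $k$, the $k$-th coordinates of the $g$-vectors of a single cluster are all $\geq 0$ or all $\leq 0$. This is a theorem of Derksen--Weyman--Zelevinsky in the skew-symmetric case, and it also follows from column sign-coherence of $C$-matrices via the Nakanishi--Zelevinsky dualities. In the example above it fails in the second row, which is $(1,-1)$. With it, Step~2 closes as follows. The $(k,k)$ entry of $CG^T=I$ reads $\sum_j (c_j)_k(g_j)_k=1$. Every $(c_j)_k$ is $\leq 0$, so some $(g_j)_k$ must be negative, and then row sign-coherence forces all $(g_j)_k\leq 0$. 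Hence $G\leq 0$ and $(-C)^{-1}=-G^T\geq 0$, so $-C$ is a permutation matrix.

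\textbf{Smaller points.}
\begin{enumerate}
\item In Step~1, full rank of $\widetilde{B}$ does not imply that its lower block $C$ is unimodular. Instead, use $C^{-1}=G^T$. Alternatively, by sign-coherence, mutation at $k$ acts by $C_{t'}=C_tF$ and $B_{t'}=F^TB_tF$ for one and the same elementary matrix $F$ of determinant $-1$. Here $F$ depends on $B_t$, $k$ and the sign of $c_k$; the second formula uses skew-symmetry of $B_t$.
\item The same two formulas prove by induction the identity $B_R=C^TBC$ that you need in Step~3, whose justification is currently garbled. You can also get it from $G_tB_t=BC_t$ combined with $G_t^T=C_t^{-1}$. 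Once you have it, Step~3 and the uniqueness argument are fine.
\item The absence of arrows between frozen vertices does not follow from $-C$ being a permutation matrix. It comes from the convention, implicit in the definition of green sequences, that such arrows are discarded.
\end{enumerate}
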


The next results provide a way to construct maximal green sequences that are either source or sink sequences.

\begin{lemma}[{\cite[Lemma 2.20]{BrustleDupontPerotin}}]\label{lemma:source maximal green sequence}
    Let $Q$ be an acyclic quiver with $n$ vertices. If ${\bf v} = (v_1,\dots,v_n)$ is a source sequence for $Q$ such that $Q_0 = \{v_1,\dots,v_n\}$, then ${\bf v}$ is a maximal green sequence and $\sigma_{\bf v} = \id$.
\end{lemma}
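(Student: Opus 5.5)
We argue by induction on $r$. At each step we track two things: the $c$-vectors, that is, the arrows between frozen and unfrozen vertices, of the partially mutated framed quiver $Q^{(k)} \coloneqq \mu_{v_k}\dotsb\mu_{v_1}(\widehat{Q})$; and the unfrozen part of $Q^{(k)}$.

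The inductive claim for $0\le k\le n$ is the following.
\begin{enumerate}[(a)]
    \item The unfrozen part of $Q^{(k)}$ is $\mu_{v_k}\dotsb\mu_{v_1}(Q)$, which is the quiver obtained from $Q$ by reversing all arrows incident to $v_1,\dots,v_k$. In particular it is acyclic.
    \item The only arrows between frozen and unfrozen vertices are $v_i'\to v_i$ for $i\le k$ and $v_j\to v_j'$ for $j>k$.
\end{enumerate}
For $k=0$ this is the definition of $\widehat{Q}$.

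For the inductive step, suppose the claim holds for $k-1$. Since ${\bf v}$ is a source sequence, $v_k$ is a source of $\mu_{v_{k-1}}\dotsb\mu_{v_1}(Q)$. By (b), the only frozen arrow at $v_k$ is the outgoing arrow $v_k\to v_k'$, so $v_k$ is green. Moreover, $v_k$ has no incoming arrows at all in $Q^{(k-1)}$: it is a source in the unfrozen part, and no frozen arrow points into it. Mutation at a vertex with no incoming arrows creates no composite arrows in step (1). Hence $\mu_{v_k}$ merely reverses the arrows incident to $v_k$, and no cancellation of $2$-cycles is needed. This reversal turns $v_k\to v_k'$ into $v_k'\to v_k$ and reverses the unfrozen arrows at $v_k$, which gives (a) and (b) for $k$. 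The fact that $v_k$ is a source justifies that (a) is exactly the source-reflection $\mu_{v_k}$, so the source-sequence hypothesis persists.

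At $k=n$, every vertex appears in ${\bf v}$, and since the $v_i$ exhaust $Q_0$ (with $r=n$ they are distinct), each vertex has been mutated exactly once. Thus every unfrozen vertex has the incoming frozen arrow $v'\to v$ and is red, so ${\bf v}$ is a maximal green sequence. Furthermore, each arrow of $Q$ has both endpoints reversed exactly once, so it is reversed twice overall and the unfrozen part returns to $Q$. The frozen arrows are exactly $v'\to v$. Hence $\mu_{\bf v}(\widehat{Q})=\widecheck{Q}$ by the identity map on vertices, and by the uniqueness in the preceding proposition, $\sigma_{\bf v}=\id$.

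The only point requiring care is bookkeeping: checking that at each stage $v_k$ has no incoming arrows from frozen vertices as well as from unfrozen ones, so that mutation reduces to pure reversal. This is exactly what the simultaneous induction on (a) and (b) secures.
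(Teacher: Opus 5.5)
Your argument is correct. Because each $v_k$ has no incoming arrows at all in $\mu_{v_{k-1}}\dotsb\mu_{v_1}(\widehat{Q})$, every mutation is a pure reversal. This gives greenness at each step, redness of every vertex at the end, and $\mu_{\bf v}(\widehat{Q})=\widecheck{Q}$ via the identity on vertices, hence $\sigma_{\bf v}=\id$.

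The paper gives no proof of its own here and simply cites Br\"ustle--Dupont--P\'erotin; your direct induction is the standard argument for this fact. The only blemishes are cosmetic. You announce an induction on $r$ but actually induct on $k$. Also, ``reversing all arrows incident to $v_1,\dots,v_k$'' must be read with multiplicity: an arrow between two already-mutated vertices has been reversed twice, which is exactly how you use it at $k=n$.
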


\begin{proposition}\label{proposition:sink maximal green sequence}
    Suppose $Q$ is an orientation of an $\mathsf{ADE}$ Dynkin diagram $\Delta$. Let ${\bf v} = (v_1,\dots,v_{l(w_0)})$ be a sink sequence for $Q$ that is also a reduced expression of the longest element $w_0$ of $\Delta$. Then ${\bf v}$ is a maximal green sequence and $\sigma_{\bf v}$ coincides with the involution induced by $w_0$. 
\end{proposition}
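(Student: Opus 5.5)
The plan is to track the $c$-vectors (frozen parts) along the mutation sequence and identify them with roots of $\Delta$. Throughout, the unfrozen part of each mutated quiver will be a reflected orientation of $\Delta$.

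For $0 \leq k \leq l(w_0)$ write $R_k = \mu_{v_k}\dotsb\mu_{v_1}(\widehat{Q})$ and $w_k = s_{v_1}s_{v_2}\dotsb s_{v_k}$. For an unfrozen vertex $j$ of $R_k$, let $c^{(k)}_j \in \Z^{Q_0}$ be its $c$-vector: its $i$-th coordinate is the number of arrows $j \to i'$ minus the number of arrows $i' \to j$ in $R_k$. We identify $\Z^{Q_0}$ with the root lattice via $e_i \mapsto \alpha_i$. I will prove by induction on $k$ the following two claims.

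\begin{enumerate}[(a)]
\item The full subquiver of $R_k$ on unfrozen vertices is the orientation $\mu_{v_k}\dotsb\mu_{v_1}(Q)$ of $\Delta$. Since each $v_k$ is a sink at its step, this is just the reflected orientation; in particular no arrows between unfrozen vertices are created or cancelled.
\item We have $c^{(k)}_j = w_k(\alpha_j)$ for every $j \in Q_0$.
\end{enumerate}

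For $k=0$ both claims are the definition of $\widehat{Q}$.

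For the inductive step, set $v = v_k$. Let $\beta_k = w_{k-1}(\alpha_v)$; since $(v_1,\dots,v_{l(w_0)})$ is reduced, $\beta_k$ is a positive root. Hence all arrows between $v$ and frozen vertices in $R_{k-1}$ point from $v$ to frozen vertices, so $v$ is green in $R_{k-1}$. I now mutate $R_{k-1}$ at $v$ and use that $v$ is a sink of the unfrozen part.

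\begin{itemize}
\item The only length-two paths through $v$ are $j \to v \to i'$, with $j \sim v$ in $\Delta$. Mutation therefore adds no arrow between unfrozen vertices, and it reverses the arrows at $v$. This gives (a).
\item For $j \sim v$, mutation adds $(\beta_k)_i$ arrows $j \to i'$ for each $i$. These may cancel against existing arrows $i' \to j$. The net effect is $c^{(k)}_j = c^{(k-1)}_j + c^{(k-1)}_v$, which equals $w_{k-1}(\alpha_j + \alpha_v) = w_{k-1}s_v(\alpha_j) = w_k(\alpha_j)$ because $\Delta$ is simply laced.
\item For the vertex $v$ itself, reversing arrows gives $c^{(k)}_v = -\beta_k = w_{k-1}s_v(\alpha_v) = w_k(\alpha_v)$.
\item For $j$ not adjacent to $v$, nothing changes and $s_v(\alpha_j) = \alpha_j$.
\end{itemize}

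This gives (b). The step I expect to require the most care is this cancellation bookkeeping at the frozen vertices: one must check that removing $2$-cycles between $j$ and the $i'$ yields exactly the sum of the two $c$-vectors. Sign-coherence makes this immediate here, since $c^{(k-1)}_v$ is positive.

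At the end we have $w_{l(w_0)} = w_0$, so $c_j = w_0(\alpha_j) = -\alpha_{j^*}$ for every $j$. Thus every unfrozen vertex is red, and the sequence is maximal green. Concretely, the only arrow between $j$ and the frozen vertices in $\mu_{\bf v}(\widehat{Q})$ is a single arrow $(j^*)' \to j$.

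The unfrozen part is $Q$ reflected along a sink sequence for $w_0$. Using $s_{v}w_0 = w_0 s_{v^*}$, one checks that this part is isomorphic to $Q$ via $j \mapsto j^*$, since the reflected quiver is the image of $Q$ under the diagram automorphism $*$. Hence $j \mapsto j^*$ defines an isomorphism $\mu_{\bf v}(\widehat{Q}) \to \widecheck{Q}$ fixing the frozen vertices. By the uniqueness in \cite[Proposition 2.10]{BrustleDupontPerotin}, it follows that $\sigma_{\bf v}(j) = j^*$.
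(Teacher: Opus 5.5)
Your proof is correct, and it takes a different route from the paper. The paper does no computation. It cites the proof of \cite[Theorem 4.4]{BrustleDupontPerotin} for maximality. It then identifies $\sigma_{\bf v}$ with the permutation of Auslander--Reiten orbits induced by the shift functor on the bounded derived category of $kQ$, and matches that permutation with $i \mapsto i^*$ as a known fact.

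You work entirely in the root system instead. Your induction $c^{(k)}_j = w_k(\alpha_j)$ uses only three ingredients:
\begin{itemize}
\item the $c$-vector mutation rule at a green sink;
\item the length criterion $\ell(w s_v) > \ell(w) \iff w(\alpha_v) > 0$;
\item the identity $s_v(\alpha_j) = \alpha_j + \alpha_v$ for $j \sim v$.
\end{itemize}
The involution then comes out directly from $w_0(\alpha_j) = -\alpha_{j^*}$, with no Auslander--Reiten theory.

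This buys three things. The statement becomes self-contained and elementary. The argument shows that any sink sequence forming a reduced word for some $w$ is green with final $c$-vectors $w(\alpha_j)$, and is maximal exactly when $w = w_0$. And the $c$-vectors of the mutated vertices, $\beta_k = w_{k-1}(\alpha_{v_k})$, run through the inversion set of $w_0$, hence through all positive roots once. This is the combinatorial shadow of the representation-theoretic picture in Br\"ustle--Dupont--P\'erotin. The paper's citation, for its part, is shorter and keeps the link with the shift functor, which is the viewpoint used later in Theorem \ref{thm:maximal green sequence gives inverse shift}.

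One step of your write-up is only gestured at. You assert that the unfrozen part of $\mu_{\bf v}(\widehat{Q})$ is isomorphic to $Q$ via $j \mapsto j^*$, saying ``using $s_v w_0 = w_0 s_{v^*}$, one checks'', but this is not actually derived. The claim is true, but you do not need it. The existence part of \cite[Proposition 2.10]{BrustleDupontPerotin} already gives an isomorphism $\mu_{\bf v}(\widehat{Q}) \to \widecheck{Q}$ fixing the frozen vertices. You showed that the only arrow between $j$ and the frozen vertices is $(j^*)' \to j$. Hence any such isomorphism must send $j$ to $j^*$, so $\sigma_{\bf v}(j) = j^*$ follows without examining the unfrozen arrows at all.
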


\begin{proof}
    A proof that ${\bf v}$ is a maximal green sequence is given in the proof of \cite[Theorem 4.4]{BrustleDupontPerotin}. Their argument also shows that $\sigma_{\bf v}$ agrees with the permutation on the orbits of the Auslander--Reiten translation induced by the shift functor in the bounded derived category of $Q$, which is well-known to coincide with the involution induced by $w_0$.
\end{proof}

Now let $(Q,W)$ be a Jacobi-finite and non-degenerate quiver with potential. Denote by $\bGh = \bGh(Q,W)$ the corresponding completed (absolute) Ginzburg dg algebra. For a sequence of vertices ${\bf v} = (v_1,\dots,v_r)$ of $Q$, we denote by $(\mu_{\bf v}(Q),\mu_{\bf v}(W))$ the quiver with potential obtained by mutation following ${\bf v}$. We write $\mu_{\bf v}(\bGh)$ for $\bGh(\mu_{\bf v}(Q),\mu_{\bf v}(W))$ and we let
\[
\Phi_{\bf v}: \mc{C}(\mu_{\bf v}(Q),\mu_{\bf v}(W)) \to \mc{C}(Q,W)
\]
be the composition of equivalences $\Phi_{v_1}\dotsb\Phi_{v_r}$ as in Section \ref{section:relative cluster and Higgs categories}. The following result describes the action of $\Phi_{\bf v}$ when ${\bf v}$ is a maximal green sequence.

\begin{theorem}[{\cite{KellerQuantumDilog},\cite[Section 7]{KellerClusterDerivedSurvey}}]\label{thm:maximal green sequence gives inverse shift}
    Suppose $Q$ admits a maximal green sequence ${\bf v}$. Then the equivalence $\Phi_{\bf v}$ sends $e_i\mu_{\bf v}(\bGh)$ to $\Sigma^{-1}e_{\sigma_{\bf v}(i)}\bGh$. In particular, $\Phi_{\bf v}(\mu_{\bf v}(\bGh)) \cong \Sigma^{-1}\bGh$.
\end{theorem}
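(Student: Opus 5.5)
The plan is to reduce the statement to Keller's description of the maximal green sequence in terms of $c$-vectors and silting/t-structures in $\per(\bGh)$. I will work with the triangle equivalences $\Phi_{v,\pm}$ of \cite{KellerYang} at the level of $\per$ and $\pvd$, rather than directly in the cluster category. Set $\bGh_j = \mu_{v_j}\dotsb\mu_{v_1}(\bGh)$, and write $\Psi_j: \per(\bGh_j) \to \per(\bGh)$ for the composition of the equivalences $\Phi_{v_i,\epsilon_i}$. The sign $\epsilon_i$ is chosen to be $+$ when $v_i$ is green at step $i$. The first step is the inductive claim (Keller's ``green = forward'' principle): the images $\Psi_j(e_i\bGh_j)$ form a silting object of $\per(\bGh)$. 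It is obtained from the previous one by a right (forward) silting mutation at $v_j$, so it lies in the aisle generated by $\bGh$ and contains $\Sigma\bGh$ in its co-aisle. Simultaneously, the images of the simples $S_i^{(j)}$ of $\bGh_j$ lie in the heart $\mathrm{mod}\,H^0(\bGh)$ or in its shift $\Sigma^{-1}$ of it. The sign pattern is recorded by the $c$-vectors, i.e.\ by the arrows to the frozen vertices of the framed quiver: green means the class in $K_0(\pvd)$ is positive, red means negative.

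The second step uses maximality. At the end all vertices are red, so every $\Psi_r(S_i^{(r)})$ is a negative simple, namely $\Sigma^{-1}S_{\sigma(i)}$ up to the identification of the coframed quiver with $\mu_{\bf v}(\widehat Q)$ from \cite[Proposition 2.10]{BrustleDupontPerotin}. The key point to argue is that the $c$-vectors of $\mu_{\bf v}(\widehat Q)$ are exactly $-e_{\sigma(i)}$, which is precisely the content of that isomorphism. Then I will show that a heart whose simples are $\Sigma^{-1}S_{\sigma(i)}$ must be $\Sigma^{-1}$ of the standard heart. Since $\Psi_r(\bGh_r)$ is the silting object dual to this heart (via $\Hom(\Psi_r(e_i\bGh_r),\Psi_r(S^{(r)}_{i'}))=k\delta_{ii'}$), it follows that $\Psi_r(e_i\bGh_r)\cong \Sigma^{-1}e_{\sigma(i)}\bGh$ in $\per(\bGh)$. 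Uniqueness of the silting object dual to a bounded t-structure with length heart is what makes this step work. I would invoke the Koenig--Yang type correspondence, or argue directly that $\Hom$ from both silting objects into all simples of the common heart agree and vanish in nonzero degrees.

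Finally, I pass to $\mc{C}(Q,W)$. By \cite[Proposition 3.29]{KellerWuJacobiinfinite} (absolute case), the equivalences $\Phi_{v,\pm}$ descend to the same functor $\Phi_v$, regardless of sign, so $\Phi_{\bf v}$ is induced by $\Psi_r$. It therefore sends $e_i\mu_{\bf v}(\bGh)$ to the image of $\Sigma^{-1}e_{\sigma(i)}\bGh$, and summing over $i$ gives $\Phi_{\bf v}(\mu_{\bf v}(\bGh))\cong\Sigma^{-1}\bGh$.

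The main obstacle is the first step: controlling that each green mutation is a forward mutation with the simple-tilting description. I expect to handle it by induction using the sign-coherence of $c$-vectors, which holds by \cite{DerksenWeymanZelevinskyII} since $(Q,W)$ is non-degenerate, together with the explicit formulas of \cite{KellerYang} for $\Phi_{v,+}$ on simples: $S_v\mapsto \Sigma S'_v$ and the other simples are extended.
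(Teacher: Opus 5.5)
The paper does not prove this statement; it quotes it from Keller. Your architecture is Keller's argument. You identify $c$-vectors with classes of simples of successively tilted hearts, and maximality then forces the final heart to be a shift of $\mathcal{A}=\modcat H^0(\bGh)$. You pass to the dual silting object and descend to $\mc{C}(Q,W)$, where the choice between $\Phi_{v,+}$ and $\Phi_{v,-}$ no longer matters.

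\textbf{The gap is in your first step.} Your choice of equivalence and your aisle claim point in the opposite direction from your heart claim, and the statement is sensitive to exactly this sign. Already for $Q=1\to 2$, $\Sigma$ acts with order $5$ on the indecomposables of the cluster category, so $\Sigma\bGh\not\cong\Sigma^{-1}\bGh$. At green vertices you use the equivalence whose formula you quote, namely the cone equivalence, which sends $S'_v$ to $\Sigma S_v$. You also assert that $\Psi_j(\bGh_j)$ stays in the aisle generated by $\bGh$. That is indeed what cones give, but then $\Psi_r(\bGh_r)$ can never be $\Sigma^{-1}\bGh$, which is not in that aisle, and the hearts move towards $\Sigma\mathcal{A}$ rather than $\Sigma^{-1}\mathcal{A}$. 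Moreover, the classes of the simples stop being $c$-vectors at the very first step. For $Q=1\to 2$ mutated at the source $1$, the cone equivalence yields the simples $\Sigma S_1$ and the non-split extension of $S_2$ by $S_1$, with classes $-[S_1]$ and $[S_1]+[S_2]$. The $c$-vectors of $\mu_1(\widehat{Q})$, however, are $-e_1$ and $e_2$.

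\textbf{The fix.} At each green vertex, use the cocone equivalence, which sends $S'_v$ to $\Sigma^{-1}S_v$; it realizes the forward tilt at the current simple $S$.
\begin{itemize}
\item Inductively, green means $S\in\mathcal{A}$. So the new aisle, generated by the old one and $\Sigma^{-1}S$, stays between $\mathcal{D}^{\leq 0}$ and $\Sigma^{-1}\mathcal{D}^{\leq 0}$.
\item Simples of any heart in this interval lie in $\mathcal{A}$ or in $\Sigma^{-1}\mathcal{A}$.
\item The new simples have classes $[S_i]+a_{iv}[S_v]$, with $a_{iv}$ the number of arrows $i\to v$ in the current quiver. This is exactly the $c$-vector mutation rule at a green vertex, so along a green sequence you do not even need sign-coherence.
\item Correspondingly, $\Psi_j(\bGh_j)$ lies between $\bGh$ and $\Sigma^{-1}\bGh$ in the silting order: it lies in $\Sigma^{-1}\mathcal{D}^{\leq 0}$ and has $\bGh$ in its own aisle.
\end{itemize}

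\textbf{A smaller point.} The Koenig--Yang correspondence does not apply as stated, because $\bGh$ is not proper. Matching $\Hom$-dimensions into simples does not by itself give an isomorphism. For the final identification you need the complete-setting lemma, proved via minimal semi-free resolutions: a perfect $P$ with $\Hom(P,\Sigma^nS_j)\cong k$ for $(n,j)=(0,j_0)$ and zero otherwise is isomorphic to $e_{j_0}\bGh$.
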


\subsection{Triangle products of quivers}\label{section:triangle products and green sequences} Let $Q$ and $Q'$ be two quivers. We define their \emph{tensor product} $Q \otimes Q'$ to be the quiver with vertex set $Q_0 \times Q'_0$ and with the following arrows. For any arrow $\alpha: i \to j$ in $Q$ and any $i' \in Q'_0$, there is an arrow $(\alpha,i'):(i,i') \to (j,i')$ in $Q \otimes Q'$, and similarly, for any arrow $\alpha':i' \to j'$ in $Q'$ and any $i \in Q_0$, there is an arrow $(i,\alpha'):(i,i') \to (i,j')$ in $Q \otimes Q'$. We also define the \emph{triangle product} $Q \boxtimes Q'$ as the quiver obtained from $Q \otimes Q'$ by adding an arrow $(\alpha,\alpha')^{\mathrm{op}}:(j,j') \to (i,i')$ for all pairs of arrows $\alpha: i \to j$ in $Q$ and $\alpha': i' \to j'$ in $Q'$.

Following \cite{GenzKoshevoy}, we provide a method for constructing maximal green sequences for $Q \boxtimes Q'$. Let ${\bf v} = (v_1,\dots,v_r)$ and ${\bf w} = (w_1,\dots,w_s)$ be sequences of vertices in $Q$ and $Q'$, respectively. We define ${\bf v} \boxtimes {\bf w}$ as the sequence
\[
((v_1,w_1),(v_1,w_2),\dots,(v_1,w_s),(v_2,w_1),\dots,(v_2,w_s),\dots\dots,(v_r,w_1),\dots,(v_r,w_s)),
\]
which is a sequence of vertices in $Q \boxtimes Q'$.

\begin{theorem}[{\cite[Theorem 3.8]{GenzKoshevoy}}]\label{thm:maximal green sequence for triangle product}
    Let $Q$ and $Q'$ be finite quivers. Suppose that $Q$ admits a sink maximal green sequence ${\bf v}$ and $Q'$ admits a source maximal green sequence ${\bf w}$. Then ${\bf v} \boxtimes {\bf w}$ is a maximal green sequence for $Q \boxtimes Q'$ and $\sigma_{{\bf v} \boxtimes {\bf w}} = \sigma_{\bf v} \times \sigma_{\bf w} = \sigma_{\bf v} \times \id$.
\end{theorem}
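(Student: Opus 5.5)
The plan is to argue by induction along ${\bf v} \boxtimes {\bf w}$, using the row-by-row block structure of the sequence. Write $R_0 = \widehat{Q \boxtimes Q'}$. Let $R_k$ denote the quiver obtained after performing the first $k$ blocks $((v_k,w_1),\dots,(v_k,w_s))$.

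The key intermediate claim is that $R_k$ again has triangle-product shape, described as follows:
\begin{enumerate}[(a)]
  \item The unfrozen part of $R_k$ is $\mu_{v_k}\dotsb\mu_{v_1}(Q) \boxtimes Q'$. Here the unfrozen vertices $(v_j,\cdot)$ for $j\le k$ have been relabelled according to how the first factor evolves under the sink sequence.
  \item The frozen arrows attached to a vertex $(i,i')$ are exactly the "product" of the frozen arrows at $i$ in $\mu_{v_k}\dotsb\mu_{v_1}(\widehat{Q})$ with the identity framing at $i'$.
\end{enumerate}
In other words, mutating a whole row $\{v_k\}\times Q'_0$ along the source sequence ${\bf w}$ of $Q'$ acts on the $Q$-factor like a single mutation at $v_k$.

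To establish (a) and (b) one step at a time, I would analyse one block. Since $v_k$ is a sink in the current first factor, the row $\{v_k\}\times Q'$ receives arrows $(u,i')\to(v_k,i')$ from each in-neighbour $u$. It also emits the triangle arrows $(v_k,j')\to(u,i')$, one for each arrow $i'\to j'$ in $Q'$. Mutating at $(v_k,w_1),(v_k,w_2),\dots$ in source order, I would check directly with the extended Fomin--Zelevinsky rule that each mutation does three things:
\begin{enumerate}[(i)]
  \item it reverses the vertical arrows at $(v_k,w_l)$;
  \item it cancels, against the triangle arrows, the $2$-cycles created by composing $(u,w_l)\to(v_k,w_l)\to(v_k,j')$;
  \item it produces new triangle arrows of the opposite orientation.
\end{enumerate}
This is a local computation on a square $\{u,v_k\}\times\{i',j'\}$. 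The source property of ${\bf w}$ guarantees that each vertex $w_l$ has already "seen" all its predecessors, and this is what makes the cancellation clean. For the frozen part, I would check that the frozen arrows at row $v_k$ are replaced by those of $\mu_{v_k}(\cdot)$ applied in the first factor, tensored with the framing.

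Greenness of $(v_k,w_l)$ at its turn then follows from two facts: $v_k$ is green in the $k$-th step of ${\bf v}$, and $w_l$ is green for ${\bf w}$. The frozen arrows at $(v_k,w_l)$ are governed by the first factor alone, with the correction from earlier row mutations matching the $Q'$-side green sequence.

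After all $r$ blocks, (b) together with maximality of ${\bf v}$ shows that every vertex is red. So the sequence is maximal. The identification $\mu_{{\bf v}\boxtimes{\bf w}}(\widehat{Q\boxtimes Q'})\cong \widecheck{Q\boxtimes Q'}$ is then the product of the isomorphism for ${\bf v}$, namely $i\mapsto\sigma_{\bf v}(i)$, with the identity in the second factor. The latter is forced by Lemma \ref{lemma:source maximal green sequence}, since $\sigma_{\bf w}=\id$. This gives $\sigma_{{\bf v}\boxtimes{\bf w}}=\sigma_{\bf v}\times\id$.

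The main obstacle is the single-block computation. I have to verify that the triangle arrows and the frozen arrows transform exactly as claimed, with no leftover $2$-cycles or extra arrows between different rows. I would first try to handle this by tracking $c$-vectors instead of arrows. The $c$-vector of $(i,i')$ should be a product-type vector, given by the $c$-vector of $i$ for ${\bf v}$ in the first coordinate and the standard basis vector at $i'$ in the second. Sign-coherence then reduces both greenness and maximality to the corresponding statements for ${\bf v}$, which is the route taken in \cite{GenzKoshevoy}.
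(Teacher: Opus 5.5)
The paper gives no proof of this statement (it is quoted from Genz--Koshevoy), so your argument has to stand on its own. Your block-by-block strategy is the right one, and the invariant (a)--(b) at block boundaries is in fact true. The proposal nevertheless has a gap exactly where you place the ``main obstacle'': you never determine what the quiver looks like \emph{inside} a block, and your fallback does not avoid this.

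Tracking $c$-vectors cannot replace the arrow computation. The $c$-vector recursion at a green vertex $m$, namely $c_j\mapsto c_j+[b_{jm}]_+c_m$, needs the current number $b_{jm}$ of arrows from $(u,i')$ to the mutated vertex, which is precisely the cross-row bookkeeping you wanted to skip. Sign-coherence gives you signs, not product form.

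The crux that (i)--(iii) leave unstated is the neighbourhood of $(v_k,w_l)$ at the moment it is mutated. Suppose it still carried the original triangle arrows $(v_k,w_l)\to(u,w_j)$ coming from arrows $w_j\to w_l$ of $Q'$. Then the mutation would create arrows $(u,w_l)\to(u'',w_j)$ between rows different from $v_k$. For $u''=u$ these even cancel the row arrows $(u,w_j)\to(u,w_l)$, and the product structure breaks. That those triangle arrows are gone by then is exactly what the source order buys.

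To close the gap, run an inner induction on $l$. Write $c_i$ for the $c$-vector of $i$ in $\mu_{v_{k-1}}\dotsb\mu_{v_1}(\widehat{Q})$ (the signed count of arrows from $i$ to frozen vertices). Write $b_u$ for the number of arrows $u\to v_k$ in $\mu_{v_{k-1}}\dotsb\mu_{v_1}(Q)$, and put $M=\{w_1,\dots,w_l\}$. The claim is that after mutating $(v_k,w_1),\dots,(v_k,w_l)$ the following hold:
\begin{enumerate}[(1)]
    \item Nothing has changed away from row $v_k$, except that $c_{(u,x)}=(c_u+b_uc_{v_k})\otimes e_x$ for $x\in M$.
    \item Row $v_k$ carries $\mu_{w_l}\dotsb\mu_{w_1}(Q')$, and $c_{(v_k,x)}=\pm c_{v_k}\otimes e_x$, with sign $-$ if and only if $x\in M$.
    \item The vertical arrows between rows $u$ and $v_k$ point into row $u$ exactly over $M$.
    \item For each arrow $u\to v_k$ and each arrow $i'\to j'$ of $Q'$, there is the old triangle arrow $(v_k,j')\to(u,i')$ if $i',j'\notin M$, the new one $(u,j')\to(v_k,i')$ if $i',j'\in M$, and none if $i'\in M$, $j'\notin M$. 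The remaining case cannot occur, because every arrow of $Q'$ into $w_l$ starts at some $w_j$ with $j<l$.
\end{enumerate}

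Given this, consider the vertex $(v_k,w_{l+1})$:
\begin{enumerate}[(1)]
    \item Its only unfrozen in-arrows are the vertical ones from the rows $u$; every $Q$-neighbour of $v_k$ is an in-neighbour, since $v_k$ is a sink.
    \item Its out-arrows go only to row $v_k$ and to frozen vertices.
    \item Its $c$-vector is $c_{v_k}\otimes e_{w_{l+1}}\geq 0$, so it is green simply because $v_k$ is. No ``correction'' from ${\bf w}$ enters.
\end{enumerate}
The mutation then produces exactly the cancellations and new arrows of the claim at level $l+1$, and creates nothing between other rows.

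A green source mutation makes the vertex red for good, so ${\bf w}$ visits each vertex of $Q'$ exactly once. At $l=|Q'_0|$ the claim therefore becomes your (a)--(b) for block $k$, since mutation at the green sink $v_k$ sends $c_u\mapsto c_u+b_uc_{v_k}$. The rest of your argument then goes through.
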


In particular, by Lemma \ref{lemma:source maximal green sequence} and Proposition \ref{proposition:sink maximal green sequence}, we can apply the theorem above when $Q$ is a Dynkin quiver and $Q'$ is acyclic. It is conjectured in \cite{GenzKoshevoy} that this is precisely the case for which the result above applies.

\begin{example}
    Consider the quivers
    \[
    Q = \begin{tikzcd}
	1 & 2 & 3
	\arrow[from=1-1, to=1-2]
	\arrow[from=1-2, to=1-3]
\end{tikzcd} \quad \textrm{and} \quad Q' = \begin{tikzcd}
	1 & 2 & 3 & 4.
	\arrow[from=1-2, to=1-1]
	\arrow[from=1-3, to=1-2]
    \arrow[from=1-4, to=1-3]
\end{tikzcd}
    \]
The triangle product $Q \boxtimes Q'$ is given as follows:
\[\begin{tikzcd}[sep=small]
	{(1,1)} && {(1,2)} && {(1,3)} && {(1,4)} && \\
	& {(2,1)} && {(2,2)} && {(2,3)} && {(2,4)} \\
	&& {(3,1)} && {(3,2)} && {(3,3)} && {(3,4)}
	\arrow[from=1-1, to=2-2]
	\arrow[from=1-3, to=1-1]
	\arrow[from=1-3, to=2-4]
	\arrow[from=1-5, to=1-3]
	\arrow[from=1-5, to=2-6]
	\arrow[from=1-7, to=1-5]
	\arrow[from=1-7, to=2-8]
	\arrow[from=2-2, to=1-3]
	\arrow[from=2-2, to=3-3]
	\arrow[from=2-4, to=1-5]
	\arrow[from=2-4, to=2-2]
	\arrow[from=2-4, to=3-5]
	\arrow[from=2-6, to=1-7]
	\arrow[from=2-6, to=2-4]
	\arrow[from=2-6, to=3-7]
	\arrow[from=2-8, to=2-6]
	\arrow[from=2-8, to=3-9]
	\arrow[from=3-3, to=2-4]
	\arrow[from=3-5, to=2-6]
	\arrow[from=3-5, to=3-3]
	\arrow[from=3-7, to=2-8]
	\arrow[from=3-7, to=3-5]
	\arrow[from=3-9, to=3-7]
\end{tikzcd}\]
\end{example}
By Proposition \ref{proposition:sink maximal green sequence}, ${\bf v} = (3,2,1,3,2,3)$ is a sink maximal green sequence for $Q$. By Lemma \ref{lemma:source maximal green sequence}, ${\bf w} = (4,3,2,1)$ is a source maximal green sequence for $Q'$. By Theorem \ref{thm:maximal green sequence for triangle product}, the sequence
\begin{align*}
    {\bf v} \boxtimes {\bf w} = (&(3,4),(3,3),(3,2),(3,1),(2,4),(2,3),(2,2),(2,1),\\
    &(1,4),(1,3),(1,2),(1,1),(3,4),(3,3),(3,2),(3,1),\\
    &(2,4),(2,3),(2,2),(2,1),(3,4),(3,3),(3,2),(3,1))
\end{align*}
is a maximal green sequence for $Q \boxtimes Q'$. Moreover, $\sigma_{{\bf v} \boxtimes {\bf w}}$ is an involution fixing the vertices $(2,i)$ and exchanging $(1,i)$ with $(3,i)$ for $1 \leq i \leq 4$.

\subsection{Monoidal duality from a maximal green sequence} In this section, we suppose $\mf{g}$ is untwisted of simply-laced type. Fix a height function $\xi: \Delta_0 \to \Z$. Let $\mc{D}_{\xi}$ be the complete duality datum associated with $\xi$ and fix a reduced word $\underline{w}_0$ for the longest element adapted to $Q_{\xi}$. By Theorem \ref{thm:initial monoidal seed for HL category}, the category $\mathscr{C}_{\mf{g}}^{[-\infty,0],\mc{D}_{\xi},\underline{w}_0}$ has an initial monoidal seed $\mathscr{S}^{<\xi}$ consisting of KR-modules $\mc{M}^{<\xi}_{(i,p)}$, where $(i,p)$ is a vertex of $Q^{<\xi}_{\rm{HL}}$. Our goal in this section is to define a sequence of mutations of $\mathscr{S}^{<\xi}$ such that the modules in the resulting monoidal seed are precisely the left duals $\DD^{-1}(\mc{M}^{<\xi}_{(i,p)})$.

By the definition of the quiver $Q^{<\xi}_{\rm{HL}}$, it is clear that it is isomorphic to the triangle product of $Q_{\xi}$ with the infinite quiver
\[
\dotsb \longleftarrow 3 \longleftarrow 2 \longleftarrow 1.
\]
By combining a sink maximal green sequence of $Q_{\xi}$ with the sequence $(1,2,3,\dots)$ for the quiver above, we will define an infinite version of the sequence of Theorem \ref{thm:maximal green sequence for triangle product}. We first consider a simpler case. For $j \in \Delta_0$, define the sequence
\begin{equation}\label{eq:mutation along a row}
{\bf v}_j = ((j,\xi_j-2), (j,\xi_j-4), (j,\xi_j - 6),\dots)
\end{equation}
of vertices of $Q^{<\xi}_{\rm{HL}}$. Note that this sequence contains all the vertices in the row corresponding to $j$.

\begin{remark}
    In the next results, we will apply some infinite mutation sequences from \cite{HLcluster} on the level of monoidal seeds. The final result should be interpreted as a limit, as in loc.\ cit., which can be computed on each vertex in finitely many steps. We remark that their mutation sequences do not have arrows ``going to infinity'', so that the limit is indeed well-defined and can be lifted to the categorical level. However, the resulting monoidal seed is not reachable.
\end{remark}

\begin{lemma}\label{lemma:mutation along row shifts the spectral parameter}
    If $j$ is a sink of $\xi$, then the monoidal seed obtained by mutating $\mathscr{S}^{<\xi}$ along the sequence ${\bf v}_j$ is isomorphic to $\mathscr{S}^{<s_j^{-1}\xi}$.
\end{lemma}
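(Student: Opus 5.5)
We argue by induction along the row, tracking quiver and modules together, following the argument of Hernandez--Leclerc \cite{HLcluster}. Write $\xi' = s_j^{-1}\xi$, so that $\xi'_j = \xi_j - 2$ and $\xi'_i = \xi_i$ for $i \neq j$. As sets of vertices, $Q^{<\xi'}_{\rm HL}$ is $Q^{<\xi}_{\rm HL}$ with the vertex $(j,\xi_j-1)$ removed; recall that $(j,\xi_j-1)$ is the top vertex of row $j$, so the first entry $(j,\xi_j-2)$ of ${\bf v}_j$ is the vertex just below it. We relabel the row $j$ by shifting: after mutating at $(j,\xi_j-2k)$, the mutated vertex should carry the module attached to $(j,\xi_j-2k-1)$ in $\mathscr{S}^{<\xi'}$. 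All other rows should be left unchanged.

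\textbf{Modules.} At the initial seed, $\mc{M}^{<\xi}_{(j,p)} = W^{(j)}_{r,q^p}$, where $r$ is maximal with $p+2(r-1)<\xi_j$. When we mutate at $(j,\xi_j-2k)$, which after the earlier mutations is a vertex of the modified quiver, the exchange relation should be a $T$-system relation
\[
W^{(j)}_{r,q^{p}}\, W^{(j)}_{r,q^{p+2}} = W^{(j)}_{r+1,q^{p}}\, W^{(j)}_{r-1,q^{p+2}} + \prod_{i\sim j} W^{(i)}_{\ast,\ast}.
\]
The new module is then the KR-module $W^{(j)}_{r',q^{p'}}$ with one fewer factor at the top, which is exactly $\mc{M}^{<\xi'}$ at the shifted position. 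Since $j$ is a sink of $\xi$, every neighbour $i\sim j$ has $\xi_i = \xi_j - 1$. Consequently the KR-modules in the neighbouring rows end at $q^{\xi_j-3}$, and they are precisely the ones appearing in the right-hand side of this $T$-system. The monoidal exchange relation in \cite{KKOPmonII} identifies the simple module produced by mutation as the head of the corresponding tensor product, and this head is the KR-module predicted by the $T$-system.

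\textbf{Quiver.} By induction on $k$, we show that after mutating at the first $k$ vertices of ${\bf v}_j$ the quiver has the following shape. It equals $Q^{<\xi}_{\rm HL}$, except that the vertices $(j,\xi_j-2),\dots,(j,\xi_j-2k)$ have become the shifted row of $\xi'$ (the vertex $(j,\xi_j-2l)$ now plays the role of $(j,\xi_j-2l-1)$), and there is one ``defect'' square around $(j,\xi_j-2k-2)$. Mutating at $(j,\xi_j-2k-2)$ moves the defect down by one step. Checking this is a local computation on the quiver, involving only the vertex being mutated, its two horizontal neighbours in row $j$, and the vertices of the adjacent rows. Because each vertex is mutated only once and arrows never go to infinity, the limit quiver is well-defined. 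It is $Q^{<\xi'}_{\rm HL}$ once we use the relabeling $(j,\xi_j-2l)\mapsto(j,\xi_j-2l-1)$ on row $j$.

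\textbf{Main obstacle.} The hardest step is the inductive quiver computation, because one must check that no extra arrows survive in the rows adjacent to $j$. This uses the sink condition on $j$ in an essential way. To settle it, I would write out the neighbourhood of the mutated vertex explicitly, following \cite[Section 3]{HLcluster}.
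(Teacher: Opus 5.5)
Your strategy is the paper's. The paper also mutates row $j$ from right to left, recognizes each exchange relation as a $T$-system via \cite[Sections 3.2.3 and 6.1]{HLcluster}, and checks the quiver locally, reducing to type $\mathsf{A}_2$ by treating the neighbours of $j$ one at a time; your induction with a moving defect square is the same computation.

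However, your bookkeeping of row $j$ has a parity error that breaks the isomorphism as you state it. Vertices $(j,p)$ of $Q_{\mathrm{HL}}$ satisfy $p\equiv\epsilon_j\equiv\xi_j \pmod 2$, so $(j,\xi_j-1)$ is not a vertex. The top vertex of row $j$ is $(j,\xi_j-2)$, which is the \emph{first} entry of ${\bf v}_j$. Accordingly, $Q^{<s_j^{-1}\xi}_{\mathrm{HL}}$ is $Q^{<\xi}_{\mathrm{HL}}$ with $(j,\xi_j-2)$ deleted. Your relabeling $(j,\xi_j-2l)\mapsto(j,\xi_j-2l-1)$ therefore does not land in $Q^{<s_j^{-1}\xi}_{\mathrm{HL}}$. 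The correct one slides the row by one vertex, $(j,p)\mapsto(j,p-2)$, as in the paper's pictures.

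The description of the new module is also off. Read literally, ``one fewer factor at the top'' of $W^{(j)}_{k,q^{\xi_j-2k}}$ is $W^{(j)}_{k-1,q^{\xi_j-2k}}$, which is the module already sitting at the previously mutated vertex $(j,\xi_j-2k+2)$. The correct new module at $(j,\xi_j-2k)$ is $W^{(j)}_{k,q^{\xi_j-2k-2}}$: the old module with spectral parameter shifted by $q^{-2}$. This equals $\mc{M}^{<s_j^{-1}\xi}_{(j,\xi_j-2k-2)}$.

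Your displayed $T$-system is the right relation once you set $r=k$ and $p=\xi_j-2k-2$:
\begin{itemize}
\item the old module is $W^{(j)}_{k,q^{p+2}}$;
\item $W^{(j)}_{k+1,q^{p}}$ sits at the unmutated vertex $(j,\xi_j-2k-2)$;
\item $W^{(j)}_{k-1,q^{p+2}}$ is the already mutated module at $(j,\xi_j-2k+2)$;
\item the product is $\prod_{i\sim j}W^{(i)}_{k,q^{\xi_j-2k-1}}=\prod_{i\sim j}\mc{M}^{<\xi}_{(i,\xi_j-2k-1)}$.
\end{itemize}
This matches the quiver at step $k$. There, $(j,\xi_j-2k)$ receives arrows exactly from the vertices $(i,\xi_j-2k-1)$ with $i\sim j$, and points only to $(j,\xi_j-2k\pm2)$.

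The sink condition enters at the very first step. Since $\xi_i=\xi_j-1$, the vertex $(i,\xi_j-1)$ is not in $Q^{<\xi}_{\mathrm{HL}}$, so $(j,\xi_j-2)$ has no outgoing arrows into neighbouring rows. Consequently, no arrows between two different neighbouring rows are ever created. With these corrections your argument goes through.
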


\begin{proof}
Let us explain how to prove the lemma using the arguments in  \cite[Section 3.2.3]{HLcluster}. Suppose first that $\mathfrak{g}$ is of type $\mathsf{A}_2$, and label the vertices of $\Delta$ by $1$ and $2$. Without loss of generality, assume that $j = 2$. We can thus depict $\mathscr{S}^{<\xi}$ as
\[\begin{tikzcd}[column sep=small]
	\dotsb && {W^{(1)}_{3,q^{\xi_1-6}}} && {W^{(1)}_{2,q^{\xi_1-4}}} && {W^{(1)}_{1,q^{\xi_1-2}}} & \\
	& \dotsb && {W^{(2)}_{3,q^{\xi_2-6}}} && {W^{(2)}_{2,q^{\xi_2-4}}} && {W^{(2)}_{1,q^{\xi_2-2}}}
	\arrow[from=1-1, to=2-2]
	\arrow[from=1-3, to=1-1]
	\arrow[from=1-3, to=2-4]
	\arrow[from=1-5, to=1-3]
	\arrow[from=1-5, to=2-6]
	\arrow[from=1-7, to=1-5]
	\arrow[from=1-7, to=2-8]
	\arrow[from=2-2, to=1-3]
	\arrow[from=2-4, to=1-5]
	\arrow[from=2-4, to=2-2]
	\arrow[from=2-6, to=1-7]
	\arrow[from=2-6, to=2-4]
	\arrow[from=2-8, to=2-6]
\end{tikzcd}\]
where we write over each vertex $(i,p)$ of $Q_{\rm{HL}}^{<\xi}$ the KR-module $\mc{M}^{<\xi}_{(i,p)}$. The sequence ${\bf v}_2$ reads the bottom row from right to left. By the mutation sequence in \cite[Section 6.1]{HLcluster} and by the argument in \cite[Section 3.2.3]{HLcluster} on $T$-systems (which admit an interpretation as short exact sequences \cite{NakajimatqKRmodules,HernandezKRconjecture}), the monoidal seed $\mu_{{\bf v}_2}(\mathscr{S}^{<\xi})$ obtained after mutating along ${\bf v}_2$ is
\[\begin{tikzcd}[column sep=small]
	\dotsb && {W^{(1)}_{3,q^{\xi_1-6}}} && {W^{(1)}_{2,q^{\xi_1-4}}} && {W^{(1)}_{1,q^{\xi_1-2}}} & \\
	& \dotsb && {W^{(2)}_{3,q^{\xi_2-8}}} && {W^{(2)}_{2,q^{\xi_2-6}}} && {W^{(2)}_{1,q^{\xi_2-4}}}
	\arrow[from=1-1, to=2-4]
	\arrow[from=1-3, to=1-1]
	\arrow[from=1-3, to=2-6]
	\arrow[from=1-5, to=1-3]
	\arrow[from=1-5, to=2-8]
	\arrow[from=1-7, to=1-5]
	\arrow[from=2-2, to=1-1]
	\arrow[from=2-4, to=1-3]
	\arrow[from=2-4, to=2-2]
	\arrow[from=2-6, to=1-5]
	\arrow[from=2-6, to=2-4]
	\arrow[from=2-8, to=1-7]
	\arrow[from=2-8, to=2-6]
\end{tikzcd}\]
Note that the spectral parameters of the KR-modules on the bottom row are shifted by $q^{-2}$. Sliding the bottom row to the left, we see that $\mu_{{\bf v}_2}(\mathscr{S}^{<\xi})$ coincides with
\[\begin{tikzcd}[column sep=small]
	& \dotsb && {W^{(1)}_{3,q^{\xi_1-6}}} && {W^{(1)}_{2,q^{\xi_1-4}}} && {W^{(1)}_{1,q^{\xi_1-2}}} \\
	\dotsb && {W^{(2)}_{3,q^{\xi_2-8}}} && {W^{(2)}_{2,q^{\xi_2-6}}} && {W^{(2)}_{1,q^{\xi_2-4}}}
	\arrow[from=1-2, to=2-3]
	\arrow[from=1-4, to=1-2]
	\arrow[from=1-4, to=2-5]
	\arrow[from=1-6, to=1-4]
	\arrow[from=1-6, to=2-7]
	\arrow[from=1-8, to=1-6]
	\arrow[from=2-1, to=1-2]
	\arrow[from=2-3, to=1-4]
	\arrow[from=2-3, to=2-1]
	\arrow[from=2-5, to=1-6]
	\arrow[from=2-5, to=2-3]
	\arrow[from=2-7, to=1-8]
	\arrow[from=2-7, to=2-5]
\end{tikzcd}\]
which is isomorphic to $\mathscr{S}^{<s_2^{-1}\xi}$, as desired. The proof when $\mf{g}$ is of type $\mathsf{A}_1$ is similar. Finally, as explained in \cite[Section 3.2.3]{HLcluster}, the general case can be reduced to type $\mathsf{A}_2$ by looking at the neighbors of $j$ in $\Delta$ one at a time.
\end{proof}

Denote $\widehat{\un{w}}_0 = (i_s)_{s \in \Z}$. Note that $(i_0, i_{-1},\dots,i_{_{-l(w_0)+1}})$ is a sink sequence for $Q_{\xi}$ that is also a reduced expression of $w_0$. We set
\[
{\bf v}_{\mathscr{D}^{-1}} = ({\bf v}_{i_0},{\bf v}_{i_{-1}},\dots,{\bf v}_{i_{-l(w_0)+1}})
\]
to be the concatenation of the infinite sequences as defined in \eqref{eq:mutation along a row}. Denote $\mathscr{D}^{-1}(\mathscr{S}^{<\xi}) = (\{\mathscr{D}^{-1}(\mc{M}^{<\xi}_{(i,p)})\},Q^{<\xi}_{\rm{HL}},\varnothing)$ the monoidal seed obtained by applying $\mathscr{D}^{-1}$ to $\mathscr{S}^{<\xi}$.

\begin{proposition}\label{proposition:mutation sequence the gives left dual}
    Let $\xi: \Delta_0 \to \Z$ be a height function and let ${\bf v}_{\mathscr{D}^{-1}}$ be the sequence of vertices of $Q^{<\xi}_{\rm{HL}}$ defined above. Then there is an isomorphism of monoidal seeds $\mu_{{\bf v}_{\mathscr{D}^{-1}}}(\mathscr{S}^{<\xi}) \to \mathscr{D}^{-1}(\mathscr{S}^{<\xi})$ sending a vertex of $Q^{<\xi}_{\rm{HL}}$ of the form $(i,\xi_i -2r)$ to $(i^*,\xi_{i^*}-2r)$.
\end{proposition}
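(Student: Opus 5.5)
The plan is to apply Lemma \ref{lemma:mutation along row shifts the spectral parameter} once for each block ${\bf v}_{i_{-k}}$ of ${\bf v}_{\mathscr{D}^{-1}}$, for $k = 0,1,\dots,l(w_0)-1$. Let $\xi^{(0)} = \xi$ and $\xi^{(k+1)} = s_{i_{-k}}^{-1}\xi^{(k)}$.

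The first step is to check that each lemma application is legitimate. Since $(i_0,i_{-1},\dots,i_{-l(w_0)+1})$ is a sink sequence for $Q_{\xi}$, the vertex $i_{-k}$ is a sink of $\xi^{(k)}$. Indeed, reflecting a height function at a sink is exactly sink mutation of $Q_{\xi^{(k)}}$.

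Inductively, the lemma then gives an isomorphism of monoidal seeds
\[
\mu_{{\bf v}_{i_{-k}}}\dotsb\mu_{{\bf v}_{i_0}}(\mathscr{S}^{<\xi}) \cong \mathscr{S}^{<\xi^{(k+1)}}.
\]
Here the isomorphism ``slides rows''. On row $j$ it sends the vertex $(j,\xi^{(k)}_j-2r)$ to $(j,\xi^{(k+1)}_j-2r)$, so it preserves the position index $r$ measured from the right end of the row. There is one subtlety. After the first block, the sequence ${\bf v}_{i_{-1}}$ must be read as the row sequence for $\xi^{(1)}$ transported through this identification, not literally in the original labels. I would make this precise by noting that the identification preserves $r$ on every row, so the concatenated sequence in the original labels ${\bf v}_{j} = ((j,\xi_j-2r))_{r\ge1}$ is exactly what is needed. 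Each infinite block is interpreted as a vertexwise stabilizing limit, as in the preceding remark.

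After all $l(w_0)$ blocks, the final height function is $\xi' = s_{i_{-l(w_0)+1}}^{-1}\dotsb s_{i_0}^{-1}\xi$. Since $w_0$ is reduced and each vertex $j$ appears in it, $\xi'$ should equal $\xi'_j = \xi_{j^*} - h$. I would verify this by the standard Coxeter/AR-quiver computation: the number of occurrences of $j$ in the reduced word for $w_0$ adapted to the orientation relates $\xi_j$ and $\xi_{j^*}$ through $\xi_j + \xi_{j^*}$ and the Coxeter number $h$. Equivalently, I would invoke the known fact that $w_0$ acts on height functions, in the Hernandez--Leclerc setting, as $\xi \mapsto (\xi_{j^*}-h)_j$, with $i\mapsto i^*$ reversing the orientation accordingly.

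It then remains to compare modules. The seed $\mathscr{S}^{<\xi'}$ assigns to $(j,\xi'_j-2r)$ the KR module $W^{(j)}_{r,q^{\xi'_j-2r}} = W^{(j)}_{r,q^{\xi_{j^*}-2r-h}}$. On the other hand, $\DD^{-1}(\mc{M}^{<\xi}_{(j^*,\xi_{j^*}-2r)}) = \DD^{-1}(W^{(j^*)}_{r,q^{\xi_{j^*}-2r}}) = W^{(j)}_{r,q^{\xi_{j^*}-2r-h}}$ by the formula $\DD^{-1}(W^{(i)}_{r,a}) \cong W^{(i^*)}_{r,aq^{-h}}$. So the module sitting at position $r$ in row $j$ of the mutated seed is the left dual of the module at position $r$ in row $j^*$ of $\mathscr{S}^{<\xi}$. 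Composing the accumulated row identifications with the relabeling $(i,\xi_i-2r)\mapsto(i^*,\xi_{i^*}-2r)$ therefore gives the asserted bijection on modules.

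Finally, I would check that the quivers match. The relabeling $i\mapsto i^*$ together with the shift is an isomorphism $Q^{<\xi'}_{\rm HL}\to Q^{<\xi}_{\rm HL}$, since $*$ is a diagram automorphism and the shift by the constant $h$ preserves arrows.

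I expect the main obstacle to be the height-function identity $\xi'_j=\xi_{j^*}-h$, together with the bookkeeping of the infinite limits.
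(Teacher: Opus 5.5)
Your proposal is correct and follows the paper's proof. You apply Lemma~\ref{lemma:mutation along row shifts the spectral parameter} block by block to reach $\mathscr{S}^{<\xi'}$, then use the occurrence count $t_j=(h+\xi_j-\xi_{j^*})/2$ (the paper cites B\'edard, Corollary 2.20(a); note this involves $\xi_j-\xi_{j^*}$, not $\xi_j+\xi_{j^*}$) to get $\xi'_j=\xi_{j^*}-h$, and finish with $\DD^{-1}(W^{(i)}_{r,a})\cong W^{(i^*)}_{r,aq^{-h}}$. The only difference is in the quiver identification: you check $Q^{<\xi'}_{\rm HL}\cong Q^{<\xi}_{\rm HL}$ directly via $(j,p)\mapsto(j^*,p+h)$, whereas the paper passes through $Q^{[-\infty,-l(w_0)]}(\un{w}_0)\cong Q^{[-\infty,0]}(\un{w}_0)$ using $i_{s+l(w_0)}=i_s^*$. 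Your route is slightly more direct and equally valid.
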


\begin{proof}
    Let $\xi' = s^{-1}_{i_{-l(w_0)+1}}\dotsb s_{i_{-1}}^{-1}s_{i_0}^{-1}\xi$. By repeated applications of Lemma \ref{lemma:mutation along row shifts the spectral parameter}, $\mu_{{\bf v}_{\mathscr{D}^{-1}}}(\mathscr{S}^{<\xi})$ is isomorphic to $\mathscr{S}^{<\xi'}$. By Lemma \ref{lemma:identification of HL with GLS quivers} (and Remark \ref{rmk:reflecting height function gives all cases}), $Q^{<\xi'}_{\rm{HL}}$ is isomorphic to the quiver $Q^{[-\infty,-l(w_0)]}(\un{w}_0)$. By the construction of the infinite sequence $\widehat{\un{w}}_0$, there is an isomorphism from this last quiver to $Q^{[-\infty,0]}(\un{w}_0)$ sending the $r$-th vertex on the row corresponding to $i \in \Delta_0$ (counted from right to left) to the $r$-th vertex on the row corresponding to $i^*$. By a second application of Lemma \ref{lemma:identification of HL with GLS quivers}, we obtain the desired isomorphism on the level of the quivers of the monoidal seeds. Now, again by Lemma \ref{lemma:mutation along row shifts the spectral parameter}, notice that the module in $\mu_{{\bf v}_{\mathscr{D}^{-1}}}(\mathscr{S}^{<\xi})$ associated with a vertex of the form $(i,\xi_i-2r)$ is $W^{(i)}_{r,q^{\xi_i-2r-2t_i}}$, where $t_i$ is the number of times the vertex $i$ appears in $(i_0, i_{-1},\dots,i_{_{-l(w_0)+1}})$. Since $\un{w}_0$ is adapted to $Q_{\xi}$, we have $
    t_i = (h + \xi_i - \xi_{i^*})/2$ by \cite[Corollary 2.20(a)]{Bedard}, where $h$ is the Coxeter number of $\Delta$, so this KR-module is
    \[
    W^{(i)}_{r,q^{\xi_{i^*}-2r-h}} \cong \mathscr{D}^{-1}\left(W^{(i^*)}_{r,q^{\xi_{i^*}-2r}}\right) = \mathscr{D}^{-1}\left(\mc{M}^{<\xi}_{(i^*,\xi_{i^*}-2r)}\right),
    \]
    finishing the proof.
\end{proof}

\begin{remark}
    We can iterate the result above as follows. Denote by ${\bf v}_{\mathscr{D}^{-1}}^*$ the sequence obtained from ${\bf v}_{\mathscr{D}^{-1}}$ by replacing each vertex of the form $(i,\xi_i-2r)$ by $(i^*,\xi_{i^*}-2r)$. If we define ${\bf v}_{\mathscr{D}^{-m}}$ as the concatenation of the first $m$ terms of the sequence
    \[
    {\bf v}_{\mathscr{D}^{-1}},{\bf v}_{\mathscr{D}^{-1}}^*,{\bf v}_{\mathscr{D}^{-1}},{\bf v}_{\mathscr{D}^{-1}}^*,\dots,
    \]
    then $\mu_{{\bf v}_{\mathscr{D}^{-m}}}(\mathscr{S}^{<\xi})$ is isomorphic to $\mathscr{D}^{-m}(\mathscr{S}^{<\xi})$.
\end{remark}

\begin{remark}\label{rmk:truncation of mutation sequence that gives dual}
    For $m \geq 1$, the computation of a module in $\mu_{{\bf v}_{\mathscr{D}^{-m}}}(\mathscr{S}^{<\xi})$ associated with a given vertex depends only on finitely many entries of the sequence ${\bf v}_{\mathscr{D}^{-m}}$. This leads to the following observation. For $r\geq 1$, define ${\bf v}_{\mathscr{D}^{-m}}^{\leq r}$ to be the finite subsequence of ${\bf v}_{\mathscr{D}^{-m}}$ consisting of the vertices of the form $(i,\xi_i -2s)$ with $1 \leq s \leq r$. For a fixed vertex $(i,\xi_i-2s)$, we can find $r \geq s$ large enough so that $\mathscr{D}^{-m}(\mc{M}^{<\xi}_{(i,\xi_i-2s)})$ is the module in $\mu_{{\bf v}_{\mathscr{D}^{-m}}^{\leq r}}(\mathscr{S}^{<\xi})$ corresponding to the vertex $(i,\xi_i-2s)$ (if $m$ is even) or $(i^*,\xi_{i^*}-2s)$ (if $m$ is odd).
\end{remark}

\begin{remark}\label{rmk:sequence that gives dual is indeed a maximal green sequence}
    Let $a \leq 0$ be such that every row of $Q^{[a,0]}(\un{w}_0)$ has exactly $r+1$ vertices for some $r \geq 0$. Thus, $Q^{[a,0]}(\un{w}_0)$ is isomorphic to the triangle product of $Q_{\xi}$ and a linear orientation of $\mathsf{A}_{r+1}$. Identifying  $Q^{<\xi}_{\rm{HL}}$ with $Q^{[-\infty,0]}(\un{w}_0)$ by Lemma \ref{lemma:identification of HL with GLS quivers}, we can view the sequence ${\bf v}_{\mathscr{D}^{-1}}^{\leq r}$ from Remark \ref{rmk:truncation of mutation sequence that gives dual} as a sequence of unfrozen vertices in $Q^{[a,0]}(\un{w}_0)$. By Lemma \ref{lemma:source maximal green sequence}, Proposition \ref{proposition:sink maximal green sequence}, and Theorem \ref{thm:maximal green sequence for triangle product}, ${\bf v}_{\mathscr{D}^{-1}}^{\leq r}$ is a maximal green sequence for the quiver obtained from $Q^{[a,0]}(\un{w}_0)$ by removing the frozen vertices, and the associated permutation coincides with the one described in Proposition \ref{proposition:mutation sequence the gives left dual}.
\end{remark}

\section{The inverse dualizing bimodules and their properties}\label{section:inverse dualizing bimodules}

In this section, we recall the definition and main properties of the relative inverse dualizing bimodule and the relative derived preprojective algebra. More details can be found in \cite{KellerCY}, \cite{YeungRelativeCY}, \cite{YilinWuJacobifinite}, and \cite{KellerWang}. In these references, the definitions are given for general dg categories, but we will present them in the generality that we shall later need. 

We aim to provide the main tools to prove Theorem \ref{thm:coincidence of extensions} and \Cref{cor:adapted words give proper Ginzburgs}. The latter states that the relative Ginzburg dg algebras arising in our setting are proper and hence we can apply \Cref{thm:canonical quantum structure}, yielding a $\Uplambda$-cluster algebra structure.

Properness will be shown for a quasi-isomorphic algebra, namely a relative derived preprojective algebra, see \Cref{theorem:Ginzburg algebras as completion}. For such algebras, we observe in \Cref{lemma:Omega nilpotent implies proper} that properness follows if the relative inverse dualizing bimodule is nilpotent. The latter property will be obtained through a reduction to a quiver of type $\mathsf{A}_1$ and the gluing result of \Cref{section:gluing bimodules}, which is of independent interest. \smallskip

{\it Notation:} For a small $k$-category $\mc{A}$, we denote by $\Modcat\mc{A}$ its category of \emph{right} modules and by $\modcat\mc{A}$ the full subcategory of $\mc{A}$-modules $M: \mc{A}^{\rm{op}} \to \Modcat k$ such that $M(x)$ is finite-dimensional for all $x \in \mc{A}$. Let $\mc{D}(\mc{A})$, $\per(\mc{A})$, and $\pvd(\mc{A})$ denote the derived, the perfect derived, and the perfectly valued derived categories of $\mc{A}$. The suspension functors of these triangulated categories will be denoted by $\Sigma$. We call $\mc{A}$ \emph{proper} if it is $\Hom$-finite and equivalent to a category with finitely many objects. We denote by $\mc{A}^e = \mc{A}^{\rm{op}} \ten \mc{A}$ the enveloping category of $\mc{A}$. We say $\mc{A}$ is \emph{smooth} if the regular $\mc{A}$-bimodule $\mc{A}$ belongs to $\per(\mc{A}^e)$. When $\mc{A}$ is proper, smoothness is equivalent to finiteness of the global dimension since we assume $k$ is algebraically closed (hence perfect). If $\per(\mc{A}) = \pvd(\mc{A})$, we denote this category by $\mc{D}^b(\mc{A})$. This is the case, for example, if $\mc{A}$ is smooth and proper. We view $k$-algebras as $k$-categories with a single object. Morphisms of algebras are not necessarily unital.

\subsection{The definition} Let $\mc{A}$ be a small $k$-category. For $M \in \mc{D}(\mc{A}^e)$, we define its bimodule dual $M^{\vee} = \RHom_{\mc{A}^e}(M,\mc{A}^e)$. We view it as an object of $\mc{D}(\mc{A}^e)$ (that is, as a \emph{right} dg $\mc{A}^e$-module) by composing with the duality $(\mc{A}^e)^{\rm{op}} \to \mc{A}^e$ sending $f \ten g$ to $g \ten f$ for any morphisms $f,g$ in $\mc{A}$. We define the \emph{absolute inverse dualizing bimodule} of $\mc{A}$ as the bimodule dual of the regular $\mc{A}$-bimodule:
\[
\Omega_{\mc{A}} = \mc{A}^{\vee} = \RHom_{\mc{A}}(\mc{A},\mc{A}^e) \in \mc{D}(\mc{A}^e).
\]

\begin{proposition}[{\cite[Lemma 4.1]{KellerCYtriangulated}}]
    Suppose $\mc{A}$ is smooth. For $L \in \pvd(\mc{A})$ and $M \in \mc{D}(\mc{A})$, there is a canonical isomorphism
    \[
    D\Hom_{\mc{D}(\mc{A})}(L,M) \xrightarrow{\sim} \Hom_{\mc{D}(\mc{A})}(M \lten_{\mc{A}} \Omega_{\mc{A}},L).
    \]
\end{proposition}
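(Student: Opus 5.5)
We want the canonical isomorphism $D\Hom_{\mc{D}(\mc{A})}(L,M) \cong \Hom_{\mc{D}(\mc{A})}(M \lten_{\mc{A}} \Omega_{\mc{A}},L)$ for $L \in \pvd(\mc{A})$, $M \in \mc{D}(\mc{A})$. The plan is to compute the right side through the tensor--Hom adjunction, using smoothness to exchange $\Omega_{\mc{A}} = \RHom_{\mc{A}^e}(\mc{A},\mc{A}^e)$ with $\mc{A}$ itself.

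First, adjunction gives
\[
\RHom_{\mc{A}}(M \lten_{\mc{A}} \Omega_{\mc{A}}, L) \cong \RHom_{\mc{A}}(M, \RHom_{\mc{A}}(\Omega_{\mc{A}}, L)),
\]
where $\RHom_{\mc{A}}(\Omega_{\mc{A}},L)$ is the right module obtained from the left action on $\Omega_{\mc{A}}$. So it suffices to build a natural isomorphism in $\mc{D}(\mc{A})$,
\[
\RHom_{\mc{A}}(\Omega_{\mc{A}}, L) \cong DL \lten_{\mc{A}} \ldots,
\]
or, more directly, to show $\RHom_{\mc{A}}(M,\RHom_{\mc{A}}(\Omega_{\mc{A}},L)) \cong D\RHom_{\mc{A}}(L,M)$ and then take $H^0$.

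For this, consider the functors in the bimodule variable $X \in \mc{D}(\mc{A}^e)$:
\[
F(X) = \RHom_{\mc{A}}(M \lten_{\mc{A}} X^{\vee}, L), \qquad G(X) = D\bigl(L' \lten_{\mc{A}} X \lten_{\mc{A}} M'\bigr),
\]
where $G$ is arranged (with $L' = DL$ viewed as a left module, and $M$ in the appropriate slot) so that $G(\mc{A}) \cong D\RHom_{\mc{A}}(L,M)$. This uses $D\RHom_{\mc{A}}(L,M) \cong D(M \lten_{\mc{A}} \RHom_{\mc{A}}(L,\mc{A}))$, which is false in general. So instead rewrite $\RHom_{\mc{A}}(L,M) \cong \RHom_{\mc{A}^e}(\mc{A}, \RHom_k(L,M))$, the Hochschild form. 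Then I construct a natural morphism
\[
\RHom_{\mc{A}^e}(\mc{A},\RHom_k(L,M)) \to D\RHom_{\mc{A}}(M\lten_{\mc{A}} \mc{A}^{\vee}, L)
\]
by first producing, for every perfect bimodule $P$, the canonical evaluation map
\[
\RHom_{\mc{A}^e}(P,\RHom_k(L,M)) \cong P^{\vee}\lten_{\mc{A}^e}\RHom_k(L,M),
\]
valid since $P$ is perfect, and then the map $P^\vee \lten_{\mc{A}^e} \RHom_k(L,M) \cong \RHom_k(L, M\lten_{\mc{A}} P^\vee)$, which is valid because $L$ has finite-dimensional total cohomology. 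Dualising, and using $DD \cong \id$ on complexes with finite-dimensional cohomology together with $\RHom_{\mc{A}}(N,L) \cong D(N\lten_{\mc{A}} DL)$ for the finite-dimensional-valued $L$, both sides become $D\bigl(M\lten_{\mc{A}}P^{\vee}\lten_{\mc{A}} DL\bigr)$ up to the correct variance. These identifications are natural and triangulated in $P$ and are isomorphisms for $P=\mc{A}^e$ by direct computation. By dévissage they are then isomorphisms for all $P \in \per(\mc{A}^e)$, in particular for $P=\mc{A}$ by smoothness. Taking $H^0$ gives the claim.

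The main obstacle is the finiteness bookkeeping. Biduality $D D(-)\cong(-)$ requires finite-dimensional cohomology, but $M$ is arbitrary. I would handle this by never dualising an expression involving $M$ twice: dualise only the $L$-side via the adjunction $\RHom_{\mc{A}}(N, L)\cong \RHom_{\mc{A}}(N, DDL) \cong D(N\lten_{\mc{A}}DL)$. This uses only that $L$ is perfectly valued, so it is natural in arbitrary $N = M \lten_{\mc{A}} P^\vee$. Checking that the composite is compatible with the left/right actions under the swap $(\mc{A}^e)^{\rm op}\to \mc{A}^e$ is then routine.
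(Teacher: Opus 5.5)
Once the false starts are removed, your argument is the standard proof of Keller's lemma, which the paper cites without reproducing. The chain is as follows. By adjunction, $\RHom_{\mc{A}}(L,M)\cong\RHom_{\mc{A}^e}(\mc{A},\RHom_k(L,M))$. Since $\mc{A}\in\per(\mc{A}^e)$, this is $\Omega_{\mc{A}}\lten_{\mc{A}^e}\RHom_k(L,M)$. Since $L$ is perfectly valued, $\RHom_k(L,M)\cong M\ten_k DL$ as bimodules, so $\RHom_{\mc{A}}(L,M)\cong(M\lten_{\mc{A}}\Omega_{\mc{A}})\lten_{\mc{A}}DL$. Applying $D$ and tensor--Hom adjunction gives $D\RHom_{\mc{A}}(L,M)\cong\RHom_{\mc{A}}(M\lten_{\mc{A}}\Omega_{\mc{A}},DDL)\cong\RHom_{\mc{A}}(M\lten_{\mc{A}}\Omega_{\mc{A}},L)$. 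Finally one takes $H^0$, which commutes with the exact functor $D$. Your last paragraph correctly identifies that biduality may only be used on $L$.

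As written, however, two intermediate claims are wrong and must be replaced by the chain above.

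First, $P^{\vee}\lten_{\mc{A}^e}\RHom_k(L,M)$ is not $\RHom_k(L,M\lten_{\mc{A}}P^{\vee})$. For $P=\mc{A}^e$ the left side is $M\ten_k DL$, while the right side is $\RHom_k(L,M\ten_k\mc{A})$, which carries an extra tensor factor $\mc{A}$. The correct term is $(M\lten_{\mc{A}}P^{\vee})\lten_{\mc{A}}DL$, contracted over $\mc{A}$; this is what you implicitly use afterwards.

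Second, the ``natural morphism'' $\RHom_{\mc{A}^e}(\mc{A},\RHom_k(L,M))\to D\RHom_{\mc{A}}(M\lten_{\mc{A}}\Omega_{\mc{A}},L)$ points the wrong way. Under the identifications above it is the biduality map of $\RHom_{\mc{A}}(L,M)$. That map fails to be an isomorphism as soon as some $\Hom(L,\Sigma^nM)$ is infinite-dimensional, e.g.\ $M=L^{(\N)}$ with $L\neq0$. The problem already appears for $P=\mc{A}^e$: there the map is the trace pairing $\RHom_k(L,M)\to D\RHom_k(M,L)$, which is not bijective for infinite-dimensional $M$. So the ``direct computation'' you claim for $P=\mc{A}^e$ is false in that direction, and your d\'evissage has no valid base case. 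What you need is the map in the direction of the statement, $D\RHom_{\mc{A}^e}(P,\RHom_k(L,M))\to\RHom_{\mc{A}}(M\lten_{\mc{A}}P^{\vee},L)$, built from the chain above; that one is an isomorphism for every perfect $P$.

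Two minor points. The d\'evissage is superfluous, since each step already holds for all perfect $P$. Your aside that $\RHom_{\mc{A}}(L,M)\cong M\lten_{\mc{A}}\RHom_{\mc{A}}(L,\mc{A})$ is ``false in general'' is mistaken here, because smoothness forces $\pvd(\mc{A})\subseteq\per(\mc{A})$.
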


\begin{remark}\label{rmk:absolute Omega gives Serre functor}
If $\mc{A}$ is proper and smooth, then $\mathbb{S}_{\mc{A}} = - \lten_{\mc{A}} D\mc{A}^{\rm{op}}$ defines a Serre functor on the bounded derived category $\mc{D}^b(\mc{A})$, that is,  $\mathbb{S}_{\mc{A}}$ is an autoequivalence such that there is a natural isomorphism
\[
\Hom_{\mc{D}^b(\mc{A})}(L,M) \cong D\Hom_{\mc{D}^b(\mc{A})}(M,\mathbb{S}_{\mc{A}}(L))
\]
for $L,M \in \mc{D}^b(\mc{A})$. By the proposition above, there is an isomorphism of functors $\mathbb{S}_{\mc{A}}^{-1} \cong - \lten_{\mc{A}}\Omega_{\mc{A}}$ and, consequently, $\mathbb{S}_{\mc{A}} \cong \RHom_{\mc{A}}(\Omega_{\mc{A}},-)$.
\end{remark}

Let $\mc{B} \to \mc{A}$ be a linear functor between small $k$-categories. Since we work over a field, we have canonical isomorphisms in the derived category of bimodules:
\[
\mc{A} \lten_{\mc{B}} \mc{A} = \mc{A} \lten_{\mc{B}} \mc{B} \lten_{\mc{B}} \mc{A} = \mc{B} \lten_{\mc{B}^e} \mc{A}^e.
\]
We will use these identifications freely. There is a canonical map $\mc{A} \lten_{\mc{B}} \mc{A} \longrightarrow \mc{A}$ which corresponds to the functor $\mc{B} \to \RHom_{\mc{A}^e}(\mc{A}^e,\mc{A}) = \mc{A}$, viewed as a morphism of $\mc{B}$-bimodules, under the derived tensor-$\Hom$ adjunction. Alternatively, it coincides with the composition
\[
\mc{A} \lten_{\mc{B}} \mc{A} \longrightarrow H^0(\mc{A} \lten_{\mc{B}} \mc{A}) = \mc{A} \ten_{\mc{B}} \mc{A} \longrightarrow \mc{A},
\]
where the second map is induced by the composition law in the category $\mc{A}$. We define the \emph{relative inverse dualizing bimodule} $\Omega_{\mc{A},\mc{B}} \in \mc{D}(\mc{A}^e)$ as the bimodule dual of the cone of the canonical map $\mc{A} \lten_{\mc{B}} \mc{A} \longrightarrow \mc{A}$.

\begin{lemma}[{\cite[Lemma 2.9]{KellerCY}}]\label{lemma:bimodule dual compatible with induction}
    If $M \in \per(\mc{B}^e)$, then there is a canonical isomorphism
    \[
    (M \lten_{\mc{B}^e}\mc{A}^e)^{\vee} \cong M^{\vee} \lten_{\mc{B}^e} \mc{A}^e.
    \]
    In particular, if $\mc{B}$ is smooth, then $(\mc{A}\lten_{\mc{B}}\mc{A})^{\vee} \cong \Omega_{\mc{B}} \lten_{\mc{B}^e}\mc{A}^e$.
\end{lemma}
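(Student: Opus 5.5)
The plan is to prove the general isomorphism first and then read off the second claim from it. The general claim is that $(M \lten_{\mc{B}^e}\mc{A}^e)^{\vee} \cong M^{\vee} \lten_{\mc{B}^e} \mc{A}^e$ for $M \in \per(\mc{B}^e)$. We reduce it to the case $M = \mc{B}^e$ by a thick-subcategory argument.

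First, construct a natural comparison morphism
\[
\theta_M: M^{\vee} \lten_{\mc{B}^e} \mc{A}^e = \RHom_{\mc{B}^e}(M,\mc{B}^e)\lten_{\mc{B}^e}\mc{A}^e \longrightarrow \RHom_{\mc{B}^e}(M,\mc{A}^e) \cong \RHom_{\mc{A}^e}(M\lten_{\mc{B}^e}\mc{A}^e,\mc{A}^e).
\]
The first arrow is the usual evaluation map $\RHom(M,X)\lten Y \to \RHom(M, X\lten Y)$, taken with $X=\mc{B}^e$ and $Y=\mc{A}^e$. The second is the derived extension/restriction-of-scalars adjunction along $\mc{B}^e\to\mc{A}^e$. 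Care is needed with the bimodule structures. The outer $\mc{A}^e$-action on $\RHom_{\mc{B}^e}(M,\mc{A}^e)$ comes from the second (left) module structure of $\mc{A}^e$, which commutes with the right $\mc{B}^e$-action used in the $\RHom$. One must check that this structure matches the twist by the duality $(\mc{A}^e)^{\rm op}\to\mc{A}^e$ used to define $(-)^\vee$. Both sides are triangulated functors of $M$, contravariant in $M$, and $\theta$ is natural in $M$.

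Second, check that $\theta_M$ is an isomorphism for $M=\mc{B}^e$, or more precisely for the representables $\mc{B}^e(-,(x,y))$. In that case $M^\vee$ is the corresponding representable, so the left side is the induced representable of $\mc{A}^e$. The right side is $\RHom_{\mc{A}^e}$ of a representable into $\mc{A}^e$, which is again that representable by Yoneda. The map $\theta$ becomes the identity under these identifications. The class of $M$ for which $\theta_M$ is invertible is closed under shifts, cones and direct summands, by the five lemma and naturality. Since $\per(\mc{B}^e)$ is the thick subcategory generated by the representables, $\theta_M$ is invertible for every $M\in\per(\mc{B}^e)$.

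For the \enquote{in particular}, take $M=\mc{B}$, which lies in $\per(\mc{B}^e)$ exactly because $\mc{B}$ is smooth. Then $M\lten_{\mc{B}^e}\mc{A}^e=\mc{B}\lten_{\mc{B}^e}\mc{A}^e\cong \mc{A}\lten_{\mc{B}}\mc{A}$ by the identifications recalled just before the lemma, and $M^\vee=\Omega_{\mc{B}}$ by definition. This gives $(\mc{A}\lten_{\mc{B}}\mc{A})^{\vee}\cong \Omega_{\mc{B}}\lten_{\mc{B}^e}\mc{A}^e$.

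The main obstacle is the first step: defining $\theta$ so that it is a genuine morphism of $\mc{A}$-bimodules, with the correct twist by the duality $(\mc{A}^e)^{\rm op}\to\mc{A}^e$. I would handle this by working with cofibrant (e.g.\ semi-free) resolutions of $M$ over $\mc{B}^e$. On such a resolution the map can be written explicitly at the chain level, and its compatibility with the outer bimodule action can be checked on generators.
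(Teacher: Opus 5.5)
Your proposal is correct. The paper gives no proof of its own and simply cites Keller, and your argument is the standard proof of that result: the natural comparison map $M^{\vee}\lten_{\mc{B}^e}\mc{A}^e\to(M\lten_{\mc{B}^e}\mc{A}^e)^{\vee}$ is invertible on representable $\mc{B}^e$-modules, hence by d\'evissage on the thick subcategory $\per(\mc{B}^e)$ they generate, and the special case follows by taking $M=\mc{B}$, which is perfect exactly because $\mc{B}$ is smooth.
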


\begin{remark}\label{rmk:computing Omega with Serre functor}
    By the lemma above, if $\mc{B}$ is smooth, then $\Omega_{\mc{A},\mc{B}}$ is the cocone of a canonical morphism $\Omega_{\mc{A}} \to \Omega_{\mc{B}} \lten_{\mc{B}^e}\mc{A}^e$. Consequently, if $\mc{A}$ and $\mc{B}$ are both smooth and proper, then we can compute $M \lten_{\mc{A}}\Omega_{\mc{A},\mc{B}}$ as the cocone of a canonical map
    \[
    \mathbb{S}^{-1}_{\mc{A}}(M) \longrightarrow \mathbb{S}^{-1}_{\mc{B}}(\restr{M}{\mc{B}})\lten_{\mc{B}}\mc{A}
    \]
    and $\RHom_{\mc{A}}(\Omega_{\mc{A},\mc{B}},M)$ as the cone of a canonical map
    \[
    \RHom_{\mc{B}}(\mc{A},\mathbb{S}_{\mc{B}}(\restr{M}{\mc{B}}))\longrightarrow\mathbb{S}_{\mc{A}}(M)
    \]
    for any $M \in \mc{D}^b(\mc{A})$. The resulting complexes are again in $\mc{D}^b(\mc{A})$.
\end{remark}

\subsection{Relative derived preprojective algebras} We keep the notation from the previous section. Suppose that $\mc{A}$ and $\mc{B}$ are both smooth. Let $\omega \in \mc{D}(\mc{A}^e)$ be a cofibrant replacement of $\Sigma^2\Omega_{\mc{A},\mc{B}}$. The \emph{relative derived preprojective category} $\bP_3(\mc{A},\mc{B})$ is the tensor dg category
\[
T_{\mc{A}}(\omega) = \bigoplus_{n \geq 0} \mc{\omega}^{\ten_{\mc{A}}n} = \mc{A} \oplus \omega \oplus (\omega \ten_{\mc{A}} \omega) \oplus \dotsb.
\]
Its objects are the same as those of $\mc{A}$, and the morphism complexes are the evaluations of the dg $\mc{A}$-bimodule above. Composition is given by concatenation and the $\mc{A}$-bimodule structure. We remark that $\bP_3(\mc{A},\mc{B})$ is defined up to quasi-equivalence. When $\mc{A}$ and $\mc{B}$ are algebras, we refer to $\bP_3(\mc{A},\mc{B})$ as the associated \emph{relative derived preprojective algebra}.

\begin{remark}
    If $\mc{A}$ has finitely many objects, we can turn $\bP_3(\mc{A},\mc{B})$ into a dg algebra by taking the direct sum of its morphism complexes. The resulting dg algebra will be called the \emph{total dg algebra} of $\bP_3(\mc{A},\mc{B})$. It is derived Morita equivalent to $\bP_3(\mc{A},\mc{B})$.
\end{remark}

We now show how to realize a class of relative derived preprojective algebras as relative Ginzburg dg algebras. 

Let $(Q',F)$ be a finite ice quiver. Let $A$ be the finite-dimensional quotient of $kQ'$ by an admissible ideal $I$, and suppose that $A$ has global dimension at most $2$. Let $B = kF$ and assume $I \cap B = 0$, so that we can view $B$ as a subalgebra of $A$. Let $R \subseteq I$ be the union over all $i,j \in Q'_0$ of a set of representatives of a basis of $e_j(I/(IJ+JI))e_i$, where $J \subseteq kQ'$ denotes the ideal generated by the arrows. Suppose further that $R$ generates $I$ as an ideal. We define a new quiver $Q$ by adding additional arrows $\rho_r: j \to i$ for each relation $r \in R$ from $i$ to $j$. We equip it with the potential
\[
W = \sum_{r \in R}r\rho_r.
\]
In this way, we have an ice quiver with potential $(Q,F,W)$. Its relative Ginzburg dg algebra relates to the relative derived preprojective algebra of $A$ and $B$ as follows. 

\begin{theorem}\label{theorem:Ginzburg algebras as completion}
    With the notation and hypotheses above, the relative derived preprojective algebra $\bP_3(A,B)$ is quasi-isomorphic to the non-completed relative Ginzburg dg algebra $\bG(Q,F,W)$.
\end{theorem}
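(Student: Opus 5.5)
Following Keller's proof in the absolute case \cite{KellerCY}, we compute $\Omega_{A,B}$ through explicit small resolutions and then identify the tensor algebra $T_A(\Sigma^2\Omega_{A,B})$ with $\bG(Q,F,W)$.

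\textbf{Step 1: resolution of $A$.} Write $S=\prod_{i\in Q'_0}ke_i$, let $V$ be the $S$-bimodule spanned by the arrows of $Q'$, and let $V_F\subseteq V$ be the span of the frozen arrows, so that $B=T_S(V_F)$. Since $\gldim A\le 2$ and $R$ is a minimal generating set of $I$, $A$ has the standard bimodule resolution
\[
0\to A\ten_S kR\ten_S A\to A\ten_S V\ten_S A\to A\ten_S A\to A\to 0 .
\]
Exactness on the left uses global dimension at most $2$ together with minimality of $R$. The hereditary algebra $B=kF$ has the analogous two-term resolution $B\ten_S V_F\ten_S B\to B\ten_S B$, where $S$ now ranges over the frozen idempotents, extended by zero. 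Applying $A\ten_B-\ten_B A$ to it gives a model for $A\lten_B A$. The hypothesis $I\cap B=0$ makes $B\to A$ injective, so this map of complexes is a levelwise split inclusion into the resolution of $A$.

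\textbf{Step 2: the relative dualizing complex.} The cone of $A\lten_B A\to A$ is therefore quasi-isomorphic to the cokernel complex
\[
A\ten_S kR\ten_S A\to A\ten_S (V/V_F)\ten_S A\to A\ten_S S_{\mathrm{un}}\ten_S A ,
\]
concentrated in degrees $-2,-1,0$. Here $S_{\mathrm{un}}$ is spanned by the unfrozen idempotents. Each term is finitely generated projective, so dualizing with $(-)^\vee$ is computed termwise. The result $\Sigma^2\Omega_{A,B}$ is then a cofibrant complex $\omega$ with free generators dual to the relations in degree $0$, dual to the unfrozen arrows in degree $-1$, and dual to the unfrozen vertices in degree $-2$. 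These generators become $\rho_r$, $\alpha^*$ and $t_i$ respectively.

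\textbf{Step 3: identification with $\bG(Q,F,W)$.} As a graded algebra, $T_A(\omega)$ is a quotient of the free algebra on all arrows of $\widetilde Q$, with the relations $I$ still imposed in degree $0$. Following Keller's argument for the Ginzburg case, we replace it by the quasi-isomorphic semifree algebra $T_S(\cdot)$: we adjoin $\rho_r^*$ for $r\in R$ with $d\rho_r^*=r$, which kills $I$ up to quasi-isomorphism. One then checks that the differentials transported from the dual resolution are
\[
d\alpha^*=\partial_\alpha W,\qquad d\rho_r^*=\partial_{\rho_r}W=r,\qquad dt_i=e_i\Big(\sum_\alpha[\alpha,\alpha^*]\Big)e_i ,
\]
where the sum now includes the arrows $\rho_r$ and their duals $\rho_r^*$. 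These are exactly the defining differentials of $\bG(Q,F,W)$, with the arrows $\rho_r$ unfrozen.

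\textbf{Main obstacle.} The hardest part is the bookkeeping of signs and of the dual differential in Step 3. Specifically, one has to verify that the dual of the map from relations to unfrozen arrows yields $d\alpha^*=\partial_\alpha(\sum r\rho_r)$, including for frozen arrows, which carry no starred dual. One also has to verify that $dt_i$ comes out as the commutator sum. For the first check, I would use the standard formula for the middle map of the resolution via cyclic derivatives, $r\mapsto\sum_\alpha \partial_{\alpha}r$, written in bimodule form. The second requires the relative version of the fact that, in the absolute case, $dt_i$ is the dual of the multiplication map. I expect both to reduce to the absolute computation in \cite{KellerCY}, with the frozen part simply cancelling because it appears in both $A$ and $A\lten_B A$.
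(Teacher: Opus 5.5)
Your Steps 1 and 2 are sound. Under the hypotheses the three-term bimodule complex resolves $A$, the cone of $A\lten_B A\to A$ is the cokernel complex, and $\omega=\Sigma^2C^\vee$ is a legitimate cofibrant model. So $\bP_3(A,B)=T_A(\omega)$, with generators $\rho_r,\alpha^*,t_i$ over $A$.

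The gap is Step 3, which is where the content of the theorem lies. Your resolution of $A$ contains nothing dual to a $\rho_r^*$. Dualizing it therefore gives $dt_i=e_i\bigl(\sum_{\alpha\in Q'_1\setminus F_1}[\alpha,\alpha^*]\bigr)e_i$. By the cyclic identity $\sum_{a\in Q_1}[a,\partial_aW]=0$, this has $d(dt_i)=-e_i\bigl(\sum_r[\rho_r,r]\bigr)e_i$, which vanishes only because $r=0$ in $A$. Once you adjoin $\rho_r^*$ with $d\rho_r^*=r$ this is no longer zero. The terms $[\rho_r,\rho_r^*]$ must then be inserted by hand; they are not ``transported from the dual resolution''.

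So $\bG(Q,F,W)$ is not obtained from $T_A(\omega)$ merely by resolving the coefficient algebra. The remark that adjoining $\rho_r^*$ ``kills $I$ up to quasi-isomorphism'' does not show that the natural dg map $\bG(Q,F,W)\to T_A(\omega)$, $\rho_r^*\mapsto 0$, is a quasi-isomorphism. Two things are missing:
\begin{enumerate}[(i)]
\item that $\tilde A=(k\langle Q'_1\cup\{\rho_r^*\}\rangle,\ d\rho_r^*=r)\to A$ is a quasi-isomorphism, a genuine input that uses $\gldim A\le 2$ and the minimality of $R$;
\item a base-change argument. One has $\bG(Q,F,W)=T_{\tilde A}(\tilde\omega)$ for a semifree $\tilde A$-bimodule $\tilde\omega$ (same generators, with $dt_i$ including $[\rho_r,\rho_r^*]$) such that $A\ten_{\tilde A}\tilde\omega\ten_{\tilde A}A\cong\omega$. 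Then each $\tilde\omega^{\ten_{\tilde A}n}\to\omega^{\ten_A n}$ is a quasi-isomorphism by (i), since $\tilde\omega$ is cofibrant and $\omega$ is a complex of projectives.
\end{enumerate}

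The paper avoids (ii) by reversing your order. It replaces $A$ by the cofibrant $\tilde A$ \emph{before} dualizing, views $B$ as a dg subalgebra of $\tilde A$, and computes $\Omega_{\tilde A,B}$ from the standard two-term bimodule resolution of a semifree algebra. Then $\rho_r$ is simply the dual of the generator $\rho_r^*$. The commutator $[\rho_r,\rho_r^*]$ appears in $dt_i$ automatically, as the dual of $x\mapsto x\ten 1-1\ten x$ over all generators. Invariance of $\bP_3$ under quasi-isomorphism finishes the argument, and input (i) is what the paper imports from Keller's absolute case.

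If you keep your order, (i) follows from your own Step 1:
\begin{itemize}
\item Tensoring the standard resolution of $\tilde A$ down to $A$ gives exactly your three-term complex, so $A\lten_{\tilde A}A\simeq A$.
\item Hence the fibre $J$ of $\tilde A\to A$ satisfies $A\lten_{\tilde A}J=0$, while $J$ has cohomology only in degrees $\le -1$.
\item For the top nonzero degree $m$ of $J$, one has $H^m(A\lten_{\tilde A}J)\cong A\ten_A H^m(J)=H^m(J)$, which forces $J=0$.
\end{itemize}
The sign checks you single out as the main obstacle are routine by comparison.
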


\begin{proof}
    The absolute case, that is, when $F$ is empty, is \cite[Theorem 6.10]{KellerCY} (see also \cite[Section 3]{KellerErratumCY}). The key step in the proof is to replace $A$ by a quasi-isomorphic cofibrant dg algebra $\tilde{A}$ for which the derived preprojective algebra can be computed using an explicit description of $\Omega_{\tilde{A}}$. This replacement is of the form $\tilde{A} = (k\tilde{Q},d)$, where $\tilde{Q}$ is a finite non-positively graded quiver containing $Q'$. In the relative case, we can view $B$ as a dg subalgebra of $\tilde{A}$ via the inclusion $F \subseteq \tilde{Q}$, then one can similarly compute $\bP_3(\tilde{A},B)$ using an explicit description of $\Omega_{\tilde{A},B}$ (see, for instance, \cite[Section 3.6]{YilinWuJacobifinite}).
\end{proof}

We conclude this section with a useful observation. We say that $M \in \mc{D}(\mc{A}^e)$ is \emph{nilpotent} if there exists $n \geq 1$ such that $M^{\lten_{\mc{A}}n} = 0$ in $\mc{D}(\mc{A}^e)$.

\begin{lemma}\label{lemma:Omega nilpotent implies proper}
    Let $\mc{B} \to \mc{A}$ be a linear functor between small $k$-categories. Suppose that $\mc{A}$ and $\mc{B}$ are both smooth and proper. If $\Omega_{\mc{A},\mc{B}}$ is nilpotent, then $\bP_3(\mc{A},\mc{B})$ is proper.
\end{lemma}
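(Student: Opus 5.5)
The plan is to analyse the tensor dg category $\bP_3(\mc{A},\mc{B}) = T_{\mc{A}}(\omega)$, with $\omega \simeq \Sigma^2\Omega_{\mc{A},\mc{B}}$ cofibrant, summand by summand. Since $\omega$ is cofibrant as a dg bimodule, the underived tensor powers compute the derived ones: $\omega^{\ten_{\mc{A}} n} \simeq \Sigma^{2n}\Omega_{\mc{A},\mc{B}}^{\lten_{\mc{A}} n}$ in $\mc{D}(\mc{A}^e)$. Suppose $\Omega_{\mc{A},\mc{B}}^{\lten_{\mc{A}} N} = 0$. Then $\Omega_{\mc{A},\mc{B}}^{\lten_{\mc{A}} n} = 0$ for all $n \geq N$, because it equals $\Omega_{\mc{A},\mc{B}}^{\lten_{\mc{A}} N} \lten_{\mc{A}} \Omega_{\mc{A},\mc{B}}^{\lten_{\mc{A}}(n-N)}$. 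Hence each summand $\omega^{\ten_{\mc{A}} n}$ with $n \geq N$ is acyclic. As cohomology commutes with direct sums, for any pair of objects $x,y$ we get
\[
H^*\bigl(\bP_3(\mc{A},\mc{B})(x,y)\bigr) \cong \bigoplus_{n=0}^{N-1} H^*\bigl(\Omega_{\mc{A},\mc{B}}^{\lten_{\mc{A}} n}(x,y)\bigr)[2n].
\]

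It therefore suffices to show that each $\Omega_{\mc{A},\mc{B}}^{\lten_{\mc{A}} n}$ has finite-dimensional total cohomology when evaluated at any pair of objects. Finiteness in the object direction is automatic: $\mc{A}$ is proper, so we may assume it has finitely many objects, and then the total dg algebra is proper as well.

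To get finite-dimensionality, I will prove that $\Omega_{\mc{A},\mc{B}}$ lies in $\pvd(\mc{A}^e)$. Because $\mc{A}$ and $\mc{B}$ are smooth and proper, so are $\mc{A}^e$ and $\mc{B}^e$, which gives $\per(\mc{A}^e) = \pvd(\mc{A}^e) = \mc{D}^b(\mc{A}^e)$. The regular bimodule $\mc{A}$ is perfect by smoothness. The bimodule $\mc{A}\lten_{\mc{B}}\mc{A} = \mc{B}\lten_{\mc{B}^e}\mc{A}^e$ is the induction of the perfect $\mc{B}^e$-module $\mc{B}$, so it is also perfect. Perfect objects are closed under cones and under bimodule duals, so $\Omega_{\mc{A},\mc{B}}$ is perfect, hence perfectly valued. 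For derived tensor products, note that for $M,N \in \per(\mc{A}^e)$ we have $M\lten_{\mc{A}}N \in \per(\mc{A}^e)$, since $\mc{A}^e \lten_{\mc{A}} \mc{A}^e$ is a sum of terms $\mc{A}\ten\mc{A}(-,-)\ten\mc{A}$, which is perfect because $\mc{A}$ is proper. The perfect objects $M$ with this property form a thick subcategory containing $\mc{A}^e$. So all the tensor powers are perfect, hence have finite-dimensional total cohomology, and the finite sum above is finite-dimensional. This proves that $\bP_3(\mc{A},\mc{B})$ is proper.

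The step that needs the most care is matching the derived tensor powers with the underived ones inside $T_{\mc{A}}(\omega)$. This relies on $\omega$ being cofibrant, i.e. K-projective on each side after restriction. I will also make sure the closure argument for $\lten_{\mc{A}}$ on perfect bimodules is stated correctly, using properness of $\mc{A}$ to control the generator case.
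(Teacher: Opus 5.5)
Your proposal is correct and follows the same route as the paper, whose proof simply observes that $\Omega_{\mc{A},\mc{B}}$ and its derived tensor powers lie in $\pvd(\mc{A}^e)$, so nilpotence leaves only finitely many summands of $T_{\mc{A}}(\omega)$ with nonzero cohomology. You fill in the details the paper leaves implicit: perfectness of $\Omega_{\mc{A},\mc{B}}$ from smoothness of $\mc{A}$ and $\mc{B}$, closure of perfect bimodules under $\lten_{\mc{A}}$ using properness of $\mc{A}$, and cofibrancy of $\omega$ to identify the underived tensor powers with the derived ones.
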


\begin{proof}
Observe that $\Omega_{\mc{A},\mc{B}}$ and its derived tensor powers are in $\pvd(\mc{A}^e)$. The lemma then follows from the construction of $\bP_3(\mc{A},\mc{B})$.
\end{proof}

\subsection{A gluing result for relative inverse dualizing bimodules}\label{section:gluing bimodules} Let $\mc{A}$ be a small $k$-category. Let $\mc{A}_1, \mc{A}_2 \subseteq \mc{A}$ be strictly full subcategories which together contain all objects of $\mc{A}$. Denote $\mc{C} = \mc{A}_1 \cap \mc{A}_2$. We will assume that one of the following conditions holds:
\begin{equation}\label{eq:source condition for gluing theorem}
\Hom(\mc{A}_1\setminus \mc{C}, \mc{A}_2) = \Hom(\mc{A}_2\setminus \mc{C}, \mc{A}_1) = 0
\end{equation}
or
\begin{equation}\label{eq:sink condition for gluing theorem}
\Hom(\mc{A}_2, \mc{A}_1\setminus \mc{C}) = \Hom(\mc{A}_1, \mc{A}_2\setminus \mc{C}) = 0.
\end{equation}
We have the following ``gluing'' result.

\begin{proposition}\label{proposition:gluing of bimodules} Consider $\mc{A}_i, \mc{A}$ and $\mc{C}$ as above such that \eqref{eq:source condition for gluing theorem} or \eqref{eq:sink condition for gluing theorem} holds. Let $\mc{B} \subseteq \mc{A}_1$ be a not necessarily full subcategory. If $\mc{A}_1$, $\mc{A}_2$, $\mc{B}$ and $\mc{C}$ are smooth, then there is a distinguished triangle
    \[\begin{tikzcd}
	{\Omega_{\mc{A}_2,\mc{C}} \lten_{\mc{A}_2^e} \mc{A}^e} & {\Omega_{\mc{A},\mc{B}}} & {\Omega_{\mc{A}_1,\mc{B}} \lten_{\mc{A}_1^e} \mc{A}^e} & {\Sigma\Omega_{\mc{A}_2,\mc{C}} \lten_{\mc{A}_2^e} \mc{A}^e}
	\arrow[from=1-1, to=1-2]
	\arrow[from=1-2, to=1-3]
	\arrow[from=1-3, to=1-4]
    \end{tikzcd}\]
    in $\mc{D}(\mc{A}^e)$.
\end{proposition}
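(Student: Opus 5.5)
The plan is to establish the triangle first for the cones whose bimodule duals define the relative inverse dualizing bimodules, and then dualize. For a subcategory $\mc{X} \to \mc{A}$, write
\[
K_{\mc{X}} = \cone(\mc{A} \lten_{\mc{X}} \mc{A} \to \mc{A}) \in \mc{D}(\mc{A}^e).
\]
The pieces of the statement are identified as follows:
\begin{itemize}
\item $\Omega_{\mc{A},\mc{B}} = K_{\mc{B}}^{\vee}$ by definition.
\item $\cone(\mc{A}_1 \lten_{\mc{B}} \mc{A}_1 \to \mc{A}_1) \lten_{\mc{A}_1^e} \mc{A}^e \cong \cone(\mc{A}\lten_{\mc{B}}\mc{A} \to \mc{A}\lten_{\mc{A}_1}\mc{A})$, using $\mc{A}\lten_{\mc{B}}\mc{A} = \mc{B}\lten_{\mc{B}^e}\mc{A}^e$.
\item The analogous identity holds for $(\mc{A}_2,\mc{C})$.
\end{itemize}
Since $\mc{A}_1,\mc{B}$ (respectively $\mc{A}_2,\mc{C}$) are smooth, these cones are cones of maps between perfect $\mc{A}_1^e$-bimodules (respectively $\mc{A}_2^e$-bimodules). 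Lemma \ref{lemma:bimodule dual compatible with induction}, applied to the functors $\mc{A}_i \to \mc{A}$, then identifies their duals with $\Omega_{\mc{A}_1,\mc{B}}\lten_{\mc{A}_1^e}\mc{A}^e$ and $\Omega_{\mc{A}_2,\mc{C}}\lten_{\mc{A}_2^e}\mc{A}^e$.

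Next I apply the octahedral axiom to the composable maps $\mc{A}\lten_{\mc{B}}\mc{A} \to \mc{A}\lten_{\mc{A}_1}\mc{A} \to \mc{A}$, whose composite is the canonical map. This gives a triangle
\[
\cone(\mc{A}\lten_{\mc{B}}\mc{A}\to\mc{A}\lten_{\mc{A}_1}\mc{A}) \to K_{\mc{B}} \to K_{\mc{A}_1} \to \Sigma(\dotsb).
\]
It then remains to prove the key identification
\[
K_{\mc{A}_1} \cong \cone(\mc{A}\lten_{\mc{C}}\mc{A} \to \mc{A}\lten_{\mc{A}_2}\mc{A}),
\]
which is equivalent to the Mayer--Vietoris square with corners $\mc{A}\lten_{\mc{C}}\mc{A}$, $\mc{A}\lten_{\mc{A}_1}\mc{A}$, $\mc{A}\lten_{\mc{A}_2}\mc{A}$ and $\mc{A}$ being homotopy cocartesian. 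This identifies the third term of the triangle with $\cone(\mc{A}_2\lten_{\mc{C}}\mc{A}_2\to\mc{A}_2)\lten_{\mc{A}_2^e}\mc{A}^e$. Finally, applying $(-)^{\vee}$, which is a contravariant triangulated functor, reverses the triangle. Combined with the identifications above, this yields exactly
\[
\Omega_{\mc{A}_2,\mc{C}}\lten_{\mc{A}_2^e}\mc{A}^e \to \Omega_{\mc{A},\mc{B}} \to \Omega_{\mc{A}_1,\mc{B}}\lten_{\mc{A}_1^e}\mc{A}^e \to \Sigma(\dotsb).
\]

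The main obstacle is the homotopy cocartesian square, and this is the only place where \eqref{eq:source condition for gluing theorem} or \eqref{eq:sink condition for gluing theorem} enters. I would check it objectwise, evaluating bimodules at pairs $(x,y)$ of objects. Under \eqref{eq:source condition for gluing theorem}, consider an object $y \in \mc{A}_i\setminus\mc{C}$. It receives nothing from the other side, so the representable $\mc{A}(-,y)$ restricted to $\mc{A}_j$ for $j\neq i$ is supported on $\mc{C}$, and it equals the induction from $\mc{C}$ of $\mc{C}(-,y)$. The latter is zero or representable-like, because $\mc{A}_i$ is full and there are no mixed compositions. Consequently, for such pairs the derived tensor products over $\mc{C}$ and over $\mc{A}_j$ coincide, and the square degenerates: the map $\mc{A}\lten_{\mc{A}_i}\mc{A}\to\mc{A}$ is an isomorphism there, and the two other corners agree.

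For pairs with both objects in $\mc{C}$, one uses the fact that every morphism of $\mc{A}$ is a composite of morphisms in $\mc{A}_1$ or in $\mc{A}_2$ passing through $\mc{C}$. The vanishing hypotheses then force $\mc{A} = \mc{A}_1\ast_{\mc{C}}\mc{A}_2$ to be the (derived) amalgamated sum, with no nontrivial compositions between $\mc{A}_1\setminus\mc{C}$ and $\mc{A}_2\setminus\mc{C}$. This gives the Mayer--Vietoris triangle
\[
\mc{A}\lten_{\mc{C}}\mc{A}\to\mc{A}\lten_{\mc{A}_1}\mc{A}\oplus\mc{A}\lten_{\mc{A}_2}\mc{A}\to\mc{A}.
\]
The sink case \eqref{eq:sink condition for gluing theorem} is dual: I would pass to opposite categories, which swaps the roles of left and right modules and leaves the triangle formally unchanged. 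I expect the bookkeeping of the correct triangle orientation and of the signs in the octahedron to require care, but no new idea.
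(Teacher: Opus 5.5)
Your overall architecture is the paper's. You run the octahedron on $\mc{A}\lten_{\mc{B}}\mc{A}\to\mc{A}\lten_{\mc{A}_1}\mc{A}\to\mc{A}$, you prove the key identification $\cone(\mc{A}\lten_{\mc{A}_1}\mc{A}\to\mc{A})\cong\cone(\mc{A}\lten_{\mc{C}}\mc{A}\to\mc{A}\lten_{\mc{A}_2}\mc{A})$, and you dualize via Lemma \ref{lemma:bimodule dual compatible with induction}. The key identification is the paper's $W\cong V$, which it obtains from the $3\times 3$ lemma by showing the corner $Z$ vanishes, i.e.\ your homotopy cocartesian square.

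The gap is in how you verify that square. You split into pairs with $y\in\mc{A}_i\setminus\mc{C}$ and pairs with both objects in $\mc{C}$. As written, this split does not cover pairs with $y\in\mc{C}$ and $x\notin\mc{C}$. For the second case you invoke $\mc{A}$ being a ``derived amalgamated sum'' that yields a Mayer--Vietoris triangle. That is a restatement of the cocartesianness you are trying to prove, and you give no argument for it. In general it needs a flatness property of $\mc{A}_i$ over $\mc{C}$, and that flatness is exactly what the vanishing hypotheses must be used to establish.

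The fix is that no case split is needed: your first argument works for every $y\in\mc{A}_1$, including $y\in\mc{C}$. Assume \eqref{eq:source condition for gluing theorem}.
\begin{enumerate}[(1)]
\item Nothing in $\mc{A}_2\setminus\mc{C}$ maps to $y$, so $\restr{\mc{A}(-,y)}{\mc{A}_2}$ vanishes on $\mc{A}_2\setminus\mc{C}$. Hence it has a projective resolution by modules $\mc{A}_2(-,c)$ with $c\in\mc{C}$.
\item Each $\mc{A}_2(-,c)$ is again supported on $\mc{C}$, by the same hypothesis, since $c\in\mc{A}_1$. As $\mc{C}$ is full, this gives $\mc{A}_2(-,c)\cong\mc{C}(-,c)\lten_{\mc{C}}\mc{A}_2$.
\item Therefore the map $\restr{y^{\wedge}}{\mc{C}}\lten_{\mc{C}}\mc{A}_2\to\restr{y^{\wedge}}{\mc{A}_2}$ is an isomorphism. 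Equivalently, after applying $y^{\wedge}\lten_{\mc{A}}-$ the map $\mc{A}\lten_{\mc{C}}\mc{A}\to\mc{A}\lten_{\mc{A}_2}\mc{A}$ becomes invertible.
\item The map $\mc{A}\lten_{\mc{A}_1}\mc{A}\to\mc{A}$ also becomes invertible, because $\restr{y^{\wedge}}{\mc{A}_1}=\mc{A}_1(-,y)$ is representable.
\item Two parallel sides of the square are now isomorphisms, so $y^{\wedge}\lten_{\mc{A}}Z=0$.
\end{enumerate}
The case $y\in\mc{A}_2$ is symmetric, and since every object lies in $\mc{A}_1$ or $\mc{A}_2$, this gives $Z=0$. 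This is precisely the paper's argument.

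It uses the vanishing hypothesis twice: once for the support of $\mc{A}(-,y)$, and once for the support of the representables $\mc{A}_2(-,c)$. That double use is the precise content behind your phrase ``no mixed compositions''. It also replaces your expression ``$\mc{C}(-,y)$'', which is not defined when $y\notin\mc{C}$. The sink case then follows as you say, by passing to opposite categories or by testing with $-\lten_{\mc{A}}\mc{A}(y,-)$ instead.
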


\begin{proof}
    The inclusions $\mc{C} \subseteq \mc{A}_1,\mc{A}_2 \subseteq \mc{A}$ induce the commutative square in $\mc{D}(\mc{A}^e)$ on the top left corner in the diagram below. It can be completed to the full diagram by the $3 \times 3$ lemma for triangulated categories (see \cite[Proposition 1.1.11]{BeilinsonBernsteinDeligne81}):
    \[\begin{tikzcd}
	{\mc{C} \lten_{\mc{C}^e} \mc{A}^e} & {\mc{A}_1 \lten_{\mc{A}_1^e} \mc{A}^e} & X & {\Sigma \mc{C} \lten_{\mc{C}^e} \mc{A}^e} \\
	{\mc{A}_2 \lten_{\mc{A}_2^e} \mc{A}^e} & {\mc{A}} & Y & {\Sigma \mc{A}_2 \lten_{\mc{A}_2^e} \mc{A}^e} \\
	V & W & Z & {\Sigma V} \\
	{\Sigma\mc{C} \lten_{\mc{C}^e} \mc{A}^e} & {\Sigma\mc{A}_1 \lten_{\mc{A}_1^e} \mc{A}^e} & {\Sigma X}
	\arrow["{g}", from=1-1, to=1-2]
	\arrow["{f}"', from=1-1, to=2-1]
	\arrow[from=1-2, to=1-3]
	\arrow["{f'}", from=1-2, to=2-2]
	\arrow[from=1-3, to=1-4]
	\arrow[from=1-3, to=2-3]
	\arrow[from=1-4, to=2-4]
	\arrow["{g'}", from=2-1, to=2-2]
	\arrow[from=2-1, to=3-1]
	\arrow[from=2-2, to=2-3]
	\arrow[from=2-2, to=3-2]
	\arrow[from=2-3, to=2-4]
	\arrow[from=2-3, to=3-3]
	\arrow[from=2-4, to=3-4]
	\arrow[from=3-1, to=3-2]
	\arrow[from=3-1, to=4-1]
	\arrow[from=3-2, to=3-3]
	\arrow[from=3-2, to=4-2]
	\arrow[from=3-3, to=3-4]
	\arrow[from=3-3, to=4-3]
	\arrow[from=4-1, to=4-2]
	\arrow[from=4-2, to=4-3]
\end{tikzcd}\]
Here, all squares are commutative, and the first three rows and columns are distinguished triangles in $\mc{D}(\mc{A}^e)$. We now prove that $Z \cong 0$ using the conditions \eqref{eq:source condition for gluing theorem} or \eqref{eq:sink condition for gluing theorem}.

First, let us assume that \eqref{eq:source condition for gluing theorem} holds.  It suffices to show that $x^{\wedge} \lten_{\mc{A}} Z \cong 0$ for all $x \in \mc{A}$, where we write $x^{\wedge}$ for the representable $\mc{A}$-module $\mc{A}(-,x)$. We argue separately depending on whether $x$ is in $\mc{A}_1$ or $\mc{A}_2$. Suppose first that $x \in \mc{A}_1$. Let us start by showing that $x^{\wedge} \lten_{\mc{A}} f'$ is an isomorphism. Using that $\mc{A}_1 \lten_{\mc{A}_1^e} \mc{A}^e \cong \mc{A} \lten_{\mc{A}_1} \mc{A}$, note that $x^{\wedge} \lten_{\mc{A}} f'$ identifies with the canonical map
\[
x^{\wedge} \lten_{\mc{A}_1} \mc{A} \longrightarrow x^{\wedge}
\]
induced by the $\mc{A}$-module action on $x^{\wedge}$. Since $\mc{A}_1$ is a full subcategory of $\mc{A}$ and $x \in \mc{A}_1$, the restriction of $x^{\wedge}$ to $\mc{D}(\mc{A}_1)$ is the representable $\mc{A}_1$-module $\mc{A}_1(-,x)$. Thus, $x^{\wedge} \lten_{\mc{A}_1} \mc{A} \cong x^{\wedge}$ and the above map becomes the identity on $x^{\wedge}$, which is indeed an isomorphism. Let us now prove that $x \lten_{\mc{A}} f$ is an isomorphism. Consider the canonical map
\[
\varphi: x^{\wedge} \lten_{\mc{C}} \mc{A}_2 \longrightarrow \restr{x^{\wedge}}{\mc{A}_2}
\]
in $\mc{D}(\mc{A}_2)$. Then $x^{\wedge} \lten_{\mc{A}} f$ identifies with $\varphi \lten_{\mc{A}_2} \mc{A}$. Our problem is reduced to showing that $\varphi$ is an isomorphism. Since there are no morphisms from an object in $\mc{A}_2 \setminus \mc{C}$ to an object in $\mc{A}_1$ by the condition (\ref{eq:source condition for gluing theorem}), we can find a projective resolution of $\restr{x^{\wedge}}{\mc{A}_2}$ whose terms are direct sums of direct summands of representable modules of the form $\mc{A}_2(-,y)$ with $y \in \mc{C}$. This resolution can be taken to be finite because $\mc{A}_2$ is smooth, so it suffices to verify that the canonical map
\[
\mc{A}_2(-,y) \lten_{\mc{C}} \mc{A}_2 \longrightarrow \mc{A}_2(-,y)
\]
is an isomorphism for any $y \in \mc{C}$. This can be done as before. Now that we know that $x^{\wedge} \lten_{\mc{A}}f$ and $x^{\wedge} \lten_{\mc{A}} f'$ are isomorphisms, we have $x^{\wedge} \lten_{\mc{A}} V \cong x^{\wedge} \lten_{\mc{A}} W \cong 0$, hence $x^{\wedge} \lten_{\mc{A}} Z \cong 0$, as desired. If $x \in \mc{A}_2$, a symmetric argument shows that $x^{\wedge} \lten_{\mc{A}} g$ and $x^{\wedge} \lten_{\mc{A}} g'$ are isomorphisms, so $x^{\wedge} \lten_{\mc{A}} X \cong x^{\wedge} \lten_{\mc{A}} Y \cong 0$ and again we have $x^{\wedge} \lten_{\mc{A}} Z \cong 0$. This concludes the proof that $Z \cong 0$ when \eqref{eq:source condition for gluing theorem} holds.

On the other hand, if \eqref{eq:sink condition for gluing theorem} holds, we can show instead that $Z \lten_{\mc{A}}\mc{A}(x,-) \cong 0$ for all $x \in \mc{A}$. This can be proven as in the previous paragraph using a symmetric argument. Consequently, we have $Z \cong 0$ independently of which condition holds. We deduce that $V \cong W$ and $X \cong Y$.

As we did in the beginning, the inclusions $\mc{B} \subseteq \mc{A}_1 \subseteq \mc{A}$ induce the commutative square in $\mc{D}(\mc{A}^e)$ on the top left corner in the diagram below. The full diagram is obtained by applying the octahedral axiom:
\[\begin{tikzcd}
	{\mc{B} \lten_{\mc{B}^e} \mc{A}^e} & {\mc{A}_1 \lten_{\mc{A}_1^e} \mc{A}^e} & \color{blue}{M} & {\Sigma \mc{B} \lten_{\mc{B}^e} \mc{A}^e} \\
	{\mc{B} \lten_{\mc{B}^e} \mc{A}^e} & {\mc{A}} & \color{blue}{N} & {\Sigma \mc{B} \lten_{\mc{B}^e} \mc{A}^e} \\
	& W & \color{blue}{W} \\
	& {\Sigma\mc{A}_1 \lten_{\mc{A}_1^e} \mc{A}^e} & \color{blue}{\Sigma M}
	\arrow[from=1-1, to=1-2]
	\arrow[from=1-1, to=2-1, equal]
	\arrow[from=1-2, to=1-3]
	\arrow["f'", from=1-2, to=2-2]
	\arrow[from=1-3, to=1-4]
	\arrow[from=1-3, to=2-3, color=blue]
	\arrow[from=1-4, to=2-4, equal]
	\arrow[from=2-1, to=2-2]
	\arrow[from=2-2, to=2-3]
	\arrow[from=2-2, to=3-2]
	\arrow[from=2-3, to=2-4]
	\arrow[from=2-3, to=3-3, color=blue]
	\arrow[from=3-2, to=3-3, equal]
	\arrow[from=3-2, to=4-2]
	\arrow[from=3-3, to=4-3, color=blue]
	\arrow[from=4-2, to=4-3]
\end{tikzcd}\]
Here, the squares are commutative, and the rows and columns are distinguished triangles in $\mc{D}(\mc{A}^e)$. Observe that the map $f'$ and its cone $W$ are the same as in the previous diagram. Applying the bimodule dual functor $(-)^{\vee} = \RHom_{\mc{A}^e}(-,\mc{A}^e)$ to the triangle in blue, we get the distinguished triangle
\[\begin{tikzcd}
	{W^{\vee}} & {N^{\vee}} & {M^{\vee}} & {\Sigma W^{\vee},}
	\arrow[from=1-1, to=1-2]
	\arrow[from=1-2, to=1-3]
	\arrow[from=1-3, to=1-4]
\end{tikzcd}\]
which we now show is the triangle from the statement. By definition, $N^{\vee} = \Omega_{\mc{A},\mc{B}}$. If we write $C_1$ for the cone of the canonical map $\mc{A}_1 \lten_{\mc{B}}\mc{A}_1 \to \mc{A}_1$, then
\[
M^{\vee} \cong (C_1 \lten_{\mc{A}_1}\mc{A}^e)^{\vee} \cong \RHom_{\mc{A}_1^e}(C_1,\mc{A}_1^e) \lten_{\mc{A}_1}\mc{A}^e = \Omega_{\mc{A}_1,\mc{B}} \lten_{\mc{A}_1}\mc{A}^e,
\]
where the second isomorphism follows by Lemma \ref{lemma:bimodule dual compatible with induction}. Note that $C_1$ is perfect because $\mc{A}_1$ and $\mc{B}$ are smooth, so the hypothesis of this lemma is satisfied. Finally, if $C_2$ is the cone of the canonical map $\mc{A}_2\lten_{\mc{C}}\mc{A}_2 \to \mc{A}_2$, then
\[
W^{\vee} \cong V^{\vee} \cong (C_2 \lten_{\mc{A}_2}\mc{A}^e)^{\vee} \cong \RHom_{\mc{A}_2^e}(C_2,\mc{A}_2^e) \lten_{\mc{A}_2}\mc{A}^e = \Omega_{\mc{A}_2,\mc{C}} \lten_{\mc{A}_2}\mc{A}^e,
\]
where the first isomorphism is a consequence of $V \cong W$ and the third one follows again by Lemma \ref{lemma:bimodule dual compatible with induction} since $\mc{A}_2$ and $\mc{C}$ are smooth. This finishes the proof.
\end{proof}

It will be important to understand the (co)induction functor from $\mc{A}_i$ to $\mc{A}$ when \eqref{eq:source condition for gluing theorem} or \eqref{eq:sink condition for gluing theorem} holds. For $i=1,2$ and an $\mc{A}_i$-module $M: \mc{A}_i^{\rm{op}} \to \Modcat k$, we define an $\mc{A}$-module $\mu_i(M): \mc{A}^{\rm{op}} \to \Modcat k$ on objects by
\[
\mu_i(M)(x) = \begin{cases}
        M(x) &\textrm{if } x \in \mc{A}_i,\\
        0   &\textrm{if } x \not\in \mc{A}_i,
    \end{cases}
\]
and on morphisms by $\mu_i(M)(f) = M(f)$ for any morphism $f$ in $\mc{A}_i$. This assignment is indeed functorial because either $\Hom(\mc{A}\setminus \mc{A}_i, \mc{A}_i) = 0$ or $\Hom(\mc{A}_i,\mc{A} \setminus \mc{A}_i) = 0$. It defines an exact and fully faithful functor $\mu_i: \Modcat\mc{A}_i \to \Modcat\mc{A}$ whose image are the $\mc{A}$-modules that vanish on $\mc{A} \setminus \mc{A}_i$.

\begin{lemma}\label{lemma:(co)induction with gluing conditions}
    Let $M$ be an $\mc{A}_i$-module. If \eqref{eq:source condition for gluing theorem} holds, then there are natural isomorphisms
    \[
    M \lten_{\mc{A}_i}\mc{A} \cong M \ten_{\mc{A}_i}\mc{A} \cong \mu_i(M).
    \]
    Similarly, if \eqref{eq:sink condition for gluing theorem} holds, then there are natural isomorphisms
    \[
    \RHom_{\mc{A}_i}(\mc{A},M) \cong \Hom_{\mc{A}_i}(\mc{A},M) \cong \mu_i(M).
    \]
\end{lemma}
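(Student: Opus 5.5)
We treat the two cases separately; in each, the second isomorphism is checked directly and the first follows once the relevant (co)induction functor is known to be exact on $\mathcal{A}_i$-modules.

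\textbf{Case of \eqref{eq:source condition for gluing theorem}.} Here we compute the induction $M \ten_{\mc{A}_i}\mc{A}$ objectwise. For $x \in \mc{A}$ we have
\[
(M \ten_{\mc{A}_i}\mc{A})(x) = M \ten_{\mc{A}_i} \mc{A}(x,-)|_{\mc{A}_i},
\]
where $\mc{A}(x,-)|_{\mc{A}_i}$ is the left $\mc{A}_i$-module $y \mapsto \mc{A}(x,y)$ for $y \in \mc{A}_i$.

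If $x \notin \mc{A}_i$, then $x \in \mc{A}_j\setminus\mc{C}$ for $j \neq i$. Condition \eqref{eq:source condition for gluing theorem} then gives $\mc{A}(x,y)=0$ for every $y \in \mc{A}_i$. Hence the left module vanishes, and so does the tensor product.

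If $x \in \mc{A}_i$, then fullness of $\mc{A}_i$ shows that the left module is the representable $\mc{A}_i(x,-)$. The Yoneda lemma gives $M \ten_{\mc{A}_i}\mc{A}_i(x,-) \cong M(x)$.

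These identifications are natural in $x$, with morphisms of $\mc{A}$ acting through $\mc{A}_i$. This produces $M \ten_{\mc{A}_i}\mc{A} \cong \mu_i(M)$, and the isomorphism is induced by the canonical multiplication map.

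For the derived statement, it suffices to show that $\mc{A}$ is flat as a left $\mc{A}_i$-module, i.e.\ that each $\mc{A}(x,-)|_{\mc{A}_i}$ is flat. By the computation above, each of these is either zero or representable, hence projective. Therefore $-\ten_{\mc{A}_i}\mc{A}$ is exact, and $M\lten_{\mc{A}_i}\mc{A} \cong M\ten_{\mc{A}_i}\mc{A}$. Equivalently, one may take a projective resolution of $M$ by representables $\mc{A}_i(-,y)$ and note that each induces to $\mc{A}(-,y)$, which equals $\mu_i(\mc{A}_i(-,y))$ by the same vanishing.

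\textbf{Case of \eqref{eq:sink condition for gluing theorem}.} This is dual. We have
\[
\Hom_{\mc{A}_i}(\mc{A},M)(x) = \Hom_{\mc{A}_i}(\mc{A}(-,x)|_{\mc{A}_i}, M).
\]
If $x\notin\mc{A}_i$, condition \eqref{eq:sink condition for gluing theorem} gives $\mc{A}(y,x)=0$ for all $y\in\mc{A}_i$, so the right module $\mc{A}(-,x)|_{\mc{A}_i}$ is zero and the value is zero. If $x\in\mc{A}_i$, the right module is the representable $\mc{A}_i(-,x)$, and Yoneda gives $M(x)$. This yields $\Hom_{\mc{A}_i}(\mc{A},M) \cong \mu_i(M)$ naturally.

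Since each $\mc{A}(-,x)|_{\mc{A}_i}$ is projective as a right $\mc{A}_i$-module, $\RHom_{\mc{A}_i}(\mc{A},M)$ has no higher terms and agrees with $\Hom_{\mc{A}_i}(\mc{A},M)$.

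\textbf{Main obstacle.} The only step needing care is bookkeeping. We must confirm that every $x\notin\mc{A}_i$ lies in $\mc{A}_j\setminus\mc{C}$, so that the hypothesis applies; this holds because $\mc{A}_1$ and $\mc{A}_2$ together cover all objects. We must also check that the isomorphisms are compatible with the action of morphisms of $\mc{A}$ between objects of $\mc{A}_i$ and objects outside it. Such morphisms act on both sides between a module value and zero, so compatibility is automatic.
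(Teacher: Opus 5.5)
Your proposal is correct and follows the paper's proof. Both arguments show that each $\mc{A}(x,-)|_{\mc{A}_i}$ (resp.\ $\mc{A}(-,x)|_{\mc{A}_i}$) is either zero by the gluing condition or representable by fullness of $\mc{A}_i$, hence projective. This gives at once the vanishing of the higher derived terms and the objectwise identification with $\mu_i(M)$, and your explicit checks that every $x\notin\mc{A}_i$ lies in $\mc{A}_j\setminus\mc{C}$ and that the isomorphisms are natural only spell out what the paper leaves implicit.
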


\begin{proof}
Suppose \eqref{eq:source condition for gluing theorem} holds. To prove the first isomorphism above, it suffices to show that $\mc{A}(x,-)$ is projective as a left $\mc{A}_i$-module for any $x \in \mc{A}$. Indeed, its restriction to $\mc{A}_i$ vanishes if $x \in \mc{A} \setminus \mc{A}_i$ by our hypothesis, and it is the representable left $\mc{A}_i$-module associated with $x$ if $x \in \mc{A}_i$ since $\mc{A}_i \subseteq \mc{A}$ is full. The same argument also shows that $M \ten_{\mc{A}_i} \mc{A}$ vanishes on $\mc{A} \setminus \mc{A}_i$ and that its restriction to $\mc{A}_i$ is $M \ten_{\mc{A}_i} \mc{A}_i = M$. We immediately conclude that $M \ten_{\mc{A}_i} \mc{A} \cong \mu_i(M)$. The proof of the second statement is analogous.
\end{proof}

\begin{lemma}\label{lemma:global dimension of gluing}
    Suppose \eqref{eq:source condition for gluing theorem} or \eqref{eq:sink condition for gluing theorem} holds. If $\mc{A}_1$ and $\mc{A}_2$ have finite global dimension, then the same holds for $\mc{A}$. In this case, $\gldim(\mc{A}) \leq \max\{d_1,d_2\}$ where $d_i = \gldim(\mc{A}_i)$. 
\end{lemma}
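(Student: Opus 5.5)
We work with the equivalent criterion that $\gldim(\mc{A}) \leq d$ if and only if $\Ext^{d+1}_{\mc{A}}(M,N) = 0$ for all $\mc{A}$-modules $M,N$, or equivalently, in the $k$-linear category setting, if every module admits a projective resolution of length at most $d$. We treat the case \eqref{eq:source condition for gluing theorem}; the case \eqref{eq:sink condition for gluing theorem} follows by the dual argument with injective resolutions and coinduction $\RHom_{\mc{A}_i}(\mc{A},-)$ in place of induction. The second part of Lemma \ref{lemma:(co)induction with gluing conditions} plays the role of the first part there.

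The key tool is Lemma \ref{lemma:(co)induction with gluing conditions}. Under \eqref{eq:source condition for gluing theorem}, induction $-\ten_{\mc{A}_i}\mc{A}$ is exact and agrees with the extension-by-zero functor $\mu_i$. Since induction sends representable $\mc{A}_i$-modules $\mc{A}_i(-,x)$ to representable $\mc{A}$-modules $\mc{A}(-,x)$, it preserves projectives. Hence, if $N$ is an $\mc{A}_i$-module with a projective resolution of length at most $d_i$, then applying $\mu_i$ gives a projective resolution of $\mu_i(N)$ over $\mc{A}$ of the same length. Therefore the projective dimension of $\mu_i(N)$ is at most $d_i$.

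Next we decompose an arbitrary $\mc{A}$-module $M$. Let $M_2 \subseteq M$ be the submodule defined by $M_2(x) = M(x)$ for $x \in \mc{A}_2 \setminus \mc{C}$ and $M_2(x) = 0$ otherwise. This is a submodule because the action maps $M(x) \to M(y)$ come from morphisms $y \to x$. If $y \notin \mc{A}_2 \setminus \mc{C}$, then $y \in \mc{A}_1$. So a nonzero map $M(x)\to M(y)$ with $x \in \mc{A}_2\setminus\mc{C}$ would require a morphism $y \to x$ from $\mc{A}_1$ to $\mc{A}_2\setminus\mc{C}$. We must check that the hypothesis excludes exactly this direction. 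Condition \eqref{eq:source condition for gluing theorem} kills $\Hom(\mc{A}_2\setminus\mc{C},\mc{A}_1)$, so we may have to swap the roles of sub and quotient, taking $M_1$ supported on $\mc{A}_1\setminus\mc{C}$ as a quotient instead. In either arrangement we get a short exact sequence $0 \to M' \to M \to M'' \to 0$ in which $M'$ is supported on $\mc{A}_j \setminus \mc{C}$ and $M''$ is supported on $\mc{A}_i$, for $\{i,j\}=\{1,2\}$. Both pieces then lie in the image of the corresponding $\mu$. The horseshoe lemma then gives $\operatorname{pd}M \leq \max\{\operatorname{pd}M',\operatorname{pd}M''\} \leq \max\{d_1,d_2\}$, since the middle term of a short exact sequence has projective dimension at most the maximum of the two ends.

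The main obstacle is the bookkeeping in the previous step. We must verify that the pieces really are $\mu_i$ of $\mc{A}_i$-modules, meaning that restriction followed by $\mu_i$ recovers them. This needs vanishing outside $\mc{A}_i$, together with the fact that the relevant morphisms stay inside $\mc{A}_i$, which holds because $\mc{A}_i$ is full. We must also pick the correct sub/quotient orientation for each gluing condition. Once this is settled, the bound $\gldim(\mc{A})\leq\max\{d_1,d_2\}$, and hence finiteness, follows immediately.
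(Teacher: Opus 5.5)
Your proposal is correct and follows essentially the same argument as the paper. Under \eqref{eq:source condition for gluing theorem} the exact functor $\mu_i$ preserves projectives, and under \eqref{eq:sink condition for gluing theorem} it preserves injectives; every $\mc{A}$-module is then an extension of a module in the image of $\mu_1$ by one in the image of $\mu_2$, which gives the bound. One slip: in the source case the piece supported on $\mc{A}_i$ is the submodule and the piece supported on $\mc{A}_j\setminus\mc{C}$ is the quotient, whereas your final displayed sequence has the sink orientation. This does not affect the conclusion, since $\operatorname{pd}M\leq\max\{\operatorname{pd}M',\operatorname{pd}M''\}$ holds whichever end is which.
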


\begin{proof}
    If \eqref{eq:source condition for gluing theorem} holds, then by Lemma \ref{lemma:(co)induction with gluing conditions} the exact functor $\mu_i$ is isomorphic to $- \ten_{\mc{A}_i}\mc{A}$, which preserves projective modules since it is left adjoint to the restriction functor from $\mc{A}$ to $\mc{A}_i$. Similarly, if \eqref{eq:sink condition for gluing theorem} holds, the same lemma implies that $\mu_i$ preserves injective modules. Together with our hypothesis that the projective and injective dimensions of any $\mc{A}_i$-module are at most $d_i$, we deduce that, to show that $\mc{A}$ has global dimension at most $\max\{d_1,d_2\}$, it suffices to prove that any $\mc{A}$-module $M$ is an extension of modules in the image of $\mu_1$ and $\mu_2$. Indeed, defining $M_1 = \mu_1(\restr{M}{\mc{A}_1})$ and $M_2$ on objects by
    \[
    M_2(x) = \begin{cases}
        M(x) &\textrm{if } x \in \mc{A}_2 \setminus \mc{C},\\
        0   &\textrm{if } x \in \mc{A}_1,
    \end{cases}
    \]
    with the action on morphisms coming from $M$, it is clear that $M$ is an extension of $M_2$ by $M_1$ (if \eqref{eq:source condition for gluing theorem} holds) or an extension of $M_1$ by $M_2$ (if \eqref{eq:sink condition for gluing theorem} holds).
\end{proof}

\subsection{Tensor products (of path algebras)}\label{section:tensor products}

Let $\mc{A}_1$, $\mc{A}_2$ and $\mc{B}_2$ be small $k$-categories. Let $\mc{B}_2 \to \mc{A}_2$ be a linear functor. The next result describes the relative inverse dualizing bimodule $\Omega_{\mc{A},\mc{B}}$ for the induced functor $\mc{B} \to \mc{A}$, where 
$$\mc{A} = \mc{A}_1 \ten \mc{A}_2 \text{ and } \mc{B} = \mc{A}_1 \ten \mc{B}_2.$$

\begin{lemma}\label{lemma:inverse dualizing bimodule for tensor product}
   If $\mc{A}_1$, $\mc{A}_2$ and $\mc{B}_2$ are smooth, then we have an isomorphism $\Omega_{\mc{A},\mc{B}} \cong \Omega_{\mc{A}_1} \ten \Omega_{\mc{A}_2,\mc{B}_2}$ in $\mc{D}(\mc{A}^e)$.
\end{lemma}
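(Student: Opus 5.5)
Everything will be checked at the level of bimodules, using that tensor products over $k$ commute with derived tensor products and with bimodule duals of perfect objects. First identify the enveloping categories: $\mc{A}^e \cong \mc{A}_1^e \ten \mc{A}_2^e$ and $\mc{B}^e \cong \mc{A}_1^e \ten \mc{B}_2^e$, compatibly with the induced functor $\mc{B}^e \to \mc{A}^e$. The regular bimodule is then $\mc{A} \cong \mc{A}_1 \ten \mc{A}_2$ as an $\mc{A}_1^e \ten \mc{A}_2^e$-module. Moreover
\[
\mc{A} \lten_{\mc{B}} \mc{A} \cong (\mc{A}_1 \lten_{\mc{A}_1} \mc{A}_1) \ten (\mc{A}_2 \lten_{\mc{B}_2} \mc{A}_2) \cong \mc{A}_1 \ten (\mc{A}_2 \lten_{\mc{B}_2} \mc{A}_2),
\]
by a Künneth-type isomorphism for derived tensor products over tensor products of categories. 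It can be checked on cofibrant replacements, since the tensor product over $k$ of cofibrant resolutions is a cofibrant resolution. Under these identifications, the canonical map $\mc{A} \lten_{\mc{B}} \mc{A} \to \mc{A}$ becomes $\id_{\mc{A}_1} \ten c_2$, where $c_2 : \mc{A}_2 \lten_{\mc{B}_2} \mc{A}_2 \to \mc{A}_2$ is the canonical map. Since $-\ten-$ over $k$ is exact in each variable, taking cones gives
\[
\cone(\mc{A} \lten_{\mc{B}} \mc{A} \to \mc{A}) \cong \mc{A}_1 \ten C_2, \qquad C_2 = \cone(c_2).
\]

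Next, take bimodule duals. The key claim is that for $M_1 \in \per(\mc{A}_1^e)$ and $M_2 \in \per(\mc{A}_2^e)$ there is a natural isomorphism
\[
\RHom_{\mc{A}_1^e \ten \mc{A}_2^e}(M_1 \ten M_2, \mc{A}_1^e \ten \mc{A}_2^e) \cong \RHom_{\mc{A}_1^e}(M_1,\mc{A}_1^e) \ten \RHom_{\mc{A}_2^e}(M_2,\mc{A}_2^e),
\]
compatible with the bimodule structures coming from the flip $(\mc{A}^e)^{\rm op}\to\mc{A}^e$. To prove it, construct the natural map from right to left. Both sides are exact in each variable, so by a dévissage it suffices to check the map on representables $M_1 = \mc{A}_1^e(-,x)$ and $M_2 = \mc{A}_2^e(-,y)$, where it is clearly an isomorphism. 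Here the hypotheses enter: $\mc{A}_1$ smooth means $\mc{A}_1 \in \per(\mc{A}_1^e)$. Also, $\mc{A}_2$ and $\mc{B}_2$ smooth imply that $C_2 \in \per(\mc{A}_2^e)$, as observed in the proof of Proposition \ref{proposition:gluing of bimodules}, using Lemma \ref{lemma:bimodule dual compatible with induction} for $\mc{A}_2 \lten_{\mc{B}_2}\mc{A}_2 \cong \mc{B}_2 \lten_{\mc{B}_2^e}\mc{A}_2^e$. Applying this with $M_1 = \mc{A}_1$ and $M_2 = C_2$ yields $\Omega_{\mc{A},\mc{B}} \cong \mc{A}_1^{\vee} \ten C_2^{\vee} = \Omega_{\mc{A}_1} \ten \Omega_{\mc{A}_2,\mc{B}_2}$.

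The main obstacle is bookkeeping rather than any conceptual step. One must make sure the Künneth identifications are compatible with the specific canonical maps; I expect to handle this by writing the canonical map via the adjunction description from the definition, namely the functor $\mc{B}\to\mc{A}$ viewed as a bimodule morphism, which is visibly a tensor product of the identity and $\mc{B}_2\to\mc{A}_2$. One must also check that the twist by the flip duality and the $\mc{A}^e$-module structures match on both sides. I would also check explicitly that the dual of a tensor product of perfect bimodules is the tensor product of duals at the level of dg modules, since only a derived-category statement is needed.
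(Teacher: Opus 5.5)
Your proposal is correct and follows essentially the same route as the paper: compute $\mc{A}\lten_{\mc{B}}\mc{A}$ by tensoring cofibrant resolutions, identify the canonical map with $\mc{A}_1\ten f_2$ so that its cone is $\mc{A}_1\ten\cone(f_2)$, and conclude with the Künneth-type isomorphism for bimodule duals of perfect bimodules (which you justify by dévissage to representables, whereas the paper simply asserts it). One small correction: perfectness of $\mc{A}_2\lten_{\mc{B}_2}\mc{A}_2\cong\mc{B}_2\lten_{\mc{B}_2^e}\mc{A}_2^e$ comes from induction along $\mc{B}_2^e\to\mc{A}_2^e$ preserving perfect objects, not from Lemma~\ref{lemma:bimodule dual compatible with induction} itself, but this does not affect the argument.
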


\begin{proof}
We can compute $\mc{A} \lten_{\mc{B}} \mc{A} = \mc{A} \lten_{\mc{B}} \mc{B} \lten_{\mc{B}} \mc{A}$ by taking a cofibrant resolution of $\mc{B}$ in $\mc{D}(\mc{B}^e)$. Since $\mc{B} = \mc{A}_1 \otimes \mc{B}_2$, such a resolution can be obtained by tensoring a cofibrant resolution of $\mc{A}_1$ in $\mc{D}(\mc{A}_1^e)$ with a cofibrant resolution of $\mc{B}_2$ in $\mc{D}(\mc{B}_2^e)$. Now, for an $\mc{A}_1$-bimodule $M$ and a $\mc{B}_2$-bimodule $N$, we have an evident natural isomorphism
\[
\mc{A} \ten_{\mc{B}} (M \ten N) \ten_{\mc{B}} \mc{A} \cong M \ten (\mc{A}_2 \ten_{\mc{B}_2} N \ten_{\mc{B}_2} \mc{A}_2)
\]
of $\mc{A}$-bimodules. We deduce that
\[
\mc{A} \lten_{\mc{B}} \mc{A} \cong \mc{A}_1 \otimes (\mc{A}_2 \lten_{\mc{B}_2} \mc{A}_2)
\]
in $\mc{D}(\mc{A}^e)$. Under this identification, the canonical morphism $f: \mc{A} \lten_{\mc{B}} \mc{A} \to \mc{A}$ becomes the tensor product of $\mc{A}_1$ with the canonical morphism $f_2: \mc{A}_2 \lten_{\mc{B}_2} \mc{A}_2 \to \mc{A}_2$. This yields the isomorphism $\cone(f) \cong \mc{A}_1 \ten \cone(f_2)$. By the definition of the inverse dualizing bimodules, we need to show that
\[
\RHom_{\mc{A}^e}(\cone(f), \mc{A}^e) \cong \RHom_{\mc{A}_1^e}(\mc{A}_1,\mc{A}_1^e) \ten \RHom_{\mc{A}_2}(\cone(f_2),\mc{A}_2^e).
\]
This follows from the hypothesis and the natural isomorphism
\[
\RHom_{\mc{A}^e}(V \otimes W, \mc{A}^e) \xleftarrow[]{\sim} \RHom_{\mc{A}_1^e}(V,\mc{A}_1^e) \ten \RHom_{\mc{A}_2^e}(W,\mc{A}_2^e)
\]
in $\mc{D}(\mc{A}^e)$ for $V \in \per(\mc{A}_1^e)$ and $W \in \per(\mc{A}_2^e)$.
\end{proof}

Keeping the notations above, we have the following immediate corollary.

\begin{corollary}\label{corollary:Omega is nilpotent for tensor product}
Suppose $\mc{A}_1$, $\mc{A}_2$ and $\mc{B}_2$ are smooth. If $\Omega_{\mc{A}_1}$ or $\Omega_{\mc{A}_2,\mc{B}_2}$ is nilpotent, then the same holds for $\Omega_{\mc{A},\mc{B}}$.  
\end{corollary}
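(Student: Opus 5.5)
By Lemma \ref{lemma:inverse dualizing bimodule for tensor product}, we have $\Omega_{\mc{A},\mc{B}} \cong \Omega_{\mc{A}_1} \ten \Omega_{\mc{A}_2,\mc{B}_2}$ in $\mc{D}(\mc{A}^e)$. The plan is to compute derived tensor powers factorwise, so that the claim reduces to nilpotency of one of the two factors.

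The key step is a natural isomorphism
\[
(M_1 \ten M_2) \lten_{\mc{A}} (N_1 \ten N_2) \cong (M_1 \lten_{\mc{A}_1} N_1) \ten (M_2 \lten_{\mc{A}_2} N_2)
\]
for $M_1,N_1 \in \mc{D}(\mc{A}_1^e)$ and $M_2,N_2 \in \mc{D}(\mc{A}_2^e)$, where $\mc{A} = \mc{A}_1 \ten \mc{A}_2$. To obtain it:
\begin{enumerate}[(1)]
    \item Take cofibrant (e.g.\ bar or semi-free) resolutions $P_1 \to M_1$ over $\mc{A}_1^e$ and $P_2 \to M_2$ over $\mc{A}_2^e$.
    \item Over the field $k$, the tensor product $P_1 \ten P_2$ is a cofibrant $\mc{A}^e$-module, since $\mc{A}^e \cong \mc{A}_1^e \ten \mc{A}_2^e$ and tensor products of semi-free modules are semi-free. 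By the Künneth formula over $k$, it is quasi-isomorphic to $M_1 \ten M_2$.
    \item At the underived level, $(P_1 \ten P_2) \ten_{\mc{A}_1 \ten \mc{A}_2} (N_1 \ten N_2) \cong (P_1 \ten_{\mc{A}_1} N_1) \ten (P_2 \ten_{\mc{A}_2} N_2)$. This is evident, because the coequalizers defining the tensor products split factorwise.
\end{enumerate}

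Iterating the isomorphism gives
\[
\Omega_{\mc{A},\mc{B}}^{\lten_{\mc{A}} n} \cong \Omega_{\mc{A}_1}^{\lten_{\mc{A}_1} n} \ten \Omega_{\mc{A}_2,\mc{B}_2}^{\lten_{\mc{A}_2} n}.
\]
If either factor on the right vanishes in its derived category for some $n$, then so does the tensor product over $k$, since $H^\ast(X \ten Y) \cong H^\ast(X) \ten H^\ast(Y)$. Hence $\Omega_{\mc{A},\mc{B}}$ is nilpotent.

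The main point needing care is step (2) together with the compatibility of the iterated tensor powers with the identification of Lemma \ref{lemma:inverse dualizing bimodule for tensor product}. Only an isomorphism of objects in $\mc{D}(\mc{A}^e)$ is needed, so it is enough to fix cofibrant models of $\Omega_{\mc{A}_1}$ and $\Omega_{\mc{A}_2,\mc{B}_2}$ and take their tensor product as the model for $\Omega_{\mc{A},\mc{B}}$. The smoothness hypotheses are used only to invoke that lemma.
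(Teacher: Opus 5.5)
Your proof is correct and follows the same approach as the paper. The paper records this as an immediate consequence of Lemma~\ref{lemma:inverse dualizing bimodule for tensor product} and writes no further argument. Your factorwise computation $\Omega_{\mc{A},\mc{B}}^{\lten_{\mc{A}} n} \cong \Omega_{\mc{A}_1}^{\lten_{\mc{A}_1} n} \ten \Omega_{\mc{A}_2,\mc{B}_2}^{\lten_{\mc{A}_2} n}$, combined with the K\"unneth formula over $k$, is exactly the step the paper leaves implicit.
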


We are interested in the case of tensor products of finite-dimensional path algebras. Let us recall some of their properties. Let $Q$ and $Q'$ be two finite acyclic quivers. The tensor product $A = kQ \ten kQ'$ is a finite-dimensional algebra of global dimension at most $2$. By \cite[Section 5]{Kelannals}, it can be described as a quotient of the path algebra of $Q \otimes Q'$ (as in Section \ref{section:triangle products and green sequences}) by the commutation relations
\begin{equation}\label{eq:commutation relations for tensor products}
(j,\alpha')(\alpha,i') - (\alpha,j')(i,\alpha')
\end{equation}
for all arrows $\alpha:i \to j$ and $\alpha': i' \to j'$ in $Q$ and $Q'$. For $i_0 \in Q'$, consider the subalgebra $B = kQ \otimes e_{i_0}(kQ')e_{i_0} \cong kQ$ of $A$. By Theorem \ref{theorem:Ginzburg algebras as completion}, we deduce that $\bP_3(A,B)$ is quasi-isomorphic to the relative Ginzburg dg algebra of the triangle product $Q \boxtimes Q'$, where we freeze the full subquiver on the vertices of the form $(i,i_0)$ and endow it with the potential
\begin{equation}\label{eq:potential for tensor product}
\sum (j,\alpha')(\alpha,i')(\alpha,\alpha')^{\mathrm{op}} - (\alpha,j')(i,\alpha')(\alpha,\alpha')^{\mathrm{op}},
\end{equation}
where the sum varies over all arrows $\alpha: i \to j$ in $Q$ and $\alpha': i' \to j'$ in $Q'$.

\begin{example}
    Consider the quivers
    \[
    Q = \begin{tikzcd}
	1 & 2 & 3
	\arrow[from=1-1, to=1-2]
	\arrow[from=1-3, to=1-2]
\end{tikzcd} \quad \textrm{and} \quad Q' = \begin{tikzcd}
	\color{blue}\boxed{1} & 2 & 3 & 4.
	\arrow[from=1-2, to=1-1]
	\arrow[from=1-3, to=1-2]
    \arrow[from=1-4, to=1-3]
\end{tikzcd}
    \]
The algebra $A = kQ \otimes kQ'$ is the path algebra of the quiver
\[\begin{tikzcd}
	{\color{blue}\square} && \bullet && \bullet && \bullet & \\
	& {\color{blue}\square} && \bullet && \bullet && \bullet \\
	{\color{blue}\square} && \bullet && \bullet && \bullet
	\arrow[color=blue, from=1-1, to=2-2]
	\arrow[from=1-3, to=1-1]
	\arrow[from=1-3, to=2-4]
	\arrow[from=1-5, to=1-3]
	\arrow[from=1-5, to=2-6]
	\arrow[from=1-7, to=1-5]
	\arrow[from=1-7, to=2-8]
    \arrow[color=gray, dashed, no head, from=2-2, to=1-3]
    \arrow[color=gray, dashed, no head, from=2-2, to=3-3]
	\arrow[from=2-4, to=2-2]
    \arrow[color=gray, dashed, no head, from=2-4, to=1-5]
    \arrow[color=gray, dashed, no head, from=2-4, to=3-5]
	\arrow[from=2-6, to=2-4]
    \arrow[color=gray, dashed, no head, from=2-6, to=1-7]
    \arrow[color=gray, dashed, no head, from=2-6, to=3-7]
	\arrow[from=2-8, to=2-6]
	\arrow[color=blue, from=3-1, to=2-2]
	\arrow[from=3-3, to=2-4]
	\arrow[from=3-3, to=3-1]
	\arrow[from=3-5, to=2-6]
	\arrow[from=3-5, to=3-3]
	\arrow[from=3-7, to=2-8]
	\arrow[from=3-7, to=3-5]
\end{tikzcd}\]
modulo the relations which make all squares commute. For $i_0 = 1$, the subalgebra $B = kQ \ten e_1(kQ')e_1$ consists of the path algebra of the frozen subquiver depicted in blue. Then $\bP_3(A,B)$ is quasi-isomorphic to the relative Ginzburg dg algebra of the ice quiver
\[\begin{tikzcd}[row sep=tiny]
	{\color{blue}\square} && \bullet && \bullet && \bullet & \\
    & {\color{red}-} & {\color{red}+} & {\color{red}-} & {\color{red}+} & {\color{red}-} & {\color{red}+}\\
	& {\color{blue}\square} && \bullet && \bullet && \bullet \\
    & {\color{red}-} & {\color{red}+} & {\color{red}-} & {\color{red}+} & {\color{red}-} & {\color{red}+}\\
	{\color{blue}\square} && \bullet && \bullet && \bullet
	\arrow["\shortmid"{marking}, color=blue, from=1-1, to=3-2]
	\arrow[from=1-3, to=1-1]
	\arrow[from=1-3, to=3-4]
	\arrow[from=1-5, to=1-3]
	\arrow["\shortmid"{marking}, from=1-5, to=3-6]
	\arrow[from=1-7, to=1-5]
	\arrow[from=1-7, to=3-8]
	\arrow[from=3-2, to=1-3]
	\arrow[from=3-2, to=5-3]
	\arrow["\shortmid"{marking}, from=3-4, to=1-5]
	\arrow[from=3-4, to=3-2]
	\arrow["\shortmid"{marking}, from=3-4, to=5-5]
	\arrow[from=3-6, to=1-7]
	\arrow[from=3-6, to=3-4]
	\arrow[from=3-6, to=5-7]
	\arrow[from=3-8, to=3-6]
	\arrow["\shortmid"{marking}, color=blue, from=5-1, to=3-2]
	\arrow[from=5-3, to=3-4]
	\arrow[from=5-3, to=5-1]
	\arrow["\shortmid"{marking}, from=5-5, to=3-6]
	\arrow[from=5-5, to=5-3]
	\arrow[from=5-7, to=3-8]
	\arrow[from=5-7, to=5-5]
\end{tikzcd}\]
equipped with the potential consisting of the signed sum of all $3$-cycles, where the signs are indicated in red.
\end{example}

\begin{remark}\label{remark:signs of the potential}
    The case we are interested in is when $Q$ is a Dynkin quiver, $Q'$ is a linear orientation of a Dynkin diagram of type $\mathsf{A}_n$, and $i_0$ is the unique sink of $Q'$, as in the example above. In this case, when viewing $\bP_3(A,B)$ as a relative Ginzburg dg algebra, we can (and will) ignore the signs in the potential (\ref{eq:potential for tensor product}). Indeed, we can always find an automorphism of the path algebra of $Q \boxtimes Q'$ that changes the signs of some of the arrows of the form $(\alpha,i')$ or $(\alpha,\alpha')^{\rm{op}}$, and sends the potential (\ref{eq:potential for tensor product}) to itself but where we replaced all minus signs by plus ones. For instance, in the example, it suffices to change the sign of the marked arrows.
\end{remark}

\subsection{Truncated Auslander algebras of Dynkin quivers}\label{section:truncated Auslander algebras}

Let $Q$ be a Dynkin quiver. Let $\ind(kQ) \subseteq \modcat kQ$ be the full subcategory of indecomposable modules. We say that a full subcategory $\mc{A} \subseteq \ind(kQ)$ is \emph{left-closed} if it satisfies the following condition: for any $N \in \mc{A}$ and $M \in \ind(kQ)$ such that $\Hom_{kQ}(M,N) \neq 0$, we have $M \in \mc{A}$. Intuitively, $\mc{A}$ is obtained from $\ind(kQ)$ by removing indecomposable modules at the right side of the Auslander--Reiten quiver of $kQ$. If $X$ is the direct sum of a set of representatives for the isomorphism classes of indecomposable modules in $\mc{A}$, we call $\End_{kQ}(X)$ a \emph{left-truncated Auslander algebra}. Note that $\mc{A}$ and $\End_{kQ}(X)$ are Morita equivalent.

We recall the following description of $\mc{A}$ as a mesh category. Let $\Gamma_{\mc{A}}$ be the Gabriel quiver of $\mc{A}$, which identifies with a subquiver of the Auslander--Reiten quiver $\Gamma$ of $kQ$. Since $\mc{A}$ is left-closed, any mesh in $\Gamma$ of the form
\[\begin{tikzcd}[sep={3em,between origins}]
	& {y_1} & \\
	{\tau(x)} & \vdots & x \\
	& {y_n}
	\arrow["{\alpha_1}", from=1-2, to=2-3]
	\arrow["{\sigma(\alpha_1)}", from=2-1, to=1-2]
	\arrow["{\sigma(\alpha_n)}"', from=2-1, to=3-2]
	\arrow["{\alpha_n}"', from=3-2, to=2-3]
\end{tikzcd}\]
with $x \in \Gamma_{\mc{A}}$ is fully contained in $\Gamma_{\mc{A}}$. By \cite{Riedtmann80}, $\mc{A}$ is equivalent to the quotient of the path category of $\Gamma_{\mc{A}}$ modulo the ideal generated by the \emph{mesh relations}
\begin{equation}\label{eq:mesh relations}
    r_x = \sum_{i=1}^n\sigma(\alpha_i)\alpha_i
\end{equation}
for all $x \in \Gamma_{\mc{A}}$ that is not projective.

We will now present some lemmas concerning the global dimension of $\mc{A}$ and the action of a relative inverse dualizing bimodule on its derived category. Let us first introduce some notation.

For $M \in \mc{A}$, we write $M^{\wedge} = \restr{\Hom_{kQ}(-,M)}{\mc{A}}$ and $M^{\vee} = \restr{D\Hom_{kQ}(M,-)}{\mc{A}}$ for the corresponding representable and corepresentable $\mc{A}$-modules. Recall that $M^{\wedge}$ has a unique simple quotient $S_M$, and any simple $\mc{A}$-module is of this form. We denote by $\tau$ the Auslander--Reiten translation of $\modcat kQ$ (and not $\mc{D}^b(kQ)$).

\begin{lemma}\label{lemma:gldim for Auslander algebra}
    Suppose $\mc{A} \subseteq \ind(kQ)$ is full and left-closed. Then $\mc{A}$ has global dimension at most $2$.
\end{lemma}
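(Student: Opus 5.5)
We will bound the projective dimension of every simple $\mc{A}$-module $S_M$, $M \in \mc{A}$, by $2$, using the Auslander--Reiten theory of $kQ$. Since $\mc{A}$ is Morita equivalent to a finite-dimensional algebra (the left-truncated Auslander algebra), its global dimension is the supremum of the projective dimensions of its simple modules. So it suffices to write down, for each indecomposable $M \in \mc{A}$, a projective resolution of $S_M$ of length at most $2$ by representable functors $N^{\wedge}$ with $N \in \mc{A}$.

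We split into two cases.

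First, suppose $M$ is a projective $kQ$-module. Then the radical of $M$ is a direct sum $\bigoplus_j P_j$ of indecomposable projectives. Each $P_j$ maps nonzero into $M$, so left-closedness gives $P_j \in \mc{A}$. Any non-isomorphism $N \to M$ with $N$ indecomposable factors through $\operatorname{rad} M \hookrightarrow M$. Hence
\[
0 \to (\operatorname{rad}M)^{\wedge} \to M^{\wedge} \to S_M \to 0
\]
is exact as $\mc{A}$-modules. Left exactness of $\Hom_{kQ}(-,?)$ gives injectivity on the left. Thus $S_M$ has projective dimension at most $1$.

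Second, suppose $M$ is not projective. Take the almost split sequence
\[
0 \to \tau M \to E \to M \to 0
\]
in $\modcat kQ$. Every indecomposable summand of $E$, and $\tau M$ itself, admits a nonzero map to $M$. By left-closedness (applied twice, via $E$), all of them lie in $\mc{A}$; this is exactly the mesh-closedness recalled above. Applying $\Hom_{kQ}(N,-)$ for indecomposable $N \in \mc{A}$ yields the standard exact sequence
\[
0 \to (\tau M)^{\wedge} \to E^{\wedge} \to M^{\wedge} \to S_M \to 0 .
\]
The cokernel of $E^{\wedge} \to M^{\wedge}$ is $S_M$ because the map $E \to M$ is right almost split. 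Injectivity on the left again follows from left exactness of $\Hom$. This resolution has length $2$, hence $\operatorname{pd} S_M \le 2$.

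The step needing care, and the main potential obstacle, is the exactness of these sequences as $\mc{A}$-modules rather than as $\ind(kQ)$-modules. This is automatic: the sequences are evaluations of exact sequences of functors on $\modcat kQ$, restricted to the full subcategory $\mc{A}$. The essential input is only that all terms are representable in $\mc{A}$, which is what left-closedness guarantees. Combining the two cases, $\gldim \mc{A} \le 2$.
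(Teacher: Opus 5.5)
Your proof is correct and is essentially the paper's argument. The paper takes, for each $N\in\mc{A}$, the resolution $0\to\Hom_{kQ}(-,L)\to\Hom_{kQ}(-,E)\to\Hom_{kQ}(-,N)\to S_N\to 0$ coming from a right minimal almost split map $E\to N$ with kernel $L$, and restricts it to $\mc{A}$ using left-closedness. Here $L=\tau N$, or $L=0$ when $N$ is projective and $E=\operatorname{rad}N$, so your two cases just unpack this uniform statement.

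One wording fix: $\tau M$ need not admit a nonzero map to $M$ itself (already in type $\mathsf{A}_2$ this fails, since there $\tau M$ and $M$ are distinct simples). What is true is that $\tau M$ maps nonzero into some indecomposable summand of $E$, because $\tau M\to E$ is injective, and that two-step use of left-closedness is what your parenthetical actually needs.
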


\begin{proof}
    For $N \in \mc{A}$, we can naturally view $S_N$ as a simple module over $\ind(kQ)$ and, as such, it admits a projective resolution
    \[\begin{tikzcd}
	0 & {\Hom_{kQ}(-,L)} & {\Hom_{kQ}(-,M)} & {\Hom_{kQ}(-,N)} & {S_N} & 0
	\arrow[from=1-1, to=1-2]
	\arrow[from=1-2, to=1-3]
	\arrow[from=1-3, to=1-4]
	\arrow[from=1-4, to=1-5]
	\arrow[from=1-5, to=1-6]
    \end{tikzcd}\]
    where $M \to N$ is a right minimal almost split map and $L$ is its kernel (see \cite[Theorem 6.11]{AssemSimsonSkowronski}). Since $\mc{A}$ is left-closed, the indecomposable direct summands of $M$ belong to $\mc{A}$, and the same holds for $L$ if it is nonzero. Therefore, the restriction of the resolution above to $\mc{A}$ is still a projective resolution. We obtain that every simple $\mc{A}$-module has projective dimension at most $2$, hence $\mc{A}$ has global dimension at most $2$.
\end{proof}

\begin{lemma}[{\cite[Proposition 8.6]{YilinWuJacobifinite}}]\label{lemma:Omega for Auslander algebra}
    Let $\mc{A} \subseteq \ind(kQ)$ be a full and left-closed subcategory. Denote by $\mc{B}$ the full subcategory of $\mc{A}$ consisting of the indecomposable projective modules in $\mc{A}$. For $M \in \mc{A}$, we have
    \[
    M^{\wedge} \lten_{\mc{A}}\Sigma^2\Omega_{\mc{A},\mc{B}} \cong \begin{cases}
        (\tau^{-1}M)^{\wedge} &\textrm{if } \tau^{-1}M \in \mc{A},\\
        0 &\textrm{otherwise}.
    \end{cases}
    \]
\end{lemma}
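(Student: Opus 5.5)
Since $\mc{A}$ has global dimension at most $2$ (Lemma \ref{lemma:gldim for Auslander algebra}) and $\mc{B}$ is a finite-dimensional hereditary category, both are smooth and proper. So by Remark \ref{rmk:computing Omega with Serre functor}, for $X = M^{\wedge}$ we can compute $X \lten_{\mc{A}} \Omega_{\mc{A},\mc{B}}$ as the cocone of the canonical map
\[
\mathbb{S}^{-1}_{\mc{A}}(X) \longrightarrow \mathbb{S}^{-1}_{\mc{B}}(\restr{X}{\mc{B}}) \lten_{\mc{B}} \mc{A}.
\]
The plan is to compute both terms explicitly and identify the map.

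For the first term, $\mathbb{S}_{\mc{A}}$ sends projectives to injectives, $\mathbb{S}_{\mc{A}}(N^{\wedge}) \cong N^{\vee}$. Hence $\mathbb{S}^{-1}_{\mc{A}}(X)$ is the object $P$ with $\mathbb{S}_{\mc{A}}(P) \cong M^{\wedge}$. To find it, I take an injective coresolution of $M^{\wedge}$ by corepresentables $N^{\vee}$ (of length at most $2$) and apply $\mathbb{S}^{-1}_{\mc{A}}$ termwise, which gives a complex of representables. Concretely, if $\tau^{-1}M \in \mc{A}$, I expect to use the almost split sequence $0 \to M \to E \to \tau^{-1}M \to 0$, which lies in $\mc{A}$ by left-closedness. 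The point is that the second term has to match restrictions of $\Hom_{kQ}(P,-)$-type data to $\mc{B}$.

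For the second term, note that $\restr{M^{\wedge}}{\mc{B}} = \Hom_{kQ}(-,M)$ restricted to projectives, which identifies with $M$ as a $kQ$-module, with $\mc{B} \simeq \mathrm{proj}\,kQ$. Then $\mathbb{S}^{-1}_{\mc{B}}$ corresponds to $\mathbb{S}^{-1}_{kQ} = \tau^{-1}\Sigma^{-1}$ on $\mc{D}^b(kQ)$. Inducing along $\mc{B}\to\mc{A}$ sends a projective $kQ$-module $P$ to $P^{\wedge}$, and the induction of a general module is computed via a projective resolution. So $\mathbb{S}^{-1}_{\mc{B}}(M)\lten_{\mc{B}}\mc{A}$ becomes $\Sigma^{-1}$ applied to the two-term complex of representables $P_1^{\wedge}\to P_0^{\wedge}$, where $0\to P_1\to P_0\to\tau^{-1}M\to0$ is a projective resolution, or the analogous complex built from the injective presentation of $M$ when $\tau^{-1}M=0$ (that is, when $M$ is injective).

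The final step is to compare. Applying $\Hom_{kQ}(-,?)$ restricted to $\mc{A}$ to the almost split sequence, and using left-closedness and Auslander--Reiten theory, $\Sigma^{2}$ of the cocone should reduce to $(\tau^{-1}M)^{\wedge}$. Here the two-term complex $P_1^{\wedge}\to P_0^{\wedge}$ has cohomology $\restr{\Hom_{kQ}(-,\tau^{-1}M)}{\mc{A}}$ in degree $0$, which is exactly $(\tau^{-1}M)^{\wedge}$ when $\tau^{-1}M\in\mc{A}$. When $\tau^{-1}M \notin \mc{A}$, either $M$ is injective or $\tau^{-1}M$ lies outside $\mc{A}$, and the two terms should cancel.

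The main obstacle is this bookkeeping of the canonical map: checking it is the expected quasi-isomorphism or projection, and tracking the shifts so that exactly $\Sigma^2$ appears. If the direct computation gets messy, I would instead reduce to the absolute Auslander algebra of $kQ$ and cite \cite[Proposition 8.6]{YilinWuJacobifinite}, which is the stated source, adapting its argument to left-closed truncations.
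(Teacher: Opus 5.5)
Your framework matches the paper's. You use Remark \ref{rmk:computing Omega with Serre functor} to write $M^{\wedge}\lten_{\mc{A}}\Sigma^2\Omega_{\mc{A},\mc{B}}$ as the cocone of $\Sigma^2\mathbb{S}^{-1}_{\mc{A}}(M^{\wedge})\to\Sigma^2\mathbb{S}^{-1}_{\mc{B}}(\restr{M^{\wedge}}{\mc{B}})\lten_{\mc{B}}\mc{A}$. Your second term, the two-term complex of representables built from $0\to P_1\to P_0\to\tau^{-1}M\to 0$, is also correct.

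The step that is supposed to produce $(\tau^{-1}M)^{\wedge}$ is wrong. The complex $P_1^{\wedge}\to P_0^{\wedge}$ does \emph{not} have cohomology $\restr{\Hom_{kQ}(-,\tau^{-1}M)}{\mc{A}}$. Because $\Hom_{kQ}(X,-)$ is only left exact, its cokernel is the image of $\Hom_{kQ}(-,P_0)\to\Hom_{kQ}(-,\tau^{-1}M)$. That image is a proper subfunctor: it misses $\id_{\tau^{-1}M}$, since $\tau^{-1}M$ is not projective. So the answer cannot come from the second term.

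The almost split sequence $0\to M\to E\to\tau^{-1}M\to 0$ is also the wrong input for the first term. Passing to corepresentables, it gives an injective coresolution $0\to S_M\to M^{\vee}\to E^{\vee}\to(\tau^{-1}M)^{\vee}\to 0$ of the simple $S_M$, not of $M^{\wedge}$. Almost split sequences resolve simples, as in Lemma \ref{lemma:adjoint action of Omega for Auslander algebra}, not representables.

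The missing idea is the injective coresolution of $M^{\wedge}$ itself.
\begin{itemize}
\item Take an injective resolution $0\to M\to I_0\to I_1\to 0$ in $\modcat kQ$ and apply $\restr{\Hom_{kQ}(-,?)}{\mc{A}}$.
\item Serre duality gives $\Hom_{kQ}(-,I_j)\cong D\Hom_{kQ}(R_j,-)$ with $R_j=\mathbb{S}^{-1}_{kQ}(I_j)$.
\item The cokernel on the right is $\Ext^1_{kQ}(-,M)\cong D\Hom_{kQ}(\tau^{-1}M,-)$, by the Auslander--Reiten formula for hereditary algebras.
\end{itemize}
This yields $0\to M^{\wedge}\to R_0^{\vee}\to R_1^{\vee}\to N^{\vee}\to 0$, where $N=\tau^{-1}M$ if $\tau^{-1}M\in\mc{A}$ and $N=0$ otherwise (by left-closedness).

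Hence $\Sigma^2\mathbb{S}^{-1}_{\mc{A}}(M^{\wedge})$ is $R_0^{\wedge}\to R_1^{\wedge}\to N^{\wedge}$ in degrees $-2,-1,0$. The second term is $R_0^{\wedge}\to R_1^{\wedge}$; for non-injective $M$, $0\to R_0\to R_1\to\tau^{-1}M\to 0$ is exactly your projective resolution. The canonical map is the projection, and its cocone is $N^{\wedge}$. So $(\tau^{-1}M)^{\wedge}$ comes from the \emph{first} term.

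Finally, your identification $\mc{B}\simeq\mathrm{proj}\,kQ$ needs $\mc{A}$ to contain every indecomposable projective $kQ$-module. So does the injectivity of the $R_j^{\vee}$ in $\modcat\mc{A}$. In general you must first reduce to the full subquiver on the vertices whose projectives lie in $\mc{A}$, using that every $M\in\mc{A}$ vanishes at the other vertices, and then argue componentwise.
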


\begin{proof}
    We assume first that $\mc{A}$ contains all indecomposable projective $kQ$-modules. By Remark \ref{rmk:computing Omega with Serre functor}, $M^{\wedge} \lten_{\mc{A}}\Sigma^2\Omega_{\mc{A},\mc{B}}$ is the cocone of the canonical map
    \[
     \Sigma^2\mathbb{S}_{\mc{A}}^{-1}(M^{\wedge})\longrightarrow\Sigma^2\mathbb{S}_{\mc{B}}^{-1}(\restr{M^{\wedge}}{\mc{B}}) \lten_{\mc{B}} \mc{A}.
    \]
    We adapt the argument given in \cite[Proposition 8.6]{YilinWuJacobifinite} to compute it. Choose an injective resolution
    \[\begin{tikzcd}
	0 & M & {I_0} & {I_1} & 0
	\arrow[from=1-1, to=1-2]
	\arrow[from=1-2, to=1-3]
	\arrow[from=1-3, to=1-4]
	\arrow[from=1-4, to=1-5]
    \end{tikzcd}\]
    of the $kQ$-module $M$. If we denote by $P_i = \mathbb{S}_{kQ}^{-1}(I_i)$ the projective module corresponding to $I_i$, then \cite{YilinWuJacobifinite} shows that
    \[
    \Sigma^2\mathbb{S}_{\mc{B}}^{-1}(\restr{M^{\wedge}}{\mc{B}}) \lten_{\mc{B}} \mc{A} \quad \cong \quad 0\longrightarrow P_0^{\wedge} \longrightarrow P_1^{\wedge} \longrightarrow 0
    \]
    where the complex on the right has $P_0^{\wedge}$ in degree $-2$. To compute $\Sigma^2\mathbb{S}_{\mc{A}}^{-1}(M^{\wedge})$, we need an injective resolution of $M^{\wedge}$. It is shown in \cite{YilinWuJacobifinite} that the $\ind(kQ)$-module $\Hom_{kQ}(-,M)$ has the injective resolution
    \[\begin{tikzcd}[column sep=small]
	0 & {\Hom(-,M)} & {D\Hom(P_0,-)} & {D\Hom(P_1,-)} & {D\Hom(\tau^{-1}M,-)} & 0.
	\arrow[from=1-1, to=1-2]
	\arrow[from=1-2, to=1-3]
	\arrow[from=1-3, to=1-4]
	\arrow[from=1-4, to=1-5]
	\arrow[from=1-5, to=1-6]
\end{tikzcd}\]
Notice that the restriction of the last term to $\mc{A}$ vanishes if $\tau^{-1}M \not\in \mc{A}$ since $\mc{A}$ is left-closed. Restricting to $\mc{A}$ and forgetting the last term if $\tau^{-1}M \not\in \mc{A}$, we obtain an injective resolution for $M^{\wedge}$, whence
\[
\Sigma^2\mathbb{S}_{\mc{A}}(M^{\wedge}) \quad \cong \quad 0 \longrightarrow P_0^{\wedge} \longrightarrow P_1^{\wedge} \longrightarrow N^{\wedge} \longrightarrow 0
\]
where the complex on the right has $P_0^{\wedge}$ in degree $-2$, and where $N = \tau^{-1}M$ if $\tau^{-1}M \in \mc{A}$ or $N = 0$ otherwise. The canonical map then becomes the evident map between the complexes above, and we conclude that $M^{\wedge} \lten_{\mc{A}}\Sigma^2\Omega_{\mc{A},\mc{B}} \cong N^{\wedge}$, as claimed.

Let us prove the general case. Assume that the left-closed subcategory $\mc{A} \subseteq \ind(kQ)$ does not contain an indecomposable projective module $P$ associated with a vertex $i \in Q_0$. Then any representation $M \in \mc{A}$ must vanish at the vertex $i$, otherwise we would have a nonzero map $P \to M$. Thus, we can view $\mc{A}$ as a subcategory of $\ind(kQ')$ where $Q'$ is the full subquiver of $Q$ whose vertices correspond to the indecomposable projective modules in $\mc{A}$. Note that $Q'$ is not necessarily connected, but its connected components are Dynkin quivers. We deduce that $\mc{A}$ is a disjoint union of subcategories where each one is equivalent to a left-closed subcategory of $\ind(kQ'')$ (for some Dynkin quiver $Q''$) that contains all indecomposable projective $kQ''$-modules. We conclude the proof by applying the previous argument to each component of this disjoint union.
\end{proof}

\begin{lemma}\label{lemma:adjoint action of Omega for Auslander algebra}
Let $\mc{A} \subseteq \ind(kQ)$ be a full and left-closed subcategory. Denote by $\mc{B}$ the full subcategory of $\mc{A}$ consisting of the indecomposable projective modules in $\mc{A}$. For $M \in \mc{A}$, we have
\[
\RHom_{\mc{A}}(\Sigma^2\Omega_{\mc{A},\mc{B}}, S_M) \cong \begin{cases}
    S_{\tau M} &\textrm{if } M \not\in \mc{B},\\
    0 &\textrm{otherwise}.
\end{cases}
\]
\end{lemma}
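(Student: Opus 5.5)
We plan to compute $\RHom_{\mc{A}}(\Sigma^2\Omega_{\mc{A},\mc{B}}, S_M)$ as the right adjoint of the functor $-\lten_{\mc{A}}\Sigma^2\Omega_{\mc{A},\mc{B}}$, and to test it against representables using Lemma \ref{lemma:Omega for Auslander algebra}. Write $G = \RHom_{\mc{A}}(\Sigma^2\Omega_{\mc{A},\mc{B}},-)$ and $F = -\lten_{\mc{A}}\Sigma^2\Omega_{\mc{A},\mc{B}}$. For every $N \in \mc{A}$, adjunction gives
\[
H^p(G(S_M))(N) \cong \Hom_{\mc{D}(\mc{A})}(N^{\wedge},\Sigma^p G(S_M)) \cong \Hom_{\mc{D}(\mc{A})}(F(N^{\wedge}),\Sigma^p S_M).
\]
By Lemma \ref{lemma:Omega for Auslander algebra}, $F(N^{\wedge})$ is either $(\tau^{-1}N)^{\wedge}$ when $\tau^{-1}N \in \mc{A}$, or $0$. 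Since $(\tau^{-1}N)^{\wedge}$ is projective, the right-hand side vanishes for $p \neq 0$. For $p = 0$ it equals $\Hom((\tau^{-1}N)^{\wedge},S_M)$, which is one-dimensional if $\tau^{-1}N \cong M$ and zero otherwise. Hence $G(S_M)$ is concentrated in degree $0$, and its cohomology is a module supported at the single object $N$ with $\tau^{-1}N \cong M$ and $N \in \mc{A}$, with dimension $1$ there.

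Next we determine when such an $N$ exists. If $M$ is a non-projective $kQ$-module, then $N = \tau M$ is indecomposable and $\tau^{-1}\tau M \cong M$. Moreover $\tau M \in \mc{A}$ because $\mc{A}$ is left-closed and $\Hom_{kQ}(\tau M, E) \neq 0$ for a middle term $E$ of the almost split sequence ending at $M$. If instead $M$ is projective, no indecomposable $N$ satisfies $\tau^{-1}N \cong M$, so $G(S_M)=0$. Here $M \in \mc{A}$ is projective as a $kQ$-module exactly when $M \in \mc{B}$, by the definition of $\mc{B}$.

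It remains to identify the module in the non-projective case. A module over $\mc{A}$ that is one-dimensional at the object $\tau M$ and zero elsewhere is necessarily the simple $S_{\tau M}$: its radical must vanish, because it has total dimension one. We also need the identification to be functorial in the sense that the adjunction isomorphisms are natural in $N$. This holds because naturality in $N$ is all that is needed to recover the module structure, and the module is determined up to isomorphism by its dimension vector here.

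The main technical point is justifying the adjunction, since $\Omega_{\mc{A},\mc{B}}$ must be a well-behaved bimodule. By Lemma \ref{lemma:gldim for Auslander algebra}, $\mc{A}$ has finite global dimension, so it is smooth and proper, and $\mc{B}$ is as well. Therefore $\Omega_{\mc{A},\mc{B}}$ lies in $\pvd(\mc{A}^e)$ and the derived tensor–Hom adjunction applies in $\mc{D}(\mc{A})$. Alternatively, Remark \ref{rmk:computing Omega with Serre functor} expresses $G$ via Serre functors, which could serve as a cross-check, but the adjunction argument above is the route we intend to take.
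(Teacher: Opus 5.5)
Your argument is correct, and it takes a genuinely different route from the paper's.

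The paper computes $\RHom_{\mc{A}}(\Sigma^2\Omega_{\mc{A},\mc{B}},S_M)$ directly from Remark \ref{rmk:computing Omega with Serre functor}, as the cone of a canonical map from $\Sigma^{-2}\RHom_{\mc{B}}(\mc{A},\mathbb{S}_{\mc{B}}(-))$, applied to the restriction of $S_M$ to $\mc{B}$, to $\Sigma^{-2}\mathbb{S}_{\mc{A}}(S_M)$. It then splits into two cases:
\begin{itemize}
\item For $M\notin\mc{B}$, the first term vanishes. The paper applies $\mathbb{S}_{\mc{A}}$ to the projective resolution of $S_M$ coming from the almost split sequence, and gets a complex of corepresentables whose only cohomology is $S_{\tau M}$ by AR theory.
\item For $M\in\mc{B}$, it resolves $S_M$ via the radical of $M$. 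It observes that both terms are the same complex of corepresentables and that the canonical map between them is the identity.
\end{itemize}

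You instead derive the statement formally from Lemma \ref{lemma:Omega for Auslander algebra}. By Yoneda and the tensor--Hom adjunction, the $p$-th cohomology of $G(S_M)$ evaluated at $N$ is $\Hom(F(N^{\wedge}),\Sigma^pS_M)$, which is controlled entirely by the left action on representables. A dimension count, together with the combinatorics of $\tau$ on indecomposables and left-closedness, then finishes the proof.

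What each route buys:
\begin{itemize}
\item Yours is shorter. It avoids recomputing Serre functors and checking the canonical map in the projective case. It also makes transparent that this lemma is exactly the adjoint of Lemma \ref{lemma:Omega for Auslander algebra}.
\item The paper's does not depend on the full strength of that lemma, including its reduction to the case where $\mc{A}$ contains all indecomposable projectives. It uses only Remark \ref{rmk:computing Omega with Serre functor} and standard AR theory.
\end{itemize}

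Some minor remarks:
\begin{itemize}
\item The adjunction between $-\lten_{\mc{A}}X$ and $\RHom_{\mc{A}}(X,-)$ holds for any bimodule $X$, so your last paragraph on smoothness and properness is unnecessary.
\item Your naturality worry is moot. A module of total dimension one supported at $\tau M$, whose endomorphism ring is local with residue field $k$, is necessarily $S_{\tau M}$.
\item To show $\tau M\in\mc{A}$, state explicitly that the indecomposable summands of the middle term $E$ lie in $\mc{A}$, since each maps nonzero to $M$. That is what lets left-closedness pass to $\tau M$.
\end{itemize}
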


\begin{proof}
    By Remark \ref{rmk:computing Omega with Serre functor}, $\RHom_{\mc{A}}(\Sigma^2\Omega_{\mc{A},\mc{B}}, S_M)$ is the cone of the canonical map
    \[
    \Sigma^{-2}\RHom_{\mc{B}}(\mc{A},\mathbb{S}_{\mc{B}}(\restr{S_M}{\mc{B}})) \longrightarrow \Sigma^{-2}\mathbb{S}_{\mc{A}}(S_M).
    \]
    If $M \not\in \mc{B}$, that is, if $M$ is not a projective $kQ$-module, then $\restr{S_M}{\mc{B}} = 0$ and we are reduced to computing the term on the right. If
    \[
    0 \longrightarrow \tau M \longrightarrow L_1 \oplus \dotsb \oplus L_m \longrightarrow M \longrightarrow 0
    \]
    is the almost split sequence ending at $M$, where each $L_i$ is indecomposable, then $\tau M, L_i \in \mc{A}$ because $\mc{A}$ is left-closed. Thus, $S_M$ has a projective resolution given by
    \[
    0 \longrightarrow (\tau M)^{\wedge} \longrightarrow L_1^{\wedge} \oplus \dotsb \oplus L_m^{\wedge} \longrightarrow M^{\wedge} \longrightarrow S_M \longrightarrow 0
    \]
    and we deduce that $\Sigma^{-2}\mathbb{S}_{\mc{A}}(S_M)$ is the complex
    \[
    0 \longrightarrow (\tau M)^{\vee} \longrightarrow L_1^{\vee} \oplus \dotsb \oplus L_m^{\vee} \longrightarrow M^{\vee} \longrightarrow 0,
    \]
    where $(\tau M)^{\vee}$ is in degree zero. By \cite[Theorem 6.11]{AssemSimsonSkowronski}, this complex is concentrated in degree zero, where its cohomology is $S_{\tau M}$. We conclude that $\Sigma^{-2}\mathbb{S}_{\mc{A}}(S_M) \cong S_{\tau M}$, as desired.

    Now, assume $M \in \mc{B}$. Let $L_1 \oplus \dotsb \oplus L_m$ be the radical of $M$, where each $L_i$ is indecomposable. By \cite[Theorem 6.11]{AssemSimsonSkowronski} and the fact that $\mc{A}$ is left-closed, a projective resolution of $S_M$ is given by
    \[
    0 \longrightarrow L_1^{\wedge} \oplus \dotsb \oplus L_m^{\wedge} \longrightarrow M^{\wedge} \longrightarrow S_M \longrightarrow 0.
    \]
    Since $kQ$ is hereditary, each $L_i$ is again projective and the restriction of the resolution above to $\mc{B}$ is a projective resolution of $\restr{S_M}{\mc{B}}$. Using that $\RHom_{\mc{B}}(\mc{A},-)$ preserves corepresentable modules, we see that both $\Sigma^{-2}\RHom_{\mc{B}}(\mc{A},\mathbb{S}_{\mc{B}}(\restr{S_M}{\mc{B}}))$ and $\Sigma^{-2}\mathbb{S}_{\mc{A}}(S_M)$ are given by the complex
    \[
    0 \longrightarrow L_1^{\vee} \oplus \dotsb \oplus L_m^{\vee} \longrightarrow M^{\vee} \longrightarrow 0,
    \]
    where $M^{\vee}$ is in degree $2$, and the canonical map between them is the identity. We obtain that $\RHom_{\mc{A}}(\Sigma^2\Omega_{\mc{A},\mc{B}}, S_M) = 0$, finishing the proof.
\end{proof}

\section{Properness of the relative Ginzburg algebra}\label{section:properness of the Ginzburg algebra}

As in Section \ref{section:initial monoidal seed}, let $\underline{w}_0$ be a reduced expression of the longest element and let $[a,b]$ be a finite integer interval. We let
$\bG^{[a, b]}\left(\un{w}_0\right)$ and $\bGh^{[a, b]}\left(\un{w}_0\right)$ denote the non-completed and completed relative Ginzburg dg algebras associated with the ice quiver with potential $(Q^{[a,b]}(\underline{w}_0),F^{[a,b]}(\underline{w}_0),W^{[a,b]}(\underline{w}_0))$. We pose the following conjecture.

\begin{conjecture}\label{conj: ginzburg proper}
    The relative Ginzburg dg algebras $\bG^{[a,b]}(\underline{w}_0)$ and $\bGh^{[a,b]}(\underline{w}_0)$ are proper.
\end{conjecture}

Our main goal in this section is to prove the conjecture for when $\underline{w}_0$ is adapted to an orientation of $\Delta$. The key observation is that, for certain intervals $[a,b]$, $Q^{[a,b]}(\underline{w}_0)$ is a triangle product as in Section \ref{section:triangle products and green sequences}. In this so-called ``regular'' case, we reduce the conjecture to the case when $\Delta = \mathsf{A}_1$, for which we can easily verify it directly. For arbitrary intervals, we exploit the fact that $\underline{w}_0$ is adapted to decompose $Q^{[a,b]}(\underline{w}_0)$ into two subquivers: a regular one and another one isomorphic to the Gabriel quiver of a truncated Auslander algebra as in Section \ref{section:truncated Auslander algebras}. With the aid of Proposition \ref{proposition:gluing of bimodules}, we settle the conjecture in the adapted case. In fact, we prove the stronger Theorem \ref{thm:coincidence of extensions}, which we will need for proving Theorem \ref{thm:coincidence of Lambda matrices}. We conclude this section by exploring what happens when we apply braid and commutation moves to $\underline{w}_0$, following the ideas in \cite[Section 6.1]{ContuMonoidalAdditive}. This allows us to obtain more cases of the conjecture.

\subsection{The regular case}\label{section:regular case is proper}

For $\ell \geq 1$, let $(Q^{\circ}_{\ell},F^{\circ}_{\ell})$ be the following ice quiver:
\[\begin{tikzcd}
	\color{blue}\boxed{1} & 2 & 3 & \dotsb & \ell.
	\arrow[from=1-2, to=1-1]
	\arrow[from=1-3, to=1-2]
	\arrow[from=1-4, to=1-3]
	\arrow[from=1-5, to=1-4]
\end{tikzcd}\]
We remark that it is isomorphic to $(Q^{[a,b]}(\underline{w}_0),F^{[a,b]}(\underline{w}_0))$ if $\Delta = \mathsf{A}_1$ and $\ell = b-a+1$. Define 
\begin{equation}\label{eq:definition Al and Bl}
A^{\circ}_{\ell} = kQ^{\circ}_{\ell} \text{ and } B^{\circ}_{\ell} = e_1A^{\circ}_{\ell}e_1 \cong k
\end{equation}
Note that $B^{\circ}_{\ell}$ is the subalgebra corresponding to the frozen vertex.

For $1 \leq x \leq y \leq \ell$, denote by $V_{[x,y]}$ the indecomposable $A^{\circ}_{\ell}$-module concentrated on the vertices $x$, $x+1$, $\dots$, $y$ of $Q^{\circ}_{\ell}$. We also extend the notation for arbitrary $x,y \leq \ell$, where we interpret $V_{[x,y]}$ as $V_{[1,y]}$ if $x < 1$ and as zero if $y < 1$ or $x > y$. We write $V_{[x]}$ for $V_{[x,x]}$.

\begin{lemma}\label{lemma:action of Omega in type A1}
    For $1 \leq x \leq y \leq \ell$, we have an isomorphism
    \[
    V_{[x,y]} \lten_{A^{\circ}_{\ell}} \Sigma\Omega_{A^{\circ}_{\ell},B^{\circ}_{\ell}} \cong V_{[x-1,y-1]}
    \]
    in $\mc{D}^b(A^{\circ}_{\ell})$.
\end{lemma}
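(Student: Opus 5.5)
The plan is to compute $V_{[x,y]} \lten_{A^{\circ}_{\ell}} \Sigma\Omega_{A^{\circ}_{\ell},B^{\circ}_{\ell}}$ directly from Remark \ref{rmk:computing Omega with Serre functor}. Write $A = A^{\circ}_{\ell}$ and $B = B^{\circ}_{\ell}$; both are smooth and proper, since $A$ is the path algebra of a linear $\mathsf{A}_\ell$ quiver and $B \cong k$. For $M \in \mc{D}^b(A)$, that remark exhibits $M \lten_A \Sigma\Omega_{A,B}$ as the cone of the canonical map
\[
\mathbb{S}_A^{-1}(M) \longrightarrow \mathbb{S}_B^{-1}(\restr{M}{B}) \lten_B A,
\]
shifted by $\Sigma^{-1}\Sigma = \mathrm{id}$: applying $\Sigma$ to a cocone gives the cone. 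Two inputs are needed. First, for the hereditary algebra $A$ we use $\mathbb{S}_A^{-1} \cong \Sigma^{-1}\tau^{-1}$ on non-injective indecomposables, and $\mathbb{S}_A^{-1}(I_i) \cong \Sigma^{-1} P_i$ on indecomposable injectives. Second, since $B \cong k$ sits at vertex $1$, we have $\mathbb{S}_B^{-1} = \id$, and the functor $- \lten_B A$ sends $k$ to the indecomposable projective $e_1A$. So the second term is $M(1) \ten_k e_1A$, i.e.\ $\dim M(1)$ copies of $e_1A$.

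The first step is to fix conventions. We are working with right modules over $kQ^{\circ}_{\ell}$, whose arrows go $i \to i-1$. I will identify $e_1A$, the projectives and the injectives among the $V_{[x,y]}$, and the AR translate of each $V_{[x,y]}$. The expected outcome, forced by the statement, is as follows. In this convention $e_1A$ is the module supported on all of $[1,\ell]$ at which the top sits at $1$, so $e_1A = V_{[1,\ell]}$. The indecomposable injectives are $V_{[1,y]}$, with $V_{[1,y]} = \mathbb{S}_A(P)$ for the projective $P = V_{[y+1,\ell]}$ when $y<\ell$, and with $V_{[1,\ell]}$ also injective. Finally, $\tau^{-1}V_{[x,y]} = V_{[x-1,y-1]}$ when $x>1$. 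I will verify all of this by an explicit short projective resolution, checking it against the cocone formula rather than trusting the picture.

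The computation then splits into cases. If $x>1$, then $\restr{V_{[x,y]}}{B} = 0$, so the second term vanishes and the result is $\Sigma\mathbb{S}_A^{-1}V_{[x,y]} = \tau^{-1}V_{[x,y]} = V_{[x-1,y-1]}$, as required. If $x=1$, then $V_{[1,y]}$ is injective, and $V_{[1,y]}(1)$ is one-dimensional. Hence we need the cone of a map $\Sigma^{-1}P \to e_1A = V_{[1,\ell]}$, where $P = \mathbb{S}_A^{-1}(V_{[1,y]})$ shifted back to a module, namely $P = V_{[y+1,\ell]}$ (and $P=V_{[1,\ell]}$ if $y=\ell$). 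The cone of a degree-one morphism $\Sigma^{-1}P \to V_{[1,\ell]}$ is the middle term of the corresponding extension in $\Ext^1(P, V_{[1,\ell]})$, or rather, unwinding, it is the cokernel or kernel of the genuine map $P \hookrightarrow V_{[1,\ell]}$, i.e.\ $V_{[1,y]}$ modulo one step. For $y<\ell$ I expect to get $V_{[1,\ell]}/V_{[y+1,\ell]} \cong V_{[1,y]}$ shifted appropriately, which should become $V_{[0,y-1]} = V_{[1,y-1]}$ after tracking the grading. For $y=\ell$ the map is an isomorphism and the cone is $0 = V_{[0,\ell-1]}$ only if $\ell$... — this is exactly where I must recheck. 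The convention $V_{[x,y]} = V_{[1,y]}$ for $x<1$ predicts $V_{[1,\ell-1]}$ in that case, not $0$.

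The main obstacle is precisely this $x=1$ case. The difficulty is identifying the canonical map: one has to check that it is the nonzero (hence, up to scalar, unique) morphism. This can be argued using the adjunction defining $\mc{A}\lten_{\mc{B}}\mc{A} \to \mc{A}$, under which the map restricts to the identity on $\restr{M}{B}$. One also has to keep the homological degrees straight. I would first do $\ell=2$ by hand to pin down the conventions, and then run the general case through the explicit two-term complexes as in the proof of Lemma \ref{lemma:Omega for Auslander algebra}.
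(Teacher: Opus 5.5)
Your case $x>1$ is correct and matches the paper: $\restr{V_{[x,y]}}{B}=0$, so the result is $\Sigma\mathbb{S}_A^{-1}(V_{[x,y]})=\tau^{-1}V_{[x,y]}=V_{[x-1,y-1]}$.

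The case $x=1$, however, is not established. The inconsistency you flag at $y=\ell$ comes from two concrete errors.

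First, $\mathbb{S}_A^{-1}(I_i)\cong P_i$ in degree zero, not $\Sigma^{-1}P_i$. Indeed, $\mathbb{S}_A$ is the derived Nakayama functor and sends $P_i$ to $I_i$; equivalently, $\tau^{-1}I_i\cong\Sigma P_i$ in $\mc{D}^b(A)$. With your shift, every morphism $\Sigma^{-1}P\to V_{[1,\ell]}$ lies in $\Ext^1_A(P,V_{[1,\ell]})=0$, so the cone would be $P\oplus V_{[1,\ell]}$, which is wrong.

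Second, in your (correct) convention $P_i=V_{[i,\ell]}$ and $I_i=V_{[1,i]}$. Hence $\mathbb{S}_A^{-1}(V_{[1,y]})=V_{[y,\ell]}$, not $V_{[y+1,\ell]}$. In particular, for $y=\ell$ it is the simple projective $V_{[\ell]}$, not $V_{[1,\ell]}$.

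With these corrections your direct route does work. You need the cone of the canonical map $V_{[y,\ell]}\to e_1A=V_{[1,\ell]}$. Since $\Hom_A(V_{[y,\ell]},V_{[1,\ell]})$ is one-dimensional, it suffices to know this map is nonzero. It is then the inclusion, with cokernel $V_{[1,y-1]}$ (zero for $y=1$), which is $V_{[0,y-1]}$ under the paper's convention. For $y=\ell$ this gives $V_{[1,\ell-1]}$, so the apparent contradiction disappears.

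The nonvanishing is the one genuinely missing input, and your sentence about the adjunction is not yet an argument. Read literally, the restriction of the map to vertex $1$ is $0\to k$ for $y>1$, so it is not the identity. What you would actually need to verify is the following: under Serre duality and the restriction--coinduction adjunction one has $\Hom_A(\mathbb{S}_A^{-1}M,\mathbb{S}_B^{-1}(\restr{M}{B})\lten_BA)\cong\Hom_B(\restr{M}{B},\restr{M}{B})$, and the canonical map corresponds to $\id_{\restr{M}{B}}$.

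The paper sidesteps doing this for every $y$. It uses the short exact sequence $0\to V_{[2,y]}\to V_{[1,y]}\to V_{[1]}\to 0$ together with the case $x>1$ to reduce to $V_{[1]}=I_1$. There $\mathbb{S}_A^{-1}(I_1)=P_1=e_1A$, and the canonical map is checked to be the identity of $\Sigma V_{[1,\ell]}$, so its cocone vanishes.
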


\begin{proof}
    Denote $A = A^{\circ}_{\ell}$ and $B = B^{\circ}_{\ell}$. By Remark \ref{rmk:computing Omega with Serre functor}, $V_{[x,y]} \lten_A \Sigma\Omega_{A,B}$ is the cocone of the canonical map
    \[
    \Sigma\mathbb{S}_A^{-1}(V_{[x,y]}) \longrightarrow \Sigma\mathbb{S}^{-1}_B(\restr{V_{[x,y]}}{B}) \lten_B A. 
    \]
    If $x > 1$, then the restriction of $V_{[x,y]}$ to $B$ is zero, and we are reduced to computing the term on the left. Knowing that $\Sigma\mathbb{S}_A^{-1}$ is the inverse Auslander--Reiten translation on $\mc{D}^b(A)$ and that $A$ is the path algebra of a linearly oriented Dynkin quiver of type $\mathsf{A}_{\ell}$, one easily computes that $\Sigma\mathbb{S}_A^{-1}(V_{[x,y]}) \cong V_{[x-1,y-1]}$. For the case $x = 1$, we use the short exact sequence
    \[
    0 \longrightarrow V_{[2,y]} \longrightarrow V_{[1,y]} \longrightarrow V_{[1]} \longrightarrow 0.
    \]
    By the previous case, $V_{[2,y]} \lten_A \Sigma\Omega_{A,B} \cong V_{[1,y-1]} =  V_{[x-1,y-1]}$, so it suffices to show that $V_{[1]} \lten_A \Sigma\Omega_{A,B} = 0$ to conclude the proof. Indeed, an easy computation shows that the canonical map above identifies with the identity map of $\Sigma V_{[1,\ell]}$, whose cocone is zero.
\end{proof}

\begin{corollary}\label{corollary:type A1 is proper}
The relative inverse dualizing bimodule $\Omega_{A^{\circ}_{\ell},B^{\circ}_{\ell}}$ is nilpotent.
\end{corollary}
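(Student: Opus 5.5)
Set $A = A^{\circ}_{\ell}$ and $B = B^{\circ}_{\ell}$. The plan is to show that the functor $F = - \lten_A \Sigma\Omega_{A,B}$ on $\mc{D}(A)$ is nilpotent, and then deduce that the bimodule itself is nilpotent. For the functor, I would use Lemma \ref{lemma:action of Omega in type A1}: every indecomposable module $V_{[x,y]}$ satisfies $F(V_{[x,y]}) \cong V_{[x-1,y-1]}$. Iterating this, and using the convention that $V_{[x,y]}=0$ when $y<1$, gives $F^{\ell}(V_{[x,y]}) = 0$ for all $1 \le x \le y \le \ell$, because after $\ell$ steps the right endpoint $y-\ell$ is $<1$. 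One point needs care here: the lemma produces isomorphisms object by object, and the convention $V_{[x,y]} = V_{[1,y]}$ for $x<1$ has to be compatible with iteration. It is: by the lemma, applying $F$ to $V_{[1,y]}$ gives $V_{[0,y-1]} = V_{[1,y-1]}$, so the right endpoint drops by one at every step no matter what, and the result vanishes once $y$ drops below $1$.

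Next I would pass from indecomposable modules to the whole category. Since $A$ is hereditary of finite representation type, every object of $\mc{D}^b(A)$ is a finite direct sum of shifts of the $V_{[x,y]}$. As $F$ is triangulated and commutes with shifts and finite sums, $F^{\ell}$ vanishes on all of $\mc{D}^b(A)$. In particular, $F^{\ell}(A_A) = 0$.

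Finally I would translate back to the bimodule. Associativity of the derived tensor product gives $F^{\ell}(A_A) = A \lten_A (\Sigma\Omega_{A,B})^{\lten_A \ell} \cong \Sigma^{\ell}\,\Omega_{A,B}^{\lten_A \ell}$ as an object of $\mc{D}(A)$ (right module structure). An object of $\mc{D}(A^e)$ whose underlying right $A$-module, i.e. its restriction along $A \to A^e$, is zero must itself be zero, since restriction detects acyclicity. Hence $\Omega_{A,B}^{\lten_A \ell} = 0$ in $\mc{D}(A^e)$, and $\Omega_{A,B}$ is nilpotent.

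I expect the main obstacle to be the bookkeeping in this last step, namely making sure the right action used in Lemma \ref{lemma:action of Omega in type A1} is the one whose iterates compute the tensor powers $\Omega_{A,B}^{\lten_A n}$. The required identities are standard associativity isomorphisms, so I do not expect this to cause real difficulty.
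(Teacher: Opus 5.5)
Your proposal is correct and follows essentially the paper's proof: iterate Lemma~\ref{lemma:action of Omega in type A1} to see that $(-\lten_A\Sigma\Omega_{A,B})^{\ell}$ vanishes on $\mc{D}^b(A)$, then evaluate at $A$ to get $(\Sigma\Omega_{A,B})^{\lten_A\ell}\cong 0$. The differences are cosmetic. The paper checks vanishing only on the simple modules and uses that they generate $\mc{D}^b(A)$ as a triangulated category, whereas you use all indecomposables $V_{[x,y]}$ together with the hereditary decomposition. Your remark that restriction to right $A$-modules detects zero objects in $\mc{D}(A^e)$ makes explicit a step the paper leaves implicit.
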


\begin{proof}
Denote $A = A^{\circ}_{\ell}$ and $B = B^{\circ}_{\ell}$. We equivalently prove that $\Sigma \Omega_{A,B}$ is nilpotent. Let $G: \mc{D}^b(A) \to \mc{D}^b(A)$ be the functor $G = - \lten_A \Sigma\Omega_{A,B}$. By Lemma \ref{lemma:action of Omega in type A1}, $G^{\ell}$ vanishes on all simple $A$-modules, hence it vanishes on $\mc{D}^b(A)$. We conclude that $(\Sigma\Omega_{A,B})^{\lten_A\ell} \cong G^{\ell}(A) \cong 0$, as desired.
\end{proof}

\begin{corollary}\label{corollary:regular case is proper}
    Let $Q$ be a Dynkin quiver. Write $A = kQ \ten A^{\circ}_{\ell}$ and $B = kQ \ten B^{\circ}_{\ell}$, where $A^{\circ}_{\ell}$ and $B^{\circ}_{\ell}$ are as in \eqref{eq:definition Al and Bl}. Then $\Omega_{A,B}$ is nilpotent. Consequently, $\bP_3(A,B)$ is proper.
\end{corollary}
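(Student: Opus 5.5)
The plan is to apply the tensor product formula of Lemma \ref{lemma:inverse dualizing bimodule for tensor product} with $\mc{A}_1 = kQ$, $\mc{A}_2 = A^{\circ}_{\ell}$ and $\mc{B}_2 = B^{\circ}_{\ell}$. Then $\mc{A} = kQ \ten A^{\circ}_{\ell} = A$ and $\mc{B} = kQ \ten B^{\circ}_{\ell} = B$, where the functor $\mc{B}_2 \to \mc{A}_2$ is the (non-unital) inclusion $B^{\circ}_{\ell} = e_1A^{\circ}_{\ell}e_1 \hookrightarrow A^{\circ}_{\ell}$.

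First I check the smoothness hypotheses. The algebra $kQ$ is the path algebra of a finite acyclic quiver, so it is finite-dimensional of global dimension at most $1$. The same holds for $A^{\circ}_{\ell}$, which is a linearly oriented $\mathsf{A}_{\ell}$ path algebra. Finally, $B^{\circ}_{\ell} \cong k$. All three are proper of finite global dimension over the algebraically closed (hence perfect) field $k$, so they are smooth. Lemma \ref{lemma:inverse dualizing bimodule for tensor product} then gives
\[
\Omega_{A,B} \cong \Omega_{kQ} \ten \Omega_{A^{\circ}_{\ell},B^{\circ}_{\ell}}.
\]
By Corollary \ref{corollary:type A1 is proper}, the second factor is nilpotent. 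Corollary \ref{corollary:Omega is nilpotent for tensor product} therefore shows that $\Omega_{A,B}$ is nilpotent. Concretely, the $n$-th derived tensor power over $A$ of $M_1 \ten M_2$ is $M_1^{\lten n} \ten M_2^{\lten n}$, which vanishes once the second factor does.

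For properness, I invoke Lemma \ref{lemma:Omega nilpotent implies proper}. It requires $A$ and $B$ to be smooth and proper. Both are finite-dimensional: $A$ has global dimension at most $2$ (as recalled in Section \ref{section:tensor products}), and $B \cong kQ$ has global dimension at most $1$. Hence both are smooth and proper, and $\bP_3(A,B)$ is proper.

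The only point needing care is the non-unital inclusion $B^{\circ}_{\ell} \to A^{\circ}_{\ell}$. I would treat the algebras as $k$-categories: $A^{\circ}_{\ell}$ with $\ell$ objects, and $B^{\circ}_{\ell}$ the one-object subcategory at vertex $1$. The tensor lemma is stated for linear functors between small categories, so it applies directly. I expect no real obstacle beyond this bookkeeping.
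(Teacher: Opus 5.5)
Your proposal is correct and follows essentially the same route as the paper. The paper cites Corollaries \ref{corollary:Omega is nilpotent for tensor product} and \ref{corollary:type A1 is proper} for nilpotence and then Lemma \ref{lemma:Omega nilpotent implies proper} for properness; you spell out the smoothness and properness checks and the tensor-power identity behind those citations.
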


\begin{proof}
    The first statement follows from Corollaries \ref{corollary:Omega is nilpotent for tensor product} and \ref{corollary:type A1 is proper}. The second follows from Lemma \ref{lemma:Omega nilpotent implies proper}.
\end{proof}

For the next result, we denote by $\tau = \mathbb{S}_{kQ}\Sigma^{-1}$ the Auslander--Reiten translation on the bounded derived category of the Dynkin quiver $Q$.

\begin{corollary}\label{corollary:action of Omega for tensor product}
    Let $Q$ be a Dynkin quiver. Write $A = kQ \ten A^{\circ}_{\ell}$ and $B = kQ \ten B^{\circ}_{\ell}$, where $A^{\circ}_{\ell}$ and $B^{\circ}_{\ell}$ are as in \eqref{eq:definition Al and Bl}. For $M \in \mc{D}^b(kQ)$ and $1 \leq x \leq y \leq \ell$, we have an isomorphism
    \[
    (M \ten V_{[x,y]}) \lten_A \Sigma^2\Omega_{A,B} \cong \tau^{-1}M \ten V_{[x-1,y-1]}.
    \]
    in $\mc{D}^b(A)$.
\end{corollary}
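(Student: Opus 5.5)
The plan is to combine Lemma \ref{lemma:inverse dualizing bimodule for tensor product} with Lemma \ref{lemma:action of Omega in type A1} and Remark \ref{rmk:absolute Omega gives Serre functor}. Set $\mc{A}_1 = kQ$, $\mc{A}_2 = A^{\circ}_{\ell}$ and $\mc{B}_2 = B^{\circ}_{\ell}$. All three are smooth: they are finite-dimensional of finite global dimension over a perfect field. Lemma \ref{lemma:inverse dualizing bimodule for tensor product} therefore gives
\[
\Omega_{A,B} \cong \Omega_{kQ} \ten \Omega_{A^{\circ}_{\ell},B^{\circ}_{\ell}}
\]
in $\mc{D}(A^e)$. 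We split the suspension $\Sigma^2$ as one $\Sigma$ on each factor, so that
\[
\Sigma^2\Omega_{A,B} \cong \Sigma\Omega_{kQ} \ten \Sigma\Omega_{A^{\circ}_{\ell},B^{\circ}_{\ell}}.
\]

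The next step is a Künneth-type identity. For a $kQ$-module complex $M$ and an $A^{\circ}_{\ell}$-module complex $V$, and for bimodules $X \in \mc{D}(kQ^e)$ and $Y \in \mc{D}((A^{\circ}_{\ell})^e)$, we claim that
\[
(M \ten V) \lten_{A} (X \ten Y) \cong (M \lten_{kQ} X) \ten (V \lten_{A^{\circ}_{\ell}} Y).
\]
To prove it, take cofibrant (e.g.\ projective) resolutions $P_M \to M$ and $P_V \to V$. Then $P_M \ten P_V$ is a cofibrant resolution of $M \ten V$ over $A = kQ \ten A^{\circ}_{\ell}$, since tensor products of representables over the two factors are representable over $A$. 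Moreover, $(P_M \ten P_V)\ten_A (X \ten Y) \cong (P_M \ten_{kQ} X)\ten(P_V\ten_{A^{\circ}_{\ell}} Y)$ holds at the level of complexes. Because we work over a field, the outer tensor product is exact, which gives the identity.

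Applying this identity, the first factor is $M \lten_{kQ}\Sigma\Omega_{kQ} \cong \Sigma\mathbb{S}_{kQ}^{-1}(M)$ by Remark \ref{rmk:absolute Omega gives Serre functor}. Since $\tau = \mathbb{S}_{kQ}\Sigma^{-1}$, we have $\tau^{-1} = \Sigma\mathbb{S}_{kQ}^{-1}$, so this factor equals $\tau^{-1}M$. The second factor is $V_{[x-1,y-1]}$ by Lemma \ref{lemma:action of Omega in type A1}. Together these give the claimed isomorphism.

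The main obstacle is bookkeeping of the bimodule structures. One point is the duality twist $(\mc{A}^e)^{\rm op}\to\mc{A}^e$ used in defining $(-)^\vee$, which must be compatible with the tensor decomposition $A^e \cong kQ^e \ten (A^{\circ}_{\ell})^e$. The other is checking that the isomorphism of Lemma \ref{lemma:inverse dualizing bimodule for tensor product} is one of $A$-bimodules, not merely of one-sided modules. Signs from distributing the suspensions only affect the result up to isomorphism, so I expect no further difficulty.
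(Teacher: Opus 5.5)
Your proposal is correct and follows the paper's proof: the paper also uses Lemma~\ref{lemma:inverse dualizing bimodule for tensor product} to write $\Sigma^2\Omega_{A,B} \cong \Sigma\Omega_{kQ} \ten \Sigma\Omega_{A^{\circ}_{\ell},B^{\circ}_{\ell}}$, splits $(M \ten V_{[x,y]}) \lten_A \Sigma^2\Omega_{A,B}$ into a tensor product of the two factors, and concludes by Remark~\ref{rmk:absolute Omega gives Serre functor} and Lemma~\ref{lemma:action of Omega in type A1}. Your K\"unneth argument spells out a step that the paper leaves implicit.
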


\begin{proof}
    By Lemma \ref{lemma:inverse dualizing bimodule for tensor product}, we have $\Sigma^2\Omega_{A,B} \cong \Sigma\Omega_{kQ} \ten \Sigma\Omega_{A^{\circ}_{\ell},B^{\circ}_{\ell}}$. Therefore,
    \[
    (M \ten V_{[x,y]}) \lten_A \Sigma^2\Omega_{A,B} \cong (M \lten_{kQ} \Sigma\Omega_{kQ}) \ten (V_{[x,y]} \lten_{A^{\circ}_{\ell}} \Sigma\Omega_{A^{\circ}_{\ell},B^{\circ}_{\ell}}).
    \]
    The result then follows by Remark \ref{rmk:absolute Omega gives Serre functor} and Lemma \ref{lemma:action of Omega in type A1}.
\end{proof}

\subsection{Coincidence of extensions in the adapted case} Let $(Q,F,W)$ and $(Q',F',W')$ be two ice quivers with potential. Let $\bG = \bG(Q,F,W)$ and $\bG' = \bG(Q',F',W')$ be the associated relative Ginzburg dg algebras. Let $X \subseteq Q_0$ and $X' \subseteq Q_0'$ be subsets such that there is a bijection $f: X \to X'$. We say that $\bG$ and $\bG'$ \emph{agree on extensions} (along $f$) if we have an isomorphism
\[
\RHom_{\bG}(e_s\bG,e_t\bG) \cong \RHom_{\bG'}(e_{f(s)}\bG',e_{f(t)}\bG')
\]
in the derived category of vector spaces for all $s,t \in X$.

\begin{remark}
    In what follows, if $Q_0$ and $Q_0'$ are both defined as subsets of a common set, we will take $X = X' = Q_0 \cap Q_0'$ and $f = \id$. This applies in particular when we want to compare extensions of $\bG^{[a,b]}(\underline{w}_0)$ and $\bG^{[a',b']}(\underline{w}_0)$ for different values of $a$, $a'$, $b$ and $b'$.
\end{remark}

The following is the main result of this subsection.

\begin{theorem}\label{thm:coincidence of extensions}
    Suppose $\underline{w}_0$ is adapted to an orientation of $\Delta$. For any $a_1, a_2, b \in \Z$ with $a_1,a_2 \leq b$, the relative Ginzburg dg algebras $\bG^{[a_1,b]}(\underline{w}_0)$ and $\bG^{[a_2,b]}(\underline{w}_0)$ agree on extensions.
\end{theorem}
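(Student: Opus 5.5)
The plan is to pass from relative Ginzburg dg algebras to relative derived preprojective algebras and compare powers of the relative inverse dualizing bimodule. By symmetry assume $a_1 < a_2$, and by transitivity of ``agreeing on extensions'' reduce to $a_1 = a_2 - 1$. Write $a = a_2$.

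\textbf{Step 1: presentation as a derived preprojective algebra.} For each interval $[a',b]$, let $A^{[a',b]}$ be the quotient of the path algebra of $Q^{[a',b]}(\un{w}_0)$, with the arrows $t \to s$ ($s<t$, $t$ unfrozen) closing the cordless cycles removed, by the cyclic derivatives of $W^{[a',b]}(\un{w}_0)$ with respect to those arrows. These are commutativity/mesh relations. Let $B^{[a',b]} = kF^{[a',b]}(\un{w}_0)$.

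Using that $\un{w}_0$ is adapted to $Q_\xi$, I would identify $A^{[a',b]}$ via Lemma~\ref{lemma:identification of HL with GLS quivers} with a category glued from two pieces:
\begin{itemize}
\item a regular part $kQ \ten A^\circ_\ell$, covering the columns where all rows have equal length;
\item a left-truncated Auslander category $\mc{A} \subseteq \ind(kQ)$ as in Section~\ref{section:truncated Auslander algebras}.
\end{itemize}
The pieces overlap in one ``slice'' $\mc{C} \cong kQ$, and condition \eqref{eq:source condition for gluing theorem} or \eqref{eq:sink condition for gluing theorem} holds because all arrows between the pieces point the same way. Lemmas~\ref{lemma:global dimension of gluing} and \ref{lemma:gldim for Auslander algebra} then give global dimension $\le 2$. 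Theorem~\ref{theorem:Ginzburg algebras as completion} yields $\bG^{[a',b]}(\un{w}_0) \simeq \bP_3(A^{[a',b]},B^{[a',b]})$. Consequently
\[
\RHom_{\bG}(e_s\bG,e_t\bG) \cong \bigoplus_{n\ge 0} \bigl( e_tA \lten_A (\Sigma^2\Omega_{A,B})^{\lten_A n} \bigr)(s),
\]
so it suffices to show that for $s,t\in[a,b]$ the value at $s$ of $G^n(e_tA)$, where $G = -\lten_A\Sigma^2\Omega_{A,B}$, is the same for $[a-1,b]$ and $[a,b]$.

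\textbf{Step 2: compute $G$ on each piece.} Apply Proposition~\ref{proposition:gluing of bimodules} with $\mc{A}_1$ the piece containing the frozen vertices. This expresses $G$ as an extension of
\[
\Omega_{\mc{A}_1,\mc{B}}\lten\mc{A}^e \quad\text{and}\quad \Omega_{\mc{A}_2,\mc{C}}\lten\mc{A}^e .
\]
Lemma~\ref{lemma:(co)induction with gluing conditions} lets me compute the induced terms by extension by zero. On each piece the action is explicit: Corollary~\ref{corollary:action of Omega for tensor product} gives $M\ten V_{[x,y]}\mapsto \tau^{-1}M\ten V_{[x-1,y-1]}$ on the regular part, and Lemma~\ref{lemma:Omega for Auslander algebra} gives $M^\wedge \mapsto (\tau^{-1}M)^\wedge$ or $0$ on the Auslander part. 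In both cases $G$ moves supports strictly toward the frozen (left) end and kills whatever crosses the frozen boundary.

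\textbf{Step 3: compare the two intervals.} Passing from $[a,b]$ to $[a-1,b]$ adds one vertex on the far left, unfreezes the vertex $a^{+}$-type vertex on that row, and alters only frozen arrows. In the $\mathsf{A}_1$ model the projective $e_t A$ is $V_{[a',t]}$ and $G^n(e_tA) = V_{[a',t-n]}$, whose value at $s \ge a$ is $k$ if $s\le t-n$ and $0$ otherwise, independently of $a'$. I expect the same mechanism in general. Concretely, I would show by induction on $n$ that the restriction of $G^n(e_tA^{[a-1,b]})$ to the full subcategory on $[a,b]$ is isomorphic to $G^n(e_tA^{[a,b]})$. The point is that the added column only enlarges supports on the left, and since $G$ never transports anything from left to right, it cannot feed back into values at vertices $\ge a$.

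\textbf{Main obstacle.} The hard part is making Step~3 rigorous when the new vertex changes which piece of the gluing it belongs to, i.e.\ when the decomposition into regular and Auslander parts shifts. I would first try to choose the gluing so that $A^{[a,b]}$ is a full piece of $A^{[a-1,b]}$ up to the single new column, apply Proposition~\ref{proposition:gluing of bimodules} directly to this decomposition, and check that the cone term is supported on the new column. I am not certain this decomposition always satisfies \eqref{eq:source condition for gluing theorem} or \eqref{eq:sink condition for gluing theorem}; that is the first thing to verify. I would also check the identification of the frozen subalgebras and that the evaluation-at-$s$ functor commutes with restriction, which holds for full subcategories.
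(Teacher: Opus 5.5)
Your Step~3 carries the whole theorem, and the mechanism you give for it is false.

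\textbf{The heuristic in Step~3 is wrong.} The formula you cite from Lemma~\ref{lemma:Omega for Auslander algebra}, $M^{\wedge}\mapsto(\tau^{-1}M)^{\wedge}$, says $e_tA\mapsto e_{t^+}A$. It moves representables to the \emph{right}, away from the frozen slice, and they die at the right end $b$, not at the frozen boundary. In the glued category, the image of $t^{\wedge}$ for an Auslander vertex $t$ in general has both a rightward piece $\tau^{-1}(t)^{\wedge}$ and a leftward regular piece. This is the filtration by $\mc{X}_r$ in the proof of Proposition~\ref{prop:Gamma and its extended version agree}. So no monotonicity prevents the new vertex from feeding back into $[a,b]$. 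Even your $\mathsf{A}_1$ model is reversed: $e_tA$ is supported on $[t,b]$, and $G^n(e_tA)\cong\Sigma^nV_{[t-n,b-n]}$ in absolute coordinates.

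\textbf{The one-vertex gluing does not work as described.} Take $\mc{A}_2$ to present $[a,b]$ and $\mc{C}=F^{[a,b]}$. The frozen arrows out of $a-1$ land in $\mc{C}$, so \eqref{eq:source condition for gluing theorem} fails. The natural way to get \eqref{eq:sink condition for gluing theorem} is to make $(a-1)^+\to a-1$ a relation-arrow, i.e.\ to take $\mc{A}_1$ to be a small Auslander piece on $F^{[a,b]}\cup\{a-1\}$. But then $-\lten_{\mc{A}_1}\mc{A}$ is no longer extension by zero. In that decomposition, $S_{a-1}=(a-1)^{\wedge}$ is sent by $-\lten_{\mc{A}}\Sigma^2\Omega_{\mc{A},\mc{B}}$ to the representable $\mc{A}(-,(a-1)^+)$, which is nonzero on $[a,b]$. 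So the $\mc{A}_1$-term is not supported on the new vertex, and computing $G^n(e_tA)$ directly does not localize.

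\textbf{Step~1 also needs repair.} No uniform choice of relation-arrows works. Deleting all horizontal arrows leaves nothing to trade them for when $\Delta=\mathsf{A}_1$. For $\Delta=\mathsf{A}_2$ it gives a radical-square-zero linear algebra of global dimension $>2$ on long windows. You need the tensor convention on the regular part and the mesh convention on the Auslander part. Intervals with an empty row have no regular part at all.

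\textbf{How the paper proceeds.} It never compares two intervals directly.
\begin{enumerate}
\item Short intervals are pure left-truncated Auslander categories and are compared with $[b-l(w_0)+1,b]$ by hand (Lemma~\ref{lemma:short word agree on extensions with word for w0}).
\item For regular width $\ell\ge1$, every row is padded on the left by $\tilde\ell-\ell$ vertices. The source-condition gluing of regular and Auslander parts, together with the explicit filtration of your $G^r(t^{\wedge})$, shows nothing depends on $\tilde\ell$ (Proposition~\ref{prop:Gamma and its extended version agree}).
\item It then glues the other way round: trans-Auslander part on the left, the $\tilde\ell$ rightmost vertices of each row on the right, where the sink condition holds. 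It passes by Serre duality to the right adjoint $R=\RHom_{\mc{A}^{\rm t}}(\Sigma^2\Omega_{\mc{A}^{\rm t},\mc{B}^{\rm t}},-)$. By Lemma~\ref{lemma:adjoint action of Omega for Auslander algebra}, the Auslander-side right adjoint sends every simple to a module vanishing on $\mc{C}^{\rm t}$. This gives $\restr{R(M)}{\mc{A}^{\rm t}_2}\cong R_2(\restr{M}{\mc{A}^{\rm t}_2})$, with $R_2$ the analogue for $(\mc{A}^{\rm t}_2,\mc{C}^{\rm t})$ (Proposition~\ref{prop:truncating the extended Ginzburg algebra}).
\item For $\tilde\ell$ large, the resulting triangle product is the same for $a_1$ and $a_2$.
\end{enumerate}

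\textbf{A possible rescue of your approach.} Your single-vertex step can be rescued in this dual form when $\Delta\neq\mathsf{A}_1$ and $[a,b]$ has regular width at least $2$. Take $\mc{A}_1$ to be the small Auslander piece above and $\mc{A}_2$ the paper's presentation of $[a,b]$, in which no nonzero morphism leaves the frozen column. Then the sink condition holds. Moreover $R_1(S_{(a-1)^+})=S_{a-1}$, $R_1$ kills the other simples, and so $R_1$ of every simple vanishes on $F^{[a,b]}$. But this is a different argument from yours: it uses the right adjoint and a vanishing statement, not $G^n(e_tA)$ and monotonicity. Intervals of regular width $0$ or $1$ would still need separate treatment.
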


\begin{corollary}\label{cor:adapted words give proper Ginzburgs}
    Suppose $\underline{w}_0$ is adapted to an orientation of $\Delta$. For any $a,b \in \Z$ with $a \leq b$, both $\bG^{[a,b]}(\underline{w}_0)$ and $\bGh^{[a,b]}(\underline{w}_0)$ are proper dg algebras.
\end{corollary}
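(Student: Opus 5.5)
The plan is to deduce properness from Theorem \ref{thm:coincidence of extensions} by comparing the given interval with a ``regular'' one, where properness is already known through Corollary \ref{corollary:regular case is proper}. Fix $a \leq b$. We choose $a' \leq a$ such that every row of $Q^{[a',b]}(\un{w}_0)$ has the same number $\ell$ of vertices. This is possible by moving $a'$ far enough to the left in the periodic sequence $\widehat{\un{w}}_0$. For example, we take $b-a'+1$ to be a multiple of $l(w_0)$ and use that each $i \in \Delta_0$ appears in any block of $l(w_0)$ consecutive entries as often as $i^*$ does. We then adjust $a'$ so that the rightmost segment is a source sequence, which is possible because $\un{w}_0$ is adapted.

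For such $a'$, as in Remark \ref{rmk:sequence that gives dual is indeed a maximal green sequence}, the ice quiver $(Q^{[a',b]}(\un{w}_0),F^{[a',b]}(\un{w}_0))$ is the triangle product $Q_\xi \boxtimes Q^{\circ}_\ell$, with the frozen part equal to the column over the sink $1$ of $Q^{\circ}_\ell$. After adjusting signs as in Remark \ref{remark:signs of the potential}, the potential $W^{[a',b]}(\un{w}_0)$ coincides with \eqref{eq:potential for tensor product}, since both are sums of all the $3$-cycles.

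By Theorem \ref{theorem:Ginzburg algebras as completion}, $\bG^{[a',b]}(\un{w}_0)$ is then quasi-isomorphic to $\bP_3(A,B)$ with $A = kQ_\xi \ten A^\circ_\ell$ and $B = kQ_\xi \ten B^\circ_\ell$. This requires checking the hypotheses: $\gldim A \leq 2$, $I\cap B = 0$, and that the commutation relations form a minimal generating set, which holds by \cite[Section 5]{Kelannals}. Corollary \ref{corollary:regular case is proper} then gives that $\bG^{[a',b]}(\un{w}_0)$ is proper.

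Next, Theorem \ref{thm:coincidence of extensions} applied to $a_1 = a$, $a_2 = a'$ gives
\[
\RHom(e_s\bG^{[a,b]}(\un{w}_0),e_t\bG^{[a,b]}(\un{w}_0)) \cong \RHom(e_s\bG^{[a',b]}(\un{w}_0),e_t\bG^{[a',b]}(\un{w}_0))
\]
for all $s,t \in [a,b]$. The left side is $e_t\bG^{[a,b]}(\un{w}_0)e_s$, and the right side has finite-dimensional total cohomology because the regular algebra is proper. Summing over the finitely many pairs $s,t$ shows that $\bG^{[a,b]}(\un{w}_0)$ is proper.

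To pass to the completed algebra, we apply Lemma \ref{lemma:complete/noncomplete Ginzburg are qiso when Jacobi-finite}. The potential $W^{[a,b]}(\un{w}_0)$ is a finite sum of cycles. Jacobi-finiteness follows from properness, since $J = H^0(\bG)$ and $H^0$ of the completion agree once $H^0(\bG)$ is finite-dimensional; alternatively, we note that the Jacobian algebra is a quotient of $H^0$. For the Adams grading, we use Remark \ref{rmk:defining an Adams grading}. The cordless cycles in $W^{[a,b]}(\un{w}_0)$ generally have different lengths, so we give positive weights to the arrows of $Q$ instead. Giving the arrows $s\to s^-$ weight $2$ and the diagonal arrows weight $1$ in the adapted (height-function) picture should make every cycle of the potential homogeneous of degree $4$ with every arrow of degree strictly less than $4$. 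This yields an Adams grading with positive degrees on all arrows of $\widetilde{Q}$, so that no cycle has degree zero. Then $\bG^{[a,b]}(\un{w}_0) \to \bGh^{[a,b]}(\un{w}_0)$ is a quasi-isomorphism, and $\bGh^{[a,b]}(\un{w}_0)$ is proper as well.

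The main obstacle is the identification step: matching $(Q^{[a',b]}(\un{w}_0),F^{[a',b]}(\un{w}_0),W^{[a',b]}(\un{w}_0))$ exactly with the triangle product ice quiver with potential, including the frozen arrows given by condition (3). The Adams grading verification, especially its behaviour near the frozen column, is the second point that needs care.
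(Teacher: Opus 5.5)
Your overall route is the paper's. You reduce to $\bG^{[a,b]}(\un{w}_0)$ via Lemma \ref{lemma:complete/noncomplete Ginzburg are qiso when Jacobi-finite}, and you write $H^n(\bG)$ as $\bigoplus_{s,t}e_tH^n(\bG)e_s$. You then transport these pieces to $\bG^{[a',b]}(\un{w}_0)$ for some $a'\leq a$ by Theorem \ref{thm:coincidence of extensions}, and you finish with the triangle-product case (Section \ref{section:tensor products} and Corollary \ref{corollary:regular case is proper}).

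\textbf{The gap: your choice of $a'$.} The justification you give is false, so the step as written fails.
\begin{itemize}
\item It is not true that $i$ and $i^*$ occur equally often in every block of $l(w_0)$ consecutive entries.
\item A multiple of $l(w_0)$ does not give rows of equal length. Take $\Delta=\mathsf{A}_2$ with the adapted word $\un{w}_0=(1,2,1)$. Then $\widehat{\un{w}}_0$ simply alternates $1,2$, so every block of length $3$ has rows of sizes $2$ and $1$.
\item Even a balance between $i$ and $i^*$ would not make all rows equal.
\end{itemize}
Equal row lengths are exactly what you need to identify $Q^{[a',b]}(\un{w}_0)$ with $Q_\xi\boxtimes Q^{\circ}_\ell$. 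Otherwise an Auslander part survives, and handling that part is the whole content of Theorem \ref{thm:coincidence of extensions}.

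\textbf{The fix (the paper's choice).} Take $b-a'+1$ to be a multiple of $2l(w_0)$. Such a block has the form $(i_s,\dots,i_{s+N-1},i_s^*,\dots,i_{s+N-1}^*)$ with $N=l(w_0)$. Row $j$ therefore receives $c_j+c_{j^*}$ vertices per period, where $c_j$ counts $j$ in $(i_s,\dots,i_{s+N-1})$. This word is again adapted, by Remark \ref{rmk:reflecting height function gives all cases}. By \cite[Corollary 2.20]{Bedard}, $c_j=(h+\xi_j-\xi_{j^*})/2$ for the corresponding height function, so $c_j+c_{j^*}=h$ for every $j$. Your extra adjustment ``so that the rightmost segment is a source sequence'' is unnecessary.

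\textbf{Two smaller points.}
\begin{itemize}
\item In the adapted case every cycle of $W^{[a,b]}(\un{w}_0)$ is a $3$-cycle, as you yourself use in the identification step. So the uniform weight $1$ from Remark \ref{rmk:defining an Adams grading} already works. Your $2/1$ weighting is also valid, because each such triangle contains exactly one horizontal arrow.
\item You do not need a separate Jacobi-finiteness argument. In the proof of Lemma \ref{lemma:complete/noncomplete Ginzburg are qiso when Jacobi-finite} it is only used to know that each $H^r(\bG)$ is finite-dimensional, which properness of $\bG$ gives directly. Your alternative remark that the Jacobian algebra is ``a quotient of $H^0$'' is not a valid justification, since $J=H^0(\bGh)$.
\end{itemize}
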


\begin{proof}[Proof of \Cref{cor:adapted words give proper Ginzburgs}]
    By Lemma \ref{lemma:identification of HL with GLS quivers} (and Remark \ref{rmk:reflecting height function gives all cases}), all cycles in $W^{[a,b]}(\underline{w}_0)$ have length $3$, so it is enough to prove that $\bG  = \bG^{[a,b]}(\underline{w}_0)$ is proper by Lemma \ref{lemma:complete/noncomplete Ginzburg are qiso when Jacobi-finite} and Remark \ref{rmk:defining an Adams grading}. Note that
    \[
    H^n(\bG) \cong \bigoplus_{a\leq s,t\leq b}e_tH^n(\bG)e_s \cong \bigoplus_{a\leq s,t\leq b} \Ext^n_{\bG}(e_s\bG,e_t\bG).
    \]
    for any $n \in \Z$. By Theorem \ref{thm:coincidence of extensions}, $\bG$ agrees on extensions with $\bG^{[a',b]}(\underline{w}_0)$ for any $a' \leq b$. Thus, by the formula above for the cohomology of $\bG$, it suffices to find $a' \leq a$ such that $\bG^{[a',b]}(\underline{w}_0)$ is proper. We take $a' = b - m \cdot  2l(w_0) + 1$ for some $m \geq 0$ such that $a'\leq a$. Since the length of the interval $[a',b]$ is a multiple of $2l(w_0)$ and $\underline{w}_0$ is adapted, all rows of $Q^{[a',b]}(\underline{w}_0)$ have the same number of vertices by \cite[Corollary 2.20]{Bedard}. Denote this number by $\ell$. By Lemma \ref{lemma:identification of HL with GLS quivers}, we conclude that $Q^{[a',b]}(\underline{w}_0)$ is isomorphic to the triangle product between an orientation of $\Delta$ and $Q^{\circ}_{\ell}$. By the discussion in Section \ref{section:tensor products} and Corollary \ref{corollary:regular case is proper}, $\bG^{[a',b]}(\underline{w}_0)$ is proper, as desired.
\end{proof}

The proof of \Cref{thm:coincidence of extensions} consists of three main steps: Lemma \ref{lemma:short word agree on extensions with word for w0}, Proposition \ref{prop:Gamma and its extended version agree}, and Proposition \ref{prop:truncating the extended Ginzburg algebra}. 

{\it Convention:} From now on, we fix an adapted expression $\underline{w}_0$ and omit it from the notation. We also fix $a,b \in \Z$ with $a \leq b$. We use throughout that $Q^{[-\infty,b]}$ is isomorphic to a subquiver of $Q_{\rm{HL}}$ by Lemma \ref{lemma:identification of HL with GLS quivers}.

\begin{lemma}\label{lemma:short word agree on extensions with word for w0}
    Let $N = l(w_0)$ be the length of $w_0$. If $b - a + 1< N$, then $\bG^{[a,b]}$ and $\bG^{[b-N+1,b]}$ agree on extensions.
\end{lemma}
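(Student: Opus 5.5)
The plan is to identify both relative Ginzburg dg algebras with relative derived preprojective algebras of truncated Auslander algebras, and then compare their morphism complexes using Lemma \ref{lemma:Omega for Auslander algebra}.

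\emph{Step 1: the full window.} Since $\un{w}_0$ is adapted to $Q_\xi$, every window of $N$ consecutive letters of $\widehat{\un{w}}_0$ is a reduced expression of $w_0$ adapted to some orientation of $\Delta$. By Lemma \ref{lemma:identification of HL with GLS quivers}, the $N$ vertices of $Q^{[b-N+1,b]}$ are then in bijection with the indecomposable modules over a Dynkin quiver $Q$ with underlying graph $\Delta$, and the quiver is the Auslander--Reiten quiver $\Gamma$ of $kQ$. The frozen vertices $s$, namely those with $s^-<b-N+1$, are the first occurrence of each letter in the window. These correspond to the indecomposable projectives, and the frozen arrows of type (3) are the arrows between projectives, that is, the quiver of $B=kQ$. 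The horizontal arrows $s\to s^-$ are the arrows $\rho_r$ attached to the mesh relations $r_x$, and $W^{[b-N+1,b]}$ is $\sum_x r_x\rho_{r_x}$ up to the sign changes of Remark \ref{remark:signs of the potential}. Hence, by Lemma \ref{lemma:gldim for Auslander algebra} and Theorem \ref{theorem:Ginzburg algebras as completion}, $\bG^{[b-N+1,b]}\simeq\bP_3(\mc{A},\mc{B})$ with $\mc{A}=\ind(kQ)$ and $\mc{B}$ its projectives.

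\emph{Step 2: the short window.} For $[a,b]\subsetneq[b-N+1,b]$ the quiver $Q^{[a,b]}$ is the full subquiver on the last $b-a+1$ vertices. A vertex of $[a,b]$ is frozen exactly when its mesh is not entirely contained in the window. I will check, using the adapted property and the convention that arrows point leftwards, that these vertices form a full subcategory $\mc{A}'\subseteq\ind(kQ)$ closed in the sense required by Lemma \ref{lemma:Omega for Auslander algebra}, after possibly replacing $Q$ by an orientation for which the truncated side is the correct one. I will also check that the frozen vertices of $[a,b]$ are precisely the indecomposable projectives of $\mc{A}'$, in the sense of the general case of that lemma (projectives over the subquiver $Q'$), and that the frozen arrows give $\mc{B}'$. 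Then $\bG^{[a,b]}\simeq\bP_3(\mc{A}',\mc{B}')$ as in Step 1.

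\emph{Step 3: computing extensions.} For a tensor dg category $T_{\mc{A}}(\omega)$ with $\omega=\Sigma^2\Omega_{\mc{A},\mc{B}}$ we have
\[
\RHom_{\bP_3}(e_s,e_t)\cong\bigoplus_{n\ge0}\RHom_{\mc{A}}\bigl(M_s^{\wedge},\,M_t^{\wedge}\lten_{\mc{A}}\omega^{\lten n}\bigr).
\]
By Lemma \ref{lemma:Omega for Auslander algebra}, iterating gives $M_t^\wedge\lten\omega^{\lten n}\cong(\tau^{-n}M_t)^\wedge$ as long as $\tau^{-n}M_t$ stays in the subcategory, and $0$ afterwards. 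Since representables are projective, the $n$-th summand is $\Hom_{kQ}(M_s,\tau^{-n}M_t)$, placed in a degree depending only on $n$. This is the same formula for $\mc{A}$ and for $\mc{A}'$, except that in $\mc{A}'$ the sum is truncated at the first $n$ with $\tau^{-n}M_t\notin\mc{A}'$.

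\emph{Main obstacle.} The hardest point is to show that this truncation loses nothing. I need that $\Hom_{kQ}(M_s,\tau^{-n}M_t)=0$ whenever $M_s\in\mc{A}'$ but $\tau^{-n}M_t$ lies outside $\mc{A}'$, while still inside $\ind(kQ)$. My first attempt will use the closure property of $\mc{A}'$ together with the fact that $\Hom_{kQ}(X,Y)\neq0$ forces $Y$ to lie to the right of $X$ in $\Gamma$. I will choose which side of the window is removed, equivalently work with $\tau$ versus $\tau^{-1}$ via the dual of Lemma \ref{lemma:Omega for Auslander algebra} obtained from Lemma \ref{lemma:adjoint action of Omega for Auslander algebra}, so that modules falling outside $\mc{A}'$ are exactly those receiving no nonzero maps from objects of $\mc{A}'$. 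If the orientation does not cooperate directly, the fallback is to apply the gluing triangle of Proposition \ref{proposition:gluing of bimodules}. I would decompose $\ind(kQ)=\mc{A}'\cup\mc{A}''$, which satisfies \eqref{eq:source condition for gluing theorem} or \eqref{eq:sink condition for gluing theorem} because the window splits $\Gamma$ along a section. I would then show that the extra contribution $\Omega_{\mc{A}'',\mc{C}}\lten\mc{A}^e$ has no morphisms from or to objects of $\mc{A}'$.
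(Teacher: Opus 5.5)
Your Steps 1--3 follow the paper's route. You realize each window as $\bP_3$ of a truncated Auslander category relative to its projectives, and Lemma \ref{lemma:Omega for Auslander algebra} reduces $\RHom(e_s,e_t)$ to $\bigoplus_n \Hom(M_s,\tau^{-n}M_t)$, with every summand in fact in degree $0$. The gap is in your last step: the obstacle you single out is not the real one, and the remedies you propose point the wrong way.

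The truncation you worry about never happens. The intervals $[a,b]$ and $[b-N+1,b]$ share the right endpoint $b$, and $\tau^{-1}$ acts on vertices by $t\mapsto t^+>t$. So for $t\in[a,b]$ we have $t^{+n}\in[a,b]$ if and only if $t^{+n}\le b$, that is, if and only if $\tau^{-n}M_t$ is still a vertex of the full window. Both sums therefore run over exactly the same $n$. The case ``$\tau^{-n}M_t\notin\mc{A}'$ but $\tau^{-n}M_t\in\ind(kQ)$'' is empty, and the vanishing you ask for is vacuous.

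You also do not get to choose which side is cut off; the statement fixes it. If the right end were cut off instead, the vanishing you want would be false: for instance, $\Hom(P_2,\tau^{-1}P_2)\neq 0$ for a linearly oriented $\mathsf{A}_3$. No use of Lemma \ref{lemma:adjoint action of Omega for Auslander algebra} or Proposition \ref{proposition:gluing of bimodules} could repair that. Relatedly, the change of orientation in Step 2 is always needed: a final segment is closed under successors in $\ind(kQ)$, not under predecessors.

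What actually remains to be checked is that the summands coincide. Step 3 assumes this silently by writing $\Hom_{kQ}$ for both windows. But Lemma \ref{lemma:Omega for Auslander algebra} computes $\Hom_{\mc{A}'}(M_s,\tau^{-n}M_t)$ inside $\ind(kQ')$ for a different orientation $Q'$, and that has to be compared with the Hom space in $\ind(kQ)$.

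The paper settles this by writing both as paths modulo mesh relations:
\begin{itemize}
\item Non-horizontal arrows increase the index. Hence $\overline{Q}^{[a,b]}$ is a full subquiver of $\overline{Q}^{[b-N+1,b]}$, and every path in the latter between vertices of $[a,b]$ already lies in the former.
\item The mesh relations of the large window that are missing from the small one are exactly those at vertices $x$ with $x^-<a$. Each of these is a sum of paths starting at $x^-$, and no path starting at a vertex of $[a,b]$ can pass through $x^-$.
\end{itemize}
Together with the coincidence of truncations above, this makes the two complexes literally equal.
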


\begin{proof}
   Denote by $\overline{Q}^{[a,b]}$ the quiver obtained from $Q^{[a,b]}$ by removing the horizontal arrows. Since $\underline{w}_0$ is adapted and since the length of the interval $[a,b]$ is at most $N$, $\overline{Q}^{[a,b]}$ is isomorphic to the Gabriel quiver of a left-truncated Auslander algebra $A$ in such a way that each row of $\overline{Q}^{[a,b]}$ corresponds to an orbit of the Auslander--Reiten translation in the associated left-closed subcategory $\mc{A}$ (see \cite{Bedard,OhSuh19a}). Note that $A$ is the quotient of $k\overline{Q}^{[a,b]}$ by the mesh relations. Let $B \subseteq A$ be the subalgebra that corresponds to the indecomposable projective modules in $\mc{A}$. By Theorem \ref{theorem:Ginzburg algebras as completion}, $\bG = \bG^{[a,b]}$ is quasi-isomorphic to $\bP_3(A,B)$. Consequently,
   \[
    \RHom_{\bG}(e_s\bG,e_t\bG) \cong \bigoplus_{r \geq 0}\RHom_A(e_sA,F^r(e_tA))
   \]
    for any $s,t \in [a,b]$, where $F$ is the functor $- \lten_A \Sigma^2\Omega_{A,B}$. By Lemma \ref{lemma:Omega for Auslander algebra}, $F(e_tA)$ is isomorphic to $e_{t^+}A$ if $t^+ \leq b$, or to zero otherwise. Moreover, we have
    \[
    \RHom_A(e_sA,e_{t'}A) = \Hom_A(e_sA,e_{t'}A) \cong e_{t'}Ae_s,
    \]
    where this last space is the quotient of the space of linear combinations of all paths from $s$ to $t'$ in $\overline{Q}^{[a,b]}$ by the mesh relations.

   Now, denote $a' = b-N+1$. Replacing the interval $[a,b]$ by $[a',b]$ in the previous paragraph, we define similarly a quiver $\overline{Q}^{[a',b]}$ and algebras $B' \subseteq A'$ such that $\bG' = \bG^{[a',b]}$ is quasi-isomorphic to $\bP_3(A',B')$. The same argument also shows that $\RHom_{\bG'}(e_s\bG',e_t\bG')$ for $s,t \in [a',b]$ is concentrated in degree zero and its zeroth cohomology is the quotient by the mesh relations of the space of linear combinations of all paths from $s$ to a vertex $t' \in [t,b]$ in the same row of $t$. To conclude, note that $\overline{Q}^{[a,b]}$ is a full subquiver of $\overline{Q}^{[a',b]}$ and any path in $\overline{Q}^{[a',b]}$ whose start and end points are in $\overline{Q}^{[a,b]}$ is completely contained in $\overline{Q}^{[a,b]}$. It follows immediately from these properties and the discussion above that $\bG$ and $\bG'$ agree on extensions.
\end{proof}

For $j \in \Delta_0$, let $m_j$ be the number of indices $s \in [a,b]$ such that $i_s = j$. We define the \emph{regular width} of $Q^{[a,b]}$ as
\[
\ell = \ell(Q^{[a,b]}) = \min\{m_j \mid j \in \Delta_0\}.
\]
Suppose $\ell \geq 1$. For $\tilde{\ell} \geq \ell$, we define the \emph{$\tilde{\ell}$-extension of $Q^{[a,b]}$} as the full subquiver $Q^{[a,b],\tilde{\ell}}$ of $Q^{[-\infty,b]}$ whose vertices are the integers in $[a,b]$ and all $\tilde{\ell} - \ell$ vertices immediately to the left of a frozen vertex of $Q^{[a,b]}$. More precisely, for any $s \in F^{[a,b]}$, the quiver $Q^{[a,b],\tilde{\ell}}$ also includes the vertices $s_1$, $s_2$, $\dots$, $s_{\tilde{\ell} - \ell}$, where $s_1 = s^-$ and $s_{j+1} = s_j^-$ for $1 \leq j < \tilde{\ell} - \ell$. We define the frozen subquiver $F^{[a,b],\tilde{\ell}} \subseteq Q^{[a,b],\tilde{\ell}}$ as the full subquiver of all vertices $s$ such that $s^-$ does not belong to $Q^{[a,b],\tilde{\ell}}$, and we equip $Q^{[a,b],\tilde{\ell}}$ with the potential $W^{[a,b],\tilde{\ell}}$ consisting of the sum of all $3$-cycles. This defines an ice quiver with potential and a relative Ginzburg dg algebra $\bG^{[a,b],\tilde{\ell}} = \bG(Q^{[a,b],\tilde{\ell}},F^{[a,b],\tilde{\ell}},W^{[a,b],\tilde{\ell}})$. We remark that $\bG^{[a,b]} = \bG^{[a,b],\tilde{\ell}}$ for $\tilde{\ell} = \ell$.

The next step in the proof of Theorem \ref{thm:coincidence of extensions} is the following.
\begin{proposition}\label{prop:Gamma and its extended version agree}
    Assume $\ell = \ell(Q^{[a,b]}) \geq 1$ and let $\tilde{\ell}_1,\tilde{\ell}_2 \geq \ell$ be two integers. Then $\bG^{[a,b],\tilde{\ell}_1}$ and $\bG^{[a,b],\tilde{\ell}_2}$ agree on extensions.
\end{proposition}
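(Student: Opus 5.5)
The plan is to realize each $\bG^{[a,b],\tilde{\ell}}$ as a relative derived preprojective algebra $\bP_3(A^{\tilde{\ell}},B^{\tilde{\ell}})$ via Theorem \ref{theorem:Ginzburg algebras as completion}. We would then compute $\RHom_{\bG}(e_s\bG,e_t\bG)\cong\bigoplus_{r\geq 0}\RHom_{A}(e_sA,F^r(e_tA))$ with $F=-\lten_A\Sigma^2\Omega_{A,B}$, exactly as in the proof of Lemma \ref{lemma:short word agree on extensions with word for w0}, and check that the answer for $s,t\in[a,b]$ does not depend on $\tilde{\ell}$. It suffices to compare $\tilde{\ell}$ with $\tilde{\ell}+1$, or alternatively to compare both $\tilde{\ell}_1$ and $\tilde{\ell}_2$ with a large $\tilde{\ell}$. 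Here $A=A^{\tilde{\ell}}$ is the quotient of the path algebra of $Q^{[a,b],\tilde{\ell}}$ with the diagonal arrows $(\alpha,\alpha')^{\rm op}$ removed, taken modulo the commutativity/mesh relations. The algebra $B$ is the path algebra of the frozen subquiver $F^{[a,b],\tilde{\ell}}$, i.e. of the leftmost vertex in each row, which is a copy of $kQ_\xi$. To apply Theorem \ref{theorem:Ginzburg algebras as completion} we must check that $A$ has global dimension at most $2$, the relations are minimal, and $W^{[a,b],\tilde{\ell}}$ is the associated potential; the signs are handled by Remark \ref{remark:signs of the potential}.

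Next we decompose $A$ using the gluing setup of Section \ref{section:gluing bimodules}. Since $\un{w}_0$ is adapted, we expect $Q^{[a,b],\tilde{\ell}}$ to split as follows. The subcategory $\mc{A}_1$ consists of the leftmost $\tilde{\ell}$ vertices of each row; it is a triangle product $Q_\xi\boxtimes Q^{\circ}_{\tilde{\ell}}$, so $\mc{A}_1\simeq kQ_\xi\ten A^{\circ}_{\tilde{\ell}}$ and it contains $\mc{B}=kQ_\xi\ten B^{\circ}_{\tilde{\ell}}$. The subcategory $\mc{A}_2$ consists of the remaining right-hand part together with the rightmost column $\mc{C}$ of $\mc{A}_1$; via Lemma \ref{lemma:identification of HL with GLS quivers} and \cite{Bedard}, $\mc{A}_2$ is a left-truncated Auslander category of $kQ'$ for a suitable orientation $Q'$, whose projectives form $\mc{C}\cong kQ'$. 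Since all arrows of $A$ point leftwards or towards larger height, one of the conditions \eqref{eq:source condition for gluing theorem} or \eqref{eq:sink condition for gluing theorem} should hold. Note that $\mc{A}_2$ and $\mc{C}$ are independent of $\tilde{\ell}$, and only $\mc{A}_1$ grows. Smoothness of all pieces follows from Lemmas \ref{lemma:gldim for Auslander algebra} and \ref{lemma:global dimension of gluing}. Proposition \ref{proposition:gluing of bimodules} then expresses $\Omega_{A,B}$ as an extension of $\Omega_{\mc{A}_2,\mc{C}}\lten_{\mc{A}_2^e}A^e$ and $\Omega_{\mc{A}_1,\mc{B}}\lten_{\mc{A}_1^e}A^e$.

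Using Lemma \ref{lemma:(co)induction with gluing conditions} to identify induction with extension by zero, we would compute $F$ on the indecomposable projectives $e_tA$. On the Auslander part, Lemma \ref{lemma:Omega for Auslander algebra} gives $e_t\mapsto e_{t^+}$, or $0$ past $b$. On the regular part, Corollary \ref{corollary:action of Omega for tensor product} gives $P\ten V_{[x,y]}\mapsto\tau^{-1}P\ten V_{[x-1,y-1]}$. The expected conclusion is that $F(e_tA)\cong e_{t^+}A$ for every vertex $t$, with $e_{t^+}A=0$ when $t^+>b$, so that $F^r(e_tA)$ for $t\in[a,b]$ only involves projectives at vertices in $[t,b]$. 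Then $\RHom_A(e_sA,e_{t'}A)=e_{t'}Ae_s$ is concentrated in degree $0$. This space is spanned by paths from $s$ to $t'$ modulo the relations, and such paths never leave $[a,b]$, because the added vertices lie strictly to the left of the frozen ones and paths only move left or up. Hence the answer is independent of $\tilde{\ell}$.

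The main obstacle is this computation of $F$ on projectives lying across the glue, i.e.\ in $\mc{C}$ and in the part of $\mc{A}_1$ adjacent to it. There $F(e_tA)$ is only given as a cone through the triangle of Proposition \ref{proposition:gluing of bimodules}, and the two pieces must combine so that the answer is again $e_{t^+}A$ rather than some nontrivial complex. I would first try to evaluate both outer terms on $e_tA$ explicitly with the resolutions from Lemmas \ref{lemma:Omega for Auslander algebra} and \ref{lemma:action of Omega in type A1}. Then I would argue that the connecting map is forced, by a $\Hom$-vanishing between these explicit modules, to be the canonical one whose cone is $e_{t^+}A$. If that fails, the fallback is to use only a weaker statement. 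It would suffice to show that the cohomology of $\RHom_A(e_sA,F^r(e_tA))$ is supported on vertices of $[a,b]$ and agrees for $\tilde{\ell}$ and $\tilde{\ell}+1$. For this one would use that passing from $\tilde{\ell}$ to $\tilde{\ell}+1$ changes $\mc{A}_1$ only by tensoring $kQ_\xi$ with $A^{\circ}_{\tilde{\ell}}\subseteq A^{\circ}_{\tilde{\ell}+1}$, where Lemma \ref{lemma:action of Omega in type A1} visibly commutes with the inclusion.
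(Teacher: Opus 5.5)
Your setup is the paper's. You realize $\bG^{[a,b],\tilde{\ell}}$ as $\bP_3(\mc{A},\mc{B})$, with $\mc{A}$ glued along the rightmost regular column $\mc{C}$ from the regular part $\mc{A}_1\simeq k\vec{\Delta}^{[a,b]}\ten A^{\circ}_{\tilde{\ell}}$ and the truncated Auslander part $\mc{A}_2$, and you reduce to $\bigoplus_{r\geq 0}\RHom_{\mc{A}}(s^{\wedge},F^r(t^{\wedge}))$.

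The gap is the computational core. Your expected conclusion $F(e_tA)\cong e_{t^+}A$ is false, and so is the consequence that $\RHom_{\bG}(e_s\bG,e_t\bG)$ is concentrated in degree $0$ and given by paths modulo relations. It even contradicts the formula you quote from Corollary~\ref{corollary:action of Omega for tensor product}:
\begin{itemize}
\item $\tau^{-1}P\ten V_{[x-1,y-1]}$ is not projective, since the projectives of $A^{\circ}_{\tilde{\ell}}$ are the $V_{[j,\tilde{\ell}]}$.
\item $\tau^{-r}P$ eventually leaves $\modcat kQ$ and becomes a shifted module in $\mc{D}^b(kQ)$.
\end{itemize}
Concretely, take $\Delta=\mathsf{A}_1$. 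Then $Q^{[a,b],\tilde{\ell}}=Q^{\circ}_{\tilde{\ell}}$, and Lemma~\ref{lemma:action of Omega in type A1} gives $F(V_{[x,\tilde{\ell}]})\cong\Sigma V_{[x-1,\tilde{\ell}-1]}$, which is concentrated in degree $-1$. For $\tilde{\ell}=2$, the degree $-1$ arrow $\alpha^*$ from the frozen vertex to vertex $2$ is a cycle but not a boundary, so $H^{-1}(\bG)\neq 0$. Such negative cohomology is exactly what the negative extensions in $[-,-]_{\mc{H}}$ and Proposition~\ref{prop:d-invariant with dual as a negative extension} measure.

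The argument of Lemma~\ref{lemma:short word agree on extensions with word for w0} works only because there is no regular part there. In the glued situation, even for an Auslander vertex $t$ the module $t^{\wedge}$ is nonzero on $\mc{C}$. So the tensor-product term of the gluing triangle contributes the non-projective piece $\tau^{-1}M_t\ten V_{[\tilde{\ell}-1]}$.

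The missing idea is to stop trying to identify $F^r(t^{\wedge})$ as a single module and to control it through a filtration instead. The paper proves by induction on $r$ that $F^r(t^{\wedge})$ has a finite filtration in $\mc{D}^b(\mc{A})$ with explicit subquotients:
\begin{itemize}
\item $\tau^{-r}(t_{\mathrm{Aus}})^{\wedge}$, for $r\le r_0$;
\item $\tau^{-r}M_t\ten V_{[\tilde{\ell}-x]}$, for a range of $x$;
\item $\tau^{-r}M_t\ten V_{[\tilde{\ell}-r-d+1,\tilde{\ell}-r-1]}$.
\end{itemize}
Here $t_{\mathrm{Aus}}$, $M_t\in\ind(k\vec{\Delta}^{[a,b]})$, $d$ and $r_0$ do not depend on $\tilde{\ell}$. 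The base case is $0\to t_{\mathrm{Aus}}^{\wedge}\to t^{\wedge}\to M_t\ten V_{[\tilde{\ell}-d+1,\tilde{\ell}-1]}\to 0$.

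In the inductive step, the triangle of Proposition~\ref{proposition:gluing of bimodules} is applied to each subquotient separately, where it is harmless. Pieces vanishing on $\mc{A}_2$ only see the tensor-product term. For $\tau^{-r}(t_{\mathrm{Aus}})^{\wedge}$, both outer terms are known from Lemma~\ref{lemma:Omega for Auslander algebra} and Corollary~\ref{corollary:action of Omega for tensor product}.

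All subquotients of $F^r(t^{\wedge})$ sit in one common cohomological degree. Hence $\RHom_{\mc{A}}(s^{\wedge},F^r(t^{\wedge}))$ is, in the derived category of vector spaces, the direct sum of the evaluations $X(s)$. The connecting maps you singled out as the main obstacle never need to be determined. One then reads off the nonvanishing of $X(s)$ from the position $d_s$ of $s$ counted from the right, which makes independence of $\tilde{\ell}$ evident.

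Your fallback does not supply this mechanism. The objects $F^r(t^{\wedge})$ are not supported on $[a,b]$: the piece $\tau^{-r}M_t\ten V_{[\tilde{\ell}-r-d+1,\tilde{\ell}-r-1]}$ drifts into the added columns. What must be shown is that their evaluations at $s\in[a,b]$ do not depend on $\tilde{\ell}$. Compatibility of Lemma~\ref{lemma:action of Omega in type A1} with enlarging $\tilde{\ell}$ also does not account for the new regular pieces that the Auslander part feeds in at every step.
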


The main tool to prove the result above will be Proposition \ref{proposition:gluing of bimodules}. To apply it, we need to introduce some categories attached to $Q^{[a,b],\tilde{\ell}}$. We refer the reader to Example \ref{example:decomposition of adapted quiver into regular and Auslander} for a concrete instance of the constructions we now define.

From now on, assume $\ell \geq 1$ and fix $\tilde{\ell} \geq \ell$. We define $Q_{\mathrm{reg}}^{[a,b],\tilde{\ell}}$ to be the full subquiver of $Q^{[a,b],\tilde{\ell}}$ which contains the $\tilde{\ell}$ leftmost vertices of each row. We call it the \emph{regular part} of $Q^{[a,b],\tilde{\ell}}$, and its vertices are called \emph{regular}. We define the \emph{Auslander part} of $Q^{[a,b],\tilde{\ell}}$ to be its full subquiver consisting of the vertices that are not regular and the rightmost regular vertex of each row. We call these vertices the \emph{Auslander vertices}. Note that the Auslander part does not depend on $\tilde{\ell}$, so we will denote it by $Q_{\mathrm{Aus}}^{[a,b]}$. We remark that the intersection of $Q_{\mathrm{reg}}^{[a,b],\tilde{\ell}}$ with $Q_{\mathrm{Aus}}^{[a,b]}$ is isomorphic to $F^{[a,b],\tilde{\ell}}$ and $F^{[a,b]}$.

\begin{lemma}\label{lemma:Auslander part is well defined}
    Let $\overline{Q}_{\mathrm{Aus}}^{[a,b]}$ be the subquiver of $Q_{\mathrm{Aus}}^{[a,b]}$ obtained by removing the horizontal arrows. There is an orientation $\vec{\Delta}^{[a,b]}$ of $\Delta$ and a left-closed full subcategory $\mc{A}_{\mathrm{Aus}}^{[a,b]} \subseteq \ind(k\vec{\Delta}^{[a,b]})$ whose Gabriel quiver is isomorphic to $\overline{Q}_{\mathrm{Aus}}^{[a,b]}$. We can take this isomorphism in such a way that the row of $\overline{Q}_{\mathrm{Aus}}^{[a,b]}$ associated with $j \in \Delta_0$ corresponds to the orbit under the Auslander--Reiten translation of the indecomposable projective $k\vec{\Delta}^{[a,b]}$-module associated with $j$.
\end{lemma}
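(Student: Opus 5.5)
The plan is to reduce to the combinatorics of the Hernandez--Leclerc quiver $Q_{\rm HL}$ and then invoke the known description of Auslander--Reiten quivers of Dynkin quivers as regions of $\Z\Delta$. By Lemma \ref{lemma:identification of HL with GLS quivers} (together with Remark \ref{rmk:reflecting height function gives all cases}), $Q^{[-\infty,b]}$ is identified with $Q_{\rm HL}^{<\xi}$ for a height function $\xi$, with $\underline{w}_0$ adapted to $Q_\xi$. Removing horizontal arrows from $Q_{\rm HL}$ leaves the quiver with arrows $(i,p)\to(j,p+1)$ for $i\sim j$, which is exactly the translation quiver $\Z\Delta$. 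Here the translation is $\tau(i,p)=(i,p-2)$, the horizontal arrows are precisely the $\tau$-arrows, and the $3$-cycles of the potential are the halves of meshes. So $\overline{Q}^{[a,b]}_{\rm Aus}$ is a full subquiver of $\Z\Delta$, and I must show it is the full subquiver of the Auslander--Reiten quiver $\Gamma_{k\vec\Delta}\subseteq\Z\Delta$ cut out by a left-closed subcategory.

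First I describe the vertices of $Q^{[a,b]}_{\rm Aus}$ in coordinates. In row $j$, the Auslander vertices are the rightmost regular vertex, call it $(j,p_j)$, and everything to its right up to the boundary $(j,\eta_j-2)$. Here $\eta$ is the height function with $Q^{[-\infty,b]}\cong Q_{\rm HL}^{<\eta}$. Since the extension only adds vertices on the left of frozen vertices, the leftmost frozen vertex of row $j$ in $Q^{[a,b],\tilde\ell}$ sits exactly $\tilde\ell-1$ steps left of $(j,p_j)$. Hence $(j,p_j)$ is the $(m_j-\ell+1)$-th vertex of row $j$ counted from the right, so $p_j=\eta_j-2(m_j-\ell+1)$.

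The key claim is that $p_j$ is itself a height function, i.e.\ $|p_i-p_j|=1$ for $i\sim j$; parity is automatic. I would prove this from the adaptedness of $\underline{w}_0$, using \cite[Corollary 2.20]{Bedard}. The point is that for an adapted word the reading along $[a,b]$ proceeds by successive source reflections of height functions. The left boundary of $[a,b]$ is then $\zeta=s_{i_{a}}^{-1}\cdots$ applied to $\eta$, so that $m_j=(\eta_j-\zeta_j)/2$ with both $\eta$ and $\zeta$ height functions. The frozen vertices are $(j,\zeta_j)$ and $p_j=\zeta_j+2(\ell-1)$, which is therefore a shift of $\zeta$ and again a height function. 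I then set $\vec\Delta^{[a,b]}=Q_{\zeta}$, up to the convention for which of source/sink is meant.

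With $\vec\Delta=Q_\zeta$, Gabriel--Happel's description identifies the indecomposable projectives with the vertices $(j,\zeta_j)$ shifted, i.e.\ with $(j,p_j)$ after the common shift. It also identifies $\Gamma_{k\vec\Delta}$ with the vertices $(j,q)$, $q\ge p_j$, lying before the injective slice, and rows with $\tau$-orbits of projectives. It remains to check two things. The first is that $(j,\eta_j-2)$ does not exceed the injective $I_j$'s position; this follows from $m_j-\ell+1\le$ the number of occurrences of $j$ in one copy of $\underline{w}_0$, by minimality of $\ell$ and B\'edard's count $t_j=(h+\xi_j-\xi_{j^*})/2$. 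The second is that the region is closed under predecessors, i.e.\ left-closed. For this, the right boundary $\eta$ is again a height function, so the region $\{(j,q): p_j\le q<\eta_j\}$ is a convex slice region. Any nonzero morphism $M\to N$ between indecomposables over a Dynkin quiver corresponds to a path in $\Gamma$, and paths only increase $q$ while starting $\ge$ the projective slice, which gives left-closedness.

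The main obstacle is the bookkeeping showing $p$ (equivalently $\zeta$) is a height function and that the right boundary stays within one copy of the AR quiver. That is where adaptedness enters essentially, and I would try to handle it by induction on $b-a$, adding one letter at a time via reflections.
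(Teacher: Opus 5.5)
Your setup is right, and in two respects it is more explicit than the paper. Writing the Auslander region as $\{(j,q): p_j\le q<\eta_j\}$ with $p=\zeta+2(\ell-1)$ shows directly that the leftmost Auslander vertices form a slice, which the paper only asserts. Your left-closedness argument, based on the upper boundary $\eta$ being a height function, is also fine. Your reformulation of the remaining condition is correct if ``one copy'' means the window $[b-l(w_0)+1,b]$. There B\'edard's count is $(h+\eta_j-\eta_{j^*})/2$, and the inequalities $m_j-\ell+1\le(h+\eta_j-\eta_{j^*})/2$ for all $j$ are equivalent to $\eta_j-2<p_{j^*}+h$ for all $j$. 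That is, the rightmost Auslander vertex of each row must lie in the Auslander--Reiten quiver of $kQ_p$.

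The gap is that this inequality is the real content of the lemma, and it does not follow from minimality of $\ell$ and B\'edard's count. Minimality only gives a row $i_0$ with a single Auslander vertex, i.e.\ $\eta_{i_0}=p_{i_0}+2$. The $1$-Lipschitz property of $\eta$ and $p$ then gives $\eta_j-2-p_{j^*}\le d(j,i_0)+d(j^*,i_0)$. You still need the Dynkin-specific fact $d(j,i_0)+d(j^*,i_0)<h$, and that is exactly the step the paper supplies, by inspection of the $\mathsf{ADE}$ diagrams. The estimate has no slack in type $\mathsf{A}_n$: there $d(1,i_0)+d(n,i_0)=n-1=h-2$ for every $i_0$, and the region can be the entire Auslander--Reiten quiver (take $b-a+1=l(w_0)$). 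So neither a soft count nor your proposed induction on $b-a$ will produce the bound without some input tying distances in $\Delta$ to $h$. If you want to avoid the case check, write $X_{(j,q)}$ for the indecomposable object of $\mc{D}^b(kQ_p)$ at the vertex $(j,q)$ of $\Z\Delta$, and set $\sigma_j=p_{i_0}+d(j,i_0)$. Each $(j,\sigma_j)$ is reached from $P_{i_0}$ by a sectional path, so $\Hom_{\mc{D}^b(kQ_p)}(P_{i_0},X_{(j,\sigma_j)})\neq 0$, which forces $X_{(j,\sigma_j)}$ to be a module. Since $\eta_j-2\le\sigma_j$, this gives the bound. As written, though, your proposal leaves the decisive step unproved.
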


\begin{proof}
    Let $\ov{Q}_{\rm{HL}}$ be the subquiver of $Q_{\rm{HL}}$ obtained by removing the arrows of the form $(i,p) \to (i,p-2)$. For a height function $\xi: \Delta_0 \to \Z$, the Auslander--Reiten quiver of $kQ_{\xi}$ is isomorphic to the full subquiver of $\ov{Q}_{\rm{HL}}$ with vertices of the form $(i,\xi(i) + 2r)$ for $i \in \Delta_0$ and $r \in \Z$ such that $0 \leq 2r < \xi(i^*) - \xi(i) + h$, where $h$ is the Coxeter number of $\Delta$ (see \cite[Section 2.3]{FujitaOh}). Moreover, by fixing $i$ and varying $r$, we obtain the vertices corresponding to the orbit under the Auslander--Reiten translation of the indecomposable projective $kQ_{\xi}$-module associated with $i$. We remark that $\xi(i^*) - \xi(i) + h$ is an even integer.

    Since $\underline{w}_0$ is adapted, we can identify $\overline{Q}_{\mathrm{Aus}}^{[a,b]}$ as a full subquiver of $\ov{Q}_{\rm{HL}}$ by Lemma \ref{lemma:identification of HL with GLS quivers}. Moreover, the full subquiver consisting of the leftmost vertices of each row in $\overline{Q}_{\mathrm{Aus}}^{[a,b]}$ is connected and determines an orientation $\vec{\Delta}^{[a,b]}$ of $\Delta$. A similar statement holds if we replace ``leftmost'' by ``rightmost''. Choose a height function $\xi$ such that $Q_{\xi} = \vec{\Delta}^{[a,b]}$ and $(i,\xi(i))$ is the leftmost vertex of the $i$-th row of $\overline{Q}_{\mathrm{Aus}}^{[a,b]}$. Let $(i,t_i)$ denote the rightmost vertex of the $i$-th row. To prove the lemma, we need to show that $t_i < \xi(i^*) + h$. By construction, note that there is $i_0 \in \Delta_0$ whose corresponding row in $\overline{Q}_{\mathrm{Aus}}^{[a,b]}$ has precisely one vertex. We may assume that $\xi(i_0) = 0$. If $d(i,i_0)$ denotes the minimal distance in $\Delta$ between $i$ and $i_0$, then notice that $t_i \leq d(i,i_0)$ since the subquiver of $\overline{Q}_{\mathrm{Aus}}^{[a,b]}$ on the vertices $(i,t_i)$ is connected. Similarly, $\xi(i) \geq -d(i,i_0)$. Therefore, we have $t_i - \xi(i^*) \leq d(i,i_0) + d(i^*,i_0)$. By direct inspection, one sees that the inequality $d(i,j) + d(i^*,j) < h$ holds for all vertices $i$ and $j$ in an arbitrary Dynkin diagram $\Delta$ of type $\mathsf{ADE}$. This concludes the proof.
\end{proof}

We note that the frozen subquiver $F^{[a,b],\tilde{\ell}}$ is isomorphic to $\vec{\Delta}^{[a,b]}$ in the lemma above. Since $\underline{w}_0$ is adapted, it follows from Lemma \ref{lemma:identification of HL with GLS quivers} that $Q_{\mathrm{reg}}^{[a,b],\tilde{\ell}}$ and is canonically isomorphic to the triangle product $\vec{\Delta}^{[a,b]} \boxtimes Q^{\circ}_{\tilde{\ell}}$, where $Q^{\circ}_{\tilde{\ell}}$ is the quiver from Section \ref{section:regular case is proper}. This canonical isomorphism sends a row associated with a vertex $j \in \Delta_0$ to the row in the triangle product corresponding to $j$. We denote by $Q_{\mathrm{tens}}^{[a,b],\tilde{\ell}}$ the subquiver of $Q_{\mathrm{reg}}^{[a,b],\tilde{\ell}}$ that corresponds to $\vec{\Delta}^{[a,b]} \ten Q^{\circ}_{\tilde{\ell}}$ under this isomorphism.

Let $Q_{\mc{A}}$ be the subquiver of $Q^{[a,b],\tilde{\ell}}$ that contains all vertices and whose arrows are precisely those in $\overline{Q}^{[a,b]}_{\mathrm{Aus}}$ and $Q^{[a,b],\tilde{\ell}}_{\mathrm{tens}}$. We define $\mc{A}$ as the quotient of the path category of $Q_{\mc{A}}$ by the mesh relations on the subquiver $\overline{Q}^{[a,b]}_{\mathrm{Aus}}$ (coming from Lemma \ref{lemma:Auslander part is well defined} and \eqref{eq:mesh relations}) and by the commutation relations on the subquiver $Q^{[a,b],\tilde{\ell}}_{\mathrm{tens}}$ (coming from (\ref{eq:commutation relations for tensor products})). Let $\mc{A}_1$ and $\mc{A}_2$ be the full subcategories on the regular and Auslander vertices, respectively, and denote by $\mc{C}$ their intersection. Note that
\[
\Hom(\mc{A}_1\setminus \mc{C}, \mc{A}_2) = \Hom(\mc{A}_2\setminus \mc{C}, \mc{A}_1) = 0
\]
because there are no arrows in $Q_{\mc{A}}$ from a vertex in $Q_{\mc{A}} \setminus Q_{\mathrm{tens}}^{[a,b],\tilde{\ell}} \cap \overline{Q}_{\mathrm{Aus}}^{[a,b]}$ to a vertex in $Q_{\mathrm{tens}}^{[a,b],\tilde{\ell}} \cap \overline{Q}_{\mathrm{Aus}}^{[a,b]}$. We also deduce that $\mc{A}_1$ is Morita equivalent to $k\vec{\Delta}^{[a,b]} \ten kQ^{\circ}_{\tilde{\ell}}$ (by Section \ref{section:tensor products}), and $\mc{A}_2$ is equivalent to the category $\mc{A}^{[a,b]}_{\mathrm{Aus}}$ in Lemma \ref{lemma:Auslander part is well defined} (by Section \ref{section:truncated Auslander algebras}). Finally, we define $\mc{B} \subseteq \mc{A}_1$ to be the full subcategory on the vertices of $F^{[a,b],\tilde{\ell}}$. We remark that the hypotheses of Proposition \ref{proposition:gluing of bimodules} are satisfied.

\begin{example}\label{example:decomposition of adapted quiver into regular and Auslander}
   Suppose $\Delta$ is of type $\mathsf{A}_3$ and take $\underline{w}_0 = (3,1,2,3,1,2)$. For $[a,b] = [-1,6]$, the ice quiver $(Q^{[-1,6]},F^{[-1,6]})$ is given by
   \[\begin{tikzcd}[sep={3em,between origins}]
	&& \color{blue}\boxed{2} && \color{red}5 & \\
	& \color{blue}\boxed{0} && \color{red}3 && 6 \\
	\color{blue}\boxed{-1} && \color{red}1 && 4
	\arrow[from=1-3, to=2-4]
	\arrow[from=1-5, to=1-3]
	\arrow[from=1-5, to=2-6]
	\arrow[color=blue, from=2-2, to=1-3]
	\arrow[from=2-2, to=3-3]
	\arrow[color=red, from=2-4, to=1-5]
	\arrow[from=2-4, to=2-2]
	\arrow[from=2-4, to=3-5]
	\arrow[from=2-6, to=2-4]
	\arrow[color=blue, from=3-1, to=2-2]
	\arrow[color=red, from=3-3, to=2-4]
	\arrow[from=3-3, to=3-1]
	\arrow[from=3-5, to=2-6]
	\arrow[from=3-5, to=3-3]
    \end{tikzcd}\]
    The regular width is $\ell = 2$. The red subquiver is $Q_{\mathrm{reg}}^{[-1,6],\ell} \cap Q_{\mathrm{Aus}}^{[-1,6]}$. On its left we have $Q_{\mathrm{reg}}^{[-1,6],\ell}$, consisting of the vertices $-1$, $0$, $1$, $2$, $3$ and $5$. On its right we have $Q_{\mathrm{Aus}}^{[-1,6]}$, consisting of the vertices $1$, $3$, $4$, $5$ and $6$.  The subquiver $Q_{\mathrm{tens}}^{[-1,6],\ell}$ is obtained from $Q_{\mathrm{reg}}^{[-1,6],\ell}$ by removing the arrows $0 \to 1$ and $2 \to 3$, and $\overline{Q}_{\mathrm{Aus}}^{[-1,6]}$ is obtained from $Q_{\mathrm{Aus}}^{[-1,6]}$ by removing the arrows $4 \to 1$ and $6 \to 3$.
    The quiver $Q_{\mc{A}}$ is given by
    \[\begin{tikzcd}[sep={3em,between origins}]
	&& \color{blue}2 && \color{red}5 & \\
	& \color{blue}0 && \color{red}3 && 6 \\
	\color{blue}-1 && \color{red}1 && 4
    \arrow[color=gray, dashed, no head, from=1-3, to=2-4]
	\arrow[from=1-5, to=1-3]
	\arrow[from=1-5, to=2-6]
	\arrow[color=blue, from=2-2, to=1-3]
    \arrow[color=gray, dashed, no head, from=2-2, to=3-3]
	\arrow[color=red, from=2-4, to=1-5]
	\arrow[from=2-4, to=2-2]
	\arrow[from=2-4, to=3-5]
    \arrow[color=gray, dashed, no head, from=2-6, to=2-4]
	\arrow[color=blue, from=3-1, to=2-2]
	\arrow[color=red, from=3-3, to=2-4]
	\arrow[from=3-3, to=3-1]
	\arrow[from=3-5, to=2-6]
    \arrow[color=gray, dashed, no head, from=3-5, to=3-3]
    \end{tikzcd}\]
    The category $\mc{A}$ is the path category of $Q_{\mc{A}}$ modulo the relations indicated by the dashed lines: the three squares (anti)commute and the composition $1 \to 3 \to 4$ vanishes. The subcategory $\mc{B} \subseteq \mc{A}$ has the blue vertices as objects, while the objects of $\mc{C} \subseteq \mc{A}$ are the red ones.

    Below, we depict the ice quiver $(Q^{[-1,6],\tilde{\ell}},F^{[-1,6],\tilde{\ell}})$ for $\tilde{\ell} = 3$: 
    \[\begin{tikzcd}[sep={3em,between origins}]
	&& \color{blue}\boxed{-2} && 2 && \color{red}5 & \\
	& \color{blue}\boxed{-3} && 0 && \color{red}3 && 6 \\
	\color{blue}\boxed{-4} && -1 && \color{red}1 && 4
	\arrow[from=1-3, to=2-4]
	\arrow[from=1-5, to=1-3]
	\arrow[from=1-5, to=2-6]
    \arrow[from=1-7, to=1-5]
    \arrow[from=1-7, to=2-8]
	\arrow[color=blue, from=2-2, to=1-3]
	\arrow[from=2-2, to=3-3]
	\arrow[from=2-4, to=1-5]
	\arrow[from=2-4, to=2-2]
	\arrow[from=2-4, to=3-5]
    \arrow[color=red, from=2-6, to=1-7]
	\arrow[from=2-6, to=2-4]
    \arrow[from=2-6, to=3-7]
    \arrow[from=2-8, to=2-6]
	\arrow[color=blue, from=3-1, to=2-2]
	\arrow[from=3-3, to=2-4]
	\arrow[from=3-3, to=3-1]
	\arrow[from=3-5, to=3-3]
    \arrow[color=red, from=3-5, to=2-6]
    \arrow[from=3-7, to=2-8]
	\arrow[from=3-7, to=3-5]
    \end{tikzcd}\] 
    The associated quivers and categories have a similar description.
\end{example}

\begin{lemma}\label{lemma:construction of Gamma for adapted word}
    The total dg algebra of $\bP_3(\mc{A},\mc{B})$ is quasi-isomorphic to $\bG^{[a,b],\tilde{\ell}}$.
\end{lemma}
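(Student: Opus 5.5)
The plan is to apply Theorem \ref{theorem:Ginzburg algebras as completion} to the total algebra $A$ of $\mc{A}$ and its subalgebra $B$ corresponding to $\mc{B}$. This requires three verifications: that $A$ is a finite-dimensional quotient of $kQ_{\mc{A}}$ by an admissible ideal $I$ with $I \cap B = 0$; that $A$ has global dimension at most $2$; and that, after adding one reversed arrow $\rho_r$ for each minimal relation $r$ and taking the potential $\sum r\rho_r$, the resulting ice quiver with potential is $(Q^{[a,b],\tilde{\ell}},F^{[a,b],\tilde{\ell}},W^{[a,b],\tilde{\ell}})$ up to the sign changes allowed by Remark \ref{remark:signs of the potential}.

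First, the global dimension bound comes from Lemma \ref{lemma:global dimension of gluing}, applied with the source condition \eqref{eq:source condition for gluing theorem}, which was already checked above. Here $\mc{A}_1$ is Morita equivalent to $k\vec{\Delta}^{[a,b]}\ten kQ^{\circ}_{\tilde{\ell}}$, which has global dimension at most $2$, and $\mc{A}_2$ is a left-closed truncated Auslander category, which has global dimension at most $2$ by Lemma \ref{lemma:gldim for Auslander algebra}. Finite-dimensionality of $A$ holds because both pieces are finite-dimensional and the source condition forces every morphism between them to factor through $\mc{C}$. The condition $I\cap B=0$ holds because the frozen subquiver $F^{[a,b],\tilde{\ell}}\cong\vec{\Delta}^{[a,b]}$ is acyclic of Dynkin type, and no relation lives entirely on it.

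Next, I would identify a minimal set of relations $R$. On the regular part these are the commutation relations \eqref{eq:commutation relations for tensor products}, one for each square of $\vec{\Delta}^{[a,b]}\ten Q^{\circ}_{\tilde{\ell}}$. On the Auslander part they are the mesh relations $r_x$ of \eqref{eq:mesh relations}, one for each non-projective vertex $x$ of $\overline{Q}^{[a,b]}_{\mathrm{Aus}}$. Under the gluing, the two families should remain minimal: the space $I/(IJ+JI)$ should decompose as the direct sum of the contributions from $\mc{A}_1$ and $\mc{A}_2$. I would prove this by comparing $\Ext^2$ between simples with the number of relations. Simples supported in $\mc{A}_i\setminus\mc{C}$ or in $\mc{C}$ compute their $\Ext$ inside $\mc{A}_i$, using the projective resolutions of Lemma \ref{lemma:(co)induction with gluing conditions}, since $\mu_i$ preserves projectives.

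Finally, I would match quivers. A commutation square between $(i,\alpha')$-type arrows, or a mesh ending at $x=t$, contributes a reversed arrow. For the commutation relations this reversed arrow is exactly the diagonal arrow $(\alpha,\alpha')^{\mathrm{op}}$ of the triangle product. For a mesh at $t$, it is a horizontal arrow $t\to t^-$. In addition, the rightmost regular vertices receive horizontal arrows from Auslander meshes, which is precisely the set of arrows removed when passing to $Q_{\mc{A}}$, such as $4\to1$ and $6\to3$ in Example \ref{example:decomposition of adapted quiver into regular and Auslander}. Each term $r\rho_r$ is then a sum of $3$-cycles, and these are exactly the $3$-cycles of $Q^{[a,b],\tilde{\ell}}$. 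Here I would use Lemma \ref{lemma:identification of HL with GLS quivers} to describe all arrows inside $Q_{\rm HL}$, and fix the signs as in Remark \ref{remark:signs of the potential}.

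The main obstacle is this bookkeeping. It requires checking that every arrow of $Q^{[a,b],\tilde{\ell}}$ arises exactly once, either as an arrow of $Q_{\mc{A}}$ or as some $\rho_r$, especially near the seam $\mc{C}$ and for frozen vertices, which carry no relations. It also requires checking that the sign changes used to turn the mesh and commutation potential into the plain sum of $3$-cycles can be chosen consistently across both parts. I would handle the signs by choosing the sign flips on horizontal and diagonal arrows row by row, from left to right.
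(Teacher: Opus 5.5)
Your proposal is correct and follows the paper's route. The global dimension of $\mc{A}$ is bounded by $2$ via Lemmas \ref{lemma:global dimension of gluing} and \ref{lemma:gldim for Auslander algebra}, and Theorem \ref{theorem:Ginzburg algebras as completion} is then applied using the quiver-with-relations description of $\mc{A}$ and $\mc{B}$, with signs handled as in Remark \ref{remark:signs of the potential}. Your $\Ext^2$ count showing that the commutation and mesh relations form a minimal generating set, and your arrow-by-arrow matching near $\mc{C}$, just make explicit what the paper's two-line proof leaves implicit. For the signs, it suffices that the $2$-complex of arrows and $3$-cycles of $Q^{[a,b],\tilde{\ell}}$ is a tree of triangulated strips glued along rows: its $H^2$ with $\Z/2\Z$ coefficients vanishes, so any sign pattern is absorbed by rescaling arrows.
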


\begin{proof}
    By Lemmas \ref{lemma:global dimension of gluing} and \ref{lemma:gldim for Auslander algebra}, $\mc{A}$ has global dimension at most $2$. The statement then follows from Theorem \ref{theorem:Ginzburg algebras as completion} and the description of the categories $\mc{A}$ and $\mc{B}$ in terms of quivers and relations. See also Remark \ref{remark:signs of the potential}.
\end{proof}

We are ready to prove Proposition \ref{prop:Gamma and its extended version agree}.

\begin{proof}[Proof of Proposition \ref{prop:Gamma and its extended version agree}]
    Let $\tilde{\ell} \geq \ell$. Let $\bG = \bG^{[a,b],\tilde{\ell}}$ and $\bP = \bP_3(\mc{A},\mc{B})$ be as above. By Lemma \ref{lemma:construction of Gamma for adapted word} and the construction of $\bP$ as a tensor dg category over $\mc{A}$, we have
    \[
    \RHom_{\bG}(e_s\bG,e_t\bG) \cong \RHom_\bP(s^{\wedge},t^{\wedge}) \cong \bigoplus_{r \geq 0} \RHom_{\mc{A}}(s^{\wedge}, F^r(t^{\wedge}))
    \]
    for any two vertices $s,t \in Q^{[-\infty,b]}_0$ that belong to $Q^{[a,b],\tilde{\ell}}$, where $F$ denotes the functor $-\lten_{\mc{A}}\Sigma^2\Omega_{\mc{A},\mc{B}}$ and we write $x^{\wedge} = \mc{A}(-,x)$ for a vertex $x$ in $Q^{[a,b],\tilde{\ell}}$. To prove the proposition, it suffices to explicitly compute the complex above for fixed $s,t \in Q^{[-\infty,b]}_0$ and show that it does not depend on $\tilde{\ell}$. We will describe $F^r(t^{\wedge})$ for this computation, but first, we need some notation.

    Fix a vertex $t \in Q^{[-\infty,b]}_0$ that belongs to $Q^{[a,b],\tilde{\ell}}$. If $t$ is not Auslander, let $d \geq 2$ be the integer such that $t$ is the $d$-th regular vertex of its row, counted from right to left. If $t$ is Auslander, we set $d = 1$. We define $t_{\mathrm{Aus}}$ as the smallest Auslander vertex such that $t \leq t_{\mathrm{Aus}}$ and $i_t = i_{t_{\mathrm{Aus}}}$. Let $M_t \in \ind(k\vec{\Delta}^{[a,b]})$ be the indecomposable representation which corresponds to $t_{\mathrm{Aus}}$ under the isomorphism in Lemma \ref{lemma:Auslander part is well defined}. For an integer $s$, we will denote $s^+$ by $\tau^{-1}(s)$. Let $r_0 \geq 0$ be the largest integer such that $\tau^{-r_0}(t_{\mathrm{Aus}})$ is in $Q^{[a,b]}_{\mathrm{Aus}}$. We remark that $d$, $t_{\mathrm{Aus}}$, $M_t$ and $r_0$ do not depend on $\tilde{\ell}$.

    Recall that the subcategory $\mc{A}_1 \subseteq \mc{A}$ is Morita equivalent to $k\vec{\Delta}^{[a,b]} \ten kQ^{\circ}_{\tilde{\ell}}$. More precisely, we can view any $k\vec{\Delta}^{[a,b]} \ten kQ^{\circ}_{\tilde{\ell}}$-module as a representation of the quiver $Q^{[a,b]}_{\mathrm{tens}}$, which can in turn be viewed as an $\mc{A}_1$-module by the description of $\mc{A}_1$ as a quotient of a path category. Additionally, we identify $\mc{A}_1$-modules with $\mc{A}$-modules vanishing on $\mc{A} \setminus \mc{A}_1$ via the functor $\mu_1$, as explained in Section \ref{section:gluing bimodules}. In what follows, we use these identifications to view objects in $\mc{D}^b(k\vec{\Delta}^{[a,b]} \ten kQ^{\circ}_{\tilde{\ell}})$ as objects in $\mc{D}^b(\mc{A})$. We also denote by $\tau$ the Auslander--Reiten translation on $\mc{D}^b(k\vec{\Delta}^{[a,b]})$ and by $V_{[x,y]}$ the indecomposable $kQ^{\circ}_{\tilde{\ell}}$-module from Section \ref{section:regular case is proper}.

    We will show by induction on $r \geq 0$ that $F^r(t^{\wedge})$ admits a filtration
    \[\begin{tikzcd}[ampersand replacement=\&, column sep=small]
        	{0 = N_0} \&\& {N_1} \& \dotsb \& {N_{n-1}} \&\& {N_n} \&\& {N_{n+1} = F^r(t^{\wedge})} \\
        	\& {X_1} \&\&\&\& {X_2} \&\& {X_n}
        	\arrow[from=1-1, to=1-3]
        	\arrow[from=1-3, to=1-4]
        	\arrow[from=1-3, to=2-2]
        	\arrow[from=1-4, to=1-5]
        	\arrow[from=1-5, to=1-7]
        	\arrow[from=1-7, to=1-9]
        	\arrow[from=1-7, to=2-6]
        	\arrow[start anchor = {[xshift=-6mm]}, from=1-9, to=2-8]
        	\arrow[dashed, end anchor={[xshift=2mm]}, from=2-2, to=1-1]
        	\arrow[dashed, from=2-6, to=1-5]
        	\arrow[dashed, from=2-8, to=1-7]
    \end{tikzcd}\]
    in $\mc{D}^b(\mc{A})$ whose set $\mc{X}_r = \{X_1,\dots,X_n\}$ of successive quotients is the following.
    If $r \leq r_0$, then $\mc{X}_r$ consists of the objects
    \[
    \tau^{-r}(t_{\mathrm{Aus}})^{\wedge}, \quad \tau^{-r}M_t \ten V_{[\tilde{\ell}-x]} \ (1 \leq x \leq r), \quad \textrm{and} \quad \tau^{-r}M_t \ten V_{[\tilde{\ell}-r-d+1,\tilde{\ell}-r-1]}.
    \]
    Otherwise, if $r > r_0$, it consists of the objects
    \[
    \tau^{-r}M_t \ten V_{[\tilde{\ell}-x]} \ (r-r_0 \leq x \leq r) \quad \textrm{and} \quad \tau^{-r}M_t \ten V_{[\tilde{\ell}-r-d+1,\tilde{\ell}-r-1]}.
    \]
    Let us start with the base case $r = 0$. If $t$ is Auslander, then $t = t_{\mathrm{Aus}}$, $d = 1$ and $M_t \ten V_{[\tilde{\ell}-d+1,\tilde{\ell}-1]} = 0$, from where our claim easily follows. If $t$ is not Auslander, note that $t_{\mathrm{Aus}}$ is also regular (and thus belongs to $\mc{C}$), so $M_t$ is the indecomposable projective $k\vec{\Delta}^{[a,b]}$-module associated with $i_t \in \Delta_0$. Therefore, since there are no morphisms from $\mc{A} \setminus \mc{A}_1$ to $\mc{A}$ and from $\mc{A} \setminus \mc{C}$ to $\mc{C}$, it is not hard to see that $t^{\wedge} \cong M_t \ten V_{[\tilde{\ell}-d+1,\tilde{\ell}]}$ and $t_{\mathrm{Aus}}^{\wedge} \cong M_t \ten V_{[\tilde{\ell}]}$. In particular, we have an exact sequence
    \[
    0 \longrightarrow t_{\mathrm{Aus}}^{\wedge} \longrightarrow t^{\wedge} \longrightarrow M_t \ten V_{[\tilde{\ell}-d+1,\tilde{\ell}-1]} \longrightarrow 0
    \]
    in $\modcat\mc{A}$, whence our claim.

    Now we assume that our claim holds for some $r \geq 0$ and we prove that it also holds for $r+1$. We first compute $F(X)$ for $X \in \mc{X}_r$. We use Proposition \ref{proposition:gluing of bimodules} to obtain a distinguished triangle
    \[
    X \lten_{\mc{A}_2} \Sigma^2\Omega_{\mc{A}_2,\mc{C}} \lten_{\mc{A}_2} \mc{A} \rightarrow F(X) \rightarrow X \lten_{\mc{A}_1} \Sigma^2\Omega_{\mc{A}_1,\mc{B}} \lten_{\mc{A}_1} \mc{A} \rightarrow \Sigma  X \lten_{\mc{A}_2} \Sigma^2\Omega_{\mc{A}_2,\mc{C}} \lten_{\mc{A}_2} \mc{A}
    \]
    in $\mc{D}^b(\mc{A})$. If $X$ is of the form $\tau^{-r}M_t \ten V_{[x,y]}$ with $x,y < \tilde{\ell}$, note that $\restr{X}{\mc{A}_2} = 0$, so the triangle above gives
    \[
    F(X) \cong X \lten_{\mc{A}_1} \Sigma^2\Omega_{\mc{A}_1,\mc{B}} \lten_{\mc{A}_1} \mc{A} \cong \tau^{-(r+1)}M_t \ten V_{[x-1,y-1]},
    \]
    where the second isomorphism follows from Corollary \ref{corollary:action of Omega for tensor product} and Lemma \ref{lemma:(co)induction with gluing conditions}. This shows that $F(X) \in \mc{X}_{r+1}$. This recovers all objects of $\mc{X}_{r+1}$ except for $\tau^{-(r+1)}M_t \ten V_{[\tilde{\ell}-1]}$ (if $r \leq r_0$) and $\tau^{-(r+1)}(t_{\mathrm{Aus}})^{\wedge}$ (if $r < r_0$). When $r \leq r_0$, it remains to compute $F(X)$ for $X = \tau^{-r}(t_{\mathrm{Aus}})^{\wedge}$. In this case, observe that the vertex $\tau^{-r}(t_{\mathrm{Aus}})$ corresponds to $\tau^{-r}M_t \in \ind(k\vec{\Delta}^{[a,b]})$ under the isomorphism in Lemma \ref{lemma:Auslander part is well defined}. We then have $\restr{X}{\mc{A}_1} \cong \tau^{-r}M_t \ten V_{[\tilde{\ell}]}$, hence the distinguished triangle above is isomorphic to
    \[
     X \lten_{\mc{A}_2} \Sigma^2\Omega_{\mc{A}_2,\mc{C}} \lten_{\mc{A}_2} \mc{A} \longrightarrow F(X) \longrightarrow \tau^{-(r+1)}M_t \ten V_{[\tilde{\ell} - 1]} \longrightarrow \Sigma X \lten_{\mc{A}_2} \Sigma^2\Omega_{\mc{A}_2,\mc{C}} \lten_{\mc{A}_2} \mc{A}
    \]
    where we computed the third term by Corollary \ref{corollary:action of Omega for tensor product} and Lemma \ref{lemma:(co)induction with gluing conditions} again. Finally, Lemma \ref{lemma:Omega for Auslander algebra} implies that
    \[
     X \lten_{\mc{A}_2} \Sigma^2\Omega_{\mc{A}_2,\mc{C}} \lten_{\mc{A}_2} \mc{A} \cong \begin{cases}
         \tau^{-(r+1)}(t_{\mathrm{Aus}})^{\wedge} &\textrm{if } r < r_0,\\
         0 &\textrm{if } r = r_0.
     \end{cases}
    \]
    This gives a filtration of $F(X)$ by the remaining objects of $\mc{X}_{r+1}$ that we did not recover before. Since $F$ is a triangle functor, we conclude that the image under $F$ of the filtration of $F^r(t^{\wedge})$ by $\mc{X}_r$ gives rise to a filtration of $F^{r+1}(t^{\wedge})$ by $\mc{X}_{r+1}$, as desired. This finishes the induction.

    For $s \in Q^{[a,b],\tilde{\ell}}_0$, we can finally compute $C^r_{s,t} = \RHom_{\mc{A}}(s^{\wedge},F^r(t^{\wedge}))$. As a first step, note that all objects in $\mc{X}_r$ are concentrated in a single cohomological degree $m$, where $m \leq 0$ is the integer such that $\Sigma^m\tau^{-r}M_t$ is concentrated in degree zero. Observe that $m = 0$ if $r \leq r_0$. Therefore, $C^r_{s,t}$ is concentrated in degree $m$ and we have an isomorphism
    \[
    C^r_{s,t} \cong \bigoplus_{X \in \mc{X}_r}\RHom_{\mc{A}}(s^{\wedge},X)
    \]
    in the derived category of vector spaces. Viewing $X$ as a functor from the opposite category of $\mc{A}$ to the category of complexes of vector spaces, each term in the direct sum is given by the evaluation $X(s)$. By the definition of $\mc{X}_r$, we thus get the following description of $C^r_{s,t}$. First, assume $s$ is Auslander. If $r > r_0$, then $C^r_{s,t} = 0$. If $r \leq r_0$, then $C^r_{s,t}$ is concentrated in degree $0$ and
    \[
    \dim H^0(C^r_{s,t}) = \dim \mc{A}_2(s,\tau^{-r}(t_{\mathrm{Aus}})).
    \]
    Now, suppose $s$ is not Auslander. Let $d_s \geq 2$ be the integer such that $s$ is the $d_s$-th regular vertex of its row, counted from right to left. Then $C^r_{s,t}$ can only be nonzero if 
    \[
    \tilde{\ell}-r-d+1 \leq \tilde{\ell} - d_s + 1 \leq \tilde{\ell} - \max\{1,r-r_0\}\iff \max\{1,r-r_0\} + 1 \leq d_s \leq r + d,
    \]
    and, in this case, it is concentrated in degree $m$ and the dimension of $H^m(C^r_{s,t})$ is the multiplicity of the simple associated with $i_s \in \Delta_0$ in the $k\vec{\Delta}^{[a,b]}$-module $\Sigma^m\tau^{-r}M_t$. As we can see, in any case, $C^r_{s,t}$ admits a uniform description independent of $\tilde{\ell}$, concluding the proof.
\end{proof}

The final step for the proof of Theorem \ref{thm:coincidence of extensions} is to truncate $Q^{[a,b],\tilde{\ell}}$ into a certain triangle product. To do so, we need to define another decomposition of $Q^{[a,b],\tilde{\ell}}$ similar to the one we used for the proof of Proposition \ref{prop:Gamma and its extended version agree}, but we will swap ``left'' and ``right''.

As before, assume $\ell \geq 1$ and fix $\tilde{\ell} \geq \ell$. We define $Q_{\mathrm{t-reg}}^{[a,b],\tilde{\ell}}$ to be the full subquiver of $Q^{[a,b],\tilde{\ell}}$ which contains the $\tilde{\ell}$ \emph{rightmost} vertices of each row. We call it the \emph{trans-regular part} of $Q^{[a,b],\tilde{\ell}}$, and its vertices are called \emph{trans-regular}. We let $F^{[a,b],\tilde{\ell}}_{\mathrm{t-reg}} \subseteq Q^{[a,b],\tilde{\ell}}_{\mathrm{t-reg}}$ be the full subquiver consisting of the leftmost vertex of each row, and consider the potential $W^{[a,b],\tilde{\ell}}_{\mathrm{t-reg}}$ consisting of the sum of all $3$-cycles. This defines a relative Ginzburg dg algebra $\bG^{[a,b],\tilde{\ell}}_{\mathrm{t-reg}} = (Q^{[a,b],\tilde{\ell}}_{\mathrm{t-reg}},F^{[a,b],\tilde{\ell}}_{\mathrm{t-reg}},W^{[a,b],\tilde{\ell}}_{\mathrm{t-reg}})$. Our next goal will be to prove the following result.

\begin{proposition}\label{prop:truncating the extended Ginzburg algebra}
    Assume $\ell = \ell(Q^{[a,b]}) \geq 1$ and take $\tilde{\ell} \geq \ell$. Then $\bG^{[a,b],\tilde{\ell}}$ and $\bG^{[a,b],\tilde{\ell}}_{\mathrm{t-reg}}$ agree on extensions.
\end{proposition}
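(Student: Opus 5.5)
We mirror the proof of Proposition \ref{prop:Gamma and its extended version agree}, with ``left'' and ``right'' swapped and the sink-type gluing condition \eqref{eq:sink condition for gluing theorem} in place of \eqref{eq:source condition for gluing theorem}. The trans-regular part $Q^{[a,b],\tilde{\ell}}_{\mathrm{t-reg}}$ is canonically a triangle product $\vec{\Delta}' \boxtimes Q^{\circ}_{\tilde{\ell}}$. Here $\vec{\Delta}'$ is the orientation of $\Delta$ read off from the leftmost vertices of the trans-regular rows, and the frozen subquiver $F^{[a,b],\tilde{\ell}}_{\mathrm{t-reg}}$ corresponds to the sink of $Q^{\circ}_{\tilde{\ell}}$. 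By Section \ref{section:tensor products}, Theorem \ref{theorem:Ginzburg algebras as completion} and Remark \ref{remark:signs of the potential}, we get $\bG^{[a,b],\tilde{\ell}}_{\mathrm{t-reg}} \simeq \bP_3(A',B')$ with $A' = k\vec{\Delta}' \ten A^{\circ}_{\tilde{\ell}}$ and $B' = k\vec{\Delta}' \ten B^{\circ}_{\tilde{\ell}}$.

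The complement consists of the vertices of $Q^{[a,b],\tilde{\ell}}$ that are not trans-regular, together with the leftmost trans-regular vertex of each row. By the analogue of Lemma \ref{lemma:Auslander part is well defined}, with the frozen vertices of $F^{[a,b],\tilde{\ell}}$ playing the role of projectives, it is the Gabriel quiver, with horizontal arrows removed, of a left-closed subcategory of $\ind(k\vec{\Delta}'')$. We then write $\bG^{[a,b],\tilde{\ell}} \simeq \bP_3(\mc{A},\mc{B})$ as in Lemma \ref{lemma:construction of Gamma for adapted word}. Here $\mc{A}$ is glued from $\mc{A}_1$ (Morita equivalent to $A'$) and the truncated Auslander category $\mc{A}_2$ along $\mc{C}$, the leftmost trans-regular column, and $\mc{B}$ is the frozen column of $Q^{[a,b],\tilde{\ell}}$, which now lies in $\mc{A}_2$. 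Since $\mc{B}$ sits in the Auslander piece, we apply Proposition \ref{proposition:gluing of bimodules} with the roles of $\mc{A}_1,\mc{A}_2$ swapped: $\mc{B}\subseteq \mc{A}_2$, and the relative piece for $\mc{A}_1$ is taken relative to $\mc{C}$. The arrows between the two pieces point from the trans-regular side towards the Auslander side, which should give \eqref{eq:sink condition for gluing theorem}. The resulting triangle relates $F=-\lten_{\mc{A}}\Sigma^2\Omega_{\mc{A},\mc{B}}$ to $\Omega_{\mc{A}_2,\mc{B}}$ and to $\Omega_{\mc{A}_1,\mc{C}}$. By Corollary \ref{corollary:action of Omega for tensor product}, $\Omega_{\mc{A}_1,\mc{C}}$ is exactly the bimodule defining $\bP_3(A',B')$.

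Next, for $s,t$ trans-regular, we compute
\[
\RHom_{\bG}(e_s\bG,e_t\bG) \cong \bigoplus_{r\ge0}\RHom_{\mc{A}}(s^{\wedge},F^r(t^{\wedge}))
\]
and compare it with the corresponding sum for $\bP_3(A',B')$. The key claim is that for trans-regular $t$, $F^r(t^{\wedge})$ lies in the image of $\mc{D}^b(\mc{A}_1)$ (via $\mu_1$, using Lemma \ref{lemma:(co)induction with gluing conditions}) and coincides there with $t^{\wedge}\lten_{A'}(\Sigma^2\Omega_{A',B'})^{\lten r}$. By Corollary \ref{corollary:action of Omega for tensor product}, $F$ acts on objects $M\ten V_{[x,y]}$ as $\tau^{-1}M\ten V_{[x-1,y-1]}$, moving right-to-left along the rows. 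Because $Q^{\circ}_{\tilde{\ell}}$ has its frozen sink at the left end, the term $V_{[x-1,y-1]}$ is truncated at the leftmost column and never leaves the trans-regular part. We argue by induction on $r$, as in the proof of Proposition \ref{prop:Gamma and its extended version agree}, that the term $X\lten_{\mc{A}_2}\Sigma^2\Omega_{\mc{A}_2,\mc{B}}\lten_{\mc{A}_2}\mc{A}$ in the gluing triangle vanishes. Indeed, the restriction of $X$ to $\mc{A}_2$ is supported on $\mc{C}$, and on these vertices Lemma \ref{lemma:Omega for Auslander algebra} (or its adjoint version, Lemma \ref{lemma:adjoint action of Omega for Auslander algebra}) should give a contribution only through $\tau^{-1}$ inside $\mc{C}$. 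The summands then agree: $\Hom$ from the representable $s^{\wedge}$ only evaluates at $s$, and $\mc{A}_1\subseteq\mc{A}$ is full with its restriction equal to $A'$.

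The main obstacle is this vanishing, or more precisely the identification of the $\mc{A}_2$-contribution on objects supported in the column $\mc{C}$. Unlike before, $\mc{C}$ is not the projective part of $\mc{A}_2$; it is the \emph{rightmost} column of the Auslander piece, corresponding to injective-like objects. So Lemma \ref{lemma:Omega for Auslander algebra} cannot be applied verbatim. We would first try to prove the dual statement: for the left-closed subcategory relative to $\mc{B}$, the functor $-\lten\Sigma^2\Omega_{\mc{A}_2,\mc{B}}$ kills objects supported on the last column, because $\tau^{-1}$ leaves $\mc{A}_2$. Then the filtration bookkeeping shows that the glued $F^r(t^{\wedge})$ never involves Auslander vertices outside $\mc{C}$. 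If this fails directly, the fallback is to dualize: run the argument with $\RHom_{\mc{A}}(\Sigma^2\Omega_{\mc{A},\mc{B}},-)$ applied to simples, using Lemma \ref{lemma:adjoint action of Omega for Auslander algebra} and the adjunction.
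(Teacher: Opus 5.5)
Your set-up is exactly the paper's, with your $\mc{A}_1,\mc{A}_2$ swapped: the Auslander piece $\mc{A}^{\rm t}_1$ contains the frozen vertices, the trans-regular piece $\mc{A}^{\rm t}_2$ is taken relative to $\mc{C}^{\rm t}$, and condition \eqref{eq:sink condition for gluing theorem} holds. The gap is in your forward computation of $F^r(t^{\wedge})$.

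When you apply $X\lten_{\mc{A}^{\rm t}}-$ to the triangle of Proposition \ref{proposition:gluing of bimodules}, the outer terms involve the \emph{induction} functors $-\lten_{\mc{A}^{\rm t}_i}\mc{A}^{\rm t}$. Under the sink condition, Lemma \ref{lemma:(co)induction with gluing conditions} only identifies the \emph{coinduction} $\RHom_{\mc{A}^{\rm t}_i}(\mc{A}^{\rm t},-)$ with $\mu_i$. So $F^r(t^{\wedge})$ is not extended by zero from the trans-regular piece, and its restriction to the Auslander piece is not supported on $\mc{C}^{\rm t}$.

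Take Example \ref{example:decomposition of adapted quiver into regular and Auslander} with $\tilde{\ell}=\ell=2$: the trans-regular vertices are $1,\dots,6$, the Auslander vertices are $-1,0,1,2,3$, and $\mc{C}^{\rm t}=\{1,2,3\}$. Here $6^{\wedge}$ vanishes on the Auslander piece, and Corollary \ref{corollary:action of Omega for tensor product} shows that $F(6^{\wedge})$ is the induction of the simple trans-regular module at $3$. Evaluating the induced resolution $4^{\wedge}\oplus5^{\wedge}\to1^{\wedge}\oplus2^{\wedge}\oplus6^{\wedge}\to3^{\wedge}$ at vertex $0$ gives a surjection $k^2\to k$. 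Hence $F(6^{\wedge})$ has nonzero $H^{-1}$ at $0\notin\mc{C}^{\rm t}$.

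The lemma you propose to overcome the obstacle is also false. The simple module at $3$ of the Auslander piece is supported on the last column and has resolution $0^{\wedge}\to1^{\wedge}\oplus2^{\wedge}\to3^{\wedge}$. Lemma \ref{lemma:Omega for Auslander algebra} sends it to $\Sigma^2 3^{\wedge}\neq0$.

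The vanishing of the Auslander term that you want does hold for the relevant objects, but for a different reason. For $x$ in the Auslander piece and $y$ trans-regular, $\mc{A}^{\rm t}(x,y)=0$ unless $y\in\mc{C}^{\rm t}$, in which case it equals $\mc{A}^{\rm t}_1(x,y)$. So for any $N\in\mc{D}^b(\mc{A}^{\rm t}_2)$, the restriction of $N\lten_{\mc{A}^{\rm t}_2}\mc{A}^{\rm t}$ to the Auslander piece lies in the thick subcategory generated by the representables $c^{\wedge}$ with $c\in\mc{C}^{\rm t}$. Lemma \ref{lemma:Omega for Auslander algebra} kills these, since $\tau^{-1}c$ leaves the Auslander piece.

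Write $F_2=-\lten_{\mc{A}^{\rm t}_2}\Sigma^2\Omega_{\mc{A}^{\rm t}_2,\mc{C}^{\rm t}}$. Then $F(N\lten_{\mc{A}^{\rm t}_2}\mc{A}^{\rm t})\cong F_2(N)\lten_{\mc{A}^{\rm t}_2}\mc{A}^{\rm t}$. Since $t^{\wedge}$ is induced from the trans-regular piece, induction on $r$ gives $\restr{F^r(t^{\wedge})}{\mc{A}^{\rm t}_2}\cong F_2^r(\restr{t^{\wedge}}{\mc{A}^{\rm t}_2})$, which is what you need. With this invariant, your forward route becomes a valid and somewhat more direct alternative.

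The paper instead takes your fallback. By Serre duality it passes to the right adjoint $G$, where under the sink condition the coinductions are the exact functors $\mu_i$. Lemma \ref{lemma:adjoint action of Omega for Auslander algebra} then shows that the Auslander contribution sends simples to simples (or zero) that vanish on $\mc{C}^{\rm t}$. This gives $\restr{G(M)}{\mc{A}^{\rm t}_2}\cong G_2(\restr{M}{\mc{A}^{\rm t}_2})$ for every $M$, where $G_2$ is the right adjoint of $F_2$.

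As written, however, both your key claim and the lemma meant to justify it are false, so the proposal does not yet prove the statement.
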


Similar to the previous case, $Q_{\mathrm{t-reg}}^{[a,b],\tilde{\ell}}$ is canonically isomorphic to the triangle product $F^{[a,b],\tilde{\ell}}_{\mathrm{t-reg}} \boxtimes Q^{\circ}_{\tilde{\ell}}$ in such a way that the row associated with a vertex $j \in \Delta_0$ corresponds to the row in the triangle product associated to $j$. We denote by $Q_{\mathrm{t-tens}}^{[a,b],\tilde{\ell}}$ the subquiver of $Q_{\mathrm{t-reg}}^{[a,b],\tilde{\ell}}$ that corresponds to $F^{[a,b],\tilde{\ell}}_{\mathrm{t-reg}} \ten Q^{\circ}_{\tilde{\ell}}$ under this isomorphism.

We define the \emph{trans-Auslander part} $Q_{\mathrm{t-Aus}}^{[a,b],\tilde{\ell}}$ of $Q^{[a,b],\tilde{\ell}}$ to be its full subquiver consisting of the vertices that are not trans-regular and the leftmost trans-regular vertex of each row. We call these vertices the \emph{trans-Auslander vertices}. We remark that $Q_{\mathrm{t-Aus}}^{[a,b],\tilde{\ell}}$ is isomorphic to the Auslander part $Q^{[a,b],\tilde{\ell}}$. In particular, if we define $\overline{Q}_{\mathrm{t-Aus}}^{[a,b],\tilde{\ell}}$ as the subquiver of the trans-Auslander part obtained by removing the horizontal arrows, then $\overline{Q}_{\mathrm{t-Aus}}^{[a,b],\tilde{\ell}}$ is isomorphic to the quiver $\overline{Q}_{\mathrm{Aus}}^{[a,b]}$ in Lemma \ref{lemma:Auslander part is well defined}.

Let $Q_{\mc{A^{\rm{t}}}}$ be the subquiver of $Q^{[a,b],\tilde{\ell}}$ that contains all vertices and whose arrows are precisely those in $\overline{Q}^{[a,b],\tilde{\ell}}_{\mathrm{t-Aus}}$ and $Q^{[a,b],\tilde{\ell}}_{\mathrm{t-tens}}$. We define $\mc{A}^{\rm{t}}$ as the quotient of the path category of $Q_{\mc{A^{\rm{t}}}}$ by the mesh relations on the subquiver $\overline{Q}^{[a,b],\tilde{\ell}}_{\mathrm{t-Aus}}$ (coming from Lemma \ref{lemma:Auslander part is well defined} and \eqref{eq:mesh relations}) and by the commutation relations on the subquiver $Q^{[a,b],\tilde{\ell}}_{\mathrm{t-tens}}$ (coming from (\ref{eq:commutation relations for tensor products})). Let $\mc{A}_1^{\rm{t}}$ and $\mc{A}_2^{\rm{t}}$ be the full subcategories on the trans-Auslander and trans-regular vertices, respectively, and denote by $\mc{C}^{\rm{t}}$ their intersection. Note that
\[
\Hom(\mc{A}_2^{\rm{t}}, \mc{A}_1^{\rm{t}}\setminus \mc{C}^{\rm{t}}) = \Hom(\mc{A}_1^{\rm{t}}, \mc{A}_2^{\rm{t}}\setminus \mc{C}^{\rm{t}}) = 0.
\]
because there are no arrows in $Q_{\mc{A}^{\rm{t}}}$ from a vertex in $Q_{\mathrm{t-tens}}^{[a,b],\tilde{\ell}} \cap \overline{Q}_{\mathrm{t-Aus}}^{[a,b],\tilde{\ell}}$ to a vertex in $Q_{\mc{A}^{\rm{t}}} \setminus Q_{\mathrm{t-tens}}^{[a,b],\tilde{\ell}} \cap \overline{Q}_{\mathrm{t-Aus}}^{[a,b],\tilde{\ell}}$. Similarly to the previous case (but with a different index), $\mc{A}_1^{\rm{t}}$ is equivalent to the category $\mc{A}^{[a,b]}_{\mathrm{Aus}}$ in Lemma \ref{lemma:Auslander part is well defined}. Finally, we define $\mc{B}^{\rm{t}} \subseteq \mc{A}_1^{\rm{t}}$ to be the full subcategory on the vertices of $F^{[a,b],\tilde{\ell}}$. We remark that the hypotheses of Proposition \ref{proposition:gluing of bimodules} are satisfied, but this time \eqref{eq:sink condition for gluing theorem} is the condition that holds and not \eqref{eq:source condition for gluing theorem}.

We have an analogue of Lemma \ref{lemma:construction of Gamma for adapted word}, which can be proved similarly.

\begin{lemma}\label{lemma:alternative construction of Gamma for adapted word}
    The total dg algebras of $\bP_3(\mc{A}^{\rm{t}},\mc{B}^{\rm{t}})$ and $\bP_3(\mc{A}^{\rm{t}}_2,\mc{C}^{\rm{t}})$ are quasi-isomorphic to $\bG^{[a,b],\tilde{\ell}}$ and $\bG^{[a,b],\tilde{\ell}}_{\rm{t-reg}}$, respectively.
\end{lemma}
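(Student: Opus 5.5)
I would prove both quasi-isomorphisms by exactly the route used for Lemma \ref{lemma:construction of Gamma for adapted word}: verify that the relevant $k$-categories have global dimension at most $2$, present them by quivers with relations whose minimal relations generate the ideal, and then invoke Theorem \ref{theorem:Ginzburg algebras as completion}. The resulting ice quiver with potential is then identified with $(Q^{[a,b],\tilde{\ell}},F^{[a,b],\tilde{\ell}},W^{[a,b],\tilde{\ell}})$, respectively with its trans-regular counterpart. Throughout, I would pass to total dg algebras via the Morita equivalence remark.

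\textbf{First statement.} The first step is global dimension. The category $\mc{A}^{\rm t}_1$ is equivalent to the left-truncated Auslander category $\mc{A}^{[a,b]}_{\mathrm{Aus}}$, so Lemma \ref{lemma:gldim for Auslander algebra} gives $\gldim\mc{A}^{\rm t}_1\le 2$. The category $\mc{A}^{\rm t}_2$ is Morita equivalent to $kF^{[a,b],\tilde{\ell}}_{\mathrm{t-reg}}\ten kQ^{\circ}_{\tilde{\ell}}$, a tensor product of two path algebras of acyclic quivers, so it also has global dimension at most $2$. Since condition \eqref{eq:sink condition for gluing theorem} holds, Lemma \ref{lemma:global dimension of gluing} yields $\gldim\mc{A}^{\rm t}\le 2$. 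In particular all categories involved are smooth, and $\mc{B}^{\rm t}$ is a path category of a Dynkin quiver.

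The second step is to check the relations. The defining relations of $\mc{A}^{\rm t}$ are the mesh relations \eqref{eq:mesh relations} on $\overline{Q}^{[a,b],\tilde{\ell}}_{\mathrm{t-Aus}}$ (one for each non-projective vertex) together with the commutation relations \eqref{eq:commutation relations for tensor products} on $Q^{[a,b],\tilde{\ell}}_{\mathrm{t-tens}}$. These form a minimal generating set, because the number of relations ending at a vertex equals $\dim\Ext^2$ of the corresponding simple. This can be read off from the projective resolutions in the proofs of Lemmas \ref{lemma:gldim for Auslander algebra} and \ref{lemma:global dimension of gluing}. I also need $I\cap kF=0$ for the frozen subquiver, which holds because no relation lives purely on frozen arrows.

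The third step is to apply Theorem \ref{theorem:Ginzburg algebras as completion}. It adds one arrow $\rho_r$ reversing each relation $r$, together with the potential $\sum r\rho_r$.
\begin{itemize}
\item A mesh relation at $x$ runs from $\tau x$ to $x$, so the added arrow $x\to\tau x$ is precisely the horizontal arrow $x\to x^-$.
\item A commutation square from $(i,i')$ to $(j,j')$ gives the diagonal arrow $(\alpha,\alpha')^{\rm op}$ of the triangle product.
\end{itemize}
The new quiver is then exactly $Q^{[a,b],\tilde{\ell}}$: its horizontal arrows in the trans-Auslander part come from the meshes, and its reversed diagonals in the trans-regular part come from the squares. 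The potential becomes a signed sum of the $3$-cycles, and by Remark \ref{remark:signs of the potential} we may rescale arrows to make all signs positive. The two parts glue consistently, since the overlap $\mc{C}^{\rm t}$ has no relations.

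\textbf{Second statement.} This is the same argument applied to $\mc{A}^{\rm t}_2$ relative to $\mc{C}^{\rm t}$. Here $\mc{C}^{\rm t}$ is the leftmost trans-regular column, namely $F^{[a,b],\tilde{\ell}}_{\mathrm{t-reg}}\ten e_{i_0}kQ^{\circ}_{\tilde\ell}e_{i_0}$ with $i_0$ the sink of $Q^{\circ}_{\tilde\ell}$. The discussion in Section \ref{section:tensor products} then identifies $\bP_3(\mc{A}^{\rm t}_2,\mc{C}^{\rm t})$ with the relative Ginzburg algebra of $F^{[a,b],\tilde{\ell}}_{\mathrm{t-reg}}\boxtimes Q^{\circ}_{\tilde\ell}$, frozen along that column. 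This triple is $(Q^{[a,b],\tilde{\ell}}_{\mathrm{t-reg}},F^{[a,b],\tilde{\ell}}_{\mathrm{t-reg}},W^{[a,b],\tilde{\ell}}_{\mathrm{t-reg}})$.

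\textbf{Main obstacle.} I expect the hardest part to be the bookkeeping in the first statement. One must check that the added arrows and the $3$-cycles match $Q^{[a,b],\tilde{\ell}}$ exactly, using Lemma \ref{lemma:identification of HL with GLS quivers}, with no missing or extra arrows at the interface column. One must also confirm that the mesh orientation (the Auslander part now sitting on the left) matches the horizontal arrows $s\to s^-$.
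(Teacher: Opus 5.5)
Your proposal is correct and follows the paper's route. The paper proves this lemma exactly as Lemma~\ref{lemma:construction of Gamma for adapted word}: global dimension at most $2$ via Lemmas~\ref{lemma:gldim for Auslander algebra} and~\ref{lemma:global dimension of gluing} (now under the sink condition), then Theorem~\ref{theorem:Ginzburg algebras as completion} with the quiver-and-relations description and Remark~\ref{remark:signs of the potential}, while the second statement comes directly from Section~\ref{section:tensor products}. Your interface bookkeeping checks out, and minimality of the relations is simpler than your $\Ext^2$ count: the relations are quadratic and linearly independent, whereas $IJ+JI$ lives in path length at least $3$.
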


We can now prove Proposition \ref{prop:truncating the extended Ginzburg algebra}.

\begin{proof}[Proof of Proposition \ref{prop:truncating the extended Ginzburg algebra}]
    Following the constructions above, we denote $\bG = \bG^{[a,b],\tilde{\ell}}$ and $\bP = \bP_3(\mc{A}^{\rm{t}},\mc{B}^{\rm{t}})$. By Lemma \ref{lemma:alternative construction of Gamma for adapted word} and the construction of $\bP$ as a tensor dg category over $\mc{A}^{\rm{t}}$, we have
    \[
    \RHom_{\bG}(e_s\bG,e_t\bG) \cong \RHom_\bP(s^{\wedge},t^{\wedge}) \cong \bigoplus_{r \geq 0} \RHom_{\mc{A}^{\rm{t}}}(s^{\wedge}, F^r(t^{\wedge}))
    \]
    for any two vertices $s,t \in Q^{[a,b],\tilde{\ell}}$, where $F$ denotes the functor $-\lten_{\mc{A}^{\rm{t}}}\Sigma^2\Omega_{\mc{A}^{\rm{t}},\mc{B}^{\rm{t}}}$ and we write $x^{\wedge} = \mc{A}^{\rm{t}}(-,x)$ for a vertex $x$. By Serre duality on $\mc{D}^b(\mc{A}^{\rm{t}})$, we have
    \[
    \RHom_{\mc{A}^{\rm{t}}}(s^{\wedge}, F^r(t^{\wedge})) \cong D\RHom_{\mc{A}^{\rm{t}}}(F^r(t^{\wedge}),s^{\vee}) \cong D\RHom_{\mc{A}^{\rm{t}}}(t^{\wedge},G^r(s^{\vee})),
    \]
    where $D$ denotes duality over $k$, $s^{\vee}$ is the corepresentable module $D\mc{A}^{\rm{t}}(s,-)$, and $G = \RHom_{\mc{A}^{\rm{t}}}(\Sigma^2\Omega_{\mc{A}^{\rm{t}},\mc{B}^{\rm{t}}},-)$ is the right adjoint of $F$. When $t$ is trans-regular, observe that the complex above only depends on the restriction of $G^r(s^{\vee})$ to $\mc{A}_2^{\rm{t}}$. Similarly, if $\bG' = \bG^{[a,b],\tilde{\ell}}_{\rm{t-reg}}$, then we obtain that
    \[
    \RHom_{\bG'}(e_s\bG',e_t\bG') \cong \bigoplus_{r \geq 0} D\RHom_{\mc{A}_2^{\rm{t}}}(\restr{t^{\wedge}}{\mc{A}_2^{\rm{t}}},G_2^r(\restr{s^{\vee}}{\mc{A}_2^{\rm{t}}}))
    \]
    for any $s,t \in Q^{[a,b],\tilde{\ell}}_{\rm{t-reg}}$, where $G_2 = \RHom_{\mc{A}_2^{\rm{t}}}(\Sigma^2\Omega_{\mc{A}_2^{\rm{t}},\mc{C}^{\rm{t}}},-)$. Therefore, to prove that $\bG$ and $\bG'$ agree on extensions, it suffices to show that
    \[
    \restr{G^r(s^{\vee})}{\mc{A}_2^{\rm{t}}} \cong G_2^r(\restr{s^{\vee}}{\mc{A}_2^{\rm{t}}})
    \]
    for any trans-regular vertex $s$ and $r \geq 0$. In fact, we will now prove a stronger result:
    \[
    \restr{G(M)}{\mc{A}_2^{\rm{t}}} \cong G_2(\restr{M}{\mc{A}_2^{\rm{t}}}).
    \]
    for any $M \in \mc{D}^b(\mc{A}^{\rm{t}})$.

    Applying $\RHom_{\mc{A}^{\rm{t}}}(-,M)$ to the distinguished triangle in Proposition \ref{proposition:gluing of bimodules}, we get the distinguished triangle
    \[
    \mu_1^{\bf{R}}(G_1(\restr{M}{\mc{A}_1^{\rm{t}}}))
     \longrightarrow  G(M) \longrightarrow \mu_2^{\bf{R}}(G_2(\restr{M}{\mc{A}_2^{\rm{t}}})) \longrightarrow \Sigma\mu_1^{\bf{R}}(G_1(\restr{M}{\mc{A}_1^{\rm{t}}}))
    \]
    in $\mc{D}^b(\mc{A}^{\rm{t}})$, where $G_1 = \RHom_{\mc{A}_1^{\rm{t}}}(\Sigma^2\Omega_{\mc{A}_1^{\rm{t}},\mc{B}^{\rm{t}}},-)$ and $\mu_i^{\bf{R}} = \RHom_{\mc{A}_i^{\rm{t}}}(\mc{A}^{\rm{t}},-)$ for $i=1,2$. Let us show that
    \begin{equation}\label{eq:restriction that vanishes}
    \restr{\mu_1^{\bf{R}}(G_1(\restr{M}{\mc{A}_1^{\rm{t}}}))}{\mc{A}_2^{\rm{t}}} = 0,
    \end{equation}
    which will imply our claim. For a simple $\mc{A}_1^{\rm{t}}$-module $S$, Lemma \ref{lemma:adjoint action of Omega for Auslander algebra} implies that $G_1(S)$ is still concentrated in degree zero and $\restr{G_1(S)}{\mc{C}^{\rm{t}}} = 0$. By Lemma \ref{lemma:(co)induction with gluing conditions}, $\mu_1^{\bf{R}}(G_1(S)) = \mu_1(G_1(S))$ where $\mu_1: \Modcat\mc{A}_1^{\rm{t}} \to \Modcat\mc{A}^{\rm{t}}$ is the functor defined in Section \ref{section:gluing bimodules}. But $\mu_1(G_1(S))$ vanishes on $\mc{A}_2^{\rm{t}}$ since $G_1(S)$ vanishes on $\mc{C}^{\rm{t}}$. Since $\restr{M}{\mc{A}_1^{\rm{t}}}$ belongs to $\mc{D}^b(\mc{A}_1^{\rm{t}})$, which is generated by the simple $\mc{A}_1^{\rm{t}}$-modules, we obtain \eqref{eq:restriction that vanishes}, as desired. This concludes the proof.
\end{proof}

We can finally give a proof for Theorem \ref{thm:coincidence of extensions}.

\begin{proof}[Proof of Theorem \ref{thm:coincidence of extensions}]
    Let $a_1,a_2,b \in \Z$ with $a_1,a_2 \leq b$. To prove that $\bG^{[a_1,b]}$ and $\bG^{[a_2,b]}$ agree on extensions, we may assume that $a_1,a_2 \leq b - l(w_0) + 1$ by Lemma \ref{lemma:short word agree on extensions with word for w0}. In particular, the regular lengths $\ell_1 = \ell(Q^{[a_1,b]})$ and $\ell_2 = \ell(Q^{[a_2,b]})$ satisfy $\ell_1,\ell_2 \geq 1$. Take $\tilde{\ell} \geq \ell_1,\ell_2$ such that $Q^{[a_i,b],\tilde{\ell}}_{\rm{t-reg}}$ contains $Q^{[a_i,b]}$ for $i=1,2$. By Propositions \ref{prop:Gamma and its extended version agree} and \ref{prop:truncating the extended Ginzburg algebra}, $\bG^{[a_i,b]}$ and $\bG^{[a_i,b],\tilde{\ell}}_{\rm{t-reg}}$ agree on extensions. But $Q^{[a_1,b],\tilde{\ell}}_{\rm{t-reg}} = Q^{[a_2,b],\tilde{\ell}}_{\rm{t-reg}}$, so $\bG^{[a_1,b],\tilde{\ell}}_{\rm{t-reg}} = \bG^{[a_2,b],\tilde{\ell}}_{\rm{t-reg}}$, concluding the proof.
\end{proof}

\subsection{Commutation and braid moves} In this section, we describe how the ice quiver with potential $(Q^{[a,b]}(\underline{w}_0),F^{[a,b]}(\underline{w}_0),W^{[a,b]}(\underline{w}_0))$ changes when we apply a commutation move or a braid move to the reduced expression $\underline{w}_0$. We will follow the arguments in \cite[Section 6.1]{ContuMonoidalAdditive} closely.

Let $\mf{i} = (i_s)_{s\in\Z}$ be an infinite sequence of indices $i_s \in \Delta_0$ satisfying the following:
\begin{equation}\label{eq:condition for infinite sequences}
    \textrm{for any } s \in \Z, \textrm{ there are } s' < s < s'' \textrm{ such that } i_{s'} = i_s = i_{s''}.
\end{equation}
Note that the sequence $\widehat{\underline{w}}_0$ satisfies this condition. It allows us to define $s^-$ and $s^+$ for any $s \in \Z$ as in Section \ref{section:initial monoidal seed}. Therefore, for a finite interval $[a,b]$, we can define an ice quiver with potential $(Q^{[a,b]}(\mf{i}),F^{[a,b]}(\mf{i}),W^{[a,b]}(\mf{i}))$ as in Section \ref{section:initial monoidal seed} by replacing $\widehat{\underline{w}}_0$ by $\mf{i}$.

Let $\mf{i} = (i_s)_{s\in\Z}$ and $\mf{i}' = (i_s')_{s\in\Z}$ be two sequences of indices in $\Delta_0$ satisfying (\ref{eq:condition for infinite sequences}). We say that $\mf{i}'$ is obtained from $\mf{i}$ by a
\begin{itemize}
    \item \emph{commutation move} at position $s \in \Z$ if $i_s \not\sim i_{s+1}$ and we have $i_s' = i_{s+1}$, $i_{s+1}' = i_s$ and $i_t' = i_t$ for $t \neq s,s+1$.
    \item \emph{braid move} at position $s \in \Z$ if $i_s \sim i_{s+1}$ and we have $i'_{s-1} = i_{s+1}' = i_s$, $i_s' = i_{s-1} = i_{s+1}$ and $i_t' = i_t$ for $t \neq s-1,s,s+1$.
\end{itemize}
These moves have the following effect on the associated ice quivers with potential.
 
\begin{lemma}\label{lemma:effect of commutation move}
    Let $\mf{i} = (i_s)_{s\in\Z}$ and $\mf{i}' = (i_s')_{s\in\Z}$ be two sequences of indices in $\Delta_0$ satisfying (\ref{eq:condition for infinite sequences}). Let $[a,b]$ be a finite interval. If $\mf{i}'$ is obtained from $\mf{i}$ by a commutation move at position $a \leq s < b$, then there exists an isomorphism of ice quivers with potential
    \[
    \varphi: (Q^{[a,b]}(\mf{i}),F^{[a,b]}(\mf{i}),W^{[a,b]}(\mf{i})) \longrightarrow (Q^{[a,b]}(\mf{i}'),F^{[a,b]}(\mf{i}'),W^{[a,b]}(\mf{i}')).
    \]
\end{lemma}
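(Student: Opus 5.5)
The map will be the transposition of the two positions involved: $\varphi(s) = s+1$, $\varphi(s+1) = s$, and $\varphi(t) = t$ for all other $t \in [a,b]$. Since $\mf{i}'$ is $\mf{i}$ with positions $s$ and $s+1$ swapped, $\varphi$ preserves the row label: $i'_{\varphi(t)} = i_t$ for every $t$. Set $j = i_s$ and $k = i_{s+1}$, so $j \neq k$ and $j \not\sim k$. Then the successor and predecessor maps are intertwined, $\varphi(t^{\pm}) = \varphi(t)^{\pm}$, where the right-hand side is computed with respect to $\mf{i}'$. This holds because the occurrences of a fixed letter keep their relative order under the swap, and positions outside $[a,b]$ are unchanged. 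Frozenness means $t^- < a$, and since $a \le s < s+1 \le b$ the swap never moves a position across $a$. Hence $t^- < a$ if and only if $\varphi(t)^- < a$, so $\varphi$ matches the frozen vertices.

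Next we check the arrows, following the three defining conditions of $Q^{[a,b]}$. Condition (1), $t = u^-$, is preserved because $\varphi$ commutes with $(-)^-$. Conditions (2) and (3) involve pairs $u,t$ with $i_u \sim i_t$, together with order comparisons among $u, t, u^-, t^-$. The only pair of positions whose relative order $\varphi$ reverses is $\{s, s+1\}$. That pair has labels $j,k$ with $j \not\sim k$, so it never appears as $(u,t)$ in (2) or (3).

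The one real check is that no inequality in (2) or (3) compares $s$ with $s+1$ among the auxiliary terms $u^-, t^-$. In any comparison inside $s^- < t^- < s < t$ or $u < t$, either both positions are unaffected, or they involve neighbours. A comparison between $s$ (or $s+1$) and a position of a different label keeps its truth value under $\varphi$. The reason is that only $s$ and $s+1$ move, each by one step, and comparisons like $t^- < s$ with $t^- \ne s+1$ are stable. I would verify this by listing the few cases where one of $u, t, u^-, t^-$ lies in $\{s,s+1\}$. A situation such as $u^- = s$ and $t = s+1$ forces $i_u = j$ and $i_t = k$, so $i_u \not\sim i_t$, and the case is excluded. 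Getting this case list right is the main, though modest, obstacle. The same reasoning shows frozen arrows (condition (3)) go to frozen arrows.

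Finally, the potential is the sum of the cycles $u \to t = t_1 \to t_1^- \to \dots \to t_l \to u$ determined by the arrows $u \to t$ with $u < t$ and $t$ unfrozen. The map $\varphi$ is a quiver isomorphism, commutes with $(-)^-$, and preserves unfrozenness. It therefore sends each such cycle to the cycle attached to the arrow $\varphi(u) \to \varphi(t)$. The only subtlety is that $u < t$ could flip, which happens only when $\{u,t\} = \{s,s+1\}$, and that pair is not adjacent. It follows that $\varphi(W^{[a,b]}(\mf{i})) = W^{[a,b]}(\mf{i}')$ exactly, without any sign issues.
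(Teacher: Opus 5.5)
Your proposal is correct and follows the paper's proof: the paper uses the same transposition $\varphi$ of $s$ and $s+1$ and says the verification is immediate from $i_s \not\sim i_{s+1}$. Your deferred case list also closes in one line. Any two of the four positions $u^-,u,t^-,t$ in conditions (2) and (3) have equal or adjacent labels, since $i_{u^-}=i_u \sim i_t=i_{t^-}$. Hence no pair among them is $\{s,s+1\}$, whose labels are distinct and non-adjacent, so $\varphi$ preserves every comparison there.
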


\begin{proof}
    We define $\varphi$ on vertices by
    \[
    \varphi(t) = \begin{cases}
        t, &\textrm{if } t \neq s,s+1,\\
        s+1 &\textrm{if } t = s,\\
        s &\textrm{if } t = s+1,
    \end{cases}
    \]
    for $t \in [a,b]$. Since $i_s \not\sim i_{s+1}$, it is immediate from the definition of these ice quivers with potential that $\varphi$ yields the desired isomorphism.
\end{proof}

\begin{lemma}\label{lemma:effect of braid move}
    Let $\mf{i} = (i_s)_{s\in\Z}$ and $\mf{i}' = (i_s')_{s\in\Z}$ be two sequences of indices in $\Delta_0$ satisfying (\ref{eq:condition for infinite sequences}). Let $[a,b]$ be a finite interval. If $\mf{i}'$ is obtained from $\mf{i}$ by a braid move at position $a < s < b$, then there exists an isomorphism of ice quivers with potential
    \[
    \varphi: \mu_{s+1}(Q^{[a,b]}(\mf{i}),F^{[a,b]}(\mf{i}),W^{[a,b]}(\mf{i})) \longrightarrow (Q^{[a,b]}(\mf{i}'),F^{[a,b]}(\mf{i}'),W^{[a,b]}(\mf{i}')).
    \]
\end{lemma}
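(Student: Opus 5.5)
The plan is to compute both sides explicitly and compare them locally, the way \cite[Section 6.1]{ContuMonoidalAdditive} does. Set $i = i_{s-1} = i_{s+1}$ and $j = i_s$, with $i \sim j$. Then $\mf{i}'$ has $j$ at positions $s-1, s+1$ and $i$ at position $s$, and agrees with $\mf{i}$ elsewhere.

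First, I record how $s^\pm$ changes for the vertices near position $s$.
\begin{itemize}
\item In $\mf{i}$ we have $(s+1)^- = s-1$.
\item In $\mf{i}'$ we have $(s+1)^- = s-1$ as well, but now on row $j$.
\item The vertex $s$ on row $j$ in $\mf{i}$ becomes $s$ on row $i$ in $\mf{i}'$.
\end{itemize}
For every other $t$, the predecessor $t^-$ is unchanged, except when it pointed into $\{s-1,s,s+1\}$. Those exceptions are rerouted: the predecessor on row $i$ (resp. $j$) of the next occurrence after $s+1$ changes accordingly.

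The candidate bijection on vertices is the identity outside $\{s-1,s,s+1\}$, and it swaps the roles $s-1 \leftrightarrow s$ in the appropriate rows. Concretely, the vertex carrying the row-$i$ occurrence is matched with the row-$i$ occurrence. The mutated vertex $s+1$ of $\mu_{s+1}$ goes to $s+1$ in $\mf{i}'$.

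Since $a < s < b$, all three vertices $s-1, s, s+1$ lie in $[a,b]$. The vertex $s+1$ is unfrozen because $(s+1)^- = s-1 \ge a$. Frozenness of the other vertices is unchanged, because $s^- < a$ in $\mf{i}$ iff the corresponding condition holds in $\mf{i}'$ after the identification.

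Next I compute $\widetilde{\mu}_{s+1}$ of $(Q^{[a,b]}(\mf{i}), F, W)$. The vertex $s+1$ has the following incident arrows:
\begin{itemize}
\item outgoing: $s+1 \to s-1$ (horizontal), and arrows $s+1 \to t$ to neighbours $t$ with $i_t \sim i$ and $s < t < (s+1)^+$, in particular possibly to $s^+$ on row $j$;
\item incoming: from $s$ (since $s^- < s-1 < s < s+1$ and $i_s \sim i$), from $(s+1)^+$, and from neighbours in other rows adjacent to $i$.
\end{itemize}
Mutation creates composite arrows $u \to s+1 \to w$ and reverses these arrows. The $2$-cycles that appear are cancelled by the $3$-cycle terms of $W$ passing through $s+1$, using the reduction of \cite[Theorem 3.6]{PresslandMutation}. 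Among them is the cycle $s \to s+1 \to s-1 \to s$: the arrow $s-1 \to s$ exists by condition (2), since $(s-1)^- < s^- < s-1 < s$.

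I then check, row by row (rows $i$, $j$, and rows $k \sim i$ or $k \sim j$), that the surviving arrows are exactly those prescribed by conditions (1)--(3) for $\mf{i}'$. Frozen arrows need a separate check. Condition (3) involves only pairs with $s^-, t^- < a$. Since $s-1 \ge a$, the local change only affects frozen arrows through whether the leftmost occurrences lie in $\{s-1,s,s+1\}$, and I expect these to match Pressland's rule that half-frozen $2$-cycles become frozen arrows.

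The main obstacle is the potential. After mutation, $\widetilde{\mu}_{s+1}(W)$ consists of three kinds of terms:
\begin{itemize}
\item the old cordless cycles, with paths through $s+1$ replaced by composite arrows;
\item the new triangles $[\beta\alpha]\alpha^*\beta^*$;
\item quadratic terms from the cancelled $2$-cycles.
\end{itemize}
Reduction removes the quadratic terms by a right equivalence that substitutes arrows. I must show that the reduced potential is the sum of the simple cordless cycles of $Q^{[a,b]}(\mf{i}')$, with all signs $+$.

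To do this, I will write the substitution explicitly. Each cancelled $2$-cycle $xy$ appears in a term $xy + x p + y q$ with $p, q$ paths, and it is removed by $x \mapsto x - q$, $y \mapsto y - p$. Because every cycle in $W$ has length $3$ or is a long cordless cycle through a single vertical arrow, these substitutions create only cordless cycles of $\mf{i}'$, possibly together with sign changes.

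Sign discrepancies are then absorbed by rescaling arrows by $-1$, as in Remark \ref{remark:signs of the potential}. If a global sign consistency fails, I would fall back on checking that Contu's signed-potential computation in \cite[Section 6.1]{ContuMonoidalAdditive} transports to our unsigned convention via such a rescaling.

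Once the quiver and the potential are identified up to right equivalence, the isomorphism $\varphi$ follows.
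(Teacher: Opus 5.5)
Your overall strategy matches the paper's. You compute $\widetilde{\mu}_{s+1}$ locally, reduce à la \cite{PresslandMutation}, fix signs by rescaling, and match vertices by the map swapping $s-1$ and $s$. However, the proposal goes wrong at the one point where the ice setting differs from \cite[Lemma 6.2]{ContuMonoidalAdditive}: the arrow $\delta_1\colon s-1\to s$.

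You assert that this arrow always exists by condition (2) because $(s-1)^-<s^-$, but nothing forces that inequality. For example, take $\mf{i}=\widehat{\un{w}}_0$ with $\un{w}_0=(1,2,3,2,1,2)$ in type $\mathsf{A}_3$ and $s=5$; then $s^-=1<2=(s-1)^-$. In fact $\delta_1$ exists if and only if $(s-1)^-<s^-$ or both $s-1$ and $s$ are frozen, and it is a frozen arrow exactly in the latter situation. The three resulting cases behave differently:
\begin{enumerate}[(1)]
\item If $\delta_1$ is absent, the composite arrow $[\gamma_1\gamma_3]\colon s\to s-1$ survives unfrozen. It matches condition (2) for $\mf{i}'$, since then $s^-<(s-1)^-$.
\item If $\delta_1$ is unfrozen, it cancels against $[\gamma_1\gamma_3]$ exactly as in Contu.
\item If $\delta_1$ is frozen, $\widetilde{\mu}_{s+1}(W)$ contains a half-frozen $2$-cycle $[\gamma_1\gamma_3]\delta_1$.
\end{enumerate}

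The third case is the actual new content of the lemma, and there your reduction recipe ($x\mapsto x-q$, $y\mapsto y-p$, then delete both arrows) fails. A right equivalence of ice quivers with potential must preserve the completed path algebra of the frozen subquiver, so you may not replace the frozen arrow $\delta_1$ by $\delta_1-\gamma_3^*\gamma_1^*$. Moreover, deleting both arrows would destroy the arrow between $s$ and $s-1$ together with the term $[\gamma_1\gamma_3]\gamma_3^*\gamma_1^*$. But $Q^{[a,b]}(\mf{i}')$ has the frozen arrow $s-1\to s$ by condition (3), and $W^{[a,b]}(\mf{i}')$ contains the $3$-cycle $s\to s+1\to s-1\to s$ through it. What is needed is Pressland's treatment of half-frozen $2$-cycles \cite[Lemma 3.14, Theorem 3.6]{PresslandMutation}. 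First bring $\widetilde{\mu}_{s+1}(W)$ to the required form by an automorphism that alters $\delta_1$ only by a sign; the paper's $f$ does this and also substitutes $\gamma_5$ and $[\gamma_4\gamma_3]$ to split off the unfrozen $2$-cycle $\gamma_5[\gamma_4\gamma_3]$. Then drop the term $[\gamma_1\gamma_3]\delta_1$, delete $\delta_1$, freeze $[\gamma_1\gamma_3]$, and keep $[\gamma_1\gamma_3]\gamma_3^*\gamma_1^*$ in the potential. Your remark that you ``expect'' frozen arrows to follow Pressland's rule points in this direction at the quiver level, but your stated potential computation contradicts it, so the key case is left unproved.

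A smaller point: the neighbours of $s+1$ are exactly $s-1$, $s$, $s^+$ and $(s+1)^+$ (when these lie in $[a,b]$). There are no arrows to or from other rows adjacent to $i$, and the arrow $s+1\to s^+$ exists regardless of whether $s^+<(s+1)^+$.
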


\begin{proof}
    To ease the reading, we will omit the index $[a,b]$ in the notation of the ice quivers and the potentials. We compute the mutation above following the proof of \cite[Lemma 6.2]{ContuMonoidalAdditive}.

    By the definition of $Q(\mf{i})$, its full subquiver containing all vertices connected to $s+1$ is of the following form:
    \[\begin{tikzcd}
	{s-1} && {s+1} && \textcolor{rgb,255:red,128;green,128;blue,128}{{(s+1)^+}} \\
	& s &&& \textcolor{rgb,255:red,128;green,128;blue,128}{{s^+}}
	\arrow["{\delta_1}"', dashed, from=1-1, to=2-2]
	\arrow["{\gamma_1}"', from=1-3, to=1-1]
	\arrow["{\gamma_4}", color={rgb,255:red,128;green,128;blue,128}, from=1-3, to=2-5]
	\arrow["{\gamma_2}"', color={rgb,255:red,128;green,128;blue,128}, from=1-5, to=1-3]
	\arrow["{\gamma_3}", from=2-2, to=1-3]
	\arrow["{\delta_2}"', color={rgb,255:red,128;green,128;blue,128}, dashed, from=2-5, to=1-5]
	\arrow["{\gamma_5}", color={rgb,255:red,128;green,128;blue,128}, from=2-5, to=2-2]
    \end{tikzcd}\]
    Here, the arrow $\delta_1$ appears if $(s-1)^- < s^-$ or if both $s-1$ and $s$ are frozen vertices. The vertex $s^+$ appears if $ s^+ \leq b$, and similarly for $(s+1)^+$. If both appear, the arrow $\delta_2$ is present precisely when $s^+ < (s+1)^+$. If $\delta_1$ does not appear or is not frozen, the proof reduces to that in \cite{ContuMonoidalAdditive}, so we will assume from now on that $s-1$ and $s$ are frozen. We will also suppose that $s^+$ and $(s+1)^+$ appear but not $\delta_2$. The other cases can be treated similarly. With these assumptions, we have the following picture:
    \[\begin{tikzcd}
	\color{blue}\boxed{s-1} && {s+1} && {(s+1)^+} \\
	& \color{blue}\boxed{s} &&& {s^+}
	\arrow[color=blue, "{\delta_1}"', from=1-1, to=2-2]
	\arrow["{\gamma_1}"', from=1-3, to=1-1]
	\arrow["{\gamma_4}", from=1-3, to=2-5]
	\arrow["{\gamma_2}"', from=1-5, to=1-3]
	\arrow["{\gamma_3}", from=2-2, to=1-3]
	\arrow["{\gamma_5}", from=2-5, to=2-2]
    \end{tikzcd}\]
    We can write
    \[
    W(\mf{i}) = \gamma_5\gamma_4\gamma_3 + \gamma_1\gamma_3\delta_1 + c_1 + c_2 + c_3 + W_0
    \]
    where:
    \begin{enumerate}[(1)]
        \item $c_1$ is the sum of all simple cordless cycles containing the path $\gamma_1\gamma_2$;
        \item $c_2$ is zero or the unique simple cordless cycle containing the path $\gamma_4\gamma_2$ if it exists;
        \item $c_3$ is the sum of all simple cordless cycles containing the arrow $\gamma_5$ except for $\gamma_5\gamma_4\gamma_3$;
        \item $W_0$ is the sum of all simple cordless cycles that do not contain any arrow in the picture above.
    \end{enumerate}
    Let us compute $\widetilde{\mu}_{s+1}(Q(\mf{i}),F(\mf{i}),W(\mf{i})) = (\widetilde{\mu}_{s+1}Q(\mf{i}),F(\mf{i}),\widetilde{\mu}_{s+1}W(\mf{i}))$. We see that $Q'= \widetilde{\mu}_{s+1}Q(\mf{i})$ differs from $Q(\mf{i})$ only around $s+1$, where it is given by
    \[\begin{tikzcd}
	\color{blue}\boxed{s-1} && {s+1} && {(s+1)^+} \\
	& \color{blue}\boxed{s} &&& {s^+}
	\arrow["{\gamma_1^*}", from=1-1, to=1-3]
	\arrow[color=blue, "{\delta_1}"', shift right, from=1-1, to=2-2]
	\arrow["{\gamma_2^*}", from=1-3, to=1-5]
	\arrow["{\gamma_3^*}"', from=1-3, to=2-2]
	\arrow[color=red, "{[\gamma_1\gamma_2]}"', bend right=20, from=1-5, to=1-1]
	\arrow[color=red, "{[\gamma_4\gamma_2]}", from=1-5, to=2-5]
	\arrow[color=red, "{[\gamma_1\gamma_3]}"', shift right, from=2-2, to=1-1]
	\arrow[color=red, "{[\gamma_4\gamma_3]}", shift left, from=2-2, to=2-5]
	\arrow["{\gamma_4^*}"', from=2-5, to=1-3]
	\arrow["{\gamma_5}", shift left, from=2-5, to=2-2]
    \end{tikzcd}\]
    where the new red arrows are not frozen. Moreover, we have
    \[
    W' = \widetilde{\mu}_{s+1}W(\mf{i}) = \gamma_5[\gamma_4\gamma_3] + [\gamma_1\gamma_3]\delta_1 + \widetilde{c}_1 + \widetilde{c}_2 + c_3 + W^* + W_0
    \]
    where $\widetilde{c}_1$ and $\widetilde{c}_2$ are obtained from $c_1$ and $c_2$ by replacing all instances of the paths $\gamma_1\gamma_2$ and $\gamma_4\gamma_2$ by $[\gamma_1\gamma_2]$ and $[\gamma_4\gamma_2]$, and
    \[
    W^* = [\gamma_1\gamma_2]\gamma_2^*\gamma_1^* + [\gamma_1\gamma_3]\gamma_3^*\gamma_1^* + [\gamma_4\gamma_2]\gamma_2^*\gamma_4^* + [\gamma_4\gamma_3]\gamma_3^*\gamma_4^*.
    \]
    Observe that $(Q',F(\mf{i}),W')$ is not reduced, and the $2$-cycles contained in the new potential are precisely $\gamma_5[\gamma_4\gamma_3]$ and $[\gamma_1\gamma_3]\delta_1$. We now follow the algorithm in the proof of \cite[Lemma 3.14]{PresslandMutation} (which is based on \cite[Lemma 4.7]{DerksenWeymanZelevinskyI}). Consider the algebra endomorphism $f$ of the completed path algebra of $Q'$ that fixes the vertices and is defined on arrows as follows:
    \begin{enumerate}[(1)]
        \item $f(\gamma_5) = \gamma_5 + \gamma_3^*\gamma_4^*$;
        \item $f([\gamma_4\gamma_3]) = [\gamma_4\gamma_3] - \widetilde{c}_3$;
        \item $f(\alpha) = -\alpha$ for $\alpha = \delta_1, \gamma_1^*,\gamma_2^*,\gamma_4^*,[\gamma_1\gamma_3]$;
        \item $f(\beta) = \beta$ for any arrow $\beta \neq \gamma_5,[\gamma_4\gamma_3],\delta_1, \gamma_1^*,\gamma_2^*,\gamma_4^*,[\gamma_1\gamma_3]$ in $Q'$.
    \end{enumerate}
    Here, $\widetilde{c}_3$ is the sum of paths such that $c_3$ is cyclically equivalent to $\gamma_5\widetilde{c}_3$. Note that no path in $\widetilde{c}_3$ contains the arrow $\gamma_5$ by the definition of $W(\mf{i})$. Clearly, $f$ is an automorphism by \cite[Proposition 2.4]{DerksenWeymanZelevinskyI}. Moreover, it preserves the completed path algebra of the frozen subquiver, so it induces a right equivalence of $(Q',F(\mf{i}),W')$ and $(Q',F(\mf{i}),f(W'))$. One computes that, up to cyclic equivalence, we have
    \[
    f(W') = \gamma_5[\gamma_4\gamma_3] + [\gamma_1\gamma_3]\delta_1 + \widetilde{c}_1 + \widetilde{c}_2 + \gamma_3^*\gamma_4^*\widetilde{c}_3 + (W^* - [\gamma_4\gamma_3]\gamma_3^*\gamma_4^*) + W_0.
    \]
    The potential above has the form described in \cite[Lemma 3.14]{PresslandMutation}. Thus, by the proof of Theorem 3.6 in loc.\ cit., the ice quiver of the reduction of $(Q', F(\mf{i}), W')$ is obtained by deleting the arrows $\gamma_5$, $[\gamma_4\gamma_3]$ and $\delta_1$, and freezing the arrow $[\gamma_1\gamma_3]$. Graphically, after adjusting the positions of the vertices $s-1$ and $s$, it is given locally at $s+1$ by
    \[\begin{tikzcd}
	& \color{blue}\boxed{s-1} &&& {(s+1)^+} \\
	\color{blue}\boxed{s} && {s+1} && {s^+}
	\arrow["{\gamma_1^*}", from=1-2, to=2-3]
	\arrow[color=red, "{[\gamma_1\gamma_2]}"', from=1-5, to=1-2]
	\arrow[color=red, "{[\gamma_4\gamma_2]}", from=1-5, to=2-5]
	\arrow[color=blue, "{[\gamma_1\gamma_3]}", from=2-1, to=1-2]
	\arrow["{\gamma_2^*}", from=2-3, to=1-5]
	\arrow["{\gamma_3^*}", from=2-3, to=2-1]
	\arrow["{\gamma_4^*}", from=2-5, to=2-3]
    \end{tikzcd}\]
    We can take the potential of the reduction to be
    \begin{align*}
    W_{\mathrm{red}} &= \widetilde{c}_1 + \widetilde{c}_2 + \gamma_3^*\gamma_4^*\widetilde{c}_3 + (W^* - [\gamma_4\gamma_3]\gamma_3^*\gamma_4^*) + W_0\\
    &= [\gamma_1\gamma_2]\gamma_2^*\gamma_1^* + [\gamma_1\gamma_3]\gamma_3^*\gamma_1^* + [\gamma_4\gamma_2]\gamma_2^*\gamma_4^* + \widetilde{c}_1 + \widetilde{c}_2 + \gamma_3^*\gamma_4^*\widetilde{c}_3 + W_0.
    \end{align*}
    This reduced ice quiver with potential is $\mu_{s+1}(Q(\mf{i}),F(\mf{i}),W(\mf{i}))$ by definition. Comparing it with $(Q(\mf{i}'),F(\mf{i}'),W(\mf{i}'))$, it is clear that we have an isomorphism $\varphi$ as in the statement which is given on vertices by
    \[
    \varphi(t) = \begin{cases}
        t, &\textrm{if } t \neq s-1,s,\\
        s &\textrm{if } t = s-1,\\
        s-1 &\textrm{if } t = s,
    \end{cases}
    \]
    for $t \in [a,b]$.
\end{proof}

For the next result, we say that a sequence of commutation and braid moves is \emph{compatible} with an interval $[a,b]$ if the position $s$ of each of its moves satisfies $a \leq s < b$ (if it is a commutation move) or $a < s < b$ (if it is a braid move).

\begin{proposition}\label{prop:words related by braid/commutation moves agree on properness}
Let $\mf{i} = (i_s)_{s\in\Z}$ and $\mf{i}' = (i_s')_{s\in\Z}$ be two sequences of indices in $\Delta_0$ satisfying (\ref{eq:condition for infinite sequences}). Let $[a,b]$ be a finite interval. Suppose $\mf{i}'$ is obtained from $\mf{i}$ by a sequence of commutation and braid moves compatible with $[a,b]$. Then one of 
\[
\bGh(Q^{[a,b]}(\mf{i}),F^{[a,b]}(\mf{i}),W^{[a,b]}(\mf{i})) \quad \textrm{and} \quad \bGh(Q^{[a,b]}(\mf{i}'),F^{[a,b]}(\mf{i}'),W^{[a,b]}(\mf{i}'))
\]
is proper if and only if the other is proper.
\end{proposition}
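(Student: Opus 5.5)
By induction on the number of moves, it suffices to treat a single commutation move or a single braid move compatible with $[a,b]$. The key point is that in both cases the two ice quivers with potential are related either by an isomorphism or by one mutation at an unfrozen vertex. Isomorphisms clearly preserve properness, and Lemma \ref{lemma:mutation preserves properness} shows that mutation does too.

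For a commutation move at position $s$ with $a \leq s < b$, Lemma \ref{lemma:effect of commutation move} gives an isomorphism of ice quivers with potential
\[
(Q^{[a,b]}(\mf{i}),F^{[a,b]}(\mf{i}),W^{[a,b]}(\mf{i})) \cong (Q^{[a,b]}(\mf{i}'),F^{[a,b]}(\mf{i}'),W^{[a,b]}(\mf{i}')).
\]
Such an isomorphism maps arrows to arrows, frozen arrows to frozen arrows and potentials to potentials, so it induces an isomorphism of the completed relative Ginzburg dg algebras. In particular, one is proper if and only if the other is.

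For a braid move at position $s$ with $a < s < b$, Lemma \ref{lemma:effect of braid move} identifies the mutation $\mu_{s+1}(Q^{[a,b]}(\mf{i}),F^{[a,b]}(\mf{i}),W^{[a,b]}(\mf{i}))$ with $(Q^{[a,b]}(\mf{i}'),F^{[a,b]}(\mf{i}'),W^{[a,b]}(\mf{i}'))$ up to isomorphism. We would like to conclude with Lemma \ref{lemma:mutation preserves properness}, which requires checking its setup from Section \ref{section:relative cluster and Higgs categories}:
\begin{enumerate}[(1)]
    \item $(Q^{[a,b]}(\mf{i}),F^{[a,b]}(\mf{i}),W^{[a,b]}(\mf{i}))$ is reduced;
    \item $s+1$ is unfrozen and is not incident to loops or $2$-cycles;
    \item the ice quiver with potential is Jacobi-finite.
\end{enumerate}
Conditions (1) and (2) are read off from the definition. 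The potential is a sum of simple cordless cycles, each of length at least three and containing an unfrozen arrow, and the quiver has no loops or $2$-cycles. Moreover, $s+1$ is unfrozen because $(s+1)^- = s-1 \geq a$ for a braid move with $s > a$.

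Jacobi-finiteness, condition (3), is the step that needs care. The cleanest route avoids assuming it and instead reruns the argument of Lemma \ref{lemma:mutation preserves properness} under the hypothesis that one side is proper. If $\bGh$ is proper, then $H^0(\bGh) = J$ is finite-dimensional, so that ice quiver with potential is Jacobi-finite. The mutation equivalences $\Phi_{v,\pm}$ of \cite{YilinWuMutation} identify $\per$ and $\pvd$ on both sides, and properness is equivalent to $\per = \pvd$. Hence the mutated dg algebra is proper as well.

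For the converse direction we use that mutation at an unfrozen vertex is an involution up to right equivalence. Starting from the proper side $\mf{i}'$, we mutate back at the vertex $\varphi(s+1)$, which is unfrozen and again satisfies (1) and (2) by the same argument. Since mutation preserves properness as shown above, $\bGh$ for $\mf{i}$ is proper. Finally, we compose the single-move statements along the given sequence of moves; this is legitimate because each intermediate sequence still satisfies \eqref{eq:condition for infinite sequences} and each move remains compatible with $[a,b]$.
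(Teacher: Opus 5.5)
Your proposal is correct and follows the paper's proof, which simply combines Lemma \ref{lemma:mutation preserves properness} with Lemmas \ref{lemma:effect of commutation move} and \ref{lemma:effect of braid move} along the sequence of moves. Your additional checks are sound and make explicit hypotheses the paper leaves implicit: that the ice quiver with potential is reduced, that $s+1$ is unfrozen since $(s+1)^- = s-1 \geq a$, that Jacobi-finiteness follows from properness of whichever side is assumed proper, and that the converse direction follows by mutating back (equivalently, a braid move at $s$ is undone by a braid move at $s$).
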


\begin{proof}
    This is a combination of Lemmas \ref{lemma:mutation preserves properness}, \ref{lemma:effect of commutation move}, and \ref{lemma:effect of braid move}.
\end{proof}

We can now return to the case when $\mf{i} = \widehat{\un{w}}_0$. Let $[a,b]$ be a finite integer interval. Let $r \geq 0$ be the largest integer such that $a' = a + r \cdot l(w_0)$ satisfies $a'\leq b$. We define the \emph{$[a,b]$-residue} of $\widehat{\un{w}}_0$ to be the element
\[
w = s_{i_{a'}}s_{i_{a'+1}}\dotsb s_{i_b}
\]
of the Weyl group of $\Delta$, where $s_j$ denotes the simple reflection associated with $j \in \Delta_0$.

\begin{proposition}\label{prop:words that we can prove give a proper Ginzburg}
    Let $\un{w}_0$ be a reduced expression of $w_0$ and take a finite integer interval $[a,b]$. If the $[a,b]$-residue of $\widehat{\un{w}}_0$ has a reduced expression adapted to an orientation of $\Delta$, then $\bGh^{[a,b]}(\un{w}_0)$ is proper.
\end{proposition}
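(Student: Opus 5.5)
The plan is to reduce, via Proposition \ref{prop:words related by braid/commutation moves agree on properness}, to a sequence whose restriction to $[a,b]$ looks like a window of $\widehat{\un{w}}'_0$ for an \emph{adapted} reduced expression $\un{w}'_0$, and then apply Corollary \ref{cor:adapted words give proper Ginzburgs}.

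\textbf{Setup.} Write $N = l(w_0)$ and let $a' = a + rN$ be as in the definition of the residue, so $[a,b]$ splits into $r$ full blocks $[a+jN, a+(j+1)N-1]$ followed by the tail $[a',b]$. Since $i_{s+N} = i_s^*$, each full block reads a reduced expression of $w_0$, namely $\un{w}_0$ or its image under $*$. The tail reads the word $(i_{a'},\dots,i_b)$, which is a reduced expression of the residue $w$: it is a factor of a reduced expression of $w_0$, up to $*$. By hypothesis, $w$ has a reduced expression $\un{u}$ adapted to some orientation $Q_\xi$ of $\Delta$. By Matsumoto's theorem, $(i_{a'},\dots,i_b)$ can be transformed into $\un{u}$ by braid and commutation moves occurring strictly inside the tail.

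\textbf{Extending $\un{u}$.} Next I will extend $\un{u}$ to an adapted reduced expression of $w_0$. I expect this to be possible because $w$ has an adapted expression: I would pick a source sequence for $Q_\xi$ realizing a reduced word for $w_0$ whose \emph{last} $l(w)$ letters spell $\un{u}$. Equivalently, I would build backwards from the orientation reached after reading $\un{u}$, using that any orientation admits an adapted reduced word of $w_0$ (Bédard) and that adaptedness is closed under this prefix extension. The left part of the sequence, the positions before $a'$, is a product of full $w_0$-blocks. Using the same moves, I can rewrite the block ending at $a'-1$, and by periodicity the earlier blocks as well, so that the resulting sequence $\mf{i}'$ agrees on $[a,b]$ with $\widehat{\un{w}}'_0$ for the adapted word $\un{w}'_0$, shifted suitably.

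\textbf{The main obstacle: compatibility of moves.} The genuine difficulty is that Proposition \ref{prop:words related by braid/commutation moves agree on properness} only allows moves at positions $s$ with $a\le s<b$ (commutation) or $a<s<b$ (braid). The moves inside the tail are compatible with this. The moves rearranging the full blocks could, however, touch the boundary position $a$.

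To handle this, I would instead realign the window. Replace $[a,b]$ by $[a'',b]$, where $a''$ lies far to the left, and use Theorem \ref{thm:coincidence of extensions}, or rather its proof strategy: properness depends only on the extension complexes between vertices in $[a,b]$.

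Alternatively, and more simply, note that $Q^{[a,b]}$ depends only on $(i_s)_{s\in[a,b]}$ together with the data $s^-<a$, that is, on which vertices are frozen and on the frozen arrows. The frozen data is determined by the first occurrence of each letter in $[a,b]$. It therefore suffices to take $\mf{i}'$ equal to $\widehat{\un{w}}'_0$ to the left of $a$ and on $[a,b]$ after the moves; condition \eqref{eq:condition for infinite sequences} is preserved. This requires that the rewriting of the full blocks uses only internal moves. That holds because each full block is a reduced word for $w_0$, and any two reduced words for $w_0$ are related by moves internal to the block. Braid moves centered at the block's first position are avoided by doing the block rewriting with the block embedded away from $a$, which is possible because the first block can be conjugated by $*$-periodicity.

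\textbf{Conclusion.} Once this is done, $\bGh^{[a,b]}(\un{w}_0)$ is proper if and only if $\bGh^{[a,b]}(\un{w}'_0)$ is proper, and the latter holds by Corollary \ref{cor:adapted words give proper Ginzburgs}.
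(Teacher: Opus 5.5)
Your overall route is the paper's: reduce via Proposition~\ref{prop:words related by braid/commutation moves agree on properness} to $\widehat{\un{w}}_0'$ for an adapted $\un{w}_0'$, then apply Corollary~\ref{cor:adapted words give proper Ginzburgs}. Your handling of compatibility is sound, and even slightly simpler than the paper's. Any $N=l(w_0)$ consecutive letters of $\widehat{\un{w}}_0$ or $\widehat{\un{w}}_0'$ form a reduced word of $w_0$. So Matsumoto's theorem, applied separately to each window $[a+jN,a+(j+1)N-1]$ and to the tail $[a',b]$, only uses moves internal to $[a,b]$. Moreover, the ice quiver with potential depends only on the letters in $[a,b]$. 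The paper instead cuts every window into pieces of lengths $t$ and $N-t$ and reuses two fixed move sequences. Two asides: your worry about braid moves centred at the first position of a block is moot, since internal moves never touch a block's endpoints as centres. And the detour through Theorem~\ref{thm:coincidence of extensions} is unavailable, since $\un{w}_0$ is not adapted.

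The gap is in the one step where the hypothesis is actually used. You need an adapted reduced word $\un{w}_0'$ of $w_0$ such that the window $[a',b]$ of $\widehat{\un{w}}_0'$ spells a reduced word of $w$. Your formulation is a source sequence for $Q_\xi$ that is a reduced word of $w_0$ and \emph{ends} with $\un{u}$, and this is false. For $\Delta=\mathsf{A}_2$, $Q_\xi=1\to 2$ and $\un{u}=(1,2)$, the only reduced word of $w_0$ adapted to $Q_\xi$ is $(1,2,1)$. Your fallback (every orientation has an adapted word of $w_0$, and adaptedness survives concatenation) misses the real issue, which is reducedness. Source sequences need not stay reduced: $(1,2,3,1,2,3)$ is a source sequence for $1\to 2\to 3$ in type $\mathsf{A}_3$, yet it represents an element of length $4$. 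What you need is that $\un{u}$ is a \emph{prefix} of a reduced word of $w_0$ adapted to the same $Q_\xi$. This is Bédard's identification of adapted reduced words of $w_0$ with compatible readings of the Auslander--Reiten quiver of $kQ_\xi$: under it, $\un{u}$ is the beginning of such a reading and can be completed to a full one. With your block alignment it is indeed a prefix that is needed, because the letters of $\widehat{\un{w}}_0'$ in $[a',b]$ are, up to $*$, the first $t$ letters of the window $[a'-N,a'-1]$. A suffix version only becomes available after rotating, and then the word is adapted to a different orientation, not a source sequence for $Q_\xi$. Having normalized $a=1$, take $\un{w}_0'$ to be the completed word or its $*$-image according to the parity of $r$, as in the paper; the rest of your argument then goes through.
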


\begin{proof}
    Assume without loss of generality that $a = 1$. Let $w = s_{i_{a'}}s_{i_{a'+1}}\dotsb s_{i_b}$ denote the $[1,b]$-residue of $\widehat{\un{w}}_0$, where $a'= 1 + r \cdot l(w_0)$ as above. Let $(j_1,\dots,j_t)$ be a reduced expression of $w$ adapted to an orientation $Q$ of $\Delta$. By \cite{Bedard} (see also \cite{OhSuh19a}), there is a compatible reading of the Auslander--Reiten quiver of $Q$ starting with $(j_1,\dots,j_t)$. Consequently, we can extend this sequence of indices to a reduced word $(j_1,\dots,j_{l(w_0)})$ for $w_0$ that is adapted to $Q$. By performing a sequence $\mf{s}$ of commutation and braid moves, we can transform $(i_{a'},i_{a'+1},\dots,i_b)$ into $(j_1,\dots,j_t)$, since both represent the same element $w$. Similarly, we can transform $(i_{b+1},i_{b+2},\dots,i_{a'+l(w_0) - 1})$ into $(j_{t+1},j_{t+2},\dots,j_{l(w_0)})$ by a sequence $\mf{s}'$ of moves, since both represent the element $w^{-1}w_0$. Now we define a reduced word $\un{w}_0'$ for $w_0$ as
    \[
    \un{w}_0' = (j_1,\dots,j_{l(w_0)}) \quad \textrm{or} \quad \un{w}_0' = (j_1^*,\dots,j_{l(w_0)}^*),
    \]
    depending on whether $r$ is even or odd, respectively. This ensures that the subsequence of $\widehat{\un{w}}_0'$ from entries $a'$ to $b$ is precisely $(j_1,\dots,j_t)$. Note that $\un{w}_0'$ is adapted to an orientation of $\Delta$. The next step is to extend $\mf{s}$ and $\mf{s}'$ to a sequence of braid and commutation moves on $\widehat{\un{w}}_0$ as follows. We can divide the integers from $1$ to $b$ into blocks of consecutive numbers whose sizes alternate between $t$ and $l(w_0) - t$. On each block of size $t$, we apply the sequence $\mf{s}$, and on each block of size $l(w_0)-t$, we apply the sequence $\mf{s}'$ (with the positions of the moves appropriately shifted). It is not hard to see that this indeed defines a sequence of commutation and braid moves starting from $\widehat{\un{w}}_0$ and that the entries between positions $1$ and $b$ of the resulting infinite sequence match the corresponding entries in $\widehat{\un{w}}_0'$. But $\bGh^{[1,b]}(\un{w}_0')$ is proper by Corollary \ref{cor:adapted words give proper Ginzburgs}, so the same holds for $\bGh^{[1,b]}(\un{w}_0)$ by Proposition \ref{prop:words related by braid/commutation moves agree on properness}.
\end{proof}

\begin{corollary}\label{cor:properness if length of w0 divides length of [a,b]}
Let $\un{w}_0$ be a reduced expression of $w_0$ and take a finite integer interval $[a,b]$. If $l(w_0)$ divides $b-a+1$, then $\bGh^{[a,b]}(\un{w}_0)$ is proper.
\end{corollary}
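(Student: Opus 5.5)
This is a direct application of Proposition \ref{prop:words that we can prove give a proper Ginzburg}. If $l(w_0)$ divides $b-a+1$, write $b-a+1 = q\cdot l(w_0)$ with $q \geq 1$. Let $r$ be the largest integer with $a' = a + r\cdot l(w_0) \leq b$. Then $r = q-1$ and $a' = b - l(w_0) + 1$. The $[a,b]$-residue of $\widehat{\un{w}}_0$ is therefore
\[
w = s_{i_{a'}} s_{i_{a'+1}} \dotsb s_{i_{a'+l(w_0)-1}},
\]
a product of $l(w_0)$ consecutive letters of $\widehat{\un{w}}_0$.

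Next I identify $w$. By construction $i_{s+l(w_0)} = i_s^*$. Hence any window of $l(w_0)$ consecutive entries of $\widehat{\un{w}}_0$ has the form
\[
(i_c,\dots,i_{l(w_0)}, i_1^*,\dots,i_{c-1}^*)
\]
up to applying $*$ to every entry. Such a word is again a reduced expression of $w_0$. Indeed, $w_0 s_j w_0 = s_{j^*}$, so rotating a reduced word of $w_0$ while applying $*$ to the rotated letters yields another reduced word of $w_0$. Applying $*$ to all letters also preserves reduced words of $w_0$. Thus $w = w_0$.

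Finally, $w_0$ has a reduced expression adapted to any orientation of $\Delta$. For example, take an orientation $Q$ and read the Auslander--Reiten quiver of $kQ$ compatibly as in \cite{Bedard}, which gives a source sequence for $Q$ that is a reduced word for $w_0$. Therefore the hypothesis of Proposition \ref{prop:words that we can prove give a proper Ginzburg} holds, and $\bGh^{[a,b]}(\un{w}_0)$ is proper.

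The only step needing any care is the check that the window is a reduced word of $w_0$, and it is elementary.
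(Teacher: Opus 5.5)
Your proof is correct and follows the paper's argument exactly: the divisibility hypothesis forces the $[a,b]$-residue of $\widehat{\un{w}}_0$ to be $w_0$, which has a reduced expression adapted to an orientation of $\Delta$, so Proposition~\ref{prop:words that we can prove give a proper Ginzburg} applies. Your check that any $l(w_0)$ consecutive letters of $\widehat{\un{w}}_0$ form a reduced word for $w_0$ (via $w_0 s_j w_0 = s_{j^*}$) fills in the step the paper states without detail.
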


\begin{proof}
    The hypothesis forces the $[a,b]$-residue of $\widehat{\un{w}}_0$ to be the longest element, which has a reduced expression adapted to an orientation of $\Delta$. The result then follows from Proposition \ref{prop:words that we can prove give a proper Ginzburg}.
\end{proof}

\section{Additive categorification of the monoidal \texorpdfstring{$\Lambda$}{}-matrix}\label{section:additive categorification of the Lambda-matrix}

In this section, we suppose that $\mf{g}$ is untwisted of simply-laced type and that the reduced word $\un{w}_0$ is adapted to an orientation of $\Delta$. We fix $a,b \in \Z$ with $a \leq b$.

\begin{lemma}
    The ice quiver with potential $(Q^{[a,b]}(\un{w}_0),F^{[a,b]}(\un{w}_0),W^{[a,b]}(\un{w}_0))$ is rigid.
\end{lemma}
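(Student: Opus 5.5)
We need to show that every cycle in $Q = Q^{[a,b]}(\un{w}_0)$ containing an unfrozen arrow vanishes in $J = J(Q,F,W)$ up to cyclic equivalence, where $F = F^{[a,b]}(\un{w}_0)$ and $W = W^{[a,b]}(\un{w}_0)$. Rigidity then gives non-degeneracy via \cite[Proposition 4.10]{PresslandMutation}, which is presumably how the lemma will be used.

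The plan is to work with the relative Jacobian algebra $J = H^0(\bGh^{[a,b]}(\un{w}_0))$. By Corollary \ref{cor:adapted words give proper Ginzburgs}, $J$ is finite-dimensional, and it is moreover graded by path length. By Lemma \ref{lemma:identification of HL with GLS quivers} and Remark \ref{rmk:reflecting height function gives all cases}, $W$ is a sum of $3$-cycles, so this grading is well defined on $J$.

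First, I show that $J$ is positively graded and finite-dimensional, hence nilpotent in degrees above some bound $N$. Then any cycle $c$ of length at least $N$ is zero in $J$. It remains to handle short cycles, and more precisely to reduce every cycle to a long one modulo cyclic equivalence and the Jacobian relations.

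Second, I use the Jacobian relations. For each unfrozen arrow $\alpha$, $\partial_\alpha W$ is a sum of at most two paths of length $2$, namely the complements of $\alpha$ in the (at most two) $3$-cycles containing it. With the sign convention of \cite{HLcluster}, these relations say that the two ways of going around a triangle agree up to sign, or that the single complementary path vanishes. Using this, I show that a cycle $c$ through an unfrozen arrow is cyclically equivalent, modulo these relations, to a cycle obtained by "rotating" through adjacent triangles. Each rotation moves along the mesh-like structure of $Q_{\rm{HL}}^{<\xi}$ while preserving length, and the goal is to eventually produce a path that factors through a zero relation.

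Concretely, in the Hernandez--Leclerc quiver every cycle has length divisible by $3$ and is cyclically a product of $3$-cycles passing through vertices. Using the relations $\partial_\alpha W$ for unfrozen $\alpha$, one can commute factors until two consecutive factors share an unfrozen arrow, which allows the cycle to be shortened. A cleaner alternative is the following. Since $\bGh$ is proper, a $3$-cycle $\omega$ at a vertex $x$ of length $3$ represents an element of $e_xJe_x$. Commuting through the relations shows that every cycle at $x$ equals $\pm\omega^k$, where $\omega$ is a fixed $3$-cycle through an unfrozen arrow, and these powers are central-like. Nilpotency of the finite-dimensional graded algebra forces $\omega^k = 0$ for large $k$. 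This is still not enough for $k=1$, so for the short cycles I need $\omega = 0$ up to cyclic equivalence. For this, I take $\omega = \alpha\,\partial_\alpha W$-type terms: a $3$-cycle containing an unfrozen arrow $\alpha$ is cyclically equivalent to $\alpha p$ with $p$ a summand of $\partial_\alpha W$, and $\partial_\alpha W = p \pm p'$. Then $\alpha p \sim \mp \alpha p'$ cyclically, and iterating across the chain of triangles sharing arrows, I expect to reach either a triangle in which $\partial_\alpha W$ has a single term, which is then zero, or a sign contradiction giving $2\omega=0$.

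The main obstacle is this last propagation step for short cycles. It requires checking that every chain of adjacent triangles ends at a boundary triangle whose unfrozen arrow lies in a unique $3$-cycle, for example at the right end of the rows or next to the frozen part. I would verify this using the explicit shape of $Q^{[a,b]}$ coming from Lemma \ref{lemma:identification of HL with GLS quivers}, following the analogous argument in \cite[Section 6]{ContuMonoidalAdditive} and \cite{BuanIyamaReitenSmith}.
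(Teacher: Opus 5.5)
Your proposal has a genuine gap: the cycles of intermediate length are never handled. The nilpotency step cannot carry the weight you put on it. Since $W$ is homogeneous of degree $3$, the Jacobian relations and cyclic equivalence preserve path length. A cycle of length below the nilpotency index $N$ of $J$ can therefore never be rewritten as a longer one, and "reducing every cycle to a long one" is impossible. Every cycle of length $3,6,9,\dots$ below $N$, and you have no control over $N$, must be shown directly to be cyclically equivalent to an element of the relative Jacobian ideal. That is the entire content of the lemma.

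Your treatment of these short cycles rests on claims that are false or unproved:
\begin{enumerate}[(1)]
\item Cycles of $Q_{\rm{HL}}$ are not in general concatenations of $3$-cycles. Already in type $\mathsf{A}_3$ there are simple $6$-cycles such as $(1,0)\to(2,1)\to(3,2)\to(3,0)\to(3,-2)\to(2,-1)\to(1,0)$ (up to a shift of heights), and they occur in $Q^{[a,b]}(\un{w}_0)$ once the interval is long enough.
\item There is no reason for every cycle at a vertex to equal $\pm\omega^k$ in $J$.
\item For a horizontal arrow in a row indexed by a trivalent node of $\Delta$ (types $\mathsf{D},\mathsf{E}$), $\partial W$ has three terms, not at most two.
\item Even for $3$-cycles, the decisive propagation step is only stated as an expectation.
\end{enumerate}

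The missing idea is to use the height coordinate of $Q_{\rm{HL}}$. In the adapted case $Q^{[a,b]}(\un{w}_0)$ is a full subquiver of $Q_{\rm{HL}}$. The paper's proof just invokes the argument of \cite[Proposition 4.17]{HLcluster} through this identification, noting that it never uses the relations $\partial_\alpha W$ for frozen arrows $\alpha$; the argument below is one self-contained way to close your gap. Three facts make it work: arrows raise the height $p$ by $1$ or lower it by $2$; all horizontal arrows are unfrozen; and $W$ is the sum of all $3$-cycles.

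Let $(i,p)$ be a vertex of minimal height on a cycle $c$. After rotation, $c$ contains the subpath $(i,p+2)\to(i,p)\to(j,p+1)$. This is a summand of $\partial_\gamma W$ for the arrow $\gamma\colon(j,p+1)\to(i,p+2)$, which is unfrozen because its target is unfrozen. The only other possible summand is $(i,p+2)\to(j,p+3)\to(j,p+1)$. So, modulo the Jacobian ideal and cyclic equivalence, $c$ is either $0$ or minus a cycle of the same length whose total height is larger by $3$. Heights are bounded on the finite quiver, so iterating shows that every cycle is cyclically equivalent to an element of the relative Jacobian ideal. This works uniformly in the length and does not use properness of $\bGh$ at all.
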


\begin{proof}
    Since $\un{w}_0$ is adapted, we can view $Q^{[a,b]}(\un{w}_0)$ as a subquiver of $Q_{\rm{HL}}$ by Lemma \ref{lemma:identification of HL with GLS quivers}. Thus, the argument in \cite[Proposition 4.17]{HLcluster} shows that the quiver with potential $(Q^{[a,b]}(\un{w}_0),W^{[a,b]}(\un{w}_0))$ is rigid. They do not use the relation $\partial_{\alpha}(W^{[a,b]}(\un{w}_0)) = 0$ for $\alpha \in F_1^{[a,b]}(\un{w_0})$, which is not present in the relative Jacobian algebra, so their proof also applies to the ice quiver with potential above.
\end{proof}

In particular, the ice quiver with potential above is non-degenerate. It is also Jacobi-finite since the completed relative Ginzburg dg algebra $\bGh^{[a,b]}(\un{w}_0)$ is proper by Corollary \ref{cor:adapted words give proper Ginzburgs}. Therefore, by Section \ref{section:categorification after Yilin}, we can use the associated Higgs category $\mc{H}^{[a,b]}(\un{w}_0)$ to study its cluster algebra $\mc{A}^{[a,b]}(\un{w}_0)$. Denote by $\varphi: K_0(\mathscr{C}_{\mf{g}}^{[a,b],\mc{D},\un{w}_0}) \to \mc{A}^{[a,b]}(\un{w}_0)$ the isomorphism arising from Theorem \ref{thm:KKOP monoidal categorification} and Proposition \ref{prop:quiver of initial monoidal seed of KKOP}. The main result of this section shows how to recover the $\Lambda$-invariant in this $\Uplambda$-monoidal categorification using $\mc{H}^{[a,b]}(\un{w}_0)$.

\begin{theorem}\label{thm:coincidence of Lambda matrices}
    Suppose $\mf{g}$ is untwisted of simply-laced type. Let $(\mc{D},\un{w}_0)$ be a complete PBW-pair adapted to an orientation of $\Delta$, and let $a,b \in \Z$ with $a \leq b$. For reachable simple objects $V,W \in \mathscr{C}_{\mf{g}}^{[a,b],\mc{D},\un{w}_0}$, we have
    \[
    \Lambda(V,W) = \dim\Ext^1_{\mc{H}}(M,N) + [M,N]_{\mc{H}},
    \]
    where $\mc{H} = \mc{H}^{[a,b]}(\un{w}_0)$ and $M,N \in \mc{H}$ are the corresponding reachable rigid objects such that $\varphi(V) = CC(M)$ and $\varphi(W) = CC(N)$ in $\mc{A}^{[a,b]}(\un{w}_0)$.
\end{theorem}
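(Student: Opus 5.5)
Following the introduction, I would reduce the statement to a comparison of $\Uplambda$-cluster algebra structures. By Theorem \ref{thm:Lambda invariant as a tropical invariant}, $\Lambda(V,W) = \langle\varphi(V),\varphi(W)\rangle_{\rm{trop}}$, computed with the KKOP Poisson coefficient matrices. By Proposition \ref{prop:additive interpretation of the tropical and F invariants}, $\dim\Ext^1_{\mc{H}}(M,N) + [M,N]_{\mc{H}}$ is the tropical invariant of $CC(M)=\varphi(V)$ and $CC(N)=\varphi(W)$, computed with the additive structure of Theorem \ref{thm:canonical quantum structure}. The tropical invariant depends only on the compatible pairs attached to a single seed, both structures respect mutation, and both have the same exchange matrices. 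So it suffices to show that the two Poisson coefficient matrices agree on one initial seed. Concretely, I would show that for the initial monoidal seed $\{V_i\}$ of Remark \ref{rmk:initial monoidal seed of finite interval}, matched with $M_i = e_i\bGh$, we have $\Lambda(V_i,V_j) = [e_i\bGh,e_j\bGh]_{\mc{H}}$. Since both matrices come from compatible pairs of the same type, they then coincide on every seed, and the formula follows.

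For the initial seed, since $V_i$ and $V_j$ strongly commute, $\dd(V_i,V_j)=0$. Proposition \ref{prop: lambda as sum of delta} then gives
\[
\Lambda(V_i,V_j)=\sum_{n\geq 1}(-1)^{n-1}\bigl[\dd(V_i,\DD^{-n}V_j)-\dd(V_j,\DD^{-n}V_i)\bigr].
\]
On the additive side, $[e_i\bGh,e_j\bGh]_{\mc{H}}=\sum_{p\geq0}(-1)^p(\dim\Ext^{-p}(e_i\bGh,e_j\bGh)-\dim\Ext^{-p}(e_j\bGh,e_i\bGh))$. Reindexing $p=n-1$, it suffices to prove
\[
\dd(V_i,\DD^{-n}(V_j))=\dim\Ext^{1-n}_{\mc{H}}(e_i\bGh,e_j\bGh)\quad(n\geq1).
\]
By $2$-CY, the right-hand side equals $\dim\Ext^1_{\mc{H}}(e_i\bGh,\Sigma^{-n}e_j\bGh)$. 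Both sides are then "$F$-invariant type" quantities: $2\dd$ is the $F$-invariant on the monoidal side, and $2\dim\Ext^1$ is the $F$-invariant on the additive side (Proposition \ref{prop:additive interpretation of the tropical and F invariants}). The task is therefore to realise $\DD^{-n}(V_j)$ and $\Sigma^{-n}e_j\bGh$ as corresponding objects under a common mutation sequence.

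Doing this is the main obstacle. I would first use Theorem \ref{thm:coincidence of extensions} to replace $[a,b]$ by a larger interval $[a',b]$ with all rows of equal length $r+1$, and $r$ large. This changes neither the $\Ext$ groups between the $e_i\bGh$ with $i\in[a,b]$, nor, by Remark \ref{rmk:initial monoidal seed of finite interval}, the modules $V_i$ and their $\dd$-invariants. In this regular case, Remark \ref{rmk:sequence that gives dual is indeed a maximal green sequence} shows that the truncation ${\bf v}^{\leq r}_{\DD^{-1}}$ is a maximal green sequence for the unfrozen part. By Remark \ref{rmk:truncation of mutation sequence that gives dual} and Proposition \ref{proposition:mutation sequence the gives left dual}, this sequence produces $\DD^{-1}(V_j)$ on the monoidal side, for vertices far enough from the left boundary. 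By Theorem \ref{thm:maximal green sequence gives inverse shift}, the same sequence produces $\Sigma^{-1}e_{\sigma(j)}\bGh$ in the absolute cluster category; iterating gives the cases $\DD^{-n}$ and $\Sigma^{-n}$. Since $\Ext^1$ in $\mc{H}$ agrees with that in $\mc{C}(\ov{Q},\ov{W})$ (Proposition \ref{prop:Ext1 is the same in Higgs and absolute cluster categories}), the lifts of these objects to $\mc{H}$ agree up to frozen summands, and the $F$-invariant does not depend on coefficients.

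Two technical points remain to be handled carefully. First, the non-reachability and infinite nature of the seeds must be addressed by truncating and working with $F$-invariants of good elements, so that Cao's identification of $2\dd$ with the $F$-invariant still applies. Second, the indices must be matched: the permutation $\sigma$ corresponds to $i\mapsto i^*$, which is compatible with the vertex relabelling in Proposition \ref{proposition:mutation sequence the gives left dual}. With these in place, the displayed equality holds for all $n\geq1$, and summing the series gives the required equality of $\Lambda$-matrices on the initial seed.
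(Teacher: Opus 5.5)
Your overall architecture is the paper's: reduce to equality of Poisson matrices on the initial seed, expand $\Lambda$ via Proposition~\ref{prop: lambda as sum of delta}, enlarge the interval via Theorem~\ref{thm:coincidence of extensions}, and realise $\DD^{-n}$ by truncated maximal green sequences. However, there is a genuine gap where you pass from the absolute cluster category back to $\mc{H}$.

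Theorem~\ref{thm:maximal green sequence gives inverse shift} only tells you that the object $\wt N\in\mc H$ with $CC(\wt N)=\varphi(\DD^{-n}(W))$ satisfies $p^*(\wt N)\cong\Sigma^{-n}p^*(N)$ in $\mc{C}(\ov Q,\ov W)$. Combined with the $F$-invariant and Proposition~\ref{prop:Ext1 is the same in Higgs and absolute cluster categories}, this gives
\[
\dd(V,\DD^{-n}(W))=\dim\Ext^1_{\mc H}(M,\wt N)=\dim\Hom_{\mc{C}(\ov Q,\ov W)}(p^*M,\Sigma^{1-n}p^*N).
\]
The bracket, however, needs $\dim\Ext^{1-n}_{\mc H}(M,N)=\dim\Hom_{\mc{C}(Q,F,W)}(M,\Sigma^{1-n}N)$. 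This is a non-positive extension in the \emph{relative} cluster category. Proposition~\ref{prop:Ext1 is the same in Higgs and absolute cluster categories} only covers positive degrees, and in non-positive degrees the two sides differ in general. For $n=1$ the comparison is $\Hom_{\mc H}(M,N)$ versus $\Hom_{\mc H}(M,N)$ modulo maps factoring through frozen projectives.

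Your phrase ``the lifts agree up to frozen summands'' presupposes that $\Sigma^{-n}N$ lies in $\mc H$, which is exactly what has to be proved. Likewise, rewriting $\Ext^{1-n}_{\mc H}(M,N)$ as $\Ext^1_{\mc H}(M,\Sigma^{-n}N)$ is just a shift in $\mc{C}(Q,F,W)$, not $2$-CY. It is an $\Ext^1$ in $\mc H$, and hence accessible to the $F$-invariant formula, only if $\Sigma^{-n}N\in\mc H$.

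The paper fills this gap by showing $\Sigma^{-n}N\in\mc H$ for $r$ large.
\begin{enumerate}[(1)]
\item By Proposition~\ref{proposition:negative extensions can be computed in per Gamma}, membership reduces to the vanishing $\Ext^{p-n}_{\bGh}(e_s\bGh,e_t\bGh)=0$ for frozen $s$ and $0\le p<n$.
\item To check this, identify $\bGh$ with $\bP_3(kQ_\xi\ten A^{\circ}_{r+1},kQ_\xi\ten B^{\circ}_{r+1})$ and use Corollary~\ref{corollary:action of Omega for tensor product}, which gives $F^q(e_tA)\cong\tau^{-q}P\ten V_{[r+2-d-q,r+1-q]}$.
\item This term can only be nonzero at a frozen vertex when $q\ge r+1-d$. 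Once $r$ is large, such terms lie in degrees $<-n$.
\item Only then does the equivalence $\mc H/[\mc P]\simeq\mc{C}(\ov Q,\ov W)$, together with non-projectivity of both objects, give $\wt N\cong\Sigma^{-n}N$ in $\mc H$, and hence the required equality.
\end{enumerate}
Separately, your appeal to good elements is unnecessary. After truncation, $\DD^{-n}(W)$ is a genuine reachable simple, so Theorem~\ref{thm:Lambda invariant as a tropical invariant} applies to cluster monomials directly.
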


\begin{proof}
Theorems \ref{thm:canonical quantum structure} and \ref{thm:KKOP monoidal categorification} endow $\mc{A}^{[a,b]}(\un{w}_0)$ with two $\Uplambda$-cluster algebra structures. If we show that they coincide, then the theorem follows by Proposition \ref{prop:additive interpretation of the tropical and F invariants} and Theorem \ref{thm:Lambda invariant as a tropical invariant}, since each side of the equality above computes the same tropical invariant in $\mc{A}^{[a,b]}(\un{w}_0)$. Therefore, it suffices to show that the Poisson coefficient matrices of some $\Uplambda$-seed are equal.

We assume without loss of generality that $b = 0$. If $\xi$ denotes the height function giving rise to $(\mc{D},\un{w}_0)$, let $\mathscr{S}$ be the (reachable) monoidal seed in $\mathscr{C}_{\mf{g}}^{[a,0],\mc{D},\un{w}_0}$ that is the restriction of the monoidal seed $\mathscr{S}^{<\xi}$ from Theorem \ref{thm:initial monoidal seed for HL category} to $Q^{[a,0]}(\un{w}_0)$, as explained in Remark \ref{rmk:initial monoidal seed of finite interval}. We assume $V,W$ belong to $\mathscr{S}$. Since these objects correspond to the same seed in $\mc{A}^{[a,0]}(\un{w}_0)$, we have $\dd(V,W) = 0$ and $\Ext^1_{\mc{H}}(M,N) = 0$. By Proposition \ref{prop: lambda as sum of delta}, we have
\[
\Lambda(V,W) = \sum_{n \geq 1} (-1)^{n-1} \left[\dd(V, \DD^{-n}(W))  - \dd(W, \DD^{-n}(V)) \right].
\]
On the other hand,
\[
[M,N]_{\mc{H}} = \sum_{n \geq 0}(-1)^n(\dim\Ext^{-n}_{\mc{H}}(M,N) - \dim\Ext^{-n}_{\mc{H}}(N,M))
\]
by definition. By Proposition \ref{prop:d-invariant with dual as a negative extension} below, the terms of these sums coincide, so we have $\Lambda(V,W) = [M,N]_{\mc{H}}$. This shows that the Poisson coefficient matrices of the initial $\Uplambda$-seeds coincide, concluding the proof.
\end{proof}

\begin{proposition}\label{prop:d-invariant with dual as a negative extension}
    Suppose $\mf{g}$ is untwisted of simply-laced type. Let $(\mc{D},\un{w}_0)$ be a complete PBW-pair adapted to an orientation of $\Delta$, and denote by $\xi: \Delta_0 \to \Z$ the corresponding height function. Take an integer $a \leq 0$ and consider the initial monoidal seed $\mathscr{S}$ of $\mathscr{C}_{\mf{g}}^{[a,0],\mc{D},\un{w}_0}$ obtained by restricting the monoidal seed $\mathscr{S}^{<\xi}$ from Theorem \ref{thm:initial monoidal seed for HL category} to $Q^{[a,0]}(\un{w}_0)$. For simple objects $V$ and $W$ in $\mathscr{S}$, we have
    \begin{equation}\label{eq:shifted d-invariant is a negative extension group}
    \dd(V,\DD^{-n}(W)) = \dim\Ext^{1-n}_{\mc{H}}(M,N)
    \end{equation}
    for any $n \geq 1$, where $\mc{H} = \mc{H}^{[a,0]}(\un{w}_0)$ and $M,N \in \mc{H}$ are the corresponding reachable rigid indecomposable objects such that $\varphi(V) = CC(M)$ and $\varphi(W) = CC(N)$ in $\mc{A}^{[a,0]}(\un{w}_0)$.
\end{proposition}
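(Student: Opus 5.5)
Write $T = \bGh^{[a,0]}(\un{w}_0)$ for the initial cluster-tilting object of $\mc{H}$, so that $M = e_s\bGh$ and $N = e_t\bGh$ for vertices $s,t$ of $Q^{[a,0]}(\un{w}_0)$. The plan is to realize $\DD^{-n}(W)$ as the module attached to a suitable (non-initial) seed, to realize the corresponding object on the additive side as $\Sigma^{-n}N$, and then to compare $\dd$-invariants with $\Ext^1$ using Cao's $F$-invariant on both sides.

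\textbf{Step 1 (monoidal side, via mutation).} By Proposition \ref{proposition:mutation sequence the gives left dual}, together with the iteration in the subsequent remarks, $\DD^{-n}(W)$ is the module placed at a prescribed vertex of $\mu_{{\bf v}_{\DD^{-n}}}(\mathscr{S}^{<\xi})$. By Remark \ref{rmk:truncation of mutation sequence that gives dual}, this module is already computed by the finite truncation ${\bf v}^{\leq r}_{\DD^{-n}}$. The truncation, however, lives in $Q^{[a',0]}$ for some $a' \ll a$. I therefore enlarge the interval to some $[a',0] \supseteq [a,0]$ for which every row has $r+1$ vertices. By Remark \ref{rmk:sequence that gives dual is indeed a maximal green sequence}, ${\bf v}^{\leq r}_{\DD^{-1}}$ is then a maximal green sequence for the unfrozen part, and iterating gives the sequence computing $\DD^{-n}$. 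Since $\mathscr{C}^{[a,0]} \subseteq \mathscr{C}^{[a',0]}$ and $\dd$ is intrinsic, $\dd(V,\DD^{-n}(W))$ can be computed there: both $V$ and $\DD^{-n}(W)$ are reachable (or at least compatibly pointed) in the larger category. Theorem \ref{thm:Lambda invariant as a tropical invariant} then gives $2\dd(V,\DD^{-n}W) = (\varphi(V)\,||\,\varphi(\DD^{-n}W))_F$.

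\textbf{Step 2 (additive side).} Let $\mc{H}' = \mc{H}^{[a',0]}$. Theorem \ref{thm:maximal green sequence gives inverse shift}, applied to the absolute cluster category $\mc{C}(\ov{Q},\ov{W})$ and lifted through the equivalence \eqref{eq:stable Higgs is absolute cluster}, should show that the object of $\mc{H}'$ reached by the maximal green sequence at the relevant vertex is a lift of $\Sigma^{-n}$ of the initial summand. In $\mc{H}'$ (an extriangulated category) the shift must be interpreted via conflations with projective-injectives, so I need to check that the frozen contributions do not alter $\Ext^1$. Proposition \ref{prop:additive interpretation of the tropical and F invariants} then identifies $(\,||\,)_F$ with $2\dim\Ext^1_{\mc{H}'}(M',\Sigma^{-n}N')$. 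Since $V$ is unfrozen or frozen in $[a,0]$, the negative extension is best computed directly in $\mc{C}^{[a',0]}$, giving $\Ext^{1-n}_{\mc{C}'}(e_s\bGh',e_t\bGh')$. Matching cluster characters under the two categorifications requires the additive and monoidal seeds to mutate identically, which holds because both follow the same exchange matrices.

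\textbf{Step 3 (return to $[a,0]$).} By Proposition \ref{proposition:negative extensions can be computed in per Gamma}, for $1-n\le 0$ the group $\Ext^{1-n}$ between initial projectives equals $H^{1-n}$ of $e_t\bG e_s$ in $\per$. Theorem \ref{thm:coincidence of extensions} shows that this is the same for $[a,0]$ and $[a',0]$, which yields \eqref{eq:shifted d-invariant is a negative extension group}.

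The main obstacle is Step 2. Theorem \ref{thm:maximal green sequence gives inverse shift} concerns the absolute cluster category, whereas $\Ext^{1-n}$ for $n\ge 2$ is not visible there. So I would instead compute $\Ext^1_{\mc{H}'}(M,\Sigma^{-n}N)\cong\Ext^1_{\ov{\mc{C}}}(p^*M,\Sigma^{-n}p^*N)\cong\Hom_{\ov{\mc{C}}}(p^*M,\Sigma^{1-n}p^*N)$ using Proposition \ref{prop:Ext1 is the same in Higgs and absolute cluster categories}. It then remains to check that this agrees with the relative negative extension, which I expect follows from the vanishing of frozen terms in the long range (using properness and Lemma \ref{lemma:bracket is defined}). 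The point requiring the most care is the frozen vertices $V$ of $[a,0]$, which are unfrozen in $[a',0]$.
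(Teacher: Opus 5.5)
Your overall plan is the paper's: enlarge the interval until every row has $r+1$ vertices, reach $\DD^{-n}(W)$ through ${\bf v}^{\leq r}_{\DD^{-n}}$, turn $\dd$ into the $F$-invariant on both sides, and return to $[a,0]$ via Proposition~\ref{proposition:negative extensions can be computed in per Gamma} and Theorem~\ref{thm:coincidence of extensions}. However, Step~2 has a genuine gap, exactly where you flag it, and the fix you suggest would not close it.

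What must be shown is that the object $\wt N\in\mc{H}'$ with $CC(\wt N)=\varphi(\DD^{-n}(W))$ is isomorphic to $\Sigma^{-n}N$. Here $\Sigma$ is the suspension of the relative cluster category $\mc{C}'$, not a syzygy in $\mc{H}'$. Theorem~\ref{thm:maximal green sequence gives inverse shift} only gives $p^*(\wt N)\cong\Sigma^{-n}p^*(N)$. This lifts through \eqref{eq:stable Higgs is absolute cluster} only once you know that $\Sigma^{-n}N$ lies in $\mc{H}'$.

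Your fallback has the same hole. The isomorphism $\Ext^1_{\mc{H}'}(M,\wt N)\cong\Hom_{\ov{\mc C}}(p^*M,\Sigma^{1-n}p^*N)$ is fine. (Reading the shift as an $n$-fold syzygy in $\mc{H}'$ leads to the same absolute group.) But this group equals $\Hom_{\mc{C}'}(M,\Sigma^{1-n}N)$ precisely when Proposition~\ref{prop:Ext1 is the same in Higgs and absolute cluster categories} applies to the pair $(M,\Sigma^{-n}N)$, that is, when $\Sigma^{-n}N\in\mc{H}'$. By the definition of $\mc{H}'$ and Proposition~\ref{proposition:negative extensions can be computed in per Gamma}, this condition amounts to
\[
\Ext^{-m}_{\bGh'}(e_s\bGh',e_t\bGh')=0\quad\text{for every frozen vertex } s \text{ of } [a',0] \text{ and } 1\le m\le n .
\]
Properness and Lemma~\ref{lemma:bracket is defined} only control $m\gg0$, which is the wrong range. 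In fact, for a fixed interval the absolute and relative groups do differ for large $n$. The suspension of $\ov{\mc C}$ is periodic here (Zamolodchikov periodicity), so $\Hom_{\ov{\mc C}}(p^*N,\Sigma^{-m}p^*N)\neq0$ for infinitely many $m$. By contrast, $\Ext^{-m}_{\mc{C}'}$ vanishes for $m\gg0$.

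The required vanishing holds only because, for $r\gg n$, the new frozen row of $[a',0]$ is far from $t$. The old frozen vertices of $[a,0]$ that you single out play no role, since both sides are independent of $a$.

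The paper proves this vanishing by an explicit computation. Identify $Q^{[a',0]}(\un{w}_0)$ with $Q_{\xi}\boxtimes Q^{\circ}_{r+1}$. Then $\bGh'\simeq\bP_3(A,B)$ with $A=kQ_{\xi}\ten A^{\circ}_{r+1}$ and $B=kQ_{\xi}\ten B^{\circ}_{r+1}$, and
\[
\RHom(e_s\bGh',e_t\bGh')\cong\bigoplus_{q\ge0}F^q(e_tA)e_s,\qquad F=-\lten_A\Sigma^2\Omega_{A,B}.
\]
By Corollary~\ref{corollary:action of Omega for tensor product}, $F^q(e_tA)\cong\tau^{-q}P\ten V_{[r+2-d-q,\,r+1-q]}$, where $d$ is the fixed position of $t$ counted from the right. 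So a frozen $s$ is reached only when $q\ge r+1-d$, and for $r$ large these $\tau^{-q}P$ sit in cohomological degree $<-n$. Hence $\Sigma^{-n}N\in\mc{H}'$.

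Then $\wt N$ and $\Sigma^{-n}N$ are indecomposable, not projective-injective, and have isomorphic images in $\ov{\mc C}$. By \eqref{eq:stable Higgs is absolute cluster} they are isomorphic. This gives $\tfrac12(CC(M)\mid\mid CC(\wt N))_F=\dim\Ext^1_{\mc{H}'}(M,\Sigma^{-n}N)=\dim\Ext^{1-n}_{\mc{C}'}(M,N)$ directly, with no comparison of negative extensions in $\ov{\mc C}$ needed.
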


\begin{proof}
The following observation is essential: for fixed $V,W \in \mathscr{C}_{\mf{g}}$, the left-hand side of \eqref{eq:shifted d-invariant is a negative extension group} does not depend on $a$ by Remark \ref{rmk:initial monoidal seed of finite interval}, and neither does the right-hand side by Proposition \ref{proposition:negative extensions can be computed in per Gamma} and Theorem \ref{thm:coincidence of extensions} (see also Lemma \ref{lemma:complete/noncomplete Ginzburg are qiso when Jacobi-finite} and Remark \ref{rmk:defining an Adams grading}). By taking $a$ such that the length of the interval $[a,0]$ is a multiple of $2l(w_0)$, we can assume every row of $Q^{[a,0]}(\un{w}_0)$ has exactly $r+1$ vertices for some $r \geq 0$ (see \cite[Corollary 2.20]{Bedard}). By Proposition \ref{proposition:mutation sequence the gives left dual}, if we identify $Q^{[a,0]}(\un{w}_0)$ with a subquiver of $Q^{<\xi}_{\rm{HL}}$ via Lemma \ref{lemma:identification of HL with GLS quivers}, we can suppose $r$ is large enough so that $\DD^{-n}(W)$ is a reachable simple object of $\mathscr{C}_{\mf{g}}^{[a,0],\mc{D},\un{w}_0}$ that can be reached by applying the sequence of mutations ${\bf v}^{\leq r}_{\DD^{-n}}$ from Remark \ref{rmk:truncation of mutation sequence that gives dual} to $\mathscr{S}$. In particular, $\varphi(\DD^{-n}(W))$ is a cluster variable of $\mc{A}^{[a,0]}(\un{w}_0)$ and
\[
\dd(V,\DD^{-n}(W)) = \frac{1}{2}(\varphi(V)\mid\mid \varphi(\DD^{-n}(M)))_F
\]
by Theorem \ref{thm:Lambda invariant as a tropical invariant}. Let $\wt{N} \in \mc{H}$ be the reachable rigid indecomposable object such that $CC(\widetilde{N}) = \varphi(\DD^{-n}(W))$. We claim that $\wt{N} \cong \Sigma^{-n}N$ (up to increasing $r$), where $\Sigma$ is the suspension functor of the relative cluster category. This is enough to conclude the proof. Indeed, if this is the case, then the number above equals
\begin{align*}
    \frac{1}{2}(CC(M)\mid\mid CC(\wt{N})))_F &= \dim\Ext^1_{\mc{H}}(M,\wt{N})\\
    &= \dim\Ext^1_{\mc{H}}(M,\Sigma^{-n}N)\\
    &= \dim\Ext^{1-n}_{\mc{H}}(M,N),
\end{align*}
where the first equality follows from Proposition \ref{prop:additive interpretation of the tropical and F invariants}. We crucially use here the fact that the $F$-invariant does not depend on the choice of the Poisson coefficient matrices.

To prove our claim, let us first assume that $\Sigma^{-n}N \in \mc{H}$. In particular, observe that $N$ is not projective-injective in $\mc{H}$ by the definition of the Higgs category. Let $(\ov{Q},\ov{W})$ be the quiver with potential obtained from $(Q^{[a,0]}(\un{w}_0),F^{[a,0]}(\un{w}_0),W^{[a,0]}(\un{w}_0))$ by deleting the frozen vertices. As explained in Remark \ref{rmk:sequence that gives dual is indeed a maximal green sequence}, Theorem \ref{thm:maximal green sequence for triangle product} implies that the mutation sequence ${\bf v}^{\leq r}_{\DD^{-n}}$ is a composition of $n$ consecutive maximal green sequences for $\ov{Q}$ and the induced permutation on the vertices of $\ov{Q}$ coincides with the one induced by Proposition \ref{proposition:mutation sequence the gives left dual}. We deduce from Theorem \ref{thm:maximal green sequence gives inverse shift} that $p^*(\widetilde{N}) \cong p^*(\Sigma^{-n}N)$, where $p^*: \mc{H} \longrightarrow \mc{C}(\ov{Q},\ov{W})$ is the functor from Section \ref{section:relative cluster and Higgs categories}. Since $\widetilde{N}$ and $\Sigma^{-n}N$ are indecomposable objects that are not projective-injective in $\mc{H}$, the equivalence \eqref{eq:stable Higgs is absolute cluster} together with the previous isomorphism imply that $\wt{N} \cong \Sigma^{-n}N$, as claimed.

It remains to show that $\Sigma^{-n}N \in \mc{H}$ when $r$ is large enough. By the definition of the Higgs category, we must prove that
\[
\Ext^p_{\mc{C}}(\Sigma^{-n}N,e_s\bGh) = \Ext^p_{\mc{C}}(e_s\bGh,\Sigma^{-n}N) = 0
\]
for any $p \geq 0$ and $s \in F^{[a,0]}_0(\un{w}_0)$, where $\mc{C} = \mc{C}(Q^{[a,0]}(\un{w}_0),F^{[a,0]}(\un{w}_0),W^{[a,0]}(\un{w}_0))$ and $\bGh = \bGh^{[a,0]}(\un{w}_0)$. These equalities are equivalent to
\[
\Ext^{p+n}_{\mc{C}}(N,e_s\bGh) = \Ext^{p-n}_{\mc{C}}(e_s\bGh,N) = 0.
\]
Since $N \in \mc{H}$ and $n \geq 1$, the vanishing of the extension group on the left is immediate. We similarly deduce that the extension group on the right is zero if $p \geq n$. For $0 \leq p < n$, since $N = e_t\bGh$ for some $t \in [a,0]$, it suffices to show that
\begin{equation}\label{eq:vanishing of Ext in main theorem}
\Ext^{p-n}_{\bGh}(e_s\bGh,e_t\bGh) = 0
\end{equation}
by Proposition \ref{proposition:negative extensions can be computed in per Gamma}. To do so, we identify $Q^{[a,0]}(\un{w}_0)$ with the triangle product $Q_{\xi} \boxtimes Q^{\circ}_{r+1}$. Writing $A = kQ_{\xi} \ten A^{\circ}_{r+1}$ and $B = kQ_{\xi} \ten B^{\circ}_{r+1}$, where $A^{\circ}_{r+1}$ and $B^{\circ}_{r+1}$ are as in \eqref{eq:definition Al and Bl}, then $\bGh$ is quasi-isomorphic to $\bP_3(A,B)$ by Lemma \ref{lemma:complete/noncomplete Ginzburg are qiso when Jacobi-finite} and Section \ref{section:tensor products}. Therefore,
\begin{equation}\label{eq:RHom complex in main theorem}
\RHom_{\bGh}(e_s\bGh,e_t\bGh) \cong \bigoplus_{q \geq 0}\RHom_A(e_sA,F^q(e_tA)) \cong \bigoplus_{q \geq 0}F^q(e_tA)e_s,
\end{equation}
where $F = -\lten_A\Sigma^2\Omega_{A,B}$. But $e_tA \cong P \ten V_{[r+2-d,r+1]}$, where $P$ is the indecomposable projective $kQ_{\xi}$-module associated with $i_t \in \Delta_0$ and $d \geq 1$ denotes the integer such that $t$ is the $d$-th vertex of its row in $Q^{[a,0]}(\un{w}_0)$, counted from right to left. Note that $d$ does not change if we increase $r$. By Corollary \ref{corollary:action of Omega for tensor product}, we have
\[
F^q(e_tA) \cong \tau^{-q}P \ten V_{[r+2-d-q,r+1-q]}.
\]
Since $s$ is frozen, for $F^q(e_tA)e_s$ not to be zero, we must have $r+ 2-d-q \leq 1$, that is, $q \geq r+1-d$. If $r$ is large enough, this last inequality ensures that $\tau^{-q}P$ is a complex concentrated in a degree $m$ satisfying $m < -n$, so the same holds for $F^q(e_tA)e_s$. Hence, the complex \eqref{eq:RHom complex in main theorem} is concentrated in degrees strictly lower than $-n$ and \eqref{eq:vanishing of Ext in main theorem} holds for $0 \leq p < n$. This finishes the proof.
\end{proof}

\renewcommand*{\bibfont}{\small}
\sloppy
\printbibliography

\end{document}